\newtheorem{thm}{Theorem}
\newtheorem{prop}{Proposition}
\newtheorem{cor}[thm]{Corollary}
\newtheorem{ass}{Assumption}
\def \S {\mathbf{S}}
\def \A {\mathcal{A}}
\def \X {\mathcal{X}}
\def \R {\mathbb{R}}
\def \w {\mathbf{w}}
\def \v {\mathbf{v}}
\def \x {\mathbf{x}}
\def \E {\mathrm{E}}
\def \x {\mathbf{x}}
\def \a {\mathbf{a}}
\def \diag {\mbox{diag}}
\def \b {\mathbf{b}}
\def \z {\mathbf{z}}
\def \s {\mathbf{s}}
\def \y {\mathbf{y}}
\def \u {\mathbf{u}}
\def \g {\mathbf{g}}
\def \P {\mathcal{P}}
\def \xh {\widehat{\x}}
\def \C {\mathbf C}
\def \y {\mathbf{y}}
\def \E {\mathrm{E}}
\def \x {\mathbf{x}}
\def \g {\mathbf{g}}
\def \z {\mathbf{z}}
\def \u {\mathbf{u}}
\def \w {\mathbf{w}}
\def \R {\mathbb{R}}
\def \S {\mathcal{S}}
\def \A {\mathcal{A}}
\def \v {\mathbf{v}}
\def \a {\mathbf{a}}
\def \b {\mathbf{b}}
\def \s {\mathbf{s}}
\def \C {\mathcal{C}}
\def \xh {\widehat{\x}}
\def \X {\mathcal{X}}
\def \P {\mathbb{P}}
\begin{document}

\title[SO for DC Functions and Non-smooth Non-Convex Regularizers]{Stochastic Optimization for DC Functions and Non-smooth Non-convex Regularizers with Non-asymptotic Convergence}
\author{\Name{Yi Xu}$^\dagger$\Email{yi-xu@uiowa.edu}\\
\Name{Qi Qi}$^\dagger$\Email{qi-qi@uiowa.edu}\\
 \Name{Qihang Lin}$^\ddagger$ \Email{qihang-lin@uiowa.edu}\\
 \Name{Rong Jin}$^\natural$\Email{jinrong.jr@alibaba-inc.com}\\
\Name{Tianbao Yang}$^\dagger$\Email{tianbao-yang@uiowa.edu}\\
   \addr $^\dagger$Department of Computer Science, The University of Iowa, Iowa City, IA 52242, USA  \\
   \addr$^\ddagger$Department of Management  Sciences,  The University of Iowa, Iowa City, IA 52242, USA\\
   \addr$^\natural$Machine Intelligence Technology, Alibaba Group, Bellevue, WA 98004, USA\\
}

\maketitle
\vspace*{-0.5in}
 \begin{center}First version: November 28, 2018\\ Revised version: February 4, 2019~\footnote{In the revised version, we present some improved complexity results for non-smooth and non-convex regularizers and for functions with known H\"{o}lder continuity  parameter $\nu\in(0,1]$ by a simple change of an algorithmic parameter.}\end{center}

\begin{abstract}
Difference of convex (DC) functions cover a broad family of non-convex and possibly  non-smooth and non-differentiable  functions, and have wide applications in machine learning and statistics. Although deterministic algorithms for DC functions have been extensively studied, stochastic optimization that is more suitable for learning with big data remains under-explored. In this paper, we propose new stochastic optimization algorithms and study their first-order convergence theories  for solving a broad family of DC functions.  We improve the existing algorithms and theories of stochastic optimization for DC functions  from both practical and theoretical perspectives. On the practical side, our algorithm is more user-friendly without requiring a large mini-batch size and more efficient by saving unnecessary computations. On the theoretical side,  our convergence analysis does not necessarily require the involved functions to be smooth with Lipschitz continuous gradient. Instead, the convergence rate  of the proposed stochastic algorithm is automatically adaptive to the H\"{o}lder continuity of the gradient of one component function. Moreover, we extend the proposed stochastic algorithms for DC functions to solve problems with a general  non-convex non-differentiable  regularizer, which  does not necessarily have a DC decomposition but enjoys an efficient proximal mapping.  To the best of our knowledge, this is the first work that gives the first non-asymptotic convergence for solving non-convex optimization whose objective has a general non-convex non-differentiable  regularizer. 
\end{abstract}

\section{Introduction}
In this paper, we consider a  family of non-convex non-smooth optimization problems that can be written in the following form:
\begin{align}\label{eqn:P1}
 \min_{\x\in\R^d} g(\x)  + r(\x) - h(\x),
\end{align}
where $g(\cdot)$ and $h(\cdot)$ are real-valued lower-semicontinuous  convex functions, $r(\cdot)$ is a proper lower-semicontinuous function. We include the component $r$ in order to capture non-differentiable functions that usually plays the role of regularization, e.g., the indicator function of a convex set $\X$ where $r(\x) = \delta_{\X}(\x)$ is zero if $\x\in\X$ and infinity otherwise, and a non-differential regularizer such as the convex $\ell_1$ norm $\|\x\|_1$ or the  non-convex $\ell_0$ norm and $\ell_p$ norm $\|\x\|^p_p$ with $p\in(0,1)$. We do not necessarily impose smoothness condition  on $g(\x)$ or $h(\x)$ and the convexity condition on $r(\x)$. 

A special class of the problem~(\ref{eqn:P1}) is the one with $r(\x)$ being  a convex function - also known as difference of convex (DC) functions.  We would like to mention that even the family of DC functions is broader enough to cover many interesting non-convex problems that are well-studied, including an additive composition of a smooth non-convex function and a non-smooth convex function, weakly convex functions, etc. We postpone this discussion to Section~\ref{sec:pre} after we formally introduce the definitions of smooth functions and weakly convex functions.

In the literature, deterministic algorithms for DC problems have been studied extensively since its introduction by Pham Dinh Tao in 1985 and are continuously receiving attention from the community~\citep{pmlr-v80-khamaru18a,Wen2018}. Please refer to~\citep{LeThi2018} for a survey on this subject.  Although stochastic optimization (SO) algorithms for the special cases of DC functions mentioned above (smooth non-convex functions, weakly convex functions) have been well studied recently~\citep{davis2017proximally,sgdweakly18,modelweakly18,Drusvyatskiy2018,chen18stagewise,DBLP:journals/corr/abs/1805.05411,DBLP:conf/icml/Allen-Zhu17,chen18stagewisekatyusha,DBLP:conf/icml/ZhuH16,DBLP:conf/cdc/ReddiSPS16,Reddi:2016:SVR:3045390.3045425,zhang2018convergence}, a comprehensive study  of SO algorithms with a broader applicability to the DC functions and the problem~(\ref{eqn:P1}) with a non-smooth  non-convex regularizer $r(\x)$ still remain rare. The papers by~\cite{pmlr-v54-nitanda17a} and~\cite{pmlr-v70-thi17a} are the most related works dedicated to the stochastic optimization of special DC functions. \cite{pmlr-v70-thi17a} considered a special class of DC problem whose objective function consists of a large sum of nonconvex smooth functions and a regularization term  that can be written as a DC function. They reformulated the problem into~(\ref{eqn:P1}) such that $h$ is a sum of $n$ convex functions, and $g$ is a quadratic function and $r$ is the first component of the DC decomposition of the regularizer. Regarding algorithm and convergence, they proposed a stochastic variant of the classical DCA (Difference-of-Convex Algorithm) and established an asymptotic convergence result for finding a critical point. To our knowledge, the paper by \cite{pmlr-v54-nitanda17a} is the probably the first result that gives non-asymptotic convergence for finding an approximate critical point of a special class of DC problem, in which both $g$ and $h$ can be stochastic functions and $r=0$. 
Their algorithm consists of multiple stages of  solving a convex objective that is constructed by linearizing $h(\x)$ and adding a quadratic regularization. 
However, their algorithm and convergence theory have the following drawbacks. First, at each stage, they need to compute an unbiased stochastic gradient denoted by $\v(\x)$ of $\nabla h(\x)$ such that $\E[\|\v(\x) - \nabla h(\x)\|^2]\leq \epsilon^2$, where $\epsilon$ is the accuracy level imposed on the returned solution in terms of the gradient's norm. In reality, one has to resort to mini-batching technique by using a large number of samples to ensure this condition, which is impractical and not user-friendly. An user has to worry about what is the size of the mini-batch in order to find a sufficiently accurate solution while keeping the computational costs minimal. Second, for each constructed convex subproblem, their theory requires running a stochastic algorithm that solves each subproblem to the accuracy level of $\epsilon$, which could waste a lot of computations at earlier stages.  Third, their convergence analysis requires that $r(\x)=0$ and $g(\x)$ is a smooth function with a Lipchitz continuous gradient. 

\paragraph {Our Contributions - I.} In Section~\ref{sec:DC}, we propose new stochastic optimization algorithms and establish their convergence results for solving the DC class of the problem~(\ref{eqn:P1}) that improves the algorithm and theory in~\cite{pmlr-v54-nitanda17a} from several perspectives. It is our intension to address the aforementioned drawbacks of their algorithm and theory. In particular, (i) our algorithm only requires unbiased stochastic (sub)-gradients of $g(\x)$ and $h(\x)$ without a requirement on the small variance of the used stochastic (sub)-gradients; (ii) we do not need to solve each constructed subproblem to the accuracy level of $\epsilon$. Instead, we allow the accuracy  for solving each constructed subproblem to grow slowly without sacrificing the overall convergence rate; (iii) we improve the convergence theory significantly. First, our convergence analysis does not require $g(\x)$ to be smooth with a Lipchitz continuous gradient. Instead, we only require either $g(\x)+r(\x)$ or $h(\x)$ to be differentiable with a H\"{o}lder continuous gradient, under the former condition $h(\x)$ can be a non-smooth non-differentiable function  and under the later condition $r(\x)$ and $g(\x)$ can be non-smooth non-differentiable functions. Second, the convergence rate is automatically adaptive to the H\"{o}lder continuity of the involved function without requiring the knowledge of the H\"{o}lder continuity to run the algorithm. Third, when adaptive stochastic gradient method is employed to solve each subproblem, we establish an adaptive convergence similar to existing theory of AdaGrad for convex problems~\citep{duchi2011adaptive,SadaGrad18} and weakly convex problems~\citep{chen18stagewise}. 

\paragraph{Our Contributions - II. }Moreover, in Section~\ref{sec:nsncr} we extend our algorithm and theory to the more general class of non-convex non-smooth problem~(\ref{eqn:P1}), in which $r(\x)$ is a general non-convex non-differentiable regularizer that enjoys an efficient proximal mapping. Although such kind of non-smooth non-convex regularization has been considered in literature~\citep{Attouch2013,Bolte:2014:PAL:2650160.2650169,Bot2016,Li:2015:APG:2969239.2969282,YuZMX15,leiyangpg18,Liu2018,doi:10.1080/02331934.2016.1253694,DBLP:conf/aaai/ZhongK14}, existing results are restricted to deterministic optimization and asymptotic or local convergence analysis. In addition, most of them consider a special case of our problem with $g - h$ being a smooth non-convex function. To the best of our knowledge, this is the first work of stochastic optimization  with a  non-asymptotic first-order convergence result for tackling the non-convex objective~(\ref{eqn:P1}) with a non-convex non-differentiable regularization and a smooth function $g$ and a possibly non-smooth function $h$ with a H\"{o}lder continuous gradient.  Our algorithm and theory are based on using the Moreau envelope  of $r(\x)$ that can be written as a DC function, which then reduces to the problem that is studied in Section~\ref{sec:DC}. By using the algorithms and their convergence results established in  Section~\ref{sec:DC} and carefully controlling the approximation parameter,  we establish the first non-asymptotic convergence of stochastic optimization for solving the original non-convex problem with a  non-convex non-differentiable regularizer. This non-asymptotic convergence result can be also easily extended to the deterministic optimization, which itself is novel and could be interesting to a broader community.  A summary of our results is presented in Table~\ref{tab:2}. 
\begin{table*}[t]
		\caption{Summary of results presented in this paper for finding a (nearly) $\epsilon$-critical point of the problem~(\ref{eqn:P1}), where $g$ and $h$ are assumed to be convex.  
		HC refers to H\"{o}lder continuous gradient condition; SM refers to the smooth condition; CX means convex; NC means non-convex and NS means non-smooth; LP denotes Lipchitz continuous function; LB means lower bounded over $\R^d$; FV means finite-valued over $\R^d$; FVC means finite-valued over a compact set. $\nu\in(0,1]$ denotes the power constant of the involved function's H\"{o}lder continuity.  $n$ denotes the total number of components in a finite-sum problem. SPG denotes stochastic proximal gradient algorithm. SVRG denotes stochastic variance reduced gradient algorithm. AdaGrad denotes adaptive stocahstic gradient method. AG denotes accelerated gradient methods. Complexity for SPG and AdaGrad means iteration complexity, and for SVRG and AG means gradient complexity. 
		 }
		\centering
		\label{tab:2}
		{\begin{tabular}{l|l|l|ll}
			\toprule
			$g$ & $h$ & $r$ &Algorithms for subproblems&Complexity\\
			\midrule
			-& HC &CX&SPG, AdaGrad&$O(1/\epsilon^{4/\nu})$\\
			SM& HC &CX&SVRG&$\widetilde O(n/\epsilon^{2/\nu})$\\
			HC& - &CX, HC&SPG, AdaGrad&$O(1/\epsilon^{4/\nu})$\\
			SM& - &CX, HC&SVRG&$\widetilde O(n/\epsilon^{2/\nu})$\\
			\midrule
			SM&HC&NC, NS, LP&SPG&$O(1/\epsilon^{5(1+1/\nu)/2})$\\
			SM&HC&NC, NS, FV, LB&SPG&$O(1/\epsilon^{2(1+2/\nu)})$\\
			SM&HC&NC, NS, LP& SVRG, AG&$\widetilde  O(n/\epsilon^{3(1+1/\nu)/2})$\\
			SM&HC&NC, NS, FV, LB&SVRG, AG&$\widetilde  O(n/\epsilon^{4(1+2/\nu)/3})$\\
			SM&HC&NC, NS, FVC &SVRG, AG&$\widetilde  O(n/\epsilon^{4(1+2/\nu)/3})$\\
     	\bottomrule
		\end{tabular}}
	\end{table*}

\begin{table*}[t]
		\caption{Summary of improved complexities when $\nu$ is known
		 }
		\centering
		\label{tab:new}
		{\begin{tabular}{l|l|l|ll}
			\toprule
			$g$ & $h$ & $r$ &Algorithms for subproblems&Complexity\\
			\midrule
			-& HC &CX&SPG, AdaGrad&$O(1/\epsilon^{(1+3\nu)/\nu})$\\
			SM& HC &CX&SVRG&$\widetilde O(n/\epsilon^{(1+\nu)/\nu})$\\
			HC& - &CX, HC&SPG, AdaGrad&$O(1/\epsilon^{(1+3\nu)/\nu})$\\
			SM& - &CX, HC&SVRG&$\widetilde O(n/\epsilon^{(1+\nu)/\nu})$\\
						\midrule
			SM&HC&NC, NS, LP&SPG&$O(1/\epsilon^{4+1/\nu})$\\
			SM&HC&NC, NS, FV, LB&SPG&$O(1/\epsilon^{4+2/\nu)})$\\
			SM&HC&NC, NS, LP& SVRG, AG&$\widetilde  O(n/\epsilon^{2+1/\nu})$\\
			SM&HC&NC, NS, FV, LB&SVRG, AG&$\widetilde  O(n/\epsilon^{2+2/\nu})$\\
			SM&HC&NC, NS, FVC &SVRG, AG&$\widetilde  O(n/\epsilon^{2+2/\nu})$\\
     	\bottomrule
		\end{tabular}}
	\end{table*}



\section{Preliminaries}\label{sec:pre}
In this section, we present some preliminaries. Let $\|\cdot\|_p$ denote the standard $p$-norm with $p\geq 0$. 
For a non-convex function $f(\x): \R^d\rightarrow \R$, let $\hat\partial f(\x)$ denote the Fr\'{e}chet subgradient and $\partial f(\x)$ denote the limiting subgradient, i.e., 
\begin{align*}
\hat\partial f(\bar\x)  & = \left\{\v\in\R^d: \lim_{\x\rightarrow\bar \x}\inf \frac{f(\x) - f(\bar \x) - \v^{\top}(\x - \bar \x)}{\|\x - \bar\x\|}\geq 0\right\},\\
\partial f(\bar\x) &  =  \{\v\in\R^d: \exists \x_k \xrightarrow[]{f} \bar\x, v_k\in\hat\partial f(\x_k), \v_k\rightarrow \v\},
\end{align*} 
where the notation $\x\xrightarrow[]{f} \bar \x$ means that $\x\rightarrow \bar\x$ and $f(\x)\rightarrow f(\bar\x)$. It is known that $\hat\partial f(\x)\in\partial f(\x)$.  If $f(\cdot)$ is differential at $\x$, then $\hat\partial f(\x) = \{\nabla f(\x)\}$. Moreover, if $f(\x)$ is continuously differentiable on a neighborhood of $\x$, then $\partial f(\x) = \{\nabla f(\x)\}$. When $f$  is convex, the Fr\'{e}chet and the limiting subgradient reduce to the subgradient in the sense of convex analysis: $\partial f(\x) = \{\v\in\R^d: f(\x)\geq f(\y) + \v^{\top}(\x - \y), \forall\y\in\R^d\}$.  For simplicity, we use $\|\cdot\|$ to denote the Euclidean norm (aka. $2$-norm) of a vector. Let $\text{dist}(\S_1, \S_2)$ denote the distance between two sets. 

A function $f(\x)$ is smooth with a $L$-Lipchitz continuous gradient if it is differentiable and the following inequality holds
\begin{align*}
\|\nabla f(\x)  - \nabla f(\y)\|\leq L\|\x - \y\| ,\forall\x, \y.
\end{align*}
A differentiable function $f(\x)$ has $(L, \nu)$-H\"{o}lder continuous gradient if there exists $\nu\in(0,1]$ such that 
\begin{align*}
\|\nabla f(\x)  - \nabla f(\y)\|\leq L\|\x - \y\|^\nu, \forall \x, \y.
\end{align*}
Next, let us characterize the critical points of the considered problem~(\ref{eqn:P1}) that are standard in the literature~\citep{10.1007/978-3-642-45610-7_3,Horst1999,LeThi2018,doi:10.1080/02331934.2016.1253694}, and introduce the convergence measure for an iterative optimization algorithm. First, let us consider the DC problem:
\begin{align}\label{eqn:GP}
\min_{\x\in\R^d} f(\x):=g(\x) - h(\x)
\end{align}
where $g: \R^d\rightarrow \R\cup\{\infty\}$ is a proper lower semicontinuous convex function and $h:\R^d\rightarrow R$ is convex. If $\bar\x$ is a local minimizer of $f(\x)$, then $\partial h(\bar\x)\subset\partial g(\bar\x)$.  Any point $\bar\x$ that satisfies the condition $\partial h(\bar\x)\subset\partial g(\bar\x)$ is called a stationary point of~(\ref{eqn:GP}) and any point $\bar\x$ such that $\partial h(\bar\x)\cap\partial g(\bar\x)\neq\emptyset$ is called a critical point of~(\ref{eqn:GP}). If $h(\x)$ is further differentiable, the stationary points and the critical points coincide. For an iterative optimization algorithm, it is hard to find an exactly critical point in a finite-number of iterations. Therefore, one is usually concerned with finding an $\epsilon$-critical point $\x$ that satisfies 
\begin{align}
 \text{dist}(\partial h(\x), \partial g(\x))\leq \epsilon.
\end{align}
Similarly, we can extend the above definitions of stationary and critical points to the general problem~(\ref{eqn:P1}) with $r(\x)$ being a proper and lower semi-continuous (possibly non-convex) function~\citep{doi:10.1080/02331934.2016.1253694}. In particular, $\bar\x$ is called a stationary point of the considered problem~(\ref{eqn:P1}) if it satisfies  $\partial h(\x)\subset \hat\partial (g+r)(\x)$,  and any point $\bar \x$  such that $\partial h(\bar\x)\cap \hat\partial (g+r)(\bar\x)\neq\emptyset$ is called a critical point of~(\ref{eqn:P1}). When $g$ is differentiable,  $\hat\partial (g+r)(\bar\x) = \nabla g(\x) + \hat\partial r(\x)$~\citep{RockWets98}[Exercise 8.8], and when both $g$ and $r$ are convex and their domains cannot be separated $\hat\partial (g+r)(\bar\x) = \partial g(\x) + \partial r(\x)$~\citep{RockWets98}[Corollary 10.9].  
An $\epsilon$-critical point of~(\ref{eqn:P1}) is a point $\x$ that satisfies  $ \text{dist}( \partial h(\x), \hat\partial (g+ r)(\x))\leq \epsilon$. It is notable that when $g + r$ is non-differentiable, finding an $\epsilon$-critical point could become a challenging task for an iterative algorithm even under the condition that $r$ is  a convex function. Let us consider the example of $g=h=0, r=|x|$. As long as $x\neq 0$, we have $\text{dist}(0, \partial|x|) = 1$. To address this challenge when $g + r$ is non-differentiable, we introduce the notion of nearly $\epsilon$-critical points. In particular, a point $\x$ is called a nearly $\epsilon$-critical point of the problem~(\ref{eqn:P1}) if there exists $\bar\x$ such that 
\begin{align}
\|\x - \bar\x\|\leq O(\epsilon), \quad  \text{dist}( \partial h(\bar\x), \hat\partial(g+ r)(\bar\x))\leq \epsilon.
\end{align}
A similar notion of nearly critical points for non-smooth and non-convex optimization problems have been utilized in several recent works~\citep{davis2017proximally,sgdweakly18,modelweakly18,chen18stagewise}.

\paragraph{Examples and Applications of DC functions.} Before ending this section, we present some examples of DC functions and their applications in machine learning and statistics. 

{\it Example 1: Additive composition of a smooth loss function and a non-smooth convex regularizer.} Let us consider 
\begin{align*}
\min_{\x\in\R^d}g(\x) + r(\x),
\end{align*}
where $r(\x)$ is a convex function and $g(\x)$ is an $L$-smooth function. For an $L$-smooth function, it is clear that $\hat g(\x) = g(\x) + \frac{L}{2}\|\x\|^2$ is a convex function. Therefore, the above objective function can be written as $\hat g(\x) + r(\x) - \frac{L}{2}\|\x\|^2$ - a DC function.

{\it Example 2: Weakly convex functions.} Weakly convex functions have been recently studied in numerous papers~\citep{davis2017proximally,sgdweakly18,modelweakly18,chen18stagewise,zhang2018convergence}. A function $f(\x)$ is called $\rho$-weakly convex if $f(\x) + \frac{\rho}{2}\|\x\|^2$ is a convex function. More generally, $f(\x)$ is called $\rho$-relative convex with respect to a strongly convex function $\omega(\x)$ if $f(\x) + \rho\omega(\x)$ is convex~\citep{zhang2018convergence}. It is obvious that a weakly convex function $f(\x)$ is a DC function. Examples of weakly convex functions can be found in deep neural networks with a smooth active function and a non-smooth loss function~\citep{chen18stagewise},  robust learning~\citep{DBLP:journals/corr/abs-1805-07880}, robust phase retrieval~\citep{modelweakly18}, etc. 

{\it Example 3: Non-Convex Sparsity-Promoting Regularizers.}
Many non-convex sparsity-promoting regularizers in statistics can be written as a DC function, including log-sum penalty (LSP)~\citep{Candades2008}, minimax concave penalty (MCP)~\citep{cunzhang10}, smoothly clipped absolute deviation (SCAD)~\citep{CIS-172933},  capped $\ell_1$ penalty~\citep{Zhang:2010:AMC:1756006.1756041}, transformed $\ell_1$ norm~\cite{DBLP:journals/corr/ZhangX14}. For detailed DC composition of these regularizers, please refer to~\citep{Wen2018,DBLP:conf/icml/GongZLHY13}.  It is notable that for LSP, MCP and SCAD, the second function in their DC decomposition can be a smooth function. In particular, if one consider regression or classification with LSP, MCP, SCAD or transformed $\ell_1$ norm regularizer, the problem is a special case of~(\ref{eqn:P1}) with $r(\x)$ being a convex function and $h(\x)$ being a smooth convex function.  Here we give one example by considering learning with MCP as a regularization, where the problem is
\begin{align}
\min_{\x\in\R^d}\frac{1}{n}\sum_{i=1}^n\ell(\x^{\top}\a_i, b_i) +\underbrace{ \lambda\sum_{i=1}^d\int^{|w_i|}_0\left[1 - \frac{x}{\theta\lambda}\right]_+dx}\limits_{P(\x)},
\end{align}
where $(\a_i, b_i), i=1,\ldots, n$ denote a set of data points (feature vector and label pairs), $\ell(\cdot, \cdot)$ is a convex loss function with respect to its first argument, $\theta>0$ is a constant and $\lambda>0$ is a regularization parameter. We can write $P(\x)$ as a difference of two convex functions
\begin{align*}
P(\x) = \lambda\|\x\|_1 - \underbrace{\lambda\sum_{i=1}^d\int_{0}^{|x_i|}\min\{1, \frac{x}{\theta\lambda}\} dx}\limits_{h(\x)},
\end{align*}
where $h(\x)$ is continuously differentiable with $\frac{1}{\theta}$-Lipchitz continuous gradient~\cite{}. 

{\it Example 4: Least-squares Regression with $\ell_{1-2}$ Regularization}. Recently, a non-convex regularization in the form of $\lambda(\|\x\|_1 - \|\x\|_2)$ was proposed for least-squares regression or compressive sensing~\citep{Yin2015MinimizationO}, which is naturally a DC function.  

{\it Example 5: Positive-Unlabeled (PU) Learning}. A standard learning task is to find a model denoted by $\x$ that minimizes the expected risk based on a convex surrogate loss $\ell$, i.e., 
\begin{align*}
\min_{\x\in\R^d}\E_{\z, y}[\ell(\x; \z, y)],
\end{align*}
where $\z\in\R^m$ denotes the feature vector of a random data and $y\in\{1, -1\}$ denotes its corresponding label. In practice one observes a finite set of i.i.d training data $\{\z_i, y_i\}, i=1\ldots, n$, which leads to the well-known empirical risk (ERM) minimization problem, i.e.,  $\min_{\x\in\R^d}\frac{1}{n}\sum_{i=1}^n\ell(\x; \z_i, y_i)$. However, if only positive data $\{\z_i, +1,  i=1, \ldots, n_+\},$ are observed, ERM becomes problematic. A remedy to address this challenge is to use unlabeled data for computing an unbiased estimation of $\E_{\z, y}[\ell(\x; \z, y)]$. In particular, the objective in the following problem is an unbiased risk~\citep{NIPS2017_6765}:
\begin{align*}
\min_{\x\in\R^d}\frac{\pi_p}{n_+}\sum_{i=1}^{n_+}\left(\ell(\x; \z_i, +1) - \ell(\x; \z_i, -1)\right) + \frac{1}{n_u}\sum_{j=1}^m\ell(\x; \z^u_j, -1),
\end{align*}
where $\{\z_i^u, i=1, \ldots, n_u\}$ is a set of unlabeled data, and $\pi_p = \Pr(y=1)$ is the prior probability of the positive class. It is obvious that if $\ell(\x; \cdot)$ is a convex loss function in terms of $\x$, the above objective function  is a DC function. In practice, an estimation of $\pi_p$ is used.

\paragraph{Examples of Non-Convex Non-Smooth Regularizers.} Finally, we present some  examples of non-convex non-smooth regularizers $r(\x)$  that cannot be written as a DC function or whose DC composition is unknown. Thus, the algorithms and theories presented in Section~\ref{sec:DC} are not directly applicable, but the algorithms discussed in Section~\ref{sec:nsncr} are applicable when the proximal mapping of each component of $r(\x)$ is efficient to compute.  Examples include $\ell_0$ norm (i.e., the number of non-zero elements of a vector) and  $\ell_p$ norm regularization for $p\in(0,1)$ (i.e., $\sum_{i=1}^d|x_i|^p$), whose proximal mapping can be efficiently computed~\citep{Attouch2013,Bolte:2014:PAL:2650160.2650169}. For another example, let us consider a penalization approach for tackling  non-convex  constraints. Consider a non-convex optimization problem with domain constraint $\x\in\C$, where $\C$ is a non-convex set. Directly handling a non-convex constrained problem could be difficult. An alternative solution is to convert the domain constraint into a penalization $\frac{\lambda}{2}\|\x - \P_\C(\x)\|^2$ with $\lambda>0$ in the objective, where $\P_\C(\cdot)$ denotes the projection of a point to the set $\C$. Note that when $\C$ is a non-convex set, $r(\x)=\frac{\lambda}{2}\|\x - \P_\C(\x)\|^2$ is a non-convex non-smooth function in general, and its proximal mapping enjoys a closed-form solution~\citep{DBLP:journals/mp/LiP16}.

As a final remark, it is worth mentioning that even if $r(\x)$ can be written as a DC function such that the two components in its DC decomposition are both non-smooth non-differntiable (e.g., $\ell_{1-2}$ regularization, capped $\ell_1$ norm $\sum_{i=1}^d\min(|x_i|, \theta)$), the theory  presented in Section~\ref{sec:nsncr} can be still useful to derive a non-asymptotic first-order convergence in terms of finding a close critical point, while the theory in Section~\ref{sec:DC} is not directly applicable.


\section{New Stochastic Algorithms of DC functions}\label{sec:DC}
In this section, we present new stochastic algorithms for solving the problem~(\ref{eqn:P1}) when $r(\x)$ is a convex function and their convergence results. We assume both $g(\x)$ and $h(\x)$ have a large number of components such that computing a stochastic gradient is much more efficient than computing a deterministic gradient. Without loss of generality, we assume $g(\x) = \E_\xi[g(\x; \xi)]$ and $h(\x) = \E_{\varsigma}[h(\x; \varsigma)]$, and consider the following problem:
\begin{align}\label{eqn:PS}
\min_{\x\in\R^d}  F(\x) : = \E_\xi[g(\x; \xi)] + r(\x) -  \E_{\varsigma}[h(\x; \varsigma)].
\end{align}
where $g, h: \R^d\rightarrow\R^d$ are real-valued lower-semicontinuous convex functions and $r$ is a proper lower-semicontinuous convex function. 
It is notable that a special case of this problem is the finite-sum form: 
\begin{align}
\min_{\x\in\R^d}  F(\x):=\frac{1}{n_1}\sum_{i=1}^{n_1} g_i(\x) + r(\x) - \frac{1}{n_2}\sum_{j=1}^{n_2}h_j(\x),
\end{align}
which will allows us to develop faster algorithms for smooth functions by using variance reduction techniques.

Since we do not necessarily impose any smoothness assumption on $g(\x)$ and $h(\x)$, we will postpone the particular assumptions for these functions in the statements of later theorems.  For all algorithms presented below, we assume that the proximal mapping of $r(\x)$  can be efficiently computed, i.e., the solution to the following problem  can be easily computed for any $\eta>0$:
\begin{align*}
\min_{\x\in\R^d} \frac{1}{2\eta}\|\x - \y\|^2 + r(\y).
\end{align*}
A basic assumption that will be used in the analysis is the following. 
\begin{ass}\label{ass:0}
For a given initial solution $\x_1\in\text{dom}(r)$, assume that there exists $\Delta>0$ such that $F(\x_1) - \inf_{\x\in\R^d}F(\x)\leq \Delta$. 
\end{ass}


The basic idea of the proposed algorithm is similar to the stochastic algorithm proposed in~\citep{pmlr-v54-nitanda17a}. The algorithm consists of multiple stages of solving convex problems. At the $k$-th stage ($k\geq 1$), given a point $\x_k$, a convex majorant function $F^\gamma_{\x_k}(\x)$ is constructed as following such that $F^{\gamma}_{\x_k}(\x) \geq F(\x), \forall \x$ and $F^{\gamma}_{\x_k}(\x_k)  = F(\x_k)$:
\begin{align}
F^{\gamma}_{\x_k}(\x) &=  g(\x)  + r(\x) - (h(\x_k) + \partial h(\x_k)^{\top}(\x - \x_k)) + \frac{\gamma}{2}\|\x - \x_k\|^2,
\end{align}
where $\gamma>0$ is a constant parameter. Then a stochastic algorithm is employed to optimize the convex majorant function. The key difference from the previous work lies at how to solve each convex majorant function. An important change introduced to our design is to make the proposed algorithms more efficient and more practical. Roughly speaking, we only require solving each function $F^\gamma_{\x_k}(\x)$ up to an accuracy level of $c/k$ for some constant $c>0$, i.e., finding a solution $\x_{k+1}$ such that 
\begin{align}\label{eqn:acc}
\E[F^{\gamma}_{\x_k}(\x_{k+1})  - \min_{\x\in\R^d} F^{\gamma}_{\x_k}(\x)  ]\leq \frac{c}{k}.
\end{align}
 In contrast, the algorithm and anlysis presented in~\citep{pmlr-v54-nitanda17a} requires solving each convex problem up to an accuracy level of $\epsilon$, which is the expected accuracy level on the final solution. This change not only makes our algorithms more efficient by saving a lot of unnecessary computations but also more practical without requiring $\epsilon$ to run the algorithm.

\begin{algorithm}[t]
    \caption{A Stagewise Stochastic DC Algorithm: SSDC-$\A$}\label{alg:meta}
    \begin{algorithmic}[1]
        \STATE \textbf{Initialize:}  $\x_1\in\text{dom}(r)$
                \FOR {$k = 1,\ldots, K$}
        \STATE Let $F_{k}(\x)=F^\gamma_{\x_k}=g(\x)  + r(\x)  - h(\x_k)- \partial h(\x_k)^{\top}(\x - \x_k)+\frac{\gamma}{2}\| \x-\x_{k}\|^2$
        \STATE $\x_{k+1} = \A(F^\gamma_{\x_k}, \Theta_k)$ \hfill $\diamond$  $\Theta_k$ denotes algorithm dependent parameters
        \ENDFOR
    \end{algorithmic}
\end{algorithm}

We present a meta algorithm in Algorithm~\ref{alg:meta}, in which $\mathcal A$ refers to an appropriate stochastic algorithm for solving each convex majorant function. The Step 4 means that   $\mathcal A$ is employed for finding $\x_{k+1}$ such that~(\ref{eqn:acc}) is satisfied (or a more fine-grained  condition is satisfied for a particular algorithm as discussed later), where $\Theta_k$ denotes the algorithm dependent parameters (e.g., the number of iterations). 
There are three issues that deserve further discussion in order to fully understand the proposed algorithm. First, how many outer iterations $k$ is needed to ensure finding a (nearly) $\epsilon$-stationary point of the original problem under the condition that~(\ref{eqn:acc}) is satisfied for each problem. Second, how to ensure the condition~(\ref{eqn:acc}) to be satisfied for a stochastic algorithm. Third, what is the overall complexity (iteration complexity or gradient complexity) taking into account the complexity of the stochastic algorithm  $\mathcal A$ for solving each convex majorant function. Note that the last two issues are closely related to the particular algorithm employed. We emphasize that the last two issues are important not only in theory but also in practice.  Related factors such as how to initialize the algorithm  $\mathcal A$, how to set the step size and  how many iterations for each call of $\mathcal A$ suffice have dramatic effect on the practical performance. Next, we first present a general convergence analysis of Algorithm~\ref{alg:meta} under the condition that~(\ref{eqn:acc}) is satisfied for solving each problem. Then, we present several representative stochastic algorithms for solving each convex majorant function and derive their overall iteration complexities.  

Our convergence analysis also has its merits compared with the previous work~\citep{pmlr-v54-nitanda17a}. We will divide our convergence analysis into three parts. First, in subsection~\ref{sec:general} we introduce a general convergence measure without requiring any smoothness assumptions of involved functions and conduct a convergence analysis of the proposed algorithm. Second, we analyze different stochastic algorithms and their convergence results in subsection~\ref{sec:algs}, including an adaptive convergence result for using AdaGrad. Finally, we discuss the implications of these convergence results for solving the original problem in terms of finding a (nearly) $\epsilon$-stationary point in subsection~\ref{sec:stationary}. 

\subsection{A General Convergence Result}\label{sec:general}
For any $\gamma>0$, define
\begin{align*}
P_\gamma(\z) & = \arg\min_{\x\in\R^d} g(\x) + r(\x) -(h(\z) +  \partial h(\z)^{\top}(\x - \z)) + \frac{\gamma}{2}\|\x - \z\|^2,\\
 G_\gamma(\z) & = \gamma (\z - P_\gamma(\z)).
\end{align*}
It is notable that $P_\gamma(\z)$ is well defined since the above problem is strongly convex. The following proposition shows that when $\z = P_\gamma(\z)$, then $\z$ is a critical point of the original problem. 
\begin{prop}\label{prop:1}
If $\z =P_\lambda(\z)$, then $\z$ is a critical  point of the problem $\min_{\x\in\R^d}g(\x)  + r(\x) - h(\x)$. 
\end{prop}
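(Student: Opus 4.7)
The plan is to write down the first-order optimality condition for the strongly convex subproblem that defines $P_\gamma(\z)$ and then substitute the assumed fixed-point identity $\z=P_\gamma(\z)$ to exhibit a common element of $\partial h(\z)$ and $\hat\partial(g+r)(\z)$.

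Concretely, let $\xi_\z\in\partial h(\z)$ denote the particular subgradient used to form the linearization $h(\z)+\xi_\z^\top(\x-\z)$ inside the definition of $P_\gamma(\z)$. The subproblem
\begin{align*}
\min_{\x\in\R^d}\ g(\x)+r(\x)-\bigl(h(\z)+\xi_\z^\top(\x-\z)\bigr)+\tfrac{\gamma}{2}\|\x-\z\|^2
\end{align*}
is $\gamma$-strongly convex (sum of convex $g+r$ and a strongly convex quadratic), so its unique minimizer $P_\gamma(\z)$ is characterized by the Fermat rule
\begin{align*}
0\in \partial g(P_\gamma(\z))+\partial r(P_\gamma(\z))-\xi_\z+\gamma\bigl(P_\gamma(\z)-\z\bigr).
\end{align*}
Substituting $P_\gamma(\z)=\z$ kills the quadratic term and yields $\xi_\z\in \partial g(\z)+\partial r(\z)$.

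The final step is to identify $\partial g(\z)+\partial r(\z)$ with $\hat\partial(g+r)(\z)$ so that the critical-point condition as defined in Section~\ref{sec:pre} holds. Since $g$ and $r$ are both convex and proper lower semicontinuous, the Fréchet subdifferential of $g+r$ coincides with the convex subdifferential; moreover, under the standing assumption that the domains of $g$ and $r$ cannot be separated (invoking \citep{RockWets98}[Cor.~10.9] as already cited in the preliminaries), the sum rule $\partial(g+r)(\z)=\partial g(\z)+\partial r(\z)$ holds. Therefore $\xi_\z\in\hat\partial(g+r)(\z)\cap\partial h(\z)$, which is exactly the definition of a critical point of $\min_\x g(\x)+r(\x)-h(\x)$.

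The argument is essentially a one-line Fermat-rule calculation, so there is no real obstacle; the only point that requires care is the appeal to the sum rule for the subdifferential when $r$ is a general proper convex regularizer (e.g.\ an indicator), which is why I am explicitly citing the constraint qualification from Rockafellar--Wets rather than treating it as automatic. Everything else follows directly from strong convexity of the subproblem and the fact that the linearization subgradient $\xi_\z$ was drawn from $\partial h(\z)$ to begin with.
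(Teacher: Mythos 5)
Your proof is correct and takes essentially the same route as the paper's: apply Fermat's rule to the strongly convex subproblem defining $P_\gamma(\z)$ and substitute the fixed-point identity $\z=P_\gamma(\z)$ to cancel the quadratic term. The only difference is that you explicitly justify the subdifferential sum rule $\partial(g+r)=\partial g+\partial r$ via the constraint qualification, a detail the paper's proof elides by writing $\partial(g+r)$ directly.
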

\begin{proof}
According to the first-order optimality condition, we have
\begin{align*}
0\in \partial (g(P_\lambda(\z))+ r(P_\lambda(\z)))  -\partial h(\z) + \gamma(P_\gamma(\z) - \z).
\end{align*}
Since $\z =P_\gamma(\z)$, we have
\begin{align*}
0\in \partial (g+r)(\z)   -\partial h(\z),
\end{align*}
which implies that $\z$ is a critical  point of the original minimization problem. 
\end{proof}
The above proposition implies that $\|G_\gamma(\z)\|= \gamma\|P_\gamma(\z) - \z\|$ can serve as a  measure of convergence of an algorithm for solving the considered minimization problem. In subsection~\ref{sec:stationary}, we will discuss how the convergence in terms of $ \gamma\|P_\gamma(\z) - \z\|$ implies that the standard convergence measure in terms of the (sub)gradient norm of the original problem. The following theorems are the main results of this subsection. 

\begin{thm}\label{thm:2}
Suppose Assumption~\ref{ass:0} holds and  there exists an stochastic algorithm $\mathcal A$  that when applied to $F^\gamma_{\x_k}(\x)$ can find a solution $\x_{k+1}$ satisfying~(\ref{eqn:acc}), then with a total of $K$ stages we have
\begin{align*}
 \E\bigg[\|G_\gamma(\x_{\tau})\|^2\bigg]&\leq \frac{2\gamma\Delta}{K} + \frac{2\gamma c(1+\log (K))}{K},
\end{align*}
where 
$\tau\in\{1,\ldots, K\}$ is uniformly sampled. 
\end{thm}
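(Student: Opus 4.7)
The plan is to telescope a per-stage descent inequality that relates the suboptimality in solving $F^\gamma_{\x_k}$ to a decrease in the true objective $F$, measured in units of $\|G_\gamma(\x_k)\|^2$.

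First I would record two structural facts about the majorant. Since $h$ is convex, $h(\x)\geq h(\x_k)+\partial h(\x_k)^\top(\x-\x_k)$, so $F^\gamma_{\x_k}(\x)\geq F(\x)$ pointwise, with equality at $\x=\x_k$. Moreover $F^\gamma_{\x_k}$ is $\gamma$-strongly convex (thanks to the $\frac{\gamma}{2}\|\x-\x_k\|^2$ term, using convexity of $g+r$), and its unique minimizer is $P_\gamma(\x_k)$. Strong convexity at the minimizer gives
\begin{equation*}
F^\gamma_{\x_k}(\x_k) - F^\gamma_{\x_k}(P_\gamma(\x_k)) \;\geq\; \frac{\gamma}{2}\|\x_k - P_\gamma(\x_k)\|^2 \;=\; \frac{1}{2\gamma}\|G_\gamma(\x_k)\|^2.
\end{equation*}
Combining with $F^\gamma_{\x_k}(\x_k)=F(\x_k)$ yields $F(\x_k)-F^\gamma_{\x_k}(P_\gamma(\x_k))\geq \frac{1}{2\gamma}\|G_\gamma(\x_k)\|^2$.

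Next I would chain this with the algorithmic accuracy hypothesis. Because $F^\gamma_{\x_k}(\x_{k+1})\geq F(\x_{k+1})$, assumption~(\ref{eqn:acc}) gives, after taking conditional expectation,
\begin{equation*}
\E[F(\x_{k+1})\mid \x_k] \;\leq\; \E[F^\gamma_{\x_k}(\x_{k+1})\mid \x_k] \;\leq\; F^\gamma_{\x_k}(P_\gamma(\x_k)) + \frac{c}{k} \;\leq\; F(\x_k) - \frac{1}{2\gamma}\|G_\gamma(\x_k)\|^2 + \frac{c}{k}.
\end{equation*}
Taking full expectation gives the one-step descent inequality
\begin{equation*}
\frac{1}{2\gamma}\E\|G_\gamma(\x_k)\|^2 \;\leq\; \E[F(\x_k)] - \E[F(\x_{k+1})] + \frac{c}{k}.
\end{equation*}

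Finally I would sum over $k=1,\dots,K$, telescope the $F$ differences, and use Assumption~\ref{ass:0} together with $\E[F(\x_{K+1})]\geq \inf F$ to bound the telescoped sum by $\Delta$. Using the standard harmonic bound $\sum_{k=1}^{K}\frac{1}{k}\leq 1+\log K$ and dividing by $K$ produces
\begin{equation*}
\frac{1}{K}\sum_{k=1}^{K}\E\|G_\gamma(\x_k)\|^2 \;\leq\; \frac{2\gamma\Delta}{K} + \frac{2\gamma c(1+\log K)}{K},
\end{equation*}
and the left-hand side is exactly $\E\|G_\gamma(\x_\tau)\|^2$ for $\tau$ uniform on $\{1,\dots,K\}$.

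There is no real obstacle; the proof is almost entirely bookkeeping. The only subtle point worth stating carefully is the sandwich $F(\x_{k+1})\leq F^\gamma_{\x_k}(\x_{k+1})$ and $F(\x_k)=F^\gamma_{\x_k}(\x_k)$, which is what lets the suboptimality of $\x_{k+1}$ on the majorant translate into progress on $F$ itself; everything else follows from $\gamma$-strong convexity of the majorant and a telescoping sum.
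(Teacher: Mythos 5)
Your proof is correct and follows essentially the same route as the paper's: both arguments combine the $\gamma$-strong convexity of the majorant at its minimizer $P_\gamma(\x_k)$ with the convexity of $h$ (which you package as the pointwise sandwich $F\leq F^\gamma_{\x_k}$ with equality at $\x_k$, and the paper applies term-by-term after expanding $f_k(\x_{k+1})$) and the accuracy condition~(\ref{eqn:acc}) to obtain the per-stage descent inequality $\frac{1}{2\gamma}\E\|G_\gamma(\x_k)\|^2\leq \E[F(\x_k)]-\E[F(\x_{k+1})]+c/k$, then telescope and use $\sum_{k\leq K}1/k\leq 1+\log K$. No gaps.
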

{\bf Remark:} It is clear that when $K\rightarrow\infty$, $\gamma\|\x_{\tau} - P_\gamma(\x_{\tau})\|\rightarrow 0$ in expectation, implying the convergence to a critical point. Note that the $\log (K) $ factor will lead to an iteration complexity of $O(\log(1/\epsilon)/\epsilon^4)$ for using stochastic (sub)gradient method. This seems to be slightly worse than that presented in~\citep{pmlr-v54-nitanda17a} by a logarithmic factor. However, practically  our algorithms can perform  much better. This is because that if we simply run a stochastic algorithm $\mathcal A$ at each stage to find a solution $\x_{k+1}$ such that $E[F^{\gamma}_{\x_k}(\x_{k+1})  - \min_{\x\in\R^d} F^{\gamma}_{\x_k}(\x)  ]\leq \frac{c}{K}$, one can obtain a convergence upper bound of $O(1/K)$ without a logarithmic factor. However,  the stochastic algorithm $\mathcal A$ will need much more iterations at each stage, leading to a worse performance in practice.

A simple way to get rid of such a logarithmic factor without sacrificing the practical performance is by exploiting non-uniform sampling under  a slightly stronger condition of the problem. 

\begin{thm}\label{thm:3}
Suppose  there exists an stochastic algorithm $\mathcal A$  that when applied to $F^\gamma_{\x_k}(\x)$ can find a solution $\x_{k+1}$ satisfying~(\ref{eqn:acc}), and there exists $\Delta>0$ such that $\E[F(\x_k) - \min_{\x}F(\x)]\leq \Delta$ for all $k\in\{1,\ldots, K\}$, then with a total of $K$ stages we have
\begin{align*}
 \E\bigg[\|G_\gamma(\x_{\tau})\|^2\bigg]&\leq \frac{2\gamma(\Delta+c)(\alpha+1)}{K},
\end{align*}
where 
$\tau\in\{1,\ldots, K\}$ is sampled according to probabilities $p(\tau=k) = \frac{k^\alpha}{\sum_{k=1}^Kk^\alpha}$ with $\alpha\geq 1$. 
\end{thm}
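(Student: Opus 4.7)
The plan is to first derive a per-stage inequality of the same form that underlies Theorem~\ref{thm:2}, and then to combine these inequalities via a weighted sum tailored to the sampling distribution $p(\tau=k)\propto k^\alpha$. The per-stage inequality I would establish is
\[
\frac{1}{2\gamma}\E[\|G_\gamma(\x_k)\|^2] \leq \E[F(\x_k) - F(\x_{k+1})] + \frac{c}{k}.
\]
To prove it I exploit three facts about the majorant $F^\gamma_{\x_k}$: convexity of $h$ makes it dominate $F$ everywhere, it agrees with $F$ at $\x_k$, and it is $\gamma$-strongly convex. Strong convexity at the minimizer $P_\gamma(\x_k)$ gives $F^\gamma_{\x_k}(P_\gamma(\x_k))\leq F(\x_k) - \frac{\gamma}{2}\|\x_k - P_\gamma(\x_k)\|^2$; chaining this with $F(\x_{k+1})\leq F^\gamma_{\x_k}(\x_{k+1})$ and the accuracy guarantee~(\ref{eqn:acc}) delivers the claim after using $\|G_\gamma(\x_k)\|^2=\gamma^2\|\x_k-P_\gamma(\x_k)\|^2$.

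Next I would multiply this inequality by $k^\alpha$ and sum over $k=1,\ldots,K$. The functional-value differences are no longer a clean telescope, so I would apply Abel summation by parts: setting $\delta_k := F(\x_k)-\inf_\x F(\x)\geq 0$,
\[
\sum_{k=1}^K k^\alpha(\delta_k-\delta_{k+1}) \leq \delta_1 + \sum_{k=2}^K \bigl(k^\alpha-(k-1)^\alpha\bigr)\delta_k.
\]
This is exactly the place where the stronger hypothesis $\E[\delta_k]\leq \Delta$ for every $k$ is needed: in expectation the right-hand side collapses to $\Delta+\Delta(K^\alpha-1)=\Delta K^\alpha$. The residual term coming from the $c/k$ slack is handled by the elementary estimate $\sum_{k=1}^K k^{\alpha-1}\leq K\cdot K^{\alpha-1}=K^\alpha$, valid for $\alpha\geq 1$.

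Combining these two bounds yields $\frac{1}{2\gamma}\sum_{k=1}^K k^\alpha\E[\|G_\gamma(\x_k)\|^2]\leq (\Delta+c)K^\alpha$. Dividing by the normalizer $\sum_{k=1}^K k^\alpha$, which is at least $\int_0^K x^\alpha\,dx = K^{\alpha+1}/(\alpha+1)$, gives $\E[\|G_\gamma(\x_\tau)\|^2]\leq 2\gamma(\Delta+c)(\alpha+1)/K$, the advertised bound. The main obstacle I anticipate is precisely the Abel-summation step: with the non-uniform weights $k^\alpha$ the sum $\sum_k k^\alpha(F(\x_k)-F(\x_{k+1}))$ no longer telescopes directly, and a crude redistribution of weights would be as bad as $K^\alpha$ times the worst-stage optimality gap. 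It is only the uniform-in-$k$ control $\E[\delta_k]\leq \Delta$ that keeps this contribution at size $\Delta K^\alpha$ and explains why the logarithmic factor from Theorem~\ref{thm:2} disappears here.
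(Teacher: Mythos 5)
Your proposal is correct and follows essentially the same route as the paper's proof: the same per-stage inequality $\frac{\gamma}{2}\E[\|\x_k-P_\gamma(\x_k)\|^2]\leq \E[F(\x_k)-F(\x_{k+1})]+c/k$ (derived from the majorization $F\leq F^\gamma_{\x_k}$, the equality at $\x_k$, and $\gamma$-strong convexity), the same Abel/summation-by-parts step with weights $k^\alpha$ controlled by the uniform bound $\E[F(\x_k)-\min F]\leq\Delta$, and the same elementary estimates $\sum_k k^{\alpha-1}\leq K^\alpha$ and $\sum_k k^\alpha\geq K^{\alpha+1}/(\alpha+1)$. Your packaging of the per-stage step via the three properties of the majorant is just a more compact phrasing of the paper's explicit manipulation of $f_k$ and the convexity of $h$; no gap.
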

{\bf Remark: } Compared to Theorem~\ref{thm:2}, the condition $\E[F(\x_k) - \min_{\x}F(\x)]\leq \Delta$ for all $k\in\{1,\ldots, K\}$ is slightly stronger than Assumption~\ref{ass:0}. However, it can be easily satisfied if $\x_k\in\text{dom}(r)$ resides in a bounded set  (e.g., when $r(\x)$ is the indicator function of a bounded set), or  if $\E[F(\x_k)]$ is non-increasing (e.g.,  when using variance-reduction methods for the case that $g(\x)$ is smooth).

\begin{proof}[of Theorem~\ref{thm:3}] The proof of Theorem~\ref{thm:2} can be obtained by a slight change of the following proof. 
Define the following notations.  
\begin{align*}
\z_k = P_\gamma(\x_k)=\arg\min_{\x\in\R^d} F_k(\x) := \underbrace{g(\x) + r(\x) - \partial h(\x_k)^{\top}(\x - \x_k)}\limits_{f_k(\x)} + \frac{\gamma}{2}\|\x - \x_k\|^2.
\end{align*}
 By the assumption of~(\ref{eqn:acc}), we have $\E[F_k(\x_{k+1}) -F_k(\z_k)]\leq \epsilon_k = c/k$.  By the strong convexity of $F_k$, we have $F_k(\x_k)\geq F_k(\z_k) + \frac{\gamma}{2}\|\x_k - \z_k\|^2$. Thus we have
\begin{align}\label{eqn:c1}
\E[f_k(\x_{k+1})   + \frac{\gamma}{2}\|\x_{k+1} - \x_k\|^2]&\leq F_k(\x_k) - \frac{\gamma}{2}\|\x_k - \z_k\|^2 + \epsilon_k\nonumber\\
& =  g(\x_k)  + r(\x_k)- \frac{\gamma}{2}\|\x_k - \z_k\|^2 +  \epsilon_k.
\end{align}
Rearranging the terms, we have  
\begin{align*}
 \E\bigg[\frac{\gamma}{2}\|\z_{k} - \x_k\|^2\bigg]&\leq \E[g(\x_k)  + r(\x_k) - f_k(\x_{k+1})]  +  \epsilon_k\\
 &\leq \E[g(\x_k) + r(\x_k) - g(\x_{k+1}) - r(\x_{k+1}) + \partial h(\x_k)^{\top}(\x_{k+1} - \x_k)]  + \epsilon_k\\
 &\leq \E[g(\x_k)  + r(\x_k )- g(\x_{k+1}) -r(\x_{k+1}) + h(\x_{k+1}) - h(\x_k)]  + \epsilon_k\\
 &=  \E[F(\x_k) - F(\x_{k+1})] + \epsilon_k,
\end{align*}
where the last inequality follows the convexity of $h(\cdot)$. Multiplying both sides by $w_k = k^\alpha$ and taking summation over $k=1,\ldots, K$, we have
\begin{align}\label{eqn:c2}
 \E\bigg[\frac{\gamma}{2}\sum_{k=1}^Kw_k\|\z_{k} - \x_k\|^2\bigg]&\leq \E\bigg[ \sum_{k=1}^Kw_k(F(\x_k) - F(\x_{k+1}))\bigg] +\sum_{k=1}^Kw_k \epsilon_k,
\end{align}
The second term in the R.H.S of the above inequality can be easily bounded using simple calculus. For the first term, we use similar analysis as that in the proof of Theorem 1 in~\citep{chen18stagewise}:
\begin{align*}
&\sum_{k=1}^{K} w_k (F(\x_k) - F(\x_{k+1}))= \sum_{k=1}^{K} (w_{k-1}F(\x_{k}) - w_kF(\x_{k+1})) + \sum_{k=1}^{K}(w_k - w_{k-1})F(\x_{k})\\
&= w_0 F(\x_1) - w_{K}F(\x_{K+1}) +\sum_{k=1}^{K}(w_k - w_{k-1})F(\x_{k})\\
& =\sum_{k=1}^{K}(w_s - w_{s-1})(F(\x_{k}) - F(\x_{K+1}))\leq \sum_{k=1}^{K}(w_k - w_{k-1})(F(\x_k) - \min_{\x}F(\x)),
\end{align*}
where we use $\w_0=0$.
Taking expectation on both sides, we have
\begin{align*}
 \E\bigg[ \sum_{k=1}^Kw_k(F(\x_k) - F(\x_{k+1}))\bigg] \leq  \sum_{k=1}^{K}(w_k - w_{k-1})\E[(F(\x_k) - \min_{\x}F(\x))]\leq \Delta w_{K}
\end{align*}
Then, we have
\begin{align*}
 \E\bigg[\frac{\gamma}{2}\|\z_{\tau} - \x_\tau\|^2\bigg]&\leq \frac{\Delta (\alpha+1)}{K} + \frac{c(\alpha+1)}{K},
\end{align*}
which can complete the proof by multiplying both sides by $2\gamma$. 
The result in Theorem~\ref{thm:2} for the uniform sampling can be easily derived from the equality~(\ref{eqn:c2}) by using the fact $\sum_{k=1}^K1/k\leq (1+\log K)$.
\end{proof}

\subsection{Convergence Results of Different Stochastic Algorithms}\label{sec:algs}
In this section, we will present the convergence results of Algorithm~\ref{alg:meta} for employing different stochastic algorithms for minimizing $F_k(\x)$ at each stage. In particular, we consider three representative algorithms, namely stochastic proximal subgradient (SPG) method~\citep{DBLP:conf/colt/DuchiSST10,DBLP:conf/icml/ZhaoZ15}, adaptive stochastic gradient (AdaGrad)  method~\citep{duchi2011adaptive,SadaGrad18}, and proximal stochastic gradient method with variance reduction (SVRG)~\citep{DBLP:journals/siamjo/Xiao014}. SPG is a simple stochastic method, AdaGrad allows us to derive adaptive convergence to the history of learning, and SVRG allows us to leverage the finite-sum structure and the smoothness of the problem to improve the convergence rate.

\paragraph{Stochastic  Proximal  Subgradient Method.}
We make the additional assumptions about the problem for developing SPG. 
\begin{ass}\label{ass:1}
Assume one of the following conditions hold: 
\begin{itemize}
\item[(i)] $g(\x)$ is $L$-smooth and there exists $G>0$ such that $\E[\|(\nabla g(\x; \xi) - \partial h(\x; \varsigma)) - \E[\nabla g(\x; \xi) - \partial h(\x; \varsigma)]\|^2]\leq G^2$, where $\partial h(\x)$ denotes a subgradient such that $ \E_{\varsigma}[\partial h(\x; \varsigma)] =  \partial h(\x)$. 
\item[(ii)]  there exists $G>0$ such that $\E[\|\partial g(\x; \xi)\|^2]\leq G^2$, $\E[\| \partial h(\x; \varsigma) \|^2\}]\leq G^2$ for $\x\in\text{dom}(r)$, and either $r = \delta_{\X}(\x)$ for a closed convex set $\X$ or $\|\partial r(\x)\|\leq G$ for $\x\in\text{dom}(r)$.
\end{itemize}
\end{ass}
{\bf Remark:} The first assumption is typically used in the analysis of stochastic gradient method when the involved function is smooth~\citep{DBLP:conf/icml/ZhaoZ15}, and the second assumption is typically used when the involved function is non-smooth~\citep{DBLP:conf/colt/DuchiSST10}. Note that the condition  $\partial r(\x)^{\top}(\x - \y)\geq 0, \forall \x, \y\in\text{dom}(r)$ is to capture the indicator function of a convex set. When $r(\x)$ is the indicator function of a convex set $\X$, we have $\text{dom}(r)= \X$  and $\partial r(\x)$ corresponds to the normal cone of $\X$, implying $\partial r(\x)^{\top}(\x - \y)\geq 0, \forall \x, \y\in\X$. 

Denote by $F^\gamma_{\x_1}(\x) = g(\x)  + r(\x) - \partial h(\x_1)^{\top}(\x - \x_1)  + \frac{\gamma}{2}\|\x - \x_1\|^2$. We present the SPG algorithm in Algorithm~\ref{alg:sgd} with two options to handle smooth and non-smooth $g$ separately. The constraint $\|\x - \x_1\|\leq 3G/\gamma$ at Step 5 is added to accommodate the proximal mapping of $r(\x)$ when $g(\x)$ is non-smooth. When using the subgradient of $r(\x)$ instead of the proximal mapping of $r(\x)$ in the update or  $r(\x)$ is the indicator function of a bounded convex set, the constraint $\|\x - \x_1\|\leq  3G/\gamma$ can be removed. 
\begin{algorithm}[t]
    \caption{{SPG}$(F^\gamma_{\x_1}, \x_1, T)$}\label{alg:sgd}
\begin{algorithmic}[1]
   \STATE Set step size $\eta_t$ according to Proposition~\ref{prop:sgd}, $\Omega=\{\x\in\text{dom}(r): \|\x - \x_1\|\leq 3G/\gamma\}$
    \FOR{$t=1,\ldots, T$}
    \STATE Compute  stochastic subgradients $\partial g(\x_t; \xi_t)$ and $\partial h(\x_1; \varsigma_t)$
        \STATE Option 1: $$\x_{t+1} =\arg\min_{\x} \x^{\top}(\partial g(\x_t; \xi_t) - \partial h(\x_1; \varsigma_t)) +r(\x) + \frac{\gamma}{2}\|\x - \x_1\|^2 + \frac{1}{2\eta_t}\|\x- \x_{t}\|^2$$
            \STATE Option 2: $$\x_{t+1} =\arg\min_{\x\in\Omega} \x^{\top}(\partial g(\x_t; \xi_t) - \partial h(\x_1; \varsigma_t) )  + r(\x) + \frac{\gamma}{2}\|\x - \x_1\|^2+ \frac{1}{2\eta_t}\|\x- \x_{t}\|^2$$
    \ENDFOR
    \STATE \textbf{Output}: $\widehat{\x}_T = \sum_{t=2}^{T+1} t\x_t/\sum_{t=2}^{T+1}t$  (Option 1) or  $\widehat{\x}_T = \sum_{t=1}^{T} t\x_t/\sum_{t=1}^{T}t$ (Option 2)

\end{algorithmic}
\end{algorithm}

\begin{prop}\label{prop:sgd} Suppose Assumption~\ref{ass:1}(i) hold, then by setting $\eta_t = 3/(\gamma(t+1))$ and $\gamma\geq 3L$,  Algorithm~\ref{alg:sgd} with Option 1 guarantees that 
\begin{align*}
\E[F^\gamma_{\x_1}(\xh_T) - F^\gamma_{\x_1}(\x_*)]\leq\frac{4\gamma\|\x_* - \x_1\|^2}{3T(T+3)} + \frac{6G^2}{(T+3)\gamma}.
\end{align*}
Suppose Assumption~\ref{ass:1}(ii) hold, then by setting $\eta_t = 4/(\gamma t)$,   Algorithm~\ref{alg:sgd} with Option 2 guarantees that 
\begin{align*}
\E\bigg[F^\gamma_{\x_1}(\xh_T) - F^\gamma_{\x_1}(\x_*)\bigg]\leq  \frac{\gamma\|\x_* - \x_1\|^2}{4T(T+1)} +  \frac{28G^2}{\gamma (T+1)},
\end{align*}
where $\x_* = \arg\min_{\x}F^\gamma_{\x_1}(\x)$. 
\end{prop}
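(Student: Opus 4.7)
\textbf{Proof proposal for Proposition~\ref{prop:sgd}.} Write $F^\gamma_{\x_1}(\x)=\phi(\x)+r(\x)+\tfrac{\gamma}{2}\|\x-\x_1\|^2$ with $\phi(\x):=g(\x)-\partial h(\x_1)^\top(\x-\x_1)$, and set $\g_t:=\partial g(\x_t;\xi_t)-\partial h(\x_1;\varsigma_t)$. Note that the prox-subproblem defining $\x_{t+1}$ is $(\gamma+1/\eta_t)$-strongly convex, so optimality yields the standard three-point lemma: for any comparator $\x$ (in $\Omega_t=\R^d$ under Option~1, in $\Omega$ under Option~2),
\begin{align*}
\langle\g_t,\x_{t+1}-\x\rangle + r(\x_{t+1})-r(\x) &+ \tfrac{\gamma}{2}\bigl(\|\x_{t+1}-\x_1\|^2-\|\x-\x_1\|^2\bigr)\\
&\leq \tfrac{1}{2\eta_t}\|\x-\x_t\|^2 - \tfrac{1/\eta_t+\gamma}{2}\|\x-\x_{t+1}\|^2 - \tfrac{1}{2\eta_t}\|\x_{t+1}-\x_t\|^2.
\end{align*}
This will be the backbone of both cases; the two options differ only in how the inner product $\langle\g_t,\x_{t+1}-\x\rangle$ is converted into a bound on $F^\gamma_{\x_1}(\x_{t+1})-F^\gamma_{\x_1}(\x)$.

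For Option~1 (smooth $g$), I would add the smoothness inequality $\phi(\x_{t+1})\leq \phi(\x_t)+\langle\nabla\phi(\x_t),\x_{t+1}-\x_t\rangle+\tfrac{L}{2}\|\x_{t+1}-\x_t\|^2$ together with the convexity inequality $\phi(\x_t)\leq\phi(\x)+\langle\nabla\phi(\x_t),\x_t-\x\rangle$. After absorption, this produces
\begin{align*}
F^\gamma_{\x_1}(\x_{t+1})-F^\gamma_{\x_1}(\x) &\leq \langle\nabla\phi(\x_t)-\g_t,\x_{t+1}-\x\rangle + \tfrac{L-1/\eta_t}{2}\|\x_{t+1}-\x_t\|^2\\
&\quad+ \tfrac{1}{2\eta_t}\|\x-\x_t\|^2 - \tfrac{1/\eta_t+\gamma}{2}\|\x-\x_{t+1}\|^2.
\end{align*}
Then I would introduce the ``ghost iterate'' $\widetilde{\x}_{t+1}$, defined exactly as $\x_{t+1}$ but with $\g_t$ replaced by $\nabla\phi(\x_t)$; splitting $\langle\nabla\phi(\x_t)-\g_t,\x_{t+1}-\x\rangle = \langle\nabla\phi(\x_t)-\g_t,\widetilde{\x}_{t+1}-\x\rangle + \langle\nabla\phi(\x_t)-\g_t,\x_{t+1}-\widetilde{\x}_{t+1}\rangle$, the first piece has zero conditional expectation at $\x=\x_*$ and the second is bounded by non-expansiveness of the prox by $\eta_t\|\g_t-\nabla\phi(\x_t)\|^2$, whose expectation is $\leq\eta_t G^2$. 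Under $\gamma\geq 3L$ and $\eta_t=3/(\gamma(t+1))$ we have $1/\eta_t-L\geq 0$, so the quadratic cross term is discarded, leaving a clean recursion with strongly-convex dissipation $\tfrac{\gamma(t+4)}{6}$ on $\|\x_*-\x_{t+1}\|^2$ and noise term $\tfrac{3G^2}{\gamma(t+1)}$. Multiplying by the weights $w_t=t+1$ and telescoping (using that $w_t(1/\eta_t)=w_{t-1}(1/\eta_{t-1}+\gamma)$ is what ties adjacent terms), then applying Jensen's inequality to $F^\gamma_{\x_1}$, gives the claimed $\tfrac{4\gamma\|\x_*-\x_1\|^2}{3T(T+3)}+\tfrac{6G^2}{(T+3)\gamma}$ bound.

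For Option~2 (non-smooth $g$), there is no smoothness to invoke, but $\x_*\in\Omega$ (this follows from first-order optimality of $F^\gamma_{\x_1}$ and the bound $\|\partial g\|,\|\partial h\|,\|\partial r\|\leq G$ on $\Omega\cap\text{dom}(r)$, giving $\gamma\|\x_*-\x_1\|\leq 3G$). I would instead use convexity of $\phi$ and $r$ directly: $\phi(\x_{t+1})-\phi(\x_*)\leq \langle\g_t,\x_{t+1}-\x_*\rangle+\langle\nabla\phi(\x_t)-\g_t,\x_{t+1}-\x_*\rangle$, where the subgradient $\partial\phi(\x_t)$ is used in place of $\nabla\phi$. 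Plugging into the three-point lemma, the term $\langle\nabla\phi(\x_t)-\g_t,\x_{t+1}-\x_t\rangle$ is bounded by $\tfrac{\eta_t}{2}\|\g_t-\nabla\phi(\x_t)\|^2+\tfrac{1}{2\eta_t}\|\x_{t+1}-\x_t\|^2$ via Young's inequality, absorbing the quadratic into the $-\tfrac{1}{2\eta_t}\|\x_{t+1}-\x_t\|^2$ already present. Taking expectation kills the remaining $\langle\nabla\phi(\x_t)-\g_t,\x_t-\x_*\rangle$ term and the variance is bounded by $4G^2$. With $\eta_t=4/(\gamma t)$, the resulting recursion telescopes with weights $w_t=t$ after multiplying through, yielding the stated rate.

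The main obstacle is the telescoping bookkeeping in Option~1: the step size, the strong-convexity modulus of the subproblem, and the averaging weights must all be synchronized so that the $\|\x_*-\x_t\|^2$ coefficients cancel exactly across iterations, producing the accelerated $1/T^2$ rate on the initial distance while retaining the $1/T$ rate on the noise. The ghost-iterate decomposition is the crucial device that prevents the variance term from inflating the quadratic progress term (which would otherwise force us to project or to assume bounded iterates, as is needed in Option~2).
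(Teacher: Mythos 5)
Your Option~1 argument is, in substance, the paper's own proof: the same split of $F^\gamma_{\x_1}$ into an $L$-smooth part $f(\x)=g(\x)-\partial h(\x_1)^\top(\x-\x_1)$ and a $\gamma$-strongly convex part $\hat r(\x)=r(\x)+\tfrac{\gamma}{2}\|\x-\x_1\|^2$, the same ghost-iterate decomposition of $\langle \partial f(\x_t)-\g_t,\x_{t+1}-\x_*\rangle$ (zero-mean piece plus a piece controlled by prox non-expansiveness, giving $\eta_t G^2$), and the same weights $w_{t+1}=t+1$ with the telescoping condition $\tfrac{w_{t+1}}{\eta_t}-\tfrac{w_t}{\eta_{t-1}}-\gamma w_t\le 0$. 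That part is fine.

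Option~2 has a genuine flaw as written. The inequality $\phi(\x_{t+1})-\phi(\x_*)\leq\langle\partial\phi(\x_t),\x_{t+1}-\x_*\rangle$ is false for a non-smooth convex $\phi$: a subgradient at $\x_t$ only gives you a lower bound on $\phi$ \emph{anchored at} $\x_t$, i.e.\ $\phi(\x_t)-\phi(\x_*)\le\langle\partial\phi(\x_t),\x_t-\x_*\rangle$; it says nothing about $\phi(\x_{t+1})$ from above (take $\phi(x)=|x|$, $\x_t=1$, $\x_{t+1}=-1$, $\x_*=0$: the claimed bound reads $1\le-1$). Without smoothness you cannot upgrade the anchor point, which is exactly why the paper's Option~2 bounds $f(\x_t)-f(\x_*)$ and why its output $\widehat\x_T$ averages $\x_1,\dots,\x_T$ rather than $\x_2,\dots,\x_{T+1}$. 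The cost of this repair is that the one-step bound then carries $f$ at index $t$ but $\hat r$ at index $t+1$; reconciling the indices is done in the paper by an Abel summation of $\sum_t w_t\,(r(\x_t)-r(\x_{t+1}))$, bounded through the Lipschitz continuity of $r$ on $\Omega$ and the diameter bound $\|\x_t-\x_{T+1}\|\le 6G/\gamma$ — this is where the extra $6G^2w_T/\gamma$ term, and hence a large share of the final constant $28G^2$, comes from. Your sketch omits this step entirely, so the "recursion telescopes with weights $w_t=t$" claim does not yet close. The rest of your Option~2 outline (showing $\x_*\in\Omega$ from first-order optimality, Young's inequality to absorb the cross term, second moment bounded by $4G^2$) matches the paper.
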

We present a proof the above Proposition in the Appendix, which mostly follows existing analysis of SPG or related algorithms. By applying the above results (e.g., the second result in Proposition~\ref{prop:sgd}) to the $k$-th stage, we have
\[
\E\bigg[F_k(\x_{k+1}) - F_k(\z_k)\bigg]\leq  \frac{\gamma\|\z_k - \x_k\|^2}{4T_k(T_k+1)} +  \frac{28G^2}{\gamma (T_k+1)},
\]
where $T_k$ denotes the number of iterations used by SPG for the $k$-th stage.   One might directly use the above result to argue that  the condition~(\ref{eqn:acc}) holds by assuming that $\|\x_k - \z_k\|$ is bounded, which is true in the non-smooth case due to the domain constraint $\x\in\Omega$ in the update. In the smooth case, the upper bound is not directly available for setting $T_k$ such that  the condition~(\ref{eqn:acc}) holds. Fortunately, when we apply the above result in the convergence analysis of Algorithm~\ref{alg:meta}, we can utilize the strong convexity of $F_k$ to cancel the term $O(\frac{\gamma\|\z_k - \x_k\|^2}{T_k(T_k+1)})$ by setting $T_k$ to be larger than a constant.

Let us summarize the convergence of Algorithm~\ref{alg:meta} when using SPG to  solve each subproblem. 
\begin{thm}\label{thm:sgd}
Suppose Assumption~\ref{ass:1} (i) holds and Algorithm~\ref{alg:sgd} is employed for solving $F_k$ with parameters given in Proposition~\ref{prop:sgd} and with $\gamma\geq 3L$ and  $T_k = 3Lk/\gamma+3$, and there exists $\Delta>0$ such that $\E[F(\x_k) - \min_{\x}F(\x)]\leq \Delta$ for all $k\in\{1,\ldots, K\}$, then with a total of $K$ stages Algorithm~\ref{alg:meta} guarantees 
\begin{align*}
 \E\bigg[\|G_\gamma(\x_\tau)\|^2\bigg]&\leq \frac{8\gamma\Delta(\alpha+1)}{K} + \frac{32G^2\gamma(\alpha+1)}{LK}.
\end{align*}
Similarly, Suppose Assumption~\ref{ass:1} (ii) holds and Algorithm~\ref{alg:sgd} is employed for solving $F_k$ with parameters given in Proposition~\ref{prop:sgd} and with $T_k = k/\gamma +1$, and there exists $\Delta>0$ such that $\E[F(\x_k) - \min_{\x}F(\x)]\leq \Delta$ for all $k\in\{1,\ldots, K\}$, then with a total of $K$ stages Algorithm~\ref{alg:meta} guarantees 
\begin{align*}
 \E\bigg[\|G_\gamma(\x_{\tau})\|^2\bigg]&\leq \frac{8\gamma\Delta(\alpha+1)}{K} + \frac{448G^2 \gamma (\alpha+1)}{K},
\end{align*}
where 
$\tau\in\{1,\ldots, K\}$ is sampled according to probabilities $p(\tau=k) = \frac{k^\alpha}{\sum_{k=1}^Kk^\alpha}$ with $\alpha\geq 1$. 
\end{thm}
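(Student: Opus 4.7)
The plan is to specialize the weighted-averaging analysis of Theorem~\ref{thm:3} to the case where Algorithm~\ref{alg:sgd} is used as the inner solver. The obstacle is that Proposition~\ref{prop:sgd} does not return a bound of the form $\E[F_k(\x_{k+1})-F_k(\z_k)]\le c/k$ required by condition~(\ref{eqn:acc}); instead its right-hand side still depends on $\|\z_k-\x_k\|^2$, the very quantity we wish to control. I would therefore not verify~(\ref{eqn:acc}) directly but rather feed the SPG guarantee back into the key one-step inequality that is derived inside the proof of Theorem~\ref{thm:3}, namely
\begin{equation*}
\E\bigl[\tfrac{\gamma}{2}\|\z_k-\x_k\|^2\bigr] \;\le\; \E\bigl[F(\x_k)-F(\x_{k+1})\bigr] + \E\bigl[F_k(\x_{k+1})-F_k(\z_k)\bigr].
\end{equation*}

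Proposition~\ref{prop:sgd} bounds the last term by $\tfrac{4\gamma\E\|\z_k-\x_k\|^2}{3T_k(T_k+3)}+\tfrac{6G^2}{\gamma(T_k+3)}$ in case (i) and by $\tfrac{\gamma\E\|\z_k-\x_k\|^2}{4T_k(T_k+1)}+\tfrac{28G^2}{\gamma(T_k+1)}$ in case (ii). In either case, the prescribed choice of $T_k$ (namely $T_k=3Lk/\gamma+3$ for case (i), $T_k=k/\gamma+1$ for case (ii)) makes the coefficient of $\E\|\z_k-\x_k\|^2$ on the right-hand side at most a small fraction of $\gamma/2$, so it can be absorbed into the left-hand side, leaving an inequality of the form
\begin{equation*}
\tfrac{\gamma}{C_1}\E\|\z_k-\x_k\|^2 \;\le\; \E[F(\x_k)-F(\x_{k+1})] + \tfrac{C_2 G^2}{\gamma\, T_k}
\end{equation*}
for explicit constants $C_1,C_2$, with $T_k$ of order $k/\gamma$ (with an additional $L$-dependent scaling in case (i)).

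From here the argument is parallel to the proof of Theorem~\ref{thm:3}: multiply by the weight $w_k=k^\alpha$, sum over $k=1,\ldots,K$, and apply Abel summation to the telescoping term $\sum_k w_k(F(\x_k)-F(\x_{k+1}))$. Under the hypothesis $\E[F(\x_k)-\min F]\le\Delta$ this sum is controlled by $w_K\Delta$, exactly as in Theorem~\ref{thm:3}. The noise contribution $\sum_k w_k\cdot\tfrac{C_2 G^2}{\gamma T_k}$ reduces, after substituting $T_k$, to a constant multiple of $\sum_{k=1}^{K}k^{\alpha-1}$ (divided by $L$ in case (i)); the elementary bound $\sum_{k=1}^K k^{\alpha-1}\le K^\alpha$ for $\alpha\ge 1$ makes this $O(K^\alpha)$. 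Dividing by $\sum_j w_j\ge K^{\alpha+1}/(\alpha+1)$ and multiplying by $\gamma^2$ to convert $\E\|\z_\tau-\x_\tau\|^2$ into $\E\|G_\gamma(\x_\tau)\|^2$ yields the two stated bounds, with the constants $8$ and $32/L$ (respectively $8$ and $448$) emerging from the absorption factor $C_1$ and the SPG-specific constants $C_2$.

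The main difficulty is the absorption step coupled with the precise calibration of $T_k$: one must pick $T_k$ large enough that the $\|\z_k-\x_k\|^2$ coefficient produced by SPG is strictly smaller than $\gamma/2$, yet grow only linearly in $k$ so that $G^2/(\gamma T_k)$ decays like $G^2/k$ after weighting, preserving the target $1/K$ rate. A subsidiary subtlety in case (ii) is that $\z_k$ is the unconstrained minimizer of $F_k$ while SPG Option~2 iterates over the truncated set $\Omega$; however the Option~2 bound in Proposition~\ref{prop:sgd} is already stated against the unconstrained minimizer, so no additional work is required there.
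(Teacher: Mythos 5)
Your proposal is correct and follows essentially the same route as the paper's proof: plug the SPG guarantee of Proposition~\ref{prop:sgd} into the per-stage inequality from the proof of Theorem~\ref{thm:3}, absorb the residual $\|\z_k-\x_k\|^2$ term using the calibration of $T_k$, and then run the same $w_k=k^\alpha$ weighted summation with Abel's trick and $\sum_k k^{\alpha-1}\le K^\alpha$. The only (harmless) deviation is that you absorb the $O(\gamma\|\z_k-\x_k\|^2/T_k^2)$ term directly into the strong-convexity term $\tfrac{\gamma}{2}\|\z_k-\x_k\|^2$ on the left, whereas the paper first discards that term and recovers $\|\z_k-\x_k\|^2$ from $\|\x_{k+1}-\x_k\|^2$ via Young's inequality plus a strong-convexity bound on $\|\x_{k+1}-\z_k\|^2$; your shortcut is valid and in fact yields slightly better constants than the stated $8$ and $32/L$ (resp.\ $448$).
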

{\bf Remark:} Let us consider the iteration complexity of using SPG for finding a solution that satisfies $\E\bigg[\|G_\gamma(\x_{\tau})\|^2\bigg]\leq \epsilon^2$. In both cases, we need a total number of stages $K = O(\gamma/\epsilon^2)$ and total iteration complexity  $\sum_{k=1}^KT_k = \sum_{k=1}^KO(k/\gamma +1 )= O(\gamma/\epsilon^4)$. 
One can also derive similar results for using the uniform sampling under Assumption~\ref{ass:0}, which are  worse by a logarithmic factor. 

\paragraph{AdaGrad.}
AdaGrad~\citep{duchi2011adaptive} is an important algorithm in the literature of stochastic optimization, which uses adaptive step size for each coordinate. It has potential benefit of speeding up the convergence when the cumulative growth of stochastic gradient is slow. Next, we show that AdaGrad can be leveraged to solve each convex majorant function and yield adaptive convergence for the original problem. Similar to previous analysis of AdaGrad~\citep{duchi2011adaptive,SadaGrad18}, we make the following assumption. 
\begin{ass}\label{ass:new}
      For any $\x\in\text{dom}(r)$,  there exists $G>0$ such that $\|\partial g(\x; \xi)\|_\infty\leq G$ and $\|\partial h(\x; \varsigma)\|_\infty\leq G$, either $\partial r(\x)^{\top}(\x - \y)\geq 0, \forall \x, \y\in\text{dom}(r)$ or $\|\partial r(\x)\|\leq G_r$ for $\x\in\text{dom}(\x)$.
\end{ass}
\begin{algorithm}[t]
    \caption{\textsc{AdaGrad}($F^\gamma_{\x_1}, \x_1, \eta$)} \label{alg:adagrad}
    \begin{algorithmic}[1]
    \STATE \textbf{Initialize:} $t=1$, $\g_{1:0}=[]$, $H_0\in\R^{d\times d}$, $\Omega=\{\x\in\text{dom}(r): \|\x -\x_1\|\leq \frac{2\sqrt{d}G + G_r}{\gamma}\}$
    \WHILE{{$T$ does not satisfy the condition in Proposition~\ref{lem:adagrad}}}
    \STATE Compute a stochastic subgradient $\g_t $ for $g(\x_t) - \partial h(\x_1)^{\top}\x_t$
    \STATE Update $g_{1:t} = [g_{1:t-1}, \g_t]$, $s_{t,i}  = \|g_{1:t,i}\|_2$
    \STATE Set $H_t = H_0 + \diag(\s_t)$ and $\psi_t(\x) = \frac{1}{2}(\x-\x_1)^{\top}H_t(\x-\x_1)$
    \STATE Let $\x_{t+1}=\arg\min\limits_{\x\in\Omega} \x^{\top}\left(\frac{1}{t}\sum_{\tau=1}^t\g_\tau\right)  + r(\x)+ \frac{\gamma}{2}\|\x - \x_1\|^2 + \frac{1}{t\eta}\psi_t(\x)$
    \ENDWHILE
    \STATE \textbf{Output}:  $\xh_T=\sum_{t=1}^{T}\x_t/T$
    \end{algorithmic}
    \end{algorithm}
The convergence analysis of using AdaGrad is build on the following proposition about the convergence AdaGrad for minimizing $F^\gamma_{\x}$, which is attributed to~\cite{SadaGrad18}.
\begin{prop}\label{lem:adagrad}
  Let $H_0=2G I$ with $2G\geq\max_t \|\g_t\|_\infty$,  and iteration number $T$ be the smallest integer satisfying  $T\geq M\max\{a(2G+ \max_i\|g_{1:T,i}\|),  \sum_{i=1}^d\|g_{1:T,i}\|/a, G_r\|\x_1 - \x_{T+1}\|/\eta\}$ for any $a>0$. 
    Algorithm~\ref{alg:adagrad} returns an averaged solution $\xh_T$ such that
    \begin{align}\label{eqn:Es}
        \E[ F^\gamma_{\x_1}(\xh_T)-F^\gamma_{\x_1}(\x_*)] \leq \frac{1}{2aM\eta}\|\x_1-\x_*\|^2 +\frac{(a+1)\eta}{M},
    \end{align}
    where $\x_*=\arg\min_{\x}F^\gamma_{\x_1}(\x)$, $g_{1:t}=(\g_1,\ldots, \g_t)$ and $g_{1:t,i}$ denotes the $i$-th row of $g_{1:t}$.
\end{prop}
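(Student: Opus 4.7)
The plan is to specialize the AdaGrad dual-averaging analysis of~\cite{SadaGrad18} to the composite objective $F^\gamma_{\x_1}$. By Assumption~\ref{ass:new}, each stochastic vector $\g_t$ is an unbiased subgradient of $g(\x) - \partial h(\x_1)^{\top}\x$ at $\x_t$ whose coordinates are bounded in absolute value by $2G$, and the strongly convex part $r(\x) + \frac{\gamma}{2}\|\x - \x_1\|^2$ is kept in closed form inside each dual-averaging update, so only the linear term is learned online against the diagonal adaptive preconditioner $H_t$.

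First I would invoke the standard AdaGrad dual-averaging regret decomposition together with the diagonal bound $\psi_T(\x_*) \leq \frac{1}{2}(2G + \max_i\|g_{1:T,i}\|)\|\x_*-\x_1\|^2$ and the classical AdaGrad inequality $\sum_{t=1}^T\|\g_t\|_{H_t^{-1}}^2 \leq 2\sum_{i=1}^d \|g_{1:T,i}\|$. This telescopes to a regret bound of the form $\frac{(2G + \max_i \|g_{1:T,i}\|)\|\x_*-\x_1\|^2}{2\eta} + 2\eta \sum_i \|g_{1:T,i}\| + R$, where $R \leq G_r\|\x_1 - \x_{T+1}\|$ captures the discrepancy between dual-averaging and proximal handling of $r$; this residual vanishes when $\partial r$ satisfies the normal-cone condition (the first alternative in Assumption~\ref{ass:new}). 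Taking expectations, applying Jensen's inequality to $\xh_T = \sum_t\x_t/T$ via convexity of $F^\gamma_{\x_1}$, and dividing by $T$ yield a data-dependent bound on $\E[F^\gamma_{\x_1}(\xh_T) - F^\gamma_{\x_1}(\x_*)]$ whose three terms scale as $\max_i\|g_{1:T,i}\|/T$, $\sum_i\|g_{1:T,i}\|/T$, and $\|\x_1-\x_{T+1}\|/T$ respectively.

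I would then substitute the three inequalities embedded in the stopping criterion. The clause $T \geq Ma(2G+\max_i\|g_{1:T,i}\|)$ converts the first term into $\frac{1}{2aM\eta}\|\x_*-\x_1\|^2$; the clause $T \geq M\sum_i\|g_{1:T,i}\|/a$ converts the second into $\frac{2a\eta}{M}$; and $T \geq MG_r\|\x_1-\x_{T+1}\|/\eta$ bounds the residual by $\frac{\eta}{M}$. After absorbing a constant factor into $a$, the three pieces sum to reproduce the announced bound $\frac{1}{2aM\eta}\|\x_1-\x_*\|^2 + \frac{(a+1)\eta}{M}$.

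The principal obstacle is the implicit nature of the stopping criterion: $\max_i\|g_{1:T,i}\|$, $\sum_i\|g_{1:T,i}\|$, and $\|\x_1-\x_{T+1}\|$ all depend on the entire trajectory up to time $T$. Existence of a finite $T$ meeting all three inequalities is guaranteed because, under the coordinate-wise bound $\|\g_t\|_\infty \leq 2G$, these quantities grow at most as $O(\sqrt{T})$ (and $\|\x_1-\x_{T+1}\|$ stays uniformly bounded thanks to the strong convexity of $F^\gamma_{\x_1}$), while the left-hand side $T$ grows linearly, so the smallest such $T$ is well-defined. A secondary subtlety is the clean handling of the $r$-residual under the two alternatives of Assumption~\ref{ass:new}, which determines whether the $G_r$ clause of the stopping rule is active or is replaced by the normal-cone argument that nullifies $R$ at the outset.
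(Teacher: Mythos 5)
Your proposal follows essentially the same route as the paper's proof: the dual-averaging AdaGrad regret decomposition with $\psi_T(\x_*)\leq\tfrac{1}{2}(2G+\max_i\|g_{1:T,i}\|)\|\x_*-\x_1\|^2$ and $\sum_t\|\g_t\|^2_{\psi^*_{t-1}}\leq 2\sum_i\|g_{1:T,i}\|$, the residual $G_r\|\x_1-\x_{T+1}\|$ from the $\hat r$ terms, Jensen on the average, and substitution of the three clauses of the stopping rule. The only point you treat too lightly is that, because $T$ is a data-dependent stopping time, showing $\E[\sum_t(\partial f(\x_t)-\g_t)^\top(\x_t-\x)/T]=0$ requires the optional-stopping argument the paper borrows from~\cite{SadaGrad18}, not a plain term-by-term expectation; otherwise the argument is sound up to the constant you acknowledge absorbing into $a$.
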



The convergence result of Algorithm~\ref{alg:meta} by using AdaGrad to solve each problem is described by following theorem.
\begin{thm} \label{thm:nadagrad}
    Suppose  Assumption~\ref{ass:new} hold and Algorithm~\ref{alg:adagrad} is employed for solving $F_k$ with  $\eta_{k} = c/ \sqrt{k}$, $T_k=\lceil M_k \max\{a(2G+ \max_i\|g^k_{1:T_k,i}\|),  \sum_{i=1}^d\|g^k_{1:T_k,i}\|/a, G_r\|\x_1^k - \x_{T_k+1}^k\|/\eta_k\}\rceil$ and $M_k\eta_k \geq  4/(a\gamma) $,  and there exists $\Delta>0$ such that $\E[F(\x_k) - \min_{\x}F(\x)]\leq \Delta$ for all $k\in\{1,\ldots, K\}$,  then with a total of $K$ stages Algorithm~\ref{alg:meta} guarantees  
\begin{align*}
    \E[\|G_\gamma(\x_\tau)\|^2]     \leq& \frac{8\gamma\Delta(\alpha+1)}{K} + \frac{4\gamma^2c^2a(a+1)(\alpha+1)}{K},
\end{align*}
    where $g^k_{1:t, i}$ denotes the cumulative stochastic gradient of the $i$-th coordinate at the $k$-th stage, and $\tau\in\{1,\ldots, K\}$ is sampled according to probabilities $p(\tau=k) = \frac{k^\alpha}{\sum_{k=1}^Kk^\alpha}$ with $\alpha\geq 1$. 
\end{thm}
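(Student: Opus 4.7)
The plan is to combine Proposition~\ref{lem:adagrad}, which controls AdaGrad on each $\gamma$-strongly convex subproblem $F_k$, with the stagewise telescoping argument developed in the proof of Theorem~\ref{thm:3}. The new difficulty, absent from Theorem~\ref{thm:3}, is that AdaGrad's per-stage accuracy bound contains a $\|\x_k-\z_k\|^2$ factor rather than a pure constant $c/k$, so the template of Theorem~\ref{thm:3} cannot be invoked as a black box; a small joint calibration of $\eta_k$ and $M_k$ is required to decouple the two scales.

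First I would apply Proposition~\ref{lem:adagrad} at stage $k$, identifying the $\x_1$ there with the current anchor $\x_k$ and the $\x_*$ there with $\z_k = P_\gamma(\x_k)$, the minimizer of $F_k$; the given choice of $T_k$ is exactly the premise of that proposition. This yields
\begin{equation*}
\E[F_k(\x_{k+1})-F_k(\z_k)] \leq \frac{1}{2aM_k\eta_k}\|\x_k-\z_k\|^2 + \frac{(a+1)\eta_k}{M_k}.
\end{equation*}
Under the choice $M_k\eta_k \geq 4/(a\gamma)$ the first coefficient is at most $\gamma/8$, and combining $1/M_k \leq a\gamma\eta_k/4$ with $\eta_k = c/\sqrt{k}$ bounds the second term by $a(a+1)\gamma c^2/(4k)$. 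Thus AdaGrad delivers stage-$k$ accuracy $\epsilon_k = \frac{\gamma}{8}\|\x_k-\z_k\|^2 + \frac{a(a+1)\gamma c^2}{4k}$.

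Next I would repeat the per-stage descent step used inside the proof of Theorem~\ref{thm:3}. Writing $F_k(\x) = f_k(\x) + \frac{\gamma}{2}\|\x-\x_k\|^2$, invoking $\gamma$-strong convexity $F_k(\x_k) \geq F_k(\z_k) + \frac{\gamma}{2}\|\x_k-\z_k\|^2$, and then using the convexity of $h$ so that $\partial h(\x_k)^\top(\x_{k+1}-\x_k) \leq h(\x_{k+1}) - h(\x_k)$, the $\|\x_k-\z_k\|^2$ piece of $\epsilon_k$ is absorbed into the strong-convexity slack and I arrive at
\begin{equation*}
\E\!\left[\frac{3\gamma}{8}\|\x_k-\z_k\|^2\right] \leq \E[F(\x_k) - F(\x_{k+1})] + \frac{a(a+1)\gamma c^2}{4k}.
\end{equation*}
Multiplying by $w_k = k^\alpha$ and summing over $k=1,\ldots,K$, the sum $\sum_k w_k(F(\x_k)-F(\x_{k+1}))$ is rearranged by the same Abel-type identity used in Theorem~\ref{thm:3} and bounded by $\Delta w_K$ via the hypothesis $\E[F(\x_k) - \min_\x F(\x)] \leq \Delta$. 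The residual noise sums as $\sum_{k=1}^K w_k/k = \sum_{k=1}^K k^{\alpha-1} \leq K^\alpha$ for $\alpha \geq 1$, while $\sum_{k=1}^K w_k \geq K^{\alpha+1}/(\alpha+1)$ by an integral comparison. Dividing by $\sum_k w_k$ rewrites the weighted left-hand side as $\E[\|\x_\tau-\z_\tau\|^2]$ under the distribution $p(\tau=k)\propto k^\alpha$, and multiplying by $\gamma^2$ (using $\|G_\gamma(\x_\tau)\| = \gamma\|\x_\tau-\z_\tau\|$) recovers the advertised bound on $\E[\|G_\gamma(\x_\tau)\|^2]$, up to the numerical constants shown in the statement.

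The main subtlety, and the place I expect to spend the most care, is the joint calibration $M_k\eta_k \geq 4/(a\gamma)$ with $\eta_k = c/\sqrt{k}$: this choice is what simultaneously (i) makes the initial-distance term in AdaGrad's bound strictly dominated by the $\gamma/2$ coming from strong convexity of $F_k$, so that the $\|\x_k-\z_k\|^2$ term cleanly cancels instead of propagating, and (ii) forces the residual noise to decay at rate $1/k$, which is precisely the rate that, after weighting by $k^\alpha$ and dividing by $\sum_k k^\alpha$, still yields the $(\alpha+1)/K$ scaling. Everything else reduces either to Proposition~\ref{lem:adagrad} on each subproblem or to the Abel-summation template already developed in the proof of Theorem~\ref{thm:3}; the data-dependent bound on $T_k$ in the statement is just a direct transcription of the premise of Proposition~\ref{lem:adagrad} at stage $k$.
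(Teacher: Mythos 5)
Your proposal is correct and follows essentially the same route as the paper: Proposition~\ref{lem:adagrad} on each subproblem, cancellation of the $\|\x_k-\z_k\|^2$ error against the strong-convexity slack of $F_k$, convexity of $h$ to telescope $F$, and the weighted Abel summation from Theorem~\ref{thm:3}. The only material difference is bookkeeping: you absorb the $\frac{\gamma}{8}\|\x_k-\z_k\|^2$ error directly into the $-\frac{\gamma}{2}\|\x_k-\z_k\|^2$ slack (arriving at a coefficient $\frac{3\gamma}{8}$ and in fact a slightly sharper constant), whereas the paper drops that combination to zero and recovers the distance term through a Young's-inequality split of $\|\x_{k+1}-\x_k\|^2$ plus a second use of strong convexity (arriving at $\frac{\gamma}{8}$); both deliver the stated bound.
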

{\bf Remark:} It is obvious that the total number of iterations $\sum_{k=1}^KT_k$ is adaptive to the data. Next, let us present more discussion on the iteration complexity. Note that $M_k = O(\sqrt{k})$. By the boundness of stochastic gradient $\|g^k_{1:T_k, i}\|\leq O(\sqrt{T_k})$, therefore $T_k$ in the order of $O(k)$ will satisfy the condition in Theorem~\ref{thm:nadagrad}. Thus in the worst case, the iteration complexity for finding $ \E[\|G_\gamma(\x_\tau)\|^2]\leq \epsilon^2$ is in the order of $\sum_{k=1}^K O(k)\leq O(1/\epsilon^4)$. We can show the potential advantage of adaptiveness similar to that in~\citep{chen18stagewise}. In particular, let us consider $r=\delta_{\X}$  and thus $G_r=0$ in the above result.  When the cumulative growth of stochastic gradient is slow, e.g., assuming $\|g^k_{1:T_k,i}\|\leq O({T_k}^{\beta})$ with $\beta<1/2$. Then $T_k = O(k^{1/(2(1-\beta))})$ will work, and then the total number of iterations $\sum_{k=1}^K T_k \leq K^{1+1/(2(1-\alpha))}\leq O(1/\epsilon^{2+1/(1-\alpha)})$,  which is better than $O(1/\epsilon^4)$. 

\paragraph{SVRG.} Next, we present SVRG for solving each subproblem when it has a  finite-sum form and $g$ is a smooth function. In particular, we consider the following problem:
\begin{align}\label{eqn:fs}
\min_{\x\in\R^d}  F(\x):=\frac{1}{n_1}\sum_{i=1}^{n_1} g_i(\x) + r(\x) - \frac{1}{n_2}\sum_{j=1}^{n_2}h_j(\x),
\end{align}
and make the following assumption. 
\begin{ass}\label{ass:4}
      Assume $g_i(\x)$ is $L$-smooth function.  
\end{ass}
It is notable that the smoothness of $h$ is not necessary for developing the SVRG algorithm since at each stage we linearize $h(\x)$. Each subproblem is of the following form: 
\begin{align*}
\min_{\x\in\R^d}  F^\gamma_{\x_1}(\x):=\frac{1}{n_1}\sum_{i=1}^{n_1} g_i(\x) + r(\x)  - h(\x_1)- \frac{1}{n_2}\sum_{j=1}^{n_2}\partial h_j(\x_1)^{\top}(\x - \x_1) + \frac{\gamma}{2}\|\x - \x_1\|^2.
\end{align*}

\begin{algorithm}[t]
\caption{SVRG($F^\gamma_{\x_1}, \x_1$, $T$, $S$)}\label{alg:SVRG}
\begin{algorithmic}[1]
\STATE \textbf{Input}:  $\x_1\in\text{dom}(r)$,  the number of inner initial iterations $T_1$, and the number of outer loops $S$.
\STATE $\bar\x^{(0)}=\x_1$
\FOR{$s=1,2,\ldots,S$}
\STATE $\bar\g_s=\nabla g(\bar\x^{(s-1)}) - \partial h(\x_1)$, $\x_0^{(s)} =\bar\x^{(s-1)}$
\FOR{$t=1,2,\ldots,T$}
\STATE Choose $i_t\in\{1,\ldots,n_1\}$ uniformly at random.
\STATE $\nabla_t^{(s)}=\nabla g_{i_t}(\x_{t-1}^{(s)})-\nabla g_{i_t}(\bar\x^{(s-1)})+\bar\g_s.$
\STATE
$\x_t^{(s)} = \arg\min_{\x} \langle \nabla_t^{(s)}, \x-\x_{t-1}^{(s)}\rangle+\frac{1}{2\eta}\|\x-\x_{t-1}^{(r)}\|_2^2 +r(\x) + \frac{\gamma}{2}\|\x - \x_1\|^2.$
\ENDFOR
\STATE $\bar\x^{(s)}=\frac{1}{T}\sum_{t=1}^{T}\x_{t}^{(s)}$
\ENDFOR
\STATE  \textbf{Output: } $\bar\x^{(S)}$
\end{algorithmic}
\end{algorithm}
We use the proximal SVRG proposed in~\citep{DBLP:journals/siamjo/Xiao014} to solve the above problem, which is presented in Algorithm~\ref{alg:SVRG}. Its convergence result for solving $F^\gamma_{\x_1}$ is given below. 
\begin{prop}\label{prop:svrg}
By setting $\eta<1/(4L)$ and $T$ is large enough such that $\rho = \frac{1}{\gamma\eta (1 - 4L\eta)T} + \frac{4L\eta(T+1)}{(1-4L\eta)T}<1$, then 
\begin{align*}
\E[F^\gamma_{\x_1}(\bar\x^{(S)}) - F^\gamma_{\x_1}(\x_*)]\leq \rho^S[F^\gamma_{\x_1}(\x_1) - F^\gamma_{\x_1}(\x_*)].
\end{align*}
In particular, if we set $\eta=0.05/L$,  $T \geq \max(2, 200L/\gamma)$, we have
\begin{align*}
\E[F^\gamma_{\x_1}(\bar\x^{(S)}) - F^\gamma_{\x_1}(\x_*)]\leq 0.5^S[F^\gamma_{\x_1}(\x_1) - F^\gamma_{\x_1}(\x_*)].
\end{align*}
\end{prop}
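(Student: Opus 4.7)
The plan is to recast the subproblem into the standard proximal-SVRG template of \cite{DBLP:journals/siamjo/Xiao014} and directly invoke their one-epoch contraction, then iterate. First, I would regroup the terms of $F^\gamma_{\x_1}$ into the form $\frac{1}{n_1}\sum_i \widetilde g_i(\x) + R(\x)$, where each $\widetilde g_i(\x) = g_i(\x) - \partial h(\x_1)^\top \x$ absorbs the linearization of $h$ (still $L$-smooth, since only a linear shift is added) and $R(\x) = r(\x) + \frac{\gamma}{2}\|\x - \x_1\|^2$ is $\gamma$-strongly convex with an easy proximal mapping (the quadratic merges with the $\frac{1}{2\eta}\|\x - \x_{t-1}^{(s)}\|^2$ term in Step 8 of Algorithm~\ref{alg:SVRG} after trivial completion of squares). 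With this identification, $\bar\g_s = \nabla g(\bar\x^{(s-1)}) - \partial h(\x_1) = \nabla \widetilde g(\bar\x^{(s-1)})$ and $\nabla_t^{(s)}$ is exactly the Prox-SVRG stochastic estimator of $\nabla \widetilde g(\x_{t-1}^{(s)})$, so Algorithm~\ref{alg:SVRG} is literally Prox-SVRG applied to $F^\gamma_{\x_1}$.

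Second, I would invoke the one-epoch analysis of Xiao--Zhang (their Theorem~1), which for a smooth-plus-strongly-convex composite objective yields
\[
\E\bigl[F^\gamma_{\x_1}(\bar\x^{(s)}) - F^\gamma_{\x_1}(\x_*)\bigr] \le \rho\,\E\bigl[F^\gamma_{\x_1}(\bar\x^{(s-1)}) - F^\gamma_{\x_1}(\x_*)\bigr]
\]
with the stated $\rho = \frac{1}{\gamma\eta(1 - 4L\eta)T} + \frac{4L\eta(T+1)}{(1-4L\eta)T}$, where the strong-convexity modulus is exactly $\gamma$ (inherited from $R$) and the smoothness modulus is $L$ (inherited from each $\widetilde g_i$). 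The two internal ingredients one needs are (i) the variance bound $\E\|\nabla_t^{(s)} - \nabla \widetilde g(\x_{t-1}^{(s)})\|^2 \le 4L\bigl(\widetilde g(\x_{t-1}^{(s)}) - \widetilde g(\x_*) + \widetilde g(\bar\x^{(s-1)}) - \widetilde g(\x_*)\bigr)$ coming from $L$-smoothness of each $\widetilde g_i$, and (ii) the standard composite-prox-gradient inequality against a $\gamma$-strongly convex $R$; summing the latter over $t=1,\ldots,T$ and plugging in (i) produces the per-epoch bound. Iterating the contraction $S$ times gives the first displayed inequality.

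Third, the specialization is a direct arithmetic check. With $\eta = 0.05/L$ we have $4L\eta = 0.2$ and $1 - 4L\eta = 0.8$, so
\[
\rho = \frac{L}{0.04\,\gamma\,T} + \frac{T+1}{4T}.
\]
For $T \ge 200L/\gamma$ the first term is at most $\tfrac{1}{8}$, and for $T \ge 2$ the second term is at most $\tfrac{3}{8}$; hence $\rho \le \tfrac{1}{2}$.

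The hard part will not be the SVRG mechanics (Xiao--Zhang is off-the-shelf) but the bookkeeping: verifying that linearizing $h$ at $\x_1$ preserves the $L$-smooth structure per index $i$, that the augmented regularizer $R$ retains the proximable form used in Step~8, and that the strong-convexity constant transferred to the contraction is indeed $\gamma$ rather than being diluted by the smooth part. Once this mapping is laid out explicitly, the rest is a direct invocation followed by the parameter substitution above.
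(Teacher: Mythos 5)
Your proposal is correct and matches what the paper does: Proposition~\ref{prop:svrg} is obtained by identifying Algorithm~\ref{alg:SVRG} with Prox-SVRG applied to the composite objective $\frac{1}{n_1}\sum_i\bigl(g_i(\x)-\partial h(\x_1)^{\top}\x\bigr) + \bigl(r(\x)+\tfrac{\gamma}{2}\|\x-\x_1\|^2\bigr)$ and invoking the contraction factor $\rho$ from Theorem~1 of \cite{DBLP:journals/siamjo/Xiao014} with smoothness $L$ and strong-convexity modulus $\gamma$. Your arithmetic for the specialization ($4L\eta=0.2$, first term $\le 1/8$ when $T\ge 200L/\gamma$, second term $\le 3/8$ when $T\ge 2$) is exactly the intended verification.
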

{\bf Remark:} The gradient complexity of SVRG is $(n + T)S$, where $n = n_1 + n_2$. 

\begin{thm} \label{thm:svrg}
    Suppose  Assumption~\ref{ass:0} and Assumption~\ref{ass:4} hold,  and Algorithm~\ref{alg:SVRG} is employed for solving $F_k$ with  $\eta_{k} = 0.05/L$, $T_k\geq \max(2, 200L/\gamma)$, $S_k =\lceil \log_2(k)\rceil$, then with a total of $K$ stages Algorithm~\ref{alg:meta} guarantees  
\begin{align*}
    \E[\|G_\gamma(\x_\tau)\|^2]     \leq& \frac{12\gamma\Delta(\alpha+1)}{K},
\end{align*}
    where $\tau\in\{1,\ldots, K\}$ is sampled according to probabilities $p(\tau=k) = \frac{k^\alpha}{\sum_{k=1}^Kk^\alpha}$ with $\alpha\geq 1$. 
\end{thm}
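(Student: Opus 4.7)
The plan is to reduce Theorem~\ref{thm:svrg} to Theorem~\ref{thm:3} by verifying its two hypotheses for the SVRG-based instantiation of Algorithm~\ref{alg:meta}. Concretely, I must show (i) that at stage $k$, SVRG produces $\x_{k+1}$ satisfying the accuracy condition~(\ref{eqn:acc}) with $c = O(\Delta)$, and (ii) that $\E[F(\x_k) - \min_{\x} F(\x)] \leq \Delta$ for every $k \in \{1,\ldots,K\}$. Note that Theorem~\ref{thm:svrg} only assumes Assumption~\ref{ass:0} (an initial gap), so (ii) cannot be stated as a hypothesis and must be derived from the SVRG contraction itself.

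For (i), I invoke Proposition~\ref{prop:svrg}. With $\eta_k = 0.05/L$ and $T_k \geq \max(2, 200L/\gamma)$, the contraction factor satisfies $\rho \leq 0.5$, so running $S_k = \lceil \log_2 k\rceil$ outer loops from the starting point $\x_k$ gives
\begin{align*}
\E\bigl[F^\gamma_{\x_k}(\x_{k+1}) - F^\gamma_{\x_k}(\z_k) \mid \x_k\bigr] \leq 0.5^{S_k}\,\bigl(F^\gamma_{\x_k}(\x_k) - F^\gamma_{\x_k}(\z_k)\bigr) \leq \tfrac{1}{k}\bigl(F(\x_k) - F^\gamma_{\x_k}(\z_k)\bigr),
\end{align*}
using the identity $F^\gamma_{\x_k}(\x_k) = F(\x_k)$. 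Because $h$ is convex we have $F^\gamma_{\x_k}(\x) \geq F(\x)$ for every $\x$, hence $F^\gamma_{\x_k}(\z_k) \geq F(\z_k) \geq \min_{\x} F(\x)$, so the parenthesized quantity is at most $F(\x_k) - \min_{\x} F(\x)$. Taking full expectation and combining with (ii) will yield (\ref{eqn:acc}) with $c$ a small multiple of $\Delta$.

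For (ii), I argue by induction that $\E[F(\x_k)] \leq F(\x_1)$. Using $F(\x_{k+1}) \leq F^\gamma_{\x_k}(\x_{k+1})$ together with the SVRG contraction,
\begin{align*}
\E\bigl[F(\x_{k+1}) \mid \x_k\bigr] \leq F^\gamma_{\x_k}(\z_k) + 0.5^{S_k}\bigl(F(\x_k) - F^\gamma_{\x_k}(\z_k)\bigr),
\end{align*}
which is a convex combination of $F^\gamma_{\x_k}(\z_k)$ and $F(\x_k)$. Since $\z_k$ minimizes $F^\gamma_{\x_k}$ we have $F^\gamma_{\x_k}(\z_k) \leq F^\gamma_{\x_k}(\x_k) = F(\x_k)$, so the right-hand side is dominated by $F(\x_k)$. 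Taking total expectation and iterating delivers $\E[F(\x_k)] \leq F(\x_1)$, and hence $\E[F(\x_k) - \min_{\x}F(\x)] \leq \Delta$ for all $k$. Feeding this back into the bound from step (i) gives (\ref{eqn:acc}) with a concrete constant $c$ proportional to $\Delta$, and an application of Theorem~\ref{thm:3} with that $c$ yields the stated $O(\gamma \Delta (\alpha+1)/K)$ bound on $\E[\|G_\gamma(\x_\tau)\|^2]$; routine bookkeeping of the constants (accounting for both the $\Delta$ from Assumption~\ref{ass:0} and the $c\cdot \Delta$ from the contraction) recovers the factor $12$.

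The main obstacle is step (ii): the monotonic-decrease argument is delicate precisely because SVRG's guarantee is in expectation. The convex-combination view is essential, because the naive recursion $\E[F(\x_{k+1})] \leq (1+\rho_k)\E[F(\x_k)]$ one might write from step (i) alone yields a multiplicative blowup of $\prod_k (1+1/k) = \Theta(K)$, which would be useless for invoking Theorem~\ref{thm:3}. The observation that the contraction target $F^\gamma_{\x_k}(\z_k)$ lies \emph{below} $F(\x_k)$, rather than above it, is what saves the day and turns the would-be expansion into a contraction.
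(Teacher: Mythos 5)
Your proposal is correct and matches the paper's proof in all essential respects: the paper likewise derives $\E[F(\x_{k+1})]\le \E[F(\x_k)]$ from the SVRG contraction of Proposition~\ref{prop:svrg} (using $F_k(\z_k)\le F_k(\x_k)=F(\x_k)$ and $F\le F_k$), concludes $\E[F(\x_k)-\min_{\x}F(\x)]\le\Delta$ for every $k$, and then feeds $0.5^{S_k}\le 1/k$ into the weighted-sampling telescoping argument. The only difference is packaging: the paper re-runs the Young's-inequality telescoping from the proof of Theorem~\ref{thm:sgd} inline to reach the constant $12$, whereas your direct reduction to Theorem~\ref{thm:3} with $c=\Delta$ is cleaner and in fact yields the slightly sharper bound $4\gamma\Delta(\alpha+1)/K$, which of course still implies the stated claim.
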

{\bf Remark:} For finding a solution such that $\E[\|G_\gamma(\x_\tau)\|^2]  $, the total number of stages $K= O(\gamma/\epsilon^2)$ and the total gradient complexity is $\widetilde O((n\gamma + L)/\epsilon^2)$. 

\subsection{Convergence results for finding a (nearly) $\epsilon$-critical point}\label{sec:stationary}
In this subsection, we will discuss the convergence results of the proposed algorithms for finding a nearly $\epsilon$-critical point. The key is to connect the convergence in terms of $\|G_\gamma(\x)\|$ to the convergence in terms of (sub)gradient. To this end, we present the following result. 
\begin{prop}\label{prop:5}
If  $g(\x)+r(\x)$ is differentiable and has $L_{g+r}$-H\"{o}lder continuous gradient, we have
\begin{align*}
\text{dist}(\partial h(\x), \nabla  g(\x) + \nabla r(\x)) &\leq \frac{L_{g+r}}{\gamma^\nu}\|G_\gamma(\x)\|^\nu +  \|G_\gamma(\x)\|.
\end{align*}
If $h(\x)$ is differentiable and has $L_h$-H\"{o}lder continuous gradient, we have
\begin{align*}
\text{dist}(\nabla h(\x_+), \partial  g(\x_+) + \partial r(\x_+)) &\leq \frac{L_h}{\gamma^\nu}\|G_\gamma(\x)\|^\nu +  \|G_\gamma(\x)\|.
\end{align*}
where $\x_+ = P_\gamma(\x)$ and $G_\gamma(\x) = \gamma(\x - \x_+)$. 
\end{prop}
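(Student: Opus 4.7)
The plan is to derive both inequalities from the first-order optimality condition for the proximal point $\x_+ = P_\gamma(\x)$, combined with the Hölder continuity assumption and the triangle inequality. Recall that $\x_+$ minimizes the strongly convex function $g(\y)+r(\y) - \partial h(\x)^\top(\y-\x) + \frac{\gamma}{2}\|\y-\x\|^2$, so there exist $\v\in\partial h(\x)$ and $\u\in\hat\partial(g+r)(\x_+)$ with
\begin{align*}
\u - \v + \gamma(\x_+ - \x) \;=\; 0,
\end{align*}
i.e., $\u = \v + G_\gamma(\x)$. This single identity is the hinge of the whole proof; everything else is bounding one side in terms of the other using the regularity of whichever function is differentiable.

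For part (i), $g+r$ is differentiable with $(L_{g+r},\nu)$-Hölder continuous gradient, so $\hat\partial(g+r)(\x_+)=\{\nabla g(\x_+)+\nabla r(\x_+)\}$, meaning $\u=\nabla(g+r)(\x_+)$. The chosen $\v\in\partial h(\x)$ then satisfies
\begin{align*}
\|\nabla(g+r)(\x) - \v\|
&\leq \|\nabla(g+r)(\x) - \nabla(g+r)(\x_+)\| + \|\nabla(g+r)(\x_+) - \v\| \\
&\leq L_{g+r}\|\x - \x_+\|^\nu + \|G_\gamma(\x)\|,
\end{align*}
and substituting $\|\x-\x_+\| = \|G_\gamma(\x)\|/\gamma$ yields the claimed bound on $\text{dist}(\partial h(\x),\nabla g(\x)+\nabla r(\x))$.

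For part (ii), the differentiability hypothesis moves to $h$, so $\partial h(\x)=\{\nabla h(\x)\}$ and the optimality relation reads $\u = \nabla h(\x) + G_\gamma(\x)$ for some $\u\in\hat\partial(g+r)(\x_+) \subset \partial g(\x_+)+\partial r(\x_+)$ (invoking the convex sum rule cited after \eqref{eqn:P1}). Applying the triangle inequality in the other direction,
\begin{align*}
\|\nabla h(\x_+) - \u\|
&\leq \|\nabla h(\x_+) - \nabla h(\x)\| + \|G_\gamma(\x)\| \\
&\leq L_h\|\x_+ - \x\|^\nu + \|G_\gamma(\x)\|,
\end{align*}
which gives the second displayed inequality after again rewriting $\|\x_+-\x\|$ in terms of $\|G_\gamma(\x)\|/\gamma$.

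I do not anticipate a serious obstacle: the only subtlety is being careful about where subdifferentials are evaluated and which term is a singleton. In particular, in case (i) the differentiability of $g+r$ at $\x_+$ forces $\u$ to be the unique gradient there, while in case (ii) the differentiability of $h$ forces $\v=\nabla h(\x)$; in both cases the Hölder estimate bounds exactly the difference between the gradient at $\x$ and at $\x_+$, leaving a residual $\|G_\gamma(\x)\|$ from the proximal coupling.
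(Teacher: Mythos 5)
Your proof is correct and follows essentially the same route as the paper: the first-order optimality condition for $P_\gamma(\x)$ gives $\u = \v + G_\gamma(\x)$, and the triangle inequality plus H\"{o}lder continuity of whichever function is differentiable yields each bound. If anything, your write-up is slightly more careful than the paper's, which writes an equality where the triangle inequality actually gives only an upper bound.
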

\begin{proof}From the proof of Proposition~\ref{prop:1}, we have
\begin{align*}
0\in \partial  g(\x_+)   + \partial r(\x_+)-\partial h(\x) + \gamma(\x_+ - \x),
\end{align*}
When $g(\x)+r(\x)$ is differentiable and has $L$-H\"{o}lder continuous gradient, there exists $\v\in\partial h(\x)$ such that 
\begin{align*}
\|\nabla  g(\x)  + \nabla r(\x)  - \v\| &=  \|\nabla g(\x_+) +\nabla r(\x_+) - \nabla g(\x) - \nabla r(\x)\| +  \gamma\|\x - \x^+\|\\
&\leq L_{g+r}\|\x - \x_+\|^\nu + \|G_\gamma(\x)\| =  \frac{L_{g+r}}{\gamma^\nu}\|G_\gamma(\x)\|^\nu +  \|G_\gamma(\x)\|.
   \end{align*}
Similarly, we can prove the case when $h$ is differentiable and had H\"{o}lder continuous gradient. 
\end{proof}
Next, we present a general result that can be used to derive the convergence rate for the proposed algorithms. 
\begin{thm}\label{thm:6}
Assume Algorithm~\ref{alg:meta}  returns  a solution $\x_\tau$ such that 
\begin{align*}
    \E[\|G_\gamma(\x_\tau)\|^2]     \leq& O\left(\frac{1}{K}\right)
\end{align*}
under appropriate conditions. Then if  $g(\x)+r(\x)$ is differentiable and has $L$-H\"{o}lder continuous gradient with $\nu\in(0,1]$, we have
\begin{align*}
\E[\text{dist}(\partial h(\x_\tau), \nabla  g(\x_\tau) + \nabla r(\x_\tau))] &\leq O\left(\frac{1}{K^{\nu/2}} + \frac{1}{K^{1/2}}\right).
\end{align*}
If $h(\x)$ is differentiable and has $L$-H\"{o}lder continuous gradient with $\nu\in(0,1]$, we have
\begin{align*}
\E[\|\x_\tau - \z_\tau\|]\leq O\left(\frac{1}{K^{1/2}}\right), \quad \E[\text{dist}(\nabla h(\z_\tau), \partial  g(\z_\tau) + \partial r(\z_\tau))] &\leq O\left(\frac{1}{K^{\nu/2}} + \frac{1}{K^{1/2}}\right).
\end{align*}
where $\z_\tau = P_\gamma(\x_\tau)$.
\end{thm}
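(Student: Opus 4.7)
The plan is to combine Proposition~\ref{prop:5} (which relates the distance measure $\text{dist}(\partial h(\x), \nabla g(\x) + \nabla r(\x))$ or its counterpart at $\z_\tau$ to powers of $\|G_\gamma(\x_\tau)\|$) with the assumed rate $\E[\|G_\gamma(\x_\tau)\|^2] \leq O(1/K)$, using Jensen's inequality to convert a second-moment bound into bounds on lower-order moments.

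First, I would invoke Proposition~\ref{prop:5} pointwise to obtain, under the first assumption that $g+r$ has $(L_{g+r},\nu)$-H\"older continuous gradient,
\begin{align*}
\text{dist}(\partial h(\x_\tau), \nabla g(\x_\tau) + \nabla r(\x_\tau)) \leq \frac{L_{g+r}}{\gamma^\nu}\|G_\gamma(\x_\tau)\|^\nu + \|G_\gamma(\x_\tau)\|,
\end{align*}
and analogously at $\z_\tau = P_\gamma(\x_\tau)$ under the second assumption. Taking expectations, the task reduces to controlling $\E[\|G_\gamma(\x_\tau)\|^\nu]$ and $\E[\|G_\gamma(\x_\tau)\|]$.

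Next, since the function $t \mapsto t^{\nu/2}$ is concave on $[0,\infty)$ for $\nu \in (0,1]$, Jensen's inequality yields
\begin{align*}
\E[\|G_\gamma(\x_\tau)\|^\nu] \leq \bigl(\E[\|G_\gamma(\x_\tau)\|^2]\bigr)^{\nu/2} \leq O\!\left(\frac{1}{K^{\nu/2}}\right),
\end{align*}
and similarly $\E[\|G_\gamma(\x_\tau)\|] \leq (\E[\|G_\gamma(\x_\tau)\|^2])^{1/2} \leq O(K^{-1/2})$. Plugging these into the expectation of the Proposition~\ref{prop:5} bound gives the first claim of the theorem, with the two terms $O(K^{-\nu/2})$ and $O(K^{-1/2})$ arising from the H\"older and the linear part respectively.

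For the second claim, the identity $\x_\tau - \z_\tau = G_\gamma(\x_\tau)/\gamma$ together with Jensen gives $\E[\|\x_\tau - \z_\tau\|] \leq O(K^{-1/2})$ directly. The second distance bound follows by applying Proposition~\ref{prop:5} at $\z_\tau$ with $h$ having H\"older continuous gradient, then using the same Jensen-based moment conversion. No real obstacle arises here; the only subtlety worth noting is that the two error terms combine additively, so when $\nu < 1$ the $K^{-\nu/2}$ term dominates asymptotically, and this is the rate that will later drive the complexity bounds in Section~\ref{sec:stationary}.
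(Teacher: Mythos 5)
Your proposal is correct and follows essentially the same route as the paper: invoke Proposition~\ref{prop:5}, then convert the second-moment bound on $\|G_\gamma(\x_\tau)\|$ into bounds on $\E[\|G_\gamma(\x_\tau)\|^\nu]$ and $\E[\|G_\gamma(\x_\tau)\|]$ via Jensen's inequality (the paper phrases this as convexity of $x^{2/\nu}$ applied to $X=\|G_\gamma(\x_\tau)\|^\nu$, which is the same estimate as your concavity of $t^{\nu/2}$). The additional observation that $\|\x_\tau-\z_\tau\|=\|G_\gamma(\x_\tau)\|/\gamma$ handles the remaining claim exactly as intended.
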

{\bf Remark:} In the above results, the value of $\gamma$ is set to be a constant. We can see that the convergence of  SPG and AdaGrad can be automatically  adaptive to the H\"{o}lder continuous of the involved functions without requiring the value of $L$ and $\nu$ for running the algorithm. Both algorithms have an iteration complexity (in the worst-case) of $O(1/\epsilon^{4/\nu})$ for finding a (nearly) $\epsilon$-critical point. When the problem has a finite-sum structure~(\ref{eqn:fs}) and $g(\x)$ is smooth, SVRG has a gradient complexity of $O(n/\epsilon^{2/\nu})$ for finding a (nearly) $\epsilon$-critical point. 

\subsection{Improved Convergence results when $\nu$ is known}\label{sec:DC:new:improve}
In this subsection, we present some improved convergence results when $\nu$ is known. The key idea is by using an increasing sequence of regularization parameters $\gamma$ that depend on $\nu$. 
\begin{lemma}\label{lem:DC:new1}
Suppose Assumption~\ref{ass:1} (i) holds and Algorithm~\ref{alg:sgd} is employed for solving $F_k$ with parameters given in Proposition~\ref{prop:sgd} and with $\gamma_k= 3Lk^{\frac{1-\nu}{ 1+ \nu}}$ and  $T_k = 3Lk/\gamma_k+3$, and there exists $\Delta>0$ such that $\E[F(\x_k) - \min_{\x}F(\x)]\leq \Delta$ for all $k\in\{1,\ldots, K\}$, then with a total of $K$ stages Algorithm~\ref{alg:meta} guarantees 
\begin{align*}
\E\bigg[\|\z_{\tau} - \x_\tau\|^2\bigg]&\leq \frac{8 \Delta (\alpha +1)}{3LK^{\frac{2}{1+\nu}}} + \frac{32G^2(\alpha +1) }{3K^{\frac{2}{1+\nu}}L^2},
\end{align*}
    where $\tau\in\{1,\ldots, K\}$ is sampled according to probabilities $p(\tau=k) = \frac{k^\alpha}{\sum_{k=1}^Kk^\alpha}$ with $\alpha\geq 1$. 
Similarly, Suppose Assumption~\ref{ass:1} (ii) holds and Algorithm~\ref{alg:sgd} is employed for solving $F_k$ with parameters given in Proposition~\ref{prop:sgd} and with $\gamma_k= ck^{\frac{1-\nu}{ 1+ \nu}}$ and  $T_k = k/\gamma_k +1$, and there exists $\Delta>0$ such that $\E[F(\x_k) - \min_{\x}F(\x)]\leq \Delta$ for all $k\in\{1,\ldots, K\}$, then with a total of $K$ stages Algorithm~\ref{alg:meta} guarantees 
\begin{align*}
 \E\bigg[\|\z_{\tau} - \x_\tau\|^2\bigg]&\leq \frac{8\Delta(\alpha+1)}{cK^{\frac{2}{1+\nu}}} + \frac{448G^2 (\alpha+1)}{cK^{\frac{2}{1+\nu}}},
\end{align*}
where 
$\tau\in\{1,\ldots, K\}$ is sampled according to probabilities $p(\tau=k) = \frac{k^\alpha}{\sum_{k=1}^Kk^\alpha}$ with $\alpha\geq 1$. 
\end{lemma}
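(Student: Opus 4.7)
}
The plan is to re-run the argument of Theorem~\ref{thm:3}, but with a \emph{stage-dependent} regularization parameter $\gamma_k$ and iteration budget $T_k$. For each stage $k$, Proposition~\ref{prop:sgd} applied to the convex majorant $F_k = F^{\gamma_k}_{\x_k}$ (whose unique minimizer is $\z_k = P_{\gamma_k}(\x_k)$, with $\x_k$ playing the role of the initial point $\x_1$) gives an upper bound of the form
\begin{equation*}
\E[F_k(\x_{k+1}) - F_k(\z_k)] \;\le\; \frac{C_1\,\gamma_k\|\z_k - \x_k\|^2}{T_k(T_k+3)} + \frac{C_2\,G^2}{\gamma_k(T_k+1)},
\end{equation*}
for absolute constants $C_1, C_2$ depending on the SPG option. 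The choice $T_k = 3Lk/\gamma_k + 3$ in case (i) (resp.\ $T_k = k/\gamma_k + 1$ in case (ii)) ensures that $(T_k+1)\gamma_k = \Theta(k)$ up to additive $\gamma_k$, so the second term is at most $O(G^2/(Lk))$ (resp.\ $O(G^2/k)$), i.e.\ it scales as $1/k$ \emph{independent of $\gamma_k$}. At the same time $T_k(T_k+3) \ge$ a constant, so the first term can be absorbed into the strong-convexity contribution $\frac{\gamma_k}{2}\|\x_k - \z_k\|^2$ coming from $F_k(\x_k) - F_k(\z_k)$.

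Carrying out the absorption exactly as in the proof of Theorem~\ref{thm:3} (using strong convexity of $F_k$ and the convexity of $h$ to turn the $f_k$-differences into an $F$-difference) yields the per-stage inequality
\begin{equation*}
\E\!\left[\tfrac{c_0\gamma_k}{2}\|\x_k - \z_k\|^2\right] \;\le\; \E[F(\x_k) - F(\x_{k+1})] + \epsilon_k,
\qquad \epsilon_k \le \frac{C\,G^2}{k \cdot (\text{constant})},
\end{equation*}
for a universal constant $c_0>0$. Dividing by $\gamma_k/2$ and multiplying by the weights $w_k = k^\alpha$, I then sum over $k=1,\dots,K$. Because $\gamma_k = c\,k^\beta$ with $\beta = (1-\nu)/(1+\nu)$ and $\alpha \ge 1 > \beta$, the sequence $\tilde w_k := w_k/\gamma_k = k^{\alpha-\beta}/c$ is increasing, so Abel summation (precisely the calculation used in Theorem~\ref{thm:3}) gives
\begin{equation*}
\sum_{k=1}^K \tilde w_k\,\E[F(\x_k) - F(\x_{k+1})] \;\le\; \Delta\,\tilde w_K \;=\; \tfrac{\Delta}{c}K^{\alpha-\beta},
\end{equation*}
while $\sum_k \tilde w_k\,\epsilon_k = O(G^2 K^{\alpha-\beta}/c)$ by comparison with an integral. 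Dividing both sides by $\sum_{k=1}^K w_k \ge K^{\alpha+1}/(\alpha+1)$ and using the identity
\begin{equation*}
1+\beta \;=\; 1 + \frac{1-\nu}{1+\nu} \;=\; \frac{2}{1+\nu},
\end{equation*}
produces the claimed rate $O\!\big((\alpha+1)/(c\,K^{2/(1+\nu)})\big)$. The two cases of the lemma differ only in the constants ($c = 3L$ vs.\ $c$ arbitrary) which are inherited from the corresponding cases of Proposition~\ref{prop:sgd}.

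The main obstacle I foresee is calibrating $T_k$ and $\gamma_k$ simultaneously. The choice must achieve three objectives at once: (a) the second-order-in-$T$ term from Proposition~\ref{prop:sgd} must be absorbable into $\tfrac{\gamma_k}{2}\|\x_k-\z_k\|^2$, so $T_k$ cannot be taken too small; (b) the residual error $\epsilon_k$ must decay as $1/k$ with a constant \emph{independent of $\gamma_k$}, which is what forces $T_k \propto k/\gamma_k$; and (c) the weight $\tilde w_k = w_k/\gamma_k$ must remain monotone so that the telescoping Abel argument applies, which is the reason for requiring $\alpha\ge 1$ together with $\beta<1$. Once these three conditions are aligned, the remainder is routine bookkeeping analogous to Theorem~\ref{thm:3}.
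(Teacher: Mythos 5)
Your proposal is correct and follows essentially the same route as the paper: apply the per-stage bound from the proof of Theorem~\ref{thm:sgd} with stage-dependent $\gamma_k$ (whose residual $O(G^2/k)$ is indeed independent of $\gamma_k$ by the choice $T_k\propto k/\gamma_k$), reweight by $w_k/\gamma_k=k^{\alpha-\beta}/c$, telescope via the Abel-summation argument of Theorem~\ref{thm:3} using monotonicity of these weights, and normalize by $\sum_k w_k$ with the identity $1+\beta=2/(1+\nu)$. No gaps; the remaining constant-tracking matches the paper's appendix computation.
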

\begin{cor}\label{cor:7}
Suppose  the same conditions as in Lemma~\ref{lem:DC:new1} hold.  If  $g(\x)+r(\x)$ is differentiable and has $L$-H\"{o}lder continuous gradient with $\nu\in(0,1]$, by setting $K=O(1/\epsilon^{\frac{1+\nu}{\nu}})$ we have
\begin{align*}
\E[\text{dist}(\partial h(\x_\tau), \nabla  g(\x_\tau) + \nabla r(\x_\tau))] &\leq O(\epsilon),
\end{align*}
and the total iteration complexity is $\sum_{k=1}^KT_k =O(1/\epsilon^{\frac{1+3\nu}{\nu}})$. 

If $h(\x)$ is differentiable and has $L$-H\"{o}lder continuous gradient with $\nu\in(0,1]$, by setting $K=O(1/\epsilon^{\frac{1+\nu}{\nu}})$ we have
\begin{align*}
\E[\|\x_\tau - \z_\tau\|]\leq O\left(\epsilon^{\frac{1}{\nu}}\right), \quad \E[\text{dist}(\nabla h(\z_\tau), \partial  g(\z_\tau) + \partial r(\z_\tau))] &\leq O(\epsilon), 
\end{align*}
where $\z_\tau = P_\gamma(\x_\tau)$. The total iteration complexity is $\sum_{k=1}^KT_k =O(1/\epsilon^{\frac{1+3\nu}{\nu}})$. 
\end{cor}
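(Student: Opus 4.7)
The plan is to combine the mean-square bound on $\|\x_\tau - \z_\tau\|$ provided by Lemma~\ref{lem:DC:new1} with the subgradient approximation result in Proposition~\ref{prop:5}, applied stage-by-stage with $\gamma = \gamma_k$. First I would pass from the second-moment bound to first-moment and $\nu$-moment bounds by Jensen/Cauchy--Schwarz: since $x\mapsto x^{\nu/2}$ is concave on $[0,\infty)$ for $\nu\in(0,1]$, Lemma~\ref{lem:DC:new1} immediately yields
\begin{align*}
\E[\|\x_\tau-\z_\tau\|] &\leq \sqrt{\E[\|\x_\tau-\z_\tau\|^2]} = O\!\left(K^{-1/(1+\nu)}\right),\\
\E[\|\x_\tau-\z_\tau\|^\nu] &\leq \left(\E[\|\x_\tau-\z_\tau\|^2]\right)^{\nu/2} = O\!\left(K^{-\nu/(1+\nu)}\right).
\end{align*}
Setting $K = O(1/\epsilon^{(1+\nu)/\nu})$ turns the first bound into $O(\epsilon^{1/\nu})$, which is the ``near'' condition claimed in the $h$-Hölder case.

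Next I would invoke Proposition~\ref{prop:5}. Observing that $\|G_{\gamma_k}(\x_k)\| = \gamma_k\|\x_k-\z_k\|$, the proposition gives, in the $(g+r)$-Hölder case,
\begin{align*}
\text{dist}(\partial h(\x_\tau),\, \nabla g(\x_\tau)+\nabla r(\x_\tau)) \leq L_{g+r}\|\x_\tau-\z_\tau\|^\nu + \gamma_\tau\|\x_\tau-\z_\tau\|,
\end{align*}
and an analogous inequality at $\z_\tau=P_{\gamma_\tau}(\x_\tau)$ for the $h$-Hölder case. Because $\gamma_\tau \leq \gamma_K = O(K^{(1-\nu)/(1+\nu)})$ deterministically, taking expectation and using the two moment bounds above gives
\begin{align*}
\E[\gamma_\tau\|\x_\tau-\z_\tau\|] \leq \gamma_K\cdot\E[\|\x_\tau-\z_\tau\|] = O\!\left(K^{(1-\nu)/(1+\nu)-1/(1+\nu)}\right) = O\!\left(K^{-\nu/(1+\nu)}\right),
\end{align*}
which matches the order of the Hölder term. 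Hence $\E[\text{dist}(\cdot)] = O(K^{-\nu/(1+\nu)}) = O(\epsilon)$ under the chosen $K$.

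To conclude, I would verify the total iteration count. Under Assumption~\ref{ass:1}(i), $T_k = 3Lk/\gamma_k+3 = k^{1-(1-\nu)/(1+\nu)}+O(1) = k^{2\nu/(1+\nu)}+O(1)$, so
\begin{align*}
\sum_{k=1}^K T_k = O\!\left(K^{1+2\nu/(1+\nu)}\right) = O\!\left(K^{(1+3\nu)/(1+\nu)}\right) = O\!\left(1/\epsilon^{(1+3\nu)/\nu}\right),
\end{align*}
and the Assumption~\ref{ass:1}(ii) case is identical (the constant in $T_k$ changes but the exponent does not).

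The only real subtlety is that $\gamma_\tau$ is itself random and grows with the stage index, so Proposition~\ref{prop:5} cannot be applied with a single fixed $\gamma$; one must carefully use the deterministic bound $\gamma_\tau\leq\gamma_K$ and check that the increase of $\gamma_K$ is exactly compensated by the faster decay of $\E[\|\x_\tau-\z_\tau\|]$. This compensation is precisely what dictates the schedule $\gamma_k \propto k^{(1-\nu)/(1+\nu)}$: it equalizes the two terms $L\|\x-\z\|^\nu$ and $\gamma\|\x-\z\|$ in Proposition~\ref{prop:5}, which is why the knowledge of $\nu$ enables the improvement over Theorem~\ref{thm:6}.
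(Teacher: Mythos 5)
Your proposal is correct and follows essentially the same route as the paper: apply the Proposition~\ref{prop:5} bound with the stage-dependent $\gamma_\tau$, control it deterministically by $\gamma_K = O(K^{(1-\nu)/(1+\nu)})$, convert the second-moment bound of Lemma~\ref{lem:DC:new1} into first- and $\nu$-th-moment bounds via concavity/Jensen, and balance the two resulting terms with the choice of $K$. Your explicit verification of the iteration count $\sum_k T_k$ and the remark on why the schedule $\gamma_k \propto k^{(1-\nu)/(1+\nu)}$ equalizes the two error terms are consistent with (and slightly more detailed than) the paper's argument.
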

{\bf Remark:} The iteration complexity $O(1/\epsilon^{\frac{1+3\nu}{\nu}})$ in the above Corollary is an improved one compared with $O(1/\epsilon^{4/\nu})$ derived from Theorem~\ref{thm:6} for SPG.
\begin{proof}
By the analysis of Proposition~\ref{prop:5}, if $g(\x)+r(\x)$ is differentiable and has $L$-H\"{o}lder continuous gradient with $\nu\in(0,1]$,  we have
\begin{align*}
\E[\text{dist}(\partial h(\x_\tau), \nabla  g(\x_\tau) + \nabla r(\x_\tau))] &\leq  \E[L_{g+r}\|\z_\tau - \x_\tau\|^\nu +  \gamma_\tau \|\z_\tau - \x_\tau\| ]\\
&\leq L_{g+r}\E[\|\z_\tau - \x_\tau\|^\nu] +  \gamma_K \E[\|\z_\tau - \x_\tau\|]\\
&\leq L_{g+r}(\E[\|\z_\tau - \x_\tau\|^2])^{\nu/2}+  \gamma_K (\E[\|\z_\tau - \x_\tau\|^2])^{1/2},
\end{align*}
where we use the concavity of $x^{s}$ for $s<1$ and $x\geq 0$.  By setting $K=O(1/\epsilon^{\frac{1+\nu}{\nu}})$ and $\gamma_K = O(K^{\frac{1-\nu}{ 1+ \nu}})$, and using the results in Lemma~\ref{lem:DC:new1}, we have
\begin{align*}
\E[\text{dist}(\partial h(\x_\tau), \nabla  g(\x_\tau) + \nabla r(\x_\tau))] &\leq  O(\epsilon).
\end{align*}
Similarly, if $h(\x)$ is differentiable and has $L$-H\"{o}lder continuous gradient with $\nu\in(0,1]$, by the analysis of Proposition~\ref{prop:5} we have
\begin{align*}
\E[\text{dist}(\nabla h(\z_\tau), \partial  g(\z_\tau) + \partial r(\z_\tau))] &\leq  \E[L_{h}\|\z_\tau - \x_\tau\|^\nu +  \gamma_\tau \|\z_\tau - \x_\tau\| ]\\
&\leq L_{h}\E[\|\z_\tau - \x_\tau\|^\nu] +  \gamma_K \E[\|\z_\tau - \x_\tau\|].
\end{align*}
We can finish the proof using similar analysis. 
\end{proof}

Next, we develop the results for AdaGrad by using the following lemma.
\begin{lemma}\label{lem:DC:new2}
    Suppose  Assumption~\ref{ass:new} hold and Algorithm~\ref{alg:adagrad} is employed for solving $F_k$ with $\gamma_k= k^{\frac{1-\nu}{ 1+ \nu}}$,  $\eta_{k} = c/ \sqrt{\gamma_k k}$, $T_k=\lceil M_k \max\{a(2G+ \max_i\|g^k_{1:T_k,i}\|),  \sum_{i=1}^d\|g^k_{1:T_k,i}\|/a, G_r\|\x_1^k - \x_{T_k+1}^k\|/\eta_k\}\rceil$ and $M_k\eta_k \geq  4/(a\gamma_k) $ with $c$ and $a$ being  constants,  and there exists $\Delta>0$ such that $\E[F(\x_k) - \min_{\x}F(\x)]\leq \Delta$ for all $k\in\{1,\ldots, K\}$,  then with a total of $K$ stages Algorithm~\ref{alg:meta} guarantees  
\begin{align*}
     \E\bigg[\|\z_{\tau} - \x_\tau\|^2\bigg]&\leq \frac{8\Delta (\alpha +1)}{K^{\frac{2}{ 1+ \nu}}} + \frac{4a(a+1)c^2(\alpha +1) }{K^{\frac{2}{ 1+ \nu}}}.
\end{align*}
    where $g^k_{1:t, i}$ denotes the cumulative stochastic gradient of the $i$-th coordinate at the $k$-th stage, and $\tau\in\{1,\ldots, K\}$ is sampled according to probabilities $p(\tau=k) = \frac{k^\alpha}{\sum_{k=1}^Kk^\alpha}$ with $\alpha\geq 1$. 
\end{lemma}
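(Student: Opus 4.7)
}
The plan is to mimic the proof of Theorem~\ref{thm:nadagrad} but with the $\gamma$--dependent quantities promoted to stage--dependent $\gamma_k = k^{(1-\nu)/(1+\nu)}$, and then to absorb the varying $\gamma_k$ by reweighting the telescoping argument used in Theorem~\ref{thm:3}. First, I would instantiate Proposition~\ref{lem:adagrad} on the $k$-th majorant $F_k = F^{\gamma_k}_{\x_k}$. With $\eta_k = c/\sqrt{\gamma_k k}$ and $M_k\eta_k \ge 4/(a\gamma_k)$, the two terms of the AdaGrad bound become, respectively, $\tfrac{1}{2aM_k\eta_k}\|\x_k-\z_k\|^2 \le \tfrac{\gamma_k}{8}\|\x_k-\z_k\|^2$ and $(a+1)\eta_k/M_k \le (a+1)\cdot a\gamma_k\eta_k^2/4 = a(a+1)c^2/(4k)$. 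This yields the per--stage accuracy
\begin{align*}
\E[F_k(\x_{k+1}) - F_k(\z_k)] \le \tfrac{\gamma_k}{8}\E[\|\x_k - \z_k\|^2] + \tfrac{a(a+1)c^2}{4k}.
\end{align*}

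Next, I would repeat the ``strong convexity $+$ convexity of $h$'' step from the proof of Theorem~\ref{thm:3}, but with $\gamma$ replaced by $\gamma_k$. Combining $F_k(\x_k)-F_k(\z_k)\ge \tfrac{\gamma_k}{2}\|\x_k-\z_k\|^2$ with the accuracy inequality above and the convexity of $h$, the same chain of estimates gives
\begin{align*}
\E\!\left[\tfrac{3\gamma_k}{8}\|\x_k - \z_k\|^2\right]\le \E[F(\x_k) - F(\x_{k+1})] + \tfrac{a(a+1)c^2}{4k}.
\end{align*}
Here the extra $\tfrac{\gamma_k}{8}\E\|\x_k-\z_k\|^2$ coming from AdaGrad is absorbed into the LHS, exactly as in the proof of Theorem~\ref{thm:nadagrad}.

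The main new ingredient is the choice of weights. Instead of $w_k=k^\alpha$, I would multiply both sides by $\widetilde w_k := k^\alpha/\gamma_k = k^{\alpha - (1-\nu)/(1+\nu)}$. For $\alpha\ge 1$ and $\nu\in(0,1]$ the exponent is positive, so $\widetilde w_k$ is nondecreasing in $k$, which is exactly what the Abel--summation step in the proof of Theorem~\ref{thm:3} requires. This choice cancels $\gamma_k$ on the LHS, leaving $\tfrac{3}{8}\,k^\alpha\,\E\|\x_k-\z_k\|^2$. Summing over $k=1,\ldots,K$ and reusing the Abel identity $\sum_k \widetilde w_k[F(\x_k)-F(\x_{k+1})]\le \Delta\,\widetilde w_K$ (which uses the hypothesis $\E[F(\x_k)-\min F]\le\Delta$), together with $\sum_k \widetilde w_k/k \le \widetilde w_K/\beta$ for $\beta:=\alpha-(1-\nu)/(1+\nu)>0$, one obtains
\begin{align*}
\tfrac{3}{8}\sum_{k=1}^K k^\alpha\,\E\|\x_k-\z_k\|^2 \le \Delta\,\widetilde w_K + \tfrac{a(a+1)c^2}{4\beta}\,\widetilde w_K.
\end{align*}

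Finally, dividing by $\sum_{k=1}^K k^\alpha \ge K^{\alpha+1}/(\alpha+1)$ converts the LHS into $\tfrac{3}{8}\,\E\|\x_\tau-\z_\tau\|^2$ under the stated sampling, and the exponent arithmetic
\begin{align*}
\alpha+1-\beta = 1 + \tfrac{1-\nu}{1+\nu} = \tfrac{2}{1+\nu}
\end{align*}
produces the advertised $K^{-2/(1+\nu)}$ rate. I expect the main obstacle to be bookkeeping: carefully verifying that the AdaGrad ``second term'' simplifies to $a(a+1)c^2/(4k)$ under the prescribed $\eta_k, M_k$ (this is where the specific choice $\eta_k=c/\sqrt{\gamma_k k}$ is designed so that $\gamma_k$ drops out), and checking the monotonicity of $\widetilde w_k$ over the whole range $\alpha\ge 1$, $\nu\in(0,1]$. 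Everything else follows the templates already established in Theorem~\ref{thm:3} and Theorem~\ref{thm:nadagrad}.
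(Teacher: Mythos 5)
Your proposal is correct and follows essentially the same route as the paper's proof: instantiate Proposition~\ref{lem:adagrad} at stage $k$ with the stage-dependent $\gamma_k,\eta_k,M_k$ so that the additive error becomes $O(c^2/k)$, then reweight the telescoping sum by $w_k/\gamma_k=k^{\alpha-(1-\nu)/(1+\nu)}$ and divide by $\sum_k k^\alpha$. The only (harmless) deviation is in the strong-convexity step — you keep the $-\tfrac{\gamma_k}{2}\|\x_k-\z_k\|^2$ term and discard $\tfrac{\gamma_k}{2}\|\x_{k+1}-\x_k\|^2$, whereas the paper discards the former and recovers it from the latter via Young's inequality and a second application of strong convexity — which only changes constants (your $3\gamma_k/8$ versus the paper's $\gamma_k/8$ on the left-hand side), and your explicit $1/\beta$ in bounding $\sum_k \widetilde w_k/k$ is if anything more careful than the paper's.
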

\begin{cor}\label{cor:8}
Suppose  the same conditions as in Lemma~\ref{lem:DC:new2} hold.  If  $g(\x)+r(\x)$ is differentiable and has $L$-H\"{o}lder continuous gradient with $\nu\in(0,1]$, by setting $K=O(1/\epsilon^{\frac{1+\nu}{\nu}})$ we have
\begin{align*}
\E[\text{dist}(\partial h(\x_\tau), \nabla  g(\x_\tau) + \nabla r(\x_\tau))] &\leq O(\epsilon),
\end{align*}
and the total iteration complexity is $\sum_{k=1}^KT_k =O(1/\epsilon^{\frac{1+3\nu}{\nu}})$. 

If $h(\x)$ is differentiable and has $L$-H\"{o}lder continuous gradient with $\nu\in(0,1]$, by setting $K=O(1/\epsilon^{\frac{1+\nu}{\nu}})$ we have
\begin{align*}
\E[\|\x_\tau - \z_\tau\|]\leq O\left(\epsilon^{\frac{1}{\nu}}\right), \quad \E[\text{dist}(\nabla h(\z_\tau), \partial  g(\z_\tau) + \partial r(\z_\tau))] &\leq O(\epsilon), 
\end{align*}
where $\z_\tau = P_\gamma(\x_\tau)$. The total iteration complexity is $\sum_{k=1}^KT_k =O(1/\epsilon^{\frac{1+3\nu}{\nu}})$. 
\end{cor}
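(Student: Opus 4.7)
The plan is to mirror the proof of Corollary~\ref{cor:7} line-by-line, invoking Lemma~\ref{lem:DC:new2} in place of Lemma~\ref{lem:DC:new1}. The two cases of Corollary~\ref{cor:8} differ only in which of $g+r$ or $h$ carries the H\"older continuous gradient, so both follow from one common reduction through Proposition~\ref{prop:5}; the only work specific to the AdaGrad setting is the iteration-complexity accounting, since $T_k$ is now defined implicitly by the stopping condition in Lemma~\ref{lem:DC:new2}.

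For the case where $g+r$ has $(L,\nu)$-H\"older continuous gradient, I would apply Proposition~\ref{prop:5} at $\x = \x_\tau$, use the identity $\|G_{\gamma_\tau}(\x_\tau)\| = \gamma_\tau\|\x_\tau - \z_\tau\|$ with $\gamma_\tau \leq \gamma_K$, take expectations, and apply Jensen's inequality (concavity of $t \mapsto t^{\nu/2}$ and $t \mapsto t^{1/2}$ on $t\geq 0$) to obtain
\[
\E[\text{dist}(\partial h(\x_\tau), \nabla g(\x_\tau)+\nabla r(\x_\tau))] \leq L_{g+r}\bigl(\E\|\x_\tau-\z_\tau\|^2\bigr)^{\nu/2} + \gamma_K\bigl(\E\|\x_\tau-\z_\tau\|^2\bigr)^{1/2}.
\]
Plugging $\E\|\x_\tau - \z_\tau\|^2 = O(K^{-2/(1+\nu)})$ from Lemma~\ref{lem:DC:new2} together with $\gamma_K = O(K^{(1-\nu)/(1+\nu)})$, both summands reduce to $O(K^{-\nu/(1+\nu)})$, since the second exponent simplifies as $(1-\nu)/(1+\nu) - 1/(1+\nu) = -\nu/(1+\nu)$. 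Requiring this to be $\leq \epsilon$ forces $K = O(1/\epsilon^{(1+\nu)/\nu})$. The case where $h$ carries the H\"older gradient is symmetric via the second inequality of Proposition~\ref{prop:5}, and yields the auxiliary estimate $\E\|\x_\tau - \z_\tau\| \leq (\E\|\x_\tau - \z_\tau\|^2)^{1/2} = O(K^{-1/(1+\nu)}) = O(\epsilon^{1/\nu})$ as a free by-product.

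The only genuinely new computation is the total gradient complexity $\sum_{k=1}^K T_k$. The hypothesis $M_k\eta_k \geq 4/(a\gamma_k)$ combined with $\eta_k = c/\sqrt{\gamma_k k}$ forces $M_k = \Theta(\sqrt{k/\gamma_k})$, and with $\gamma_k = k^{(1-\nu)/(1+\nu)}$ this is $M_k = \Theta(k^{\nu/(1+\nu)})$. The implicit condition on $T_k$ in Lemma~\ref{lem:DC:new2} contains $T_k$ on both sides through the cumulative gradient $\|g^k_{1:T_k,i}\|$; using the worst-case bound $\|g^k_{1:T_k,i}\| = O(\sqrt{T_k})$ as in the remark following Theorem~\ref{thm:nadagrad}, together with a $O(1)$ bound on the $G_r\|\x_1^k - \x_{T_k+1}^k\|/\eta_k$ term supplied by the constraint set $\Omega$ in Algorithm~\ref{alg:adagrad}, reduces the stopping condition to $T_k \geq c'M_k\sqrt{T_k}$, hence $T_k = O(M_k^2) = O(k^{2\nu/(1+\nu)})$. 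Summing gives $\sum_{k=1}^K T_k = O(K^{(1+3\nu)/(1+\nu)})$, and substituting $K = O(1/\epsilon^{(1+\nu)/\nu})$ yields $O(1/\epsilon^{(1+3\nu)/\nu})$ as claimed. The main obstacle I anticipate is precisely this bookkeeping for the implicit AdaGrad stopping rule: if one wanted to preserve the adaptive behaviour highlighted after Theorem~\ref{thm:nadagrad}, one would instead keep $\|g^k_{1:T_k,i}\|$ symbolic, but since Corollary~\ref{cor:8} asserts only the worst-case rate, the crude $\sqrt{T_k}$ bound suffices and everything else is a direct transcription of the proof of Corollary~\ref{cor:7}.
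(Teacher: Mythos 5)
Your proposal is correct and follows exactly the route the paper intends: the paper omits this proof, stating it is "similar to that of Corollary~\ref{cor:7}," and your argument is precisely that transcription with Lemma~\ref{lem:DC:new2} substituted for Lemma~\ref{lem:DC:new1}, plus the $T_k$ accounting that the paper itself supplies in the remark following the corollary. One tiny slip: the term $G_r\|\x_1^k - \x_{T_k+1}^k\|/\eta_k$ is not $O(1)$ but $O(\sqrt{k/\gamma_k}) = O(M_k)$ (since $\|\x_1^k-\x_{T_k+1}^k\| = O(1/\gamma_k)$ and $1/\eta_k = \sqrt{\gamma_k k}/c$), which still yields $T_k = O(M_k^2) = O(k^{2\nu/(1+\nu)})$ and hence does not affect the final complexity.
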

{\bf Remark:} The worst-case iteration complexity can be derived as following. According to the setting $M_k = O(\sqrt{k/\gamma_k})  = O(k^{\frac{v}{1+v}})$. Since $ \max\{a(2G+ \max_i\|g^k_{1:T_k,i}\|),  \sum_{i=1}^d\|g^k_{1:T_k,i}\|/a=O(\sqrt{T_k})$ and $ G_r\|\x_1^k - \x_{T_k+1}^k\|/\eta_k =O(\sqrt{k/\gamma_k})  $, we have that $T_k = O(k^{\frac{2v}{1+v}})$. Then $\sum_k T_k \leq O(1/\epsilon^{\frac{1+3\nu}{\nu}})$. Nevertheless, the convergence of SSDC-AdaGrad is adaptive to the data.

\begin{lemma}\label{lem:DC:new3}
Suppose  Assumption~\ref{ass:0} and Assumption~\ref{ass:4} hold,  and Algorithm~\ref{alg:SVRG} is employed for solving $F_k$ with $\gamma_k= ck^{\frac{1-\nu}{ 1+ \nu}}$ with a constant $c>0$,  $\eta_{k} = 0.05/L$, $T_k\geq \max(2, 200L/\gamma_k)$, $S_k =\lceil \log_2(k)\rceil$, then with a total of $K$ stages Algorithm~\ref{alg:meta} guarantees  
\begin{align*}
 \E\bigg[\|\z_{\tau} - \x_\tau\|^2\bigg]&\leq \frac{12\Delta (\alpha +1)}{cK^{\frac{2}{ 1+ \nu}}}.
 \end{align*}
where $\tau\in\{1,\ldots, K\}$ is sampled according to probabilities $p(\tau=k) = \frac{k^\alpha}{\sum_{k=1}^Kk^\alpha}$ with $\alpha\geq 1$.  
\end{lemma}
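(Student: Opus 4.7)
The plan is to adapt the proof of Theorem~\ref{thm:3} to the stage-varying regularization $\gamma_k=ck^{(1-\nu)/(1+\nu)}$ by separating the telescoping weights from the sampling weights. First, I would invoke Proposition~\ref{prop:svrg}: with $\eta_k=0.05/L$ and $T_k\geq\max(2,200L/\gamma_k)$, SVRG guarantees the geometric contraction $\E[F_k(\x_{k+1})-F_k(\z_k)\mid\x_k]\leq 0.5^{S_k}[F_k(\x_k)-F_k(\z_k)]$. Choosing $S_k=\lceil\log_2 k\rceil$ gives $0.5^{S_k}\leq 1/k$, and the majorization $F\leq F^{\gamma_k}_{\x_k}$ with $F^{\gamma_k}_{\x_k}(\x_k)=F(\x_k)$ implies $F_k(\x_k)-F_k(\z_k)\leq F(\x_k)-F(\z_k)\leq F(\x_k)-\min_\x F(\x)$, so the per-stage accuracy satisfies $\E[F_k(\x_{k+1})-F_k(\z_k)]\leq \E[F(\x_k)-\min_\x F(\x)]/k$.

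Next I would promote this to $\E[\epsilon_k]\leq \Delta/k$ by a monotonicity-in-expectation argument: since $F^{\gamma_k}_{\x_k}$ is strongly convex and one SVRG run never increases its objective in expectation, $\E[F^{\gamma_k}_{\x_k}(\x_{k+1})\mid\x_k]\leq F^{\gamma_k}_{\x_k}(\x_k)=F(\x_k)$; combined with $F(\x_{k+1})\leq F^{\gamma_k}_{\x_k}(\x_{k+1})$ and the tower rule this yields $\E[F(\x_{k+1})]\leq \E[F(\x_k)]$, so Assumption~\ref{ass:0} propagates to $\E[F(\x_k)]-\min_\x F(\x)\leq\Delta$ for every $k$. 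With $\E[\epsilon_k]\leq \Delta/k$ in hand, the chain of inequalities~(\ref{eqn:c1})--(\ref{eqn:c2}) in the proof of Theorem~\ref{thm:3} uses only the strong convexity of $F_k$ (with modulus $\gamma_k$) and the convexity of $h$, so it carries over verbatim and yields the one-step bound
\begin{align*}
\E\!\left[\tfrac{\gamma_k}{2}\|\z_k-\x_k\|^2\right]\leq \E[F(\x_k)-F(\x_{k+1})]+\Delta/k.
\end{align*}

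The crucial twist is the choice of telescoping weights. Multiplying directly by the sampling weights $k^\alpha$ would leave a spurious factor $\gamma_k$ on the left, so instead I would multiply by $w_k=k^\alpha/\gamma_k=k^\beta/c$ with $\beta=\alpha-(1-\nu)/(1+\nu)$, which satisfies $w_k\gamma_k=k^\alpha$ and, because $\alpha\geq 1$ and $\nu\in(0,1]$, is non-decreasing with $\beta\geq 2\nu/(1+\nu)>0$. Summing over $k=1,\ldots,K$ produces $\tfrac{1}{2}\sum_k k^\alpha\E[\|\z_k-\x_k\|^2]$ on the left, while the Abel-summation argument from the proof of Theorem~\ref{thm:3} gives $\E[\sum_k w_k(F(\x_k)-F(\x_{k+1}))]\leq w_K\Delta=K^\beta\Delta/c$, and an elementary integral estimate bounds the residual $\Delta\sum_k w_k/k=(\Delta/c)\sum_k k^{\beta-1}$ by $O(K^\beta\Delta/c)$.

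Finally, dividing by $\sum_{j=1}^K j^\alpha\geq K^{\alpha+1}/(\alpha+1)$ converts the weighted sum into the expectation under the prescribed sampling $p(\tau=k)=k^\alpha/\sum_j j^\alpha$, and the exponent in the denominator simplifies to $(\alpha+1)-\beta=1+(1-\nu)/(1+\nu)=2/(1+\nu)$, giving the claimed rate up to the numerical constant $12$. The main obstacle is this decoupling between sampling weights and telescoping weights forced by the varying $\gamma_k$; once the choice $w_k=k^\alpha/\gamma_k$ is made, the rest is bookkeeping, the only subtle point being to keep $\beta$ bounded away from zero so that $\sum_k k^{\beta-1}$ remains $O(K^\beta)$.
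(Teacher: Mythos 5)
Your proposal is correct and follows essentially the same route as the paper: invoke Proposition~\ref{prop:svrg} with $S_k=\lceil\log_2 k\rceil$ to get per-stage error $\leq\Delta/k$ via the monotonicity of $\E[F(\x_k)]$, then multiply the one-step inequality by $k^\alpha/\gamma_k$ so the sampling weights $k^\alpha$ sit on the left while the non-decreasing weights $k^\alpha/\gamma_k$ telescope, and divide by $\sum_k k^\alpha\geq K^{\alpha+1}/(\alpha+1)$. The only cosmetic difference is that the paper reaches the one-step bound through a Young's-inequality step (yielding $\gamma_k/4$ and a factor $2$ on the error, hence the constant $12$) rather than your direct $\gamma_k/2$ chain, and you are in fact more careful than the paper about the residual sum $\sum_k k^{\beta-1}$ when $\beta<1$.
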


\begin{cor}\label{cor:9}
Suppose  the same conditions as in Lemma~\ref{lem:DC:new3} hold.  If  $g(\x)+r(\x)$ is differentiable and has $L$-H\"{o}lder continuous gradient with $\nu\in(0,1]$, by setting $K=O(1/\epsilon^{\frac{1+\nu}{\nu}})$ we have
\begin{align*}
\E[\text{dist}(\partial h(\x_\tau), \nabla  g(\x_\tau) + \nabla r(\x_\tau))] &\leq O(\epsilon),
\end{align*}
and the total gradient complexity is $O(n/\epsilon^{\frac{1+\nu}{\nu}})$. 

If $h(\x)$ is differentiable and has $L$-H\"{o}lder continuous gradient with $\nu\in(0,1]$, by setting $K=O(1/\epsilon^{\frac{1+\nu}{\nu}})$ we have
\begin{align*}
\E[\|\x_\tau - \z_\tau\|]\leq O\left(\epsilon^{\frac{1}{\nu}}\right), \quad \E[\text{dist}(\nabla h(\z_\tau), \partial  g(\z_\tau) + \partial r(\z_\tau))] &\leq O(\epsilon), 
\end{align*}
where $\z_\tau = P_\gamma(\x_\tau)$. The total gradient complexity is $O(n/\epsilon^{\frac{1+\nu}{\nu}})$. 
\end{cor}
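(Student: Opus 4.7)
The plan is to follow essentially the same template that was used in the proof of Corollary~\ref{cor:7}, only substituting the SVRG-specific mean-square bound on $\|\z_\tau-\x_\tau\|^2$ that is furnished by Lemma~\ref{lem:DC:new3}, and then tallying the SVRG gradient complexity across the stages.

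First I would invoke Proposition~\ref{prop:5} with the varying parameter $\gamma_\tau$: for the case when $g+r$ has $(L_{g+r},\nu)$-H\"older continuous gradient, there exists $\v\in\partial h(\x_\tau)$ such that
\begin{align*}
\|\nabla g(\x_\tau)+\nabla r(\x_\tau) - \v\| \leq L_{g+r}\|\x_\tau-\z_\tau\|^\nu + \gamma_\tau\|\x_\tau-\z_\tau\|,
\end{align*}
and symmetrically for the case when $h$ has H\"older continuous gradient (evaluated at $\z_\tau$). Taking expectations and applying Jensen's inequality together with concavity of $s\mapsto s^{\nu/2}$ for $\nu\in(0,1]$ and $s\geq 0$ gives
\begin{align*}
\E[\text{dist}(\partial h(\x_\tau),\nabla g(\x_\tau)+\nabla r(\x_\tau))] \leq L_{g+r}(\E[\|\x_\tau-\z_\tau\|^2])^{\nu/2} + \gamma_K(\E[\|\x_\tau-\z_\tau\|^2])^{1/2},
\end{align*}
where I have used $\gamma_\tau\leq\gamma_K$ along the sampled stages.

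Next I would substitute the bound $\E[\|\z_\tau-\x_\tau\|^2]\leq O(1/K^{2/(1+\nu)})$ from Lemma~\ref{lem:DC:new3} and the choice $\gamma_K=O(K^{(1-\nu)/(1+\nu)})$. The first term contributes $O(K^{-\nu/(1+\nu)})$ and the second term contributes $\gamma_K\cdot O(K^{-1/(1+\nu)})=O(K^{-\nu/(1+\nu)})$; both are of the same order. Setting $K=O(1/\epsilon^{(1+\nu)/\nu})$ then yields $O(\epsilon)$, as required. For the case where $h$ has H\"older continuous gradient, the same arithmetic applied at $\z_\tau$ produces the desired bound on $\E[\text{dist}(\nabla h(\z_\tau),\partial g(\z_\tau)+\partial r(\z_\tau))]$, while $\E[\|\x_\tau-\z_\tau\|]\leq(\E[\|\x_\tau-\z_\tau\|^2])^{1/2}\leq O(K^{-1/(1+\nu)})=O(\epsilon^{1/\nu})$.

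Finally I would compute the total gradient complexity. By the remark following Proposition~\ref{prop:svrg}, the per-stage gradient complexity of Algorithm~\ref{alg:SVRG} is $(n+T_k)S_k$. With $T_k\geq\max(2,200L/\gamma_k)$ and $\gamma_k=ck^{(1-\nu)/(1+\nu)}$ the quantity $T_k$ is bounded by $O(L/\gamma_k)+O(1)=O(L)+O(1)$ uniformly in $k$ (and is even decreasing for $\nu<1$), while $S_k=\lceil\log_2 k\rceil=\widetilde O(1)$. Hence
\begin{align*}
\sum_{k=1}^K (n+T_k)S_k \leq \widetilde O\bigl((n+L)K\bigr) = \widetilde O\bigl(n/\epsilon^{(1+\nu)/\nu}\bigr),
\end{align*}
absorbing the $L$ and the logarithmic factor into $\widetilde O$ and using $K=O(1/\epsilon^{(1+\nu)/\nu})$.

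The main obstacle is not really a technical one, since every ingredient has already been assembled; it is rather bookkeeping, namely choosing $K$ and $\gamma_K$ so that the two terms $L_{g+r}(\E\|\cdot\|^2)^{\nu/2}$ and $\gamma_K(\E\|\cdot\|^2)^{1/2}$ balance at order $\epsilon$ simultaneously with the side-bound $\E\|\x_\tau-\z_\tau\|=O(\epsilon^{1/\nu})$. The scaling $\gamma_k\propto k^{(1-\nu)/(1+\nu)}$ dictated in Lemma~\ref{lem:DC:new3} is precisely what is needed to make these exponents match, and the fact that this scaling also renders $T_k$ essentially $O(1)$ is exactly what allows the gradient complexity to improve from $\widetilde O(n/\epsilon^{2/\nu})$ in Theorem~\ref{thm:6} down to $\widetilde O(n/\epsilon^{(1+\nu)/\nu})$.
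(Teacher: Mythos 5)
Your proposal is correct and follows exactly the route the paper intends: the paper omits the proof of Corollary~\ref{cor:9}, stating it is similar to that of Corollary~\ref{cor:7}, and your argument is precisely that template with the bound from Lemma~\ref{lem:DC:new3} substituted and the SVRG per-stage cost $(n+T_k)S_k$ summed over $K=O(1/\epsilon^{(1+\nu)/\nu})$ stages. The balancing of the two terms via $\gamma_K=O(K^{(1-\nu)/(1+\nu)})$ and the observation that $T_k$ stays $O(L)$ under the increasing $\gamma_k$ schedule are both as in the paper.
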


The proofs of Corollary~\ref{cor:8} and \ref{cor:9}  are similar to that of Corollary~\ref{cor:7} and hence are omitted. The proofs of Lemma~\ref{lem:DC:new2} and Lemma~\ref{lem:DC:new3} are presented in the Appendix.

\section{Tackling Non-Smooth Non-Convex Regularization}\label{sec:nsncr}
In this section, we consider the more general class of problems where $r(\x)$ is  non-smooth and non-convex that is not necessarily a DC function (e.g., $\ell_0$ norm). Even if $r(\x)$ is a DC function such that both components in its DC decomposition $r(\x) = r_1(\x) - r_2(\x)$ are non-differentiable functions without H\"{o}lder continuous gradients (e.g., $\ell_{1-2}$ regularization, capped $\ell_1$ norm), the theories presented in this section are useful to derive non-asymptotic convergence results in terms of finding an $\epsilon$-critical point. Please note that in this case the results presented in section~\ref{sec:stationary} are not applicable. 
In particular, we consider the following class of problems:
\begin{align}\label{eqn:Pf}
\min_{\x\in\R^d} \underbrace{g(\x) - h(\x)}\limits_{f(\x)}+ r(\x),
\end{align}
where $g(\x)$ and $h(\x)$ are real-valued lower-semicontinuous convex functions, $g$ has a Lipchitz continuous gradient, and $r(\x)$ is a proper  non-smooth and non-convex lower-semicontinuous function.  Both $g$ and $h$ can be stochastic functions as in~(\ref{eqn:PS}).  We assume  $r(\x)$ is simple such that its proximal mapping exisits and  can be efficiently computed, i.e., 
\begin{align*}
\text{prox}_{\mu r}(\x)=\arg\min_{\y\in\R^d}\frac{1}{2\mu}\|\y - \x\|^2 + r(\y).
\end{align*}
The problem is challenging due to the presence of  non-smooth non-convex functions $r$. To tackle this function,  we introduce the Moreau envelope of $r$:
\begin{align*}
r_{\mu}(\x) = \min_{\y\in\R^d}\frac{1}{2\mu}\|\y - \x\|^2 + r(\y).
\end{align*}
where $\mu>0$. A nice property of the Moreau envelope function is that it can be written as a DC function: 
\begin{align*}
r_{\mu}(\x)= \frac{1}{2\mu}\|\x\|^2  - \underbrace{\max_{\y\in\R^d}\frac{1}{\mu}\y^{\top}\x - \frac{1}{2\mu}\|\y\|^2 - r(\y)}\limits_{R_\mu(\x)},
\end{align*}
where $R_\mu(\x)$ is a convex function because it is the max of convex functions of $\x$. We also have several nice properties about the Moreau envelope function that will be useful for our analysis.  
\begin{lemma}\label{lem:prox}
\begin{align}
&\frac{1}{\mu}\text{prox}_{\mu r}(\x)\subseteq \partial R_\mu(\x)\\
&\forall\v\in \text{prox}_{\mu r}(\x),~\frac{1}{\mu}(\x  - \v)\subseteq \hat \partial r(\v),
\end{align}
where $\hat\partial$ denotes the Fr\'{e}chet subdifferentiale. 
\end{lemma}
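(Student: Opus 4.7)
The proof splits naturally into the two inclusions, and the main observation that ties them together is that $\text{prox}_{\mu r}(\x)$ is precisely the argmax defining $R_\mu$. By completing the square, $\frac{1}{2\mu}\|\y-\x\|^2 + r(\y) = \frac{1}{2\mu}\|\x\|^2 - \bigl(\frac{1}{\mu}\y^\top\x - \frac{1}{2\mu}\|\y\|^2 - r(\y)\bigr)$, so minimizing the left-hand side over $\y$ is the same as maximizing $\phi(\x,\y) := \frac{1}{\mu}\y^\top\x - \frac{1}{2\mu}\|\y\|^2 - r(\y)$ over $\y$. Hence $\text{prox}_{\mu r}(\x) = \arg\max_{\y}\phi(\x,\y)$ and $R_\mu(\x) = \max_{\y}\phi(\x,\y)$.

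For the first inclusion, I would use the affine dependence of $\phi(\cdot,\y)$ on $\x$. Fix $\v\in\text{prox}_{\mu r}(\x)$ and an arbitrary $\x'\in\R^d$. Since $\v$ is a maximizer at $\x$ and $\phi(\x',\v)$ is just another feasible value in the sup defining $R_\mu(\x')$, one gets
\begin{align*}
R_\mu(\x') \;\geq\; \phi(\x',\v) \;=\; \phi(\x,\v) + \tfrac{1}{\mu}\v^\top(\x'-\x) \;=\; R_\mu(\x) + \tfrac{1}{\mu}\v^\top(\x'-\x).
\end{align*}
This is exactly the (convex) subgradient inequality certifying $\frac{1}{\mu}\v\in\partial R_\mu(\x)$. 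Varying $\v$ over $\text{prox}_{\mu r}(\x)$ yields $\frac{1}{\mu}\text{prox}_{\mu r}(\x)\subseteq \partial R_\mu(\x)$.

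For the second inclusion, I would appeal to the Fr\'echet first-order optimality condition at the minimizer $\v$ of $q(\y):=\frac{1}{2\mu}\|\y-\x\|^2 + r(\y)$. Because $\frac{1}{2\mu}\|\cdot-\x\|^2$ is $C^1$, the standard sum rule for the Fr\'echet subdifferential (addition of a smooth term) gives $\hat\partial q(\v) = \frac{1}{\mu}(\v-\x) + \hat\partial r(\v)$. The generalized Fermat rule for any local minimizer $\v$ of $q$ yields $0\in\hat\partial q(\v)$, so $0\in\frac{1}{\mu}(\v-\x) + \hat\partial r(\v)$, which rearranges to $\frac{1}{\mu}(\x-\v)\in\hat\partial r(\v)$, as required.

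The only delicate point is that $r$ is non-convex, so one must use the Fr\'echet subdifferential throughout in part two and must invoke the correct sum rule (smooth plus arbitrary lower-semicontinuous), rather than the convex calculus. This, however, is a well-known fact (see Rockafellar--Wets, Exercise 8.8), and once invoked, both inclusions follow essentially by direct verification; I do not anticipate any serious obstacle beyond being careful about which subdifferential is in play.
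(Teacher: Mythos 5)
Your proof is correct and follows essentially the same route as the paper's: the first inclusion via the observation that $\text{prox}_{\mu r}(\x)$ attains the supremum defining $R_\mu(\x)$ together with the affine dependence of the inner function on $\x$ (yielding the convex subgradient inequality), and the second via Fermat's rule combined with the Fr\'{e}chet sum rule for a smooth plus lower-semicontinuous function from Rockafellar--Wets. No gaps.
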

The proof of the first fact can be found in~\citep{Liu2018}[Eqn. 7], and the second fact follows~\citep{rockafellar-1970a}[Theorem 10.1]. Given the Moreau envelope of $r$, the key idea is to solve the following DC problem:
\begin{align}\label{eqn:New}
\min_{\x\in\R^d} g(\x) - h(\x)+  \frac{1}{2\mu}\|\x\|^2 - R_{\mu}(\x) = \underbrace{g(\x)  +  \frac{1}{2\mu}\|\x\|^2}\limits_{\hat g(\x)} - \underbrace{(h(\x) + R_{\mu}(\x))}\limits_{\hat h(\x)}.
\end{align}
By carefully controlling the value of $\mu$ and combining the results presented in previous section, we are able to derive non-asymptotic convergence results for the original problem. It is worth mentioning that using  the Moreau envelope of $r$ and its DC decomposition for handling non-smooth non-convex function is first proposed in~\citep{Liu2018}. However, their algorithms are deterministic and convergence results are only asymptotic. To formally state our non-asymptototic convergence results, we make the following assumptions. 
\begin{ass}\label{ass:last}
 Assume one of the following conditions holds: 
\begin{itemize}
\item[(i)]  $r$ is Lipchitz continuous.
\item[(ii)] $r$ is lower bounded and finite-valued over $\R^d$.
\item[(ii)] $f(\x) +  r_{\mu}(\x)$ is level bounded for a small $\mu<1$, and $r$ is finite-valued on a compact set, and lower bounded over $\R^d$. 
\end{itemize}
\end{ass}
{\bf Remark:} The above assumptions on $r$ capture many interesting non-convex non-smooth regularizers. For example,  $\ell_{1-2}$ regularization and capped $\ell_1$ norm satisfy Assumption~\ref{ass:last} (i). The $\ell_0$ norm satisfies Assumption~\ref{ass:last} (ii). A coersive function $r$ usually satisfies  Assumption~\ref{ass:last} (iii), e.g., $\ell_p$ norm $r(\x) = \sum_{i=1}^d|x|^p$ for $p\in(0,1)$. 


When employing the presented algorithms in last section to solve the problem~(\ref{eqn:New}), we let $r =   \frac{1}{2\mu}\|\x\|^2$, $g=g$ and $h= \hat h$. It is also notable that the new component $ R_{\mu}(\x)$ in $\hat h$ is deterministic, whose subgradient can be computed according to Lemma~\ref{lem:prox}. Thus the condition in Assumption~\ref{ass:1} (i) is sufficient for running SPG (option 1), and the condition in Assumption~\ref{ass:4} is sufficient for running SVRG.

\subsection{Basic Results with constant $\gamma$}
In this subsection, we present the basic results by using a constant regularization parameter $\gamma$. In next subsection, we present improved complexities by using an increasing sequence of regularization parameters. It will exhibit how an increasing sequence of $\gamma$ reduces the complexities.

We first present the results under the smoothness condition $h$, i.e., the following condition hold. 
\begin{ass}\label{ass:ghsm}
 Assume $h$ is $L$-smooth. 
\end{ass}
\begin{thm}\label{thm:ncns:reg}
Regarding the problem~(\ref{eqn:Pf}),  we have the following results:
\begin{itemize} 
\item [a.]
If Assumption~\ref{ass:ghsm}, Assumption~\ref{ass:last} (i) and Assumption~\ref{ass:1} (i) hold, then we can use Algorithm~\ref{alg:meta} with a constant $\gamma$ and with Algorithm~\ref{alg:sgd} (option 1)  to solve~(\ref{eqn:New}) with $\mu = \epsilon$, which returns a solution $\x_{\tau}$ after $K=O(1/\epsilon^4)$ stages satisfying 
\begin{align*}
\E[\|\x_\tau - \w_{\tau}\|]\leq O(\epsilon), \quad \E[\text{dist}(\nabla h(\w_\tau), \nabla g(\w_{\tau}) + \hat \partial r(\w_{\tau}))]\leq O(\epsilon),
\end{align*}
where $\w_\tau = \text{prox}_{\mu r}(\x_{\tau})$. 
\item [b.] If Assumption~\ref{ass:ghsm}, Assumption~\ref{ass:last} (ii) and Assumption~\ref{ass:1} (i) hold, then we can use Algorithm~\ref{alg:meta}  with a constant $\gamma$ and with Algorithm~\ref{alg:sgd} (option 1)  to solve~(\ref{eqn:New}) with $\mu = \epsilon^2$, which returns a solution $\x_{\tau}$ after $K=O(1/\epsilon^6)$ stages satisfying 
\begin{align*}
\E[\|\x_\tau - \w_{\tau}\|]\leq O(\epsilon), \quad \E[\text{dist}(\nabla h(\w_\tau), \nabla g(\w_{\tau}) + \hat \partial r(\w_{\tau}))]\leq O(\epsilon),
\end{align*}
where $\w_\tau = \text{prox}_{\mu r}(\x_{\tau})$. 
\item[c.]
If $g$ and $h$ have a finite-sum form, Assumption~\ref{ass:ghsm} and Assumption~\ref{ass:4} hold,  then we can use Algorithm~\ref{alg:meta}  with a constant $\gamma$ and with Algorithm~\ref{alg:SVRG}  to solve~(\ref{eqn:New}). We can  set  $\mu = \epsilon$ if Assumption~\ref{ass:last} (i) holds or $\mu=\epsilon^2$ if Assumption~\ref{ass:last} (ii) or (iii) holds.  The algorithm will return a solution $\x_{\tau}$ after $K=O(1/\epsilon^4)$ (corresponding to Assumption~\ref{ass:last} (i)) or $K=O(1/\epsilon^6)$ (corresponding to Assumption~\ref{ass:last} (ii) or (iii)) stages satisfying 
\begin{align*}
\E[\|\x_\tau - \w_{\tau}\|]\leq O(\epsilon), \quad \E[\text{dist}(\nabla h(\w_\tau), \nabla g(\w_{\tau}) + \hat \partial r(\w_{\tau}))]\leq O(\epsilon),
\end{align*}
where $\w_\tau = \text{prox}_{\mu r}(\x_{\tau})$. 
\end{itemize}
\end{thm}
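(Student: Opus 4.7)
The strategy is to reduce~(\ref{eqn:Pf}) to the DC problem~(\ref{eqn:New}) and then reuse Section~\ref{sec:DC}. Following the paragraph preceding the theorem, I invoke Algorithm~\ref{alg:meta} with the original $g$ as the smooth convex component, $\hat h(\x)=h(\x)+R_\mu(\x)$ as the subtracted convex component, and the quadratic $\tfrac{1}{2\mu}\|\x\|^2$ as the proximal regularizer $r$; this way the smoothness constant entering Theorems~\ref{thm:sgd}/\ref{thm:svrg} is the $L$ of $g$ and a \emph{constant} $\gamma\geq 3L$ is admissible (the subproblem prox is closed form because of the quadratic). As the subgradient of $\hat h$ used by the algorithm I take $\nabla h(\x_k)+\tfrac{1}{\mu}\w_k$ with $\w_k=\text{prox}_{\mu r}(\x_k)$, which is a valid element of $\partial\hat h(\x_k)$ by Lemma~\ref{lem:prox}. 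With this setup, Theorem~\ref{thm:sgd} (for (a),(b)) and Theorem~\ref{thm:svrg} (for (c)) deliver $\E[\|G_\gamma(\x_\tau)\|^2]=O(1/K)$ with constants independent of $\mu$.

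I then translate the resulting DC-criticality back to the original problem. First-order optimality of $\x_+:=P_\gamma(\x_\tau)$ with the specific subgradient choice above gives
\[
\nabla g(\x_+)+\tfrac{1}{\mu}\x_+ +\gamma(\x_+-\x_\tau)=\nabla h(\x_\tau)+\tfrac{1}{\mu}\w_\tau.
\]
By Lemma~\ref{lem:prox}, $\tfrac{1}{\mu}(\x_\tau-\w_\tau)\in\hat\partial r(\w_\tau)$, so $\nabla g(\w_\tau)+\tfrac{1}{\mu}(\x_\tau-\w_\tau)\in\nabla g(\w_\tau)+\hat\partial r(\w_\tau)$. Subtracting this from $\nabla h(\w_\tau)$, inserting the identity above, and using Lipschitz smoothness of $g,h$ to move gradient evaluations from $\x_\tau,\x_+$ to $\w_\tau$, I obtain
\[
\text{dist}(\nabla h(\w_\tau),\nabla g(\w_\tau)+\hat\partial r(\w_\tau))\leq (L_g+L_h)\|\x_\tau-\w_\tau\|+\tfrac{L_g+1/\mu+\gamma}{\gamma}\|G_\gamma(\x_\tau)\|.
\]
Because $\gamma$ is held constant while $1/\mu$ grows, the coefficient of $\|G_\gamma\|$ is $\Theta(1/\mu)$.

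The remaining work is to bound $\|\x_\tau-\w_\tau\|$ case by case and tune $\mu,K$. Under~(i), Lemma~\ref{lem:prox} together with Lipschitz continuity of $r$ yields $\|\x_\tau-\w_\tau\|\leq\mu L_r$; choosing $\mu=\Theta(\epsilon)$ makes the first term $O(\epsilon)$ and forces $\|G_\gamma(\x_\tau)\|=O(\mu\epsilon)=O(\epsilon^2)$, i.e.\ $\E[\|G_\gamma\|^2]=O(\epsilon^4)$, which needs $K=O(1/\epsilon^4)$. Under~(ii), the defining inequality $r(\w_\tau)+\tfrac{1}{2\mu}\|\x_\tau-\w_\tau\|^2\leq r(\x_\tau)$ of the proximal map combined with lower boundedness and finite-valuedness of $r$ gives $\|\x_\tau-\w_\tau\|=O(\sqrt\mu)$; choosing $\mu=\Theta(\epsilon^2)$ forces $\|G_\gamma\|=O(\epsilon^3)$ and hence $K=O(1/\epsilon^6)$. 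Case~(iii) is handled by the same $O(\sqrt\mu)$ estimate once one notes that level boundedness of $f+r_\mu$ confines iterates (and their proximal images) to a compact set on which $r$ is finite-valued; this is also what supplies the uniform bound $\E[F(\x_k)-\min F]\leq\Delta$ needed by the non-uniform-sampling version of Theorem~\ref{thm:svrg}, which is why~(iii) appears only under the SVRG branch.

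The main obstacle is the $\Theta(1/\mu)$ amplification of $\|G_\gamma(\x_\tau)\|$ in the distance bound: since $\gamma$ is constant and cannot absorb $1/\mu$, one must drive $\|G_\gamma\|$ to $O(\mu\epsilon)$ rather than $O(\epsilon)$, and this is precisely what inflates the stage count from the pure-DC $O(1/\epsilon^2)$ to $O(1/\epsilon^4)$ or $O(1/\epsilon^6)$. A secondary technical point is verifying that the initial $F$-gap for the Moreau surrogate is still $O(\Delta)$ uniformly in $\mu$, which follows from $|r_\mu(\x)-r(\x)|=O(\mu)$ under (i) and a compact-set argument under (ii)-(iii); these estimates together with the descent property of SVRG or the confinement of iterates to $\text{dom}(r)$ also furnish the non-uniform-sampling hypothesis $\E[F(\x_k)-\min F]\leq\Delta$.
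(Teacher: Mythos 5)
Your proposal is correct and follows essentially the same route as the paper's proof: the same reduction of~(\ref{eqn:Pf}) to the DC surrogate~(\ref{eqn:New}) with $\tfrac{1}{2\mu}\|\x\|^2$ playing the role of the proximal regularizer, the same first-order-optimality identity at $\x_+=P_\gamma(\x_\tau)$ combined with Lemma~\ref{lem:prox} to identify $\tfrac{1}{\mu}(\x_\tau-\w_\tau)\in\hat\partial r(\w_\tau)$, the same case-by-case bounds $\|\x_\tau-\w_\tau\|=O(\mu)$ or $O(\sqrt{\mu})$, and the same tuning of $\mu$ and $K$ driven by the $\Theta(1/\mu)$ amplification of $\|\x_\tau-\z_\tau\|$. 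Your additional remark on verifying that the initial gap $\Delta$ for the Moreau surrogate is uniform in $\mu$ is a legitimate point the paper leaves implicit, and your treatment of it is sound.
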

{\bf Remark}: The above result establishes that $\w_\tau$ is an  $\epsilon$-critical point. Under Assumption~\ref{ass:last} (i),  the iteration complexity of using SPG is $O(1/\epsilon^8)$ and the gradient complexity of using SVRG is $\widetilde O(n/\epsilon^4)$; and under Assumption~\ref{ass:last} (ii),  the iteration complexity of using SPG  is $O(1/\epsilon^{12})$ and the gradient complexity of using SVRG is $\widetilde O(n/\epsilon^6)$. Under Assumption~\ref{ass:last} (iii), the only provable interesting result is by using SVRG, which gives  a gradient complexity of $\widetilde O(n/\epsilon^6)$. 


\begin{proof} In the following proof, $\hat\partial r(\x)$ means there exists $\v\in\hat\partial r(\x)$. 
By applying the stochastic algorithms for DC functions in last section, at each stage  the following problem is solved approximately 
\begin{align*}
\z_k = \arg\min_{\x\in\R^d} \hat g(\x) + \frac{\gamma}{2}\|\x - \x_k\|^2 - (\nabla  h(\x_k) + \frac{1}{\mu}\text{prox}_{\mu r}(\x_k))^{\top}(\x - \x_k).
\end{align*}
Then we have 
\begin{align*}
\E[\|\nabla  \hat g(\x_\tau)  -\nabla h(\x_\tau) - \frac{1}{\mu}\text{prox}_{\mu r}(\x_\tau)\|] &\leq \E[ \|\nabla \hat g(\z_\tau) - \nabla\hat g(\x_\tau)\| +  \gamma\|\z_\tau - \x_\tau\|]\\
&\leq \E[L\|\x_\tau - \z_\tau\| +  \frac{1}{\mu}\|\x_\tau - \z_\tau\|+ \gamma \|\z_\tau - \x_\tau\|],
   \end{align*}
   for any $\tau\in\{1,\ldots, K\}$. Denote by $\w_\tau =  \text{prox}_{\mu r}(\x_\tau)$. 
   It is notable that $\frac{1}{\mu}\left(\x_\tau - \text{prox}_{\mu r}(\x_\tau)\right)\in\hat \partial r(\w_\tau) $. Then we have 
   $\nabla  \hat g(\x_\tau)  -\nabla h(\x_\tau) - \frac{1}{\mu}\text{prox}_{\mu r}(\x_\tau)= \nabla g(\x_\tau) - \nabla h(\x_\tau) + \hat\partial r(\w_\tau)$ and 
\begin{align*}
\E[\|\nabla  g(\x_\tau) - \nabla h(\x_\tau)  + \hat\partial r(\w_\tau)\|] &\leq \E[L\|\x_\tau - \z_\tau\| +  \frac{1}{\mu}\|\x_\tau - \z_\tau\|+ \gamma \|\z_\tau - \x_\tau\|],
   \end{align*}
   which implies that 
\begin{align*}
\E[\|\nabla  g(\w_\tau) - \nabla h(\w_\tau)  + \hat\partial r(\w_\tau)\|] &\leq \E[(L+\gamma)\|\x_\tau - \z_\tau\| +  \frac{1}{\mu}\|\x_\tau - \z_\tau\| + 2L\|\x_\tau - \w_\tau\|],
   \end{align*}
   where uses the facts that $g$ and $h$ are smooth.
   
   Next, we need to show that $\E[\|\x_\tau - \w_\tau\|]$ is small. The argument will be different for part (a), part (b) and part (c). For part (a), using 
      \begin{align*}
 \frac{1}{2\mu}\|\x_\tau - \w_\tau\|^2 +  r(\w_\tau) \leq r(\x_\tau)
   \end{align*}
  we have  
   \begin{align*}
   \frac{1}{2\mu}\|\x_\tau - \w_\tau\|^2\leq r(\x_\tau) - r(\w_\tau)\leq G\|\x_\tau - \w_\tau\|\Rightarrow \|\x_\tau - \w_\tau\|\leq 2G\mu,
   \end{align*}
   where the second inequality with an appropriate $G>0$ follows the Lipchitz continuity of $r$. 
         Then 
      \begin{align*}
      \E[\|\nabla  g(\w_\tau) - \nabla h(\w_\tau)  + \hat\partial r(\w_\tau)\|] &\leq  \E[(\gamma + L) \|\x_\tau - \z_\tau\| +  \frac{1}{\mu}\|\x_\tau - \z_\tau\| + 4GL\mu ].
      \end{align*}
      By setting $\mu = \epsilon$ and $K= O(1/\epsilon^4)$ and $\tau$ randomly sampled, we have $\E[\|\x_\tau - \z_\tau\|]\leq \epsilon^2$ and hence
      \begin{align*}
      \E[\|\nabla  g(\w_\tau) - \nabla h(\w_\tau)  + \hat\partial r(\w_\tau)\|] &\leq  O(\epsilon).
      \end{align*}
      For part (b), using
   \begin{align*}
\frac{1}{2\mu}\|\x_\tau - \w_\tau\|^2 +  r(\w_\tau) \leq r(\x_\tau)
   \end{align*}
      we have
       \begin{align*}
   \|\x_\tau - \w_\tau\|\leq \sqrt{2\mu \left(r(\x_\tau)  - \min_{\x\in\R^d}r(\x)\right)}\leq \sqrt{2\mu M},
      \end{align*}
      where $M>0$ exists due to Assumption~\ref{ass:last}(ii).
             Then 
      \begin{align*}
      \E[\|\nabla  g(\w_\tau) - \nabla h(\w_\tau)  + \hat\partial r(\w_\tau)\|] &\leq  \E[(\gamma + L) \|\x_\tau - \z_\tau\| +  \frac{1}{\mu}\|\x_\tau - \z_\tau\| + 2L\sqrt{2\mu M}].
      \end{align*}
      By setting $\mu = \epsilon^2$ and $K= O(1/\epsilon^6)$ and $\tau$ randomly sampled, we have $\E[\|\x_\tau - \z_\tau\|]\leq \epsilon^3$ and hence
      \begin{align*}
      \E[\|\nabla  g(\w_\tau) - \nabla h(\w_\tau)  + \hat\partial r(\w_\tau)\|] &\leq  O(\epsilon).
      \end{align*}
      For part (c) under Assumption~(\ref{ass:last}) (iii), we take expectation over the above inequality giving
       \begin{align*}
  \E[ \|\x_\tau - \w_\tau\|]\leq \sqrt{ 2\mu \left(\E[r(\x_\tau)  - \min_{\x\in\R^d}r(\x)]\right)}.
   \end{align*}
   Since using the SVRG, we can show $\E[f(\x_\tau) + r_\mu(\x_\tau)]$ is bounded above, i.e., $\x_\tau$ is in a bounded set (in expectation), which together with the assumption $r$ is lower bounded implies that there exists a constant $M>0$ such that $\E[r(\x_\tau) - \min_{\x\in\R^d}r(\x) ]\leq M$ for $\tau=1,\ldots, K$.      Then 
      \begin{align*}
      \E[\|\nabla  g(\w_\tau) - \nabla h(\w_\tau)  + \hat\partial r(\w_\tau)\|] &\leq  \E[(\gamma + 3L) \|\x_\tau - \z_\tau\| +  \frac{1}{\mu}\|\x_\tau - \z_\tau\| + 2L\sqrt{2\mu M} ].
      \end{align*}
By setting $\mu = \epsilon^2$ and $K=O(1/\epsilon^6)$ and $\tau$ randomly sampled, we have $\E \|\x_\tau - \z_\tau\| \leq \epsilon^3$ and hence 
      \begin{align*}
      \E[\|\nabla  g(\w_\tau) - \nabla h(\w_\tau)  + \hat\partial r(\w_\tau)\|] &\leq  O(\epsilon).
      \end{align*}
       For part (c) under Assumption~(\ref{ass:last}) (i) and Assumption~(\ref{ass:last}) (ii), we can use similar analysis as for part (a) and (b). 
\end{proof}

Next, we extend the results to the case that $h$ has a H\"{o}lder continuous gradient, i.e, the following conditions hold. 

\begin{ass}\label{ass:ghsm2}
 Assume $h$ is differentiable and has $(L, \nu)$-H\"{o}lder continuous gradient for some $\nu\in(0,1]$. 
\end{ass}
\begin{thm}\label{thm:ncns:reg:Appendix}
Regarding the problem~(\ref{eqn:Pf}), we have the following results:
\begin{itemize} 
\item [a.]
If Assumption~\ref{ass:ghsm2}, Assumption~\ref{ass:last} (i)  and Assumption~\ref{ass:1} (i) hold, then we can use Algorithm~\ref{alg:meta} with Algorithm~\ref{alg:sgd} (option 1)  to solve~(\ref{eqn:New}) with $\mu = \epsilon$, which returns a solution $\x_{\tau}$ after $K=O(1/\epsilon^{2(1+\nu)})$ stages satisfying 
\begin{align*}
\E[\|\x_\tau - \w_{\tau}\|]\leq O(\epsilon), \quad \E[\text{dist}(\nabla h(\w_\tau), \nabla g(\w_{\tau}) + \hat \partial r(\w_{\tau}))]\leq O(\epsilon^\nu),
\end{align*}
where $\w_\tau = \text{prox}_{\mu r}(\x_{\tau})$. 
\item [b.] If Assumption~\ref{ass:ghsm2}, Assumption~\ref{ass:last} (ii) and Assumption~\ref{ass:1} (i) hold, then we can use Algorithm~\ref{alg:meta} with Algorithm~\ref{alg:sgd} (option 1)  to solve~(\ref{eqn:New}) with $\mu = \epsilon^2$, which returns a solution $\x_{\tau}$ after $K=O(1/\epsilon^{2(2+\nu)})$ stages satisfying 
\begin{align*}
\E[\|\x_\tau - \w_{\tau}\|]\leq O(\epsilon), \quad \E[\text{dist}(\nabla h(\w_\tau), \nabla g(\w_{\tau}) + \hat \partial r(\w_{\tau}))]\leq O(\epsilon^\nu),
\end{align*}
where $\w_\tau = \text{prox}_{\mu r}(\x_{\tau})$. 
\item[c.]
If $g$ and $h$ have a finite-sum form, Assumption~\ref{ass:ghsm2} and Assumption~\ref{ass:4} hod,  then we can use Algorithm~\ref{alg:meta} with Algorithm~\ref{alg:SVRG}  to solve~(\ref{eqn:New}). We can  set  $\mu = \epsilon$ if Assumption~\ref{ass:last} (i) holds or $\mu=\epsilon^2$ if Assumption~\ref{ass:last} (ii) or (iii) holds.  The algorithm will return a solution $\x_{\tau}$ after $K=O(1/\epsilon^{2(1+\nu)})$ (corresponding to Assumption~\ref{ass:last} (i)) or $K=O(1/\epsilon^{2(2+\nu)})$ (corresponding to Assumption~\ref{ass:last} (ii) or (iii)) stages satisfying 
\begin{align*}
\E[\|\x_\tau - \w_{\tau}\|]\leq O(\epsilon), \quad \E[\text{dist}(\nabla h(\w_\tau), \nabla g(\w_{\tau}) + \hat \partial r(\w_{\tau}))]\leq O(\epsilon^\nu),
\end{align*}
where $\w_\tau = \text{prox}_{\mu r}(\x_{\tau})$. 
\end{itemize}

\end{thm}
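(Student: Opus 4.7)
The plan is to mirror the proof strategy of Theorem~\ref{thm:ncns:reg}, using the Moreau envelope to reformulate~(\ref{eqn:Pf}) as the DC problem~(\ref{eqn:New}) and then invoking the convergence bounds of Section~\ref{sec:DC} for SPG or SVRG applied to DC functions. Those bounds only need $\hat g(\x) = g(\x) + \frac{1}{2\mu}\|\x\|^2$ to have a Lipschitz continuous gradient, which follows from the smoothness of $g$; the H\"older (rather than Lipschitz) condition on $\nabla h$ does not affect the inner estimate $\E[\|\x_\tau - \z_\tau\|^2] \leq O(1/K)$ delivered by the DC algorithm (with $\gamma$ held constant). What must be revised is the step that converts this estimate into the $\epsilon$-critical-point measure of the original problem~(\ref{eqn:Pf}), since it is precisely there that Lipschitz smoothness of $h$ was invoked in the proof of Theorem~\ref{thm:ncns:reg}.

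To carry out that step, I start from the first-order optimality condition for the exact subproblem minimizer $\z_\tau = P_\gamma(\x_\tau)$, namely $\nabla \hat g(\z_\tau) + \gamma(\z_\tau - \x_\tau) - \nabla h(\x_\tau) - \frac{1}{\mu}\text{prox}_{\mu r}(\x_\tau) = 0$, and set $\w_\tau = \text{prox}_{\mu r}(\x_\tau)$ together with $\u_\tau = \frac{1}{\mu}(\x_\tau - \w_\tau) \in \hat\partial r(\w_\tau)$ via Lemma~\ref{lem:prox}. Decomposing
\begin{align*}
\nabla g(\w_\tau) + \u_\tau - \nabla h(\w_\tau) &= \bigl[\nabla \hat g(\x_\tau) - \nabla \hat g(\z_\tau) - \gamma(\z_\tau - \x_\tau)\bigr] \\
&\quad + \bigl[\nabla g(\w_\tau) - \nabla g(\x_\tau)\bigr] + \bigl[\nabla h(\x_\tau) - \nabla h(\w_\tau)\bigr],
\end{align*}
I bound the three bracketed pieces by $(L + 1/\mu + \gamma)\|\x_\tau - \z_\tau\|$, $L\|\x_\tau - \w_\tau\|$, and $L_h\|\x_\tau - \w_\tau\|^\nu$ respectively; only the last term uses the new H\"older hypothesis on $\nabla h$.

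To finish, I control $\|\x_\tau - \w_\tau\|$ with the Moreau-envelope estimates already used in the proof of Theorem~\ref{thm:ncns:reg}: under Assumption~\ref{ass:last}(i), $\|\x_\tau - \w_\tau\| \leq 2G\mu$ by the Lipschitz continuity of $r$; under Assumption~\ref{ass:last}(ii) or (iii), $\E[\|\x_\tau - \w_\tau\|] \leq \sqrt{2\mu M}$ (where case (iii) reuses the argument from the proof of Theorem~\ref{thm:ncns:reg}(c) that $\E[f(\x_\tau) + r_\mu(\x_\tau)]$ stays bounded so that $\E[r(\x_\tau) - \min_\x r(\x)] \leq M$ uniformly in $\tau$). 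For part~(a) I pick $\mu = \epsilon$, so $\E[\|\x_\tau - \w_\tau\|^\nu] = O(\epsilon^\nu)$; matching this forces $\E[\|\x_\tau - \z_\tau\|]/\mu = O(\epsilon^\nu)$, i.e., $\E[\|\x_\tau - \z_\tau\|] = O(\epsilon^{1+\nu})$, and Jensen's inequality combined with $\E[\|\x_\tau - \z_\tau\|^2] \leq O(1/K)$ yields $K = O(1/\epsilon^{2(1+\nu)})$. Part~(b) is analogous with $\mu = \epsilon^2$ and $K = O(1/\epsilon^{2(2+\nu)})$, while part~(c) is the same argument with SVRG in place of SPG.

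The main obstacle, and the only real departure from the smooth case, is the H\"older residual $L_h\|\x_\tau - \w_\tau\|^\nu$; for $\nu < 1$ it decays more slowly than the Lipschitz residual and therefore dictates the required subproblem accuracy. Matching the other terms against this slower decay tightens the target on $\E[\|\x_\tau - \z_\tau\|]$ from $O(\epsilon)$/$O(\epsilon^2)$ in the smooth case to $O(\epsilon^{1+\nu})$/$O(\epsilon^{2+\nu})$, which is exactly what inflates the stage counts to the $O(1/\epsilon^{2(1+\nu)})$ and $O(1/\epsilon^{2(2+\nu)})$ claimed in the theorem; no new variational or probabilistic machinery is needed beyond what already drove Theorem~\ref{thm:ncns:reg}.
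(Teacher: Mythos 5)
Your proposal is correct and follows essentially the same route as the paper's own proof: the same Moreau-envelope reformulation, the same first-order optimality decomposition at $\z_\tau = P_\gamma(\x_\tau)$ with the three residual terms $(L+1/\mu+\gamma)\|\x_\tau-\z_\tau\|$, $L\|\x_\tau-\w_\tau\|$, and $L_h\|\x_\tau-\w_\tau\|^\nu$, the same Moreau-envelope bounds on $\|\x_\tau-\w_\tau\|$ under Assumption~\ref{ass:last} (i)--(iii), and the same balancing of $\mu$ against $K$ to force $\E[\|\x_\tau-\z_\tau\|]\le \epsilon^{1+\nu}$ (resp. $\epsilon^{2+\nu}$). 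The only cosmetic difference is that you write the decomposition as a single three-term identity rather than passing through the intermediate bound at $\x_\tau$ as the paper does.
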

\begin{proof} 
In the following proof, $\hat\partial r(\x)$ means there exists $\v\in\hat\partial r(\x)$. 
By applying the stochastic algorithms for DC functions in last section, at each stage  the following problem is solved approximately 
\begin{align*}
\z_k = \arg\min_{\x\in\R^d} \hat g(\x) + \frac{\gamma}{2}\|\x - \x_k\|^2 - (\partial  h(\x_k) + \frac{1}{\mu}\text{prox}_{\mu r}(\x_k))^{\top}(\x - \x_k).
\end{align*}
Then we have 
\begin{align*}
\E[\|\nabla  \hat g(\x_\tau)  -\nabla  h(\x_\tau) - \frac{1}{\mu}\text{prox}_{\mu r}(\x_\tau)\|] &\leq \E[ \|\nabla \hat g(\z_\tau) - \nabla\hat g(\x_\tau)\| +  \gamma\|\z_\tau - \x_\tau\|]\\
&\leq \E[L\|\x_\tau - \z_\tau\| +  \frac{1}{\mu}\|\x_\tau - \z_\tau\|+ \gamma \|\z_\tau - \x_\tau\|],
   \end{align*}
   for any $\tau\in\{1,\ldots, K\}$. Denote by $\w_\tau =  \text{prox}_{\mu r}(\x_\tau)$. 
   It is notable that $\frac{1}{\mu}\left(\x_\tau - \text{prox}_{\mu r}(\x_\tau)\right)\in\hat \partial r(\w_\tau) $. Then we have $\nabla  \hat g(\x_\tau)  -\partial h(\x_\tau) - \frac{1}{\mu}\text{prox}_{\mu r}(\x_\tau)  = \nabla g(\x_\tau) - \partial h(\x_\tau) + \hat\partial r(\w_\tau)$ and 
\begin{align*}
\E[\|\nabla  g(\x_\tau) - \nabla  h(\x_\tau)  + \hat\partial r(\w_\tau)\|] &\leq \E[L\|\x_\tau - \z_\tau\| +  \frac{1}{\mu}\|\x_\tau - \z_\tau\|+ \gamma \|\z_\tau - \x_\tau\|],
   \end{align*}
   which implies that 
\begin{align*}
&\E[\|\nabla  g(\w_\tau) -\nabla h(\w_\tau)  + \hat\partial r(\w_\tau)\|] \\
\leq& \E[(L+\gamma)\|\x_\tau - \z_\tau\| +  \frac{1}{\mu}\|\x_\tau - \z_\tau\| + L\|\x_\tau - \w_\tau\| + L\|\x_\tau - \w_\tau\|^\nu ],
   \end{align*}
where uses the facts that $g$ is $L$-smooth and $h$ has $L$-H\"{o}lder-continuous gradient with parameter $\nu\in(0,1]$.

   Next, we need to show that $\E[\|\x_\tau - \w_\tau\|]$ is small. The argument will be different for part (a), part (b) and part (c).
  For part (a), using
      \begin{align*}
 \frac{1}{2\mu}\|\x_\tau - \w_\tau\|^2 +  r(\w_\tau) \leq r(\x_\tau)
   \end{align*}
  we have  
   \begin{align*}
   \frac{1}{2\mu}\|\x_\tau - \w_\tau\|^2\leq r(\x_\tau) - r(\w_\tau)\leq G\|\x_\tau - \w_\tau\|\Rightarrow \|\x_\tau - \w_\tau\|\leq 2G\mu,
   \end{align*}
   where the second inequality with an appropriate $G>0$ follows the Lipchitz continuity of $r$. 
Then 
      \begin{align*}
      \E[\|\nabla  g(\w_\tau) - \nabla h(\w_\tau)  + \hat\partial r(\w_\tau)\|] &\leq  \E[(\gamma + L) \|\x_\tau - \z_\tau\| +  \frac{1}{\mu}\|\x_\tau - \z_\tau\| + 2GL\mu  + L(2G\mu)^\nu ].
      \end{align*}
      By setting $\mu = \epsilon$ and $K= O(1/\epsilon^{2(1+\nu)})$ and $\tau$ randomly sampled, we have $\E[\|\x_\tau - \z_\tau\|]\leq \epsilon^{1+\nu}$ and hence
      \begin{align*}
      \E[\|\nabla  g(\w_\tau) - \nabla h(\w_\tau)  + \hat\partial r(\w_\tau)\|] &\leq  O(\epsilon^\nu).
      \end{align*}
      For part (b), using
   \begin{align*}
\frac{1}{2\mu}\|\x_\tau - \w_\tau\|^2 +  r(\w_\tau) \leq r(\x_\tau)
   \end{align*}
      we have
       \begin{align*}
   \|\x_\tau - \w_\tau\|\leq \sqrt{2\mu \left(r(\x_\tau)  - \min_{\x\in\R^d}r(\x)\right)}\leq \sqrt{2\mu M},
      \end{align*}
      where $M>0$ exists due to Assumption~\ref{ass:last}(ii). 
      Then 
      \begin{align*}
      \E[\|\nabla  g(\w_\tau) - \nabla h(\w_\tau)  + \hat\partial r(\w_\tau)\|] &\leq  \E[(\gamma + L) \|\x_\tau - \z_\tau\| +  \frac{1}{\mu}\|\x_\tau - \z_\tau\| + L\sqrt{2\mu M}  + L(2\mu M)^{\nu/2} ].
      \end{align*}
      By setting $\mu = \epsilon^2$ and $K= O(1/\epsilon^{2(2+\nu)})$ and $\tau$ randomly sampled, we have $\E[\|\x_\tau - \z_\tau\|]\leq \epsilon^{2+\nu}$ and hence
      \begin{align*}
      \E[\|\nabla  g(\w_\tau) - \nabla h(\w_\tau)  + \hat\partial r(\w_\tau)\|] &\leq  O(\epsilon^\nu).
      \end{align*} 
For part (c) under Assumption~\ref{ass:last} (iii),  we take expectation over the above inequality giving
       \begin{align*}
  \E[ \|\x_\tau - \w_\tau\|]\leq \sqrt{ 2\mu \left(\E[r(\x_\tau)  - \min_{\x\in\R^d}r(\x)]\right)}.
   \end{align*}
   Since using the SVRG, we can show $\E[f(\x_\tau) + r_\mu(\x_\tau)]$ is bounded above, i.e., $\x_\tau$ is in a bounded set (in expectation), which together with the assumption $r$ is lower bounded implies that there exists a constant $M>0$ such that $\E[r(\x_\tau) - \min_{\x\in\R^d}r(\x) ]\leq M$ for $\tau=1,\ldots, K$.      Then 
      \begin{align*}
      \E[\|\nabla  g(\w_\tau) - \nabla h(\w_\tau)  + \hat\partial r(\w_\tau)\|] &\leq  \E[(\gamma + L) \|\x_\tau - \z_\tau\| +  \frac{1}{\mu}\|\x_\tau - \z_\tau\| + L\sqrt{2\mu M}  + L(2\mu M)^{\nu/2} ].
      \end{align*}
By setting $\mu = \epsilon^2$ and $K=O(1/\epsilon^{2(2+\nu)})$ and $\tau$ randomly sampled, we have $\E \|\x_\tau - \z_\tau\| \leq \epsilon^{2+\nu}$ and hence 
      \begin{align*}
      \E[\|\nabla  g(\w_\tau) - \nabla h(\w_\tau)  + \hat\partial r(\w_\tau)\|] &\leq  O(\epsilon^\nu).
      \end{align*}
             For part (c) under Assumption~(\ref{ass:last}) (i) and Assumption~(\ref{ass:last}) (ii), we can use similar analysis as for part (a) and (b). 

\end{proof}

\subsection{Improved Complexities with Increasing $\gamma$}\label{sec:new:improve}
In this subsection, we present improved complexities by simply changing the value of $\gamma$ across stages. The key idea is to use an increasing sequence of $\gamma$ across stages.  In particular, at the $k$-stage, we use $\gamma_k = O(k^\beta)$ with $0 <\beta < 1$. Let us abuse the notations $F, F_k$ defined by
\begin{align*}
F & =g(\x) + \hat r(\x) - h(\x) - R_{\mu}(\x)  = g(\x)  - h(\x) + r_\mu(\x)\\
F_k &= g(\x) +\hat r(\x)+ \frac{\gamma_k}{2}\|\x - \x_k\|^2 - (\nabla  h(\x_k) + \frac{1}{\mu}\text{prox}_{\mu r}(\x_k))^{\top}(\x - \x_k),
\end{align*}
where $\hat r(\x) = \frac{1}{2\mu}\|\x\|^2$. Similar to Lemma~\ref{lem:DC:new1}, we have the following result for the convergence of $\|\x_\tau - \z_{\tau}\|$ when SPG (option 1) is employed at each stage. 
\begin{lemma}\label{lem:new1}
Suppose Assumption~\ref{ass:1} (i) holds and Algorithm~\ref{alg:sgd} is employed for solving $F_k$ with parameters given in Proposition~\ref{prop:sgd} and with $\gamma_k= 3Lk^{\beta}$ with $0\leq \beta < 1$ and  $T_k = 3Lk/\gamma_k+3$, and there exists $\Delta>0$ such that $\E[F(\x_k) - \min_{\x}F(\x)]\leq \Delta$ for all $k\in\{1,\ldots, K\}$, then with a total of $K$ stages Algorithm~\ref{alg:meta} guarantees 
\begin{align*}
\E\bigg[\|\z_{\tau} - \x_\tau\|^2\bigg]&\leq \frac{8 \Delta (\alpha +1)}{3LK^{1+\beta}} + \frac{32G^2(\alpha +1) }{3K^{1+\beta}L^2},
\end{align*}
    where $\tau\in\{1,\ldots, K\}$ is sampled according to probabilities $p(\tau=k) = \frac{k^\alpha}{\sum_{k=1}^Kk^\alpha}$ with $\alpha\geq 1$. 
\end{lemma}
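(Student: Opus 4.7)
The plan is to follow the structure of the proof of Theorem~\ref{thm:3} that led to Theorem~\ref{thm:sgd}, replacing the constant $\gamma$ by the stage-dependent $\gamma_k = 3Lk^\beta$ and tracking the consequences carefully through the weighted-summation step. Indeed, Lemma~\ref{lem:new1} is essentially Lemma~\ref{lem:DC:new1} stated for a general exponent $\beta \in [0,1)$ in place of the specific choice $\beta = (1-\nu)/(1+\nu)$, so the same template should apply.

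First I would apply Proposition~\ref{prop:sgd} (Option 1) to the $k$-th subproblem $F_k(\x) = g(\x) + \hat r(\x) - (\nabla h(\x_k) + \frac{1}{\mu}\text{prox}_{\mu r}(\x_k))^\top(\x - \x_k) + \frac{\gamma_k}{2}\|\x - \x_k\|^2$, but with $\gamma_k$ and $T_k$ in place of $\gamma$ and $T$. This yields a per-stage SPG accuracy $\epsilon_k \leq \frac{4\gamma_k\|\x_k - \z_k\|^2}{3T_k(T_k+3)} + \frac{6G^2}{(T_k+3)\gamma_k}$. Combining this with the chain of inequalities from the proof of Theorem~\ref{thm:3}---using the $\gamma_k$-strong convexity of $F_k$ and the convexity of $h$---gives
\[
\E\!\left[\frac{\gamma_k}{2}\|\z_k - \x_k\|^2\right] \leq \E[F(\x_k) - F(\x_{k+1})] + \epsilon_k.
\]
The choice $T_k = 3Lk/\gamma_k + 3$ is exactly what makes the first piece of $\epsilon_k$ absorbable into the LHS via strong convexity, while the remaining noise term simplifies to $\frac{6G^2}{(T_k+3)\gamma_k} \leq \frac{2G^2}{Lk}$, which crucially is independent of $\gamma_k$.

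Next I would multiply both sides by weights $w_k = k^{\alpha - \beta}$, selected so that $w_k \gamma_k \propto k^\alpha$. Since $\alpha \geq 1 > \beta$, the sequence $\{w_k\}$ is non-decreasing, so the Abel-summation argument used in the proof of Theorem~\ref{thm:3} applies verbatim and bounds $\sum_{k=1}^K w_k\E[F(\x_k) - F(\x_{k+1})]$ by $w_K \Delta = K^{\alpha-\beta}\Delta$. The residual sum $\frac{2G^2}{L}\sum_{k=1}^K k^{\alpha-\beta-1}$ is also $O(K^{\alpha-\beta}G^2/L)$. Dividing through by $\frac{3L}{4}\sum_{k=1}^K k^\alpha = \Theta(L K^{\alpha+1}/(\alpha+1))$ turns the LHS into $\E[\|\z_\tau - \x_\tau\|^2]$ under the sampling distribution $p(\tau=k) \propto k^\alpha$ and produces the advertised $K^{-(1+\beta)}$ rate with the stated numerical constants.

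The main obstacle, and the key idea enabling the improvement over the constant-$\gamma$ rate of Theorem~\ref{thm:sgd}, is the double compatibility required of the weight sequence $w_k$: it must satisfy $w_k \gamma_k \propto k^\alpha$ so that dividing by $\sum_k k^\alpha$ recovers exactly the sampling expectation on the LHS, and it must be non-decreasing so that Abel summation collapses the telescoping RHS to $w_K \Delta$ rather than $(\sum_k w_k)\Delta$. With $\gamma_k = 3Lk^\beta$ the unique choice $w_k = k^{\alpha-\beta}$ meets both requirements simultaneously precisely when $\alpha \geq \beta$, which holds under the hypothesis $\alpha \geq 1 > \beta$. Once this weight is in hand, the rest is routine bookkeeping essentially identical to the proofs of Theorem~\ref{thm:3}, Theorem~\ref{thm:sgd}, and Lemma~\ref{lem:DC:new1}.
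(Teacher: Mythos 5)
Your proposal is correct and follows essentially the same route as the paper: the paper proves this (via Lemma~\ref{lem:DC:new1}) by taking the per-stage inequality $\E[\tfrac{\gamma_k}{8}\|\z_k-\x_k\|^2]\leq \E[F(\x_k)-F(\x_{k+1})]+\tfrac{4G^2}{kL}$ from the proof of Theorem~\ref{thm:sgd} and multiplying by $w_k/\gamma_k$ with $w_k=k^\alpha$, which is exactly your weight $k^{\alpha-\beta}$ up to the constant $3L$, before Abel summation and division by $\sum_k k^\alpha$. The only blemishes are bookkeeping: the absorbed inequality carries the constant $\gamma_k/8$ (not $\gamma_k/2$) and the residual noise term is $4G^2/(kL)$ rather than $2G^2/(Lk)$ because it enters twice in the absorption argument, but these are exactly the constants needed to reproduce the stated bound.
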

\begin{thm}(Improved Complexities of SSDC-SPG)\label{thm:new1}
Regarding the problem~(\ref{eqn:Pf}),  we have the following results:
\begin{itemize} 
\item [a.]
If Assumption~\ref{ass:ghsm}, Assumption~\ref{ass:last} (i)  and Assumption~\ref{ass:1} (i) hold, then we can use Algorithm~\ref{alg:meta} with Algorithm~\ref{alg:sgd} (option 1)  to solve~(\ref{eqn:New}) with $\mu = \epsilon$, $\gamma_k= 3Lk^{1/3}$ and $T_k = 3Lk/\gamma_k+3$, which returns a solution $\x_{\tau}$ after $K=O(1/\epsilon^3)$ stages satisfying 
\begin{align*}
\E[\|\x_\tau - \w_{\tau}\|]\leq O(\epsilon), \quad \E[\text{dist}(\nabla h(\w_\tau), \nabla g(\w_{\tau}) + \hat \partial r(\w_{\tau}))]\leq O(\epsilon),
\end{align*}
where $\w_\tau = \text{prox}_{\mu r}(\x_{\tau})$. The total iteration complexity is $\sum_{k=1}^KT_k \le O(K^{5/3}) = O(1/\epsilon^5)$. 
\item [b.] If Assumption~\ref{ass:ghsm}, Assumption~\ref{ass:last} (ii) and Assumption~\ref{ass:1} (i) hold, then we can use Algorithm~\ref{alg:meta} with Algorithm~\ref{alg:sgd} (option 1)  to solve~(\ref{eqn:New}) with $\mu = \epsilon^2$, $\gamma_k= 3Lk^{1/2}$ and $T_k = 3Lk/\gamma_k+3$, which returns a solution $\x_{\tau}$ after $K=O(1/\epsilon^{4})$ stages satisfying 
\begin{align*}
\E[\|\x_\tau - \w_{\tau}\|]\leq O(\epsilon), \quad \E[\text{dist}(\nabla h(\w_\tau), \nabla g(\w_{\tau}) + \hat \partial r(\w_{\tau}))]\leq O(\epsilon),
\end{align*}
where $\w_\tau = \text{prox}_{\mu r}(\x_{\tau})$. The total iteration complexity is $\sum_{k=1}^KT_k \le O(K^{3/2}) = O(1/\epsilon^{6})$. 
\end{itemize}
\end{thm}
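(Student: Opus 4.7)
My plan is to closely follow the proof of Theorem~\ref{thm:ncns:reg} but replace the basic stagewise rate $\E[\|\z_\tau-\x_\tau\|^2]=O(1/K)$ with the sharper bound from Lemma~\ref{lem:new1}, which accounts for the growing $\gamma_k=3Lk^{\beta}$. Concretely, I would first apply the Moreau-envelope reformulation~(\ref{eqn:New}) and feed the DC pair $\hat g=g+\frac{1}{2\mu}\|\x\|^2$ and $\hat h=h+R_\mu$ into Algorithm~\ref{alg:meta}, using Algorithm~\ref{alg:sgd} (Option 1) at each stage. Since $R_\mu$ is deterministic (its subgradient is delivered by Lemma~\ref{lem:prox}) and $\hat g$ is $(L+1/\mu)$-smooth, Assumption~\ref{ass:1}(i) carries over to the DC problem; hence Lemma~\ref{lem:new1} gives
\[
\E[\|\z_\tau-\x_\tau\|^2]\leq O\!\left(\frac{1}{K^{1+\beta}}\right), \qquad \E[\|\z_\tau-\x_\tau\|]\leq O\!\left(\frac{1}{K^{(1+\beta)/2}}\right),
\]
the latter by Jensen's inequality.

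Second, I would reuse the residual decomposition established inside the proof of Theorem~\ref{thm:ncns:reg}: with $\w_\tau=\text{prox}_{\mu r}(\x_\tau)$,
\[
\E[\|\nabla g(\w_\tau)-\nabla h(\w_\tau)+\hat\partial r(\w_\tau)\|]\leq \E\!\left[(\gamma_\tau+L)\|\x_\tau-\z_\tau\|+\tfrac{1}{\mu}\|\x_\tau-\z_\tau\|+2L\|\x_\tau-\w_\tau\|\right].
\]
Substituting $\gamma_\tau=O(K^\beta)$ and the sharper bound from Lemma~\ref{lem:new1} produces three dominant error terms: a $(\gamma+L)$-term of order $K^{(\beta-1)/2}$, a $\frac{1}{\mu}$-term of order $\mu^{-1}K^{-(1+\beta)/2}$, and a Moreau-envelope term of order $\mu$ under Assumption~\ref{ass:last}(i) (from $\|\x_\tau-\w_\tau\|\leq 2G\mu$) or $\sqrt{\mu}$ under Assumption~\ref{ass:last}(ii) (from $\|\x_\tau-\w_\tau\|\leq\sqrt{2\mu M}$). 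The bound $\E[\|\x_\tau-\w_\tau\|]\leq O(\epsilon)$ follows immediately from these same Moreau-envelope inequalities once $\mu$ is fixed.

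Third, I would tune $\beta$ by balancing the first two terms after selecting $\mu$. In part (a) with $\mu=\epsilon$, the envelope term is already $O(\epsilon)$; equating $K^{(\beta-1)/2}=\epsilon$ and $\mu^{-1}K^{-(1+\beta)/2}=\epsilon$ yields $\beta=1/3$ and $K=O(\epsilon^{-3})$, and $T_k=3Lk/\gamma_k+3=O(k^{2/3})$ gives $\sum_{k}T_k\leq O(K^{5/3})=O(\epsilon^{-5})$. In part (b) with $\mu=\epsilon^2$, the envelope term is again $O(\epsilon)$; equating $K^{(\beta-1)/2}=\epsilon$ and $\mu^{-1}K^{-(1+\beta)/2}=\epsilon$ yields $\beta=1/2$ and $K=O(\epsilon^{-4})$, so $T_k=O(k^{1/2})$ and $\sum_k T_k\leq O(K^{3/2})=O(\epsilon^{-6})$. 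These match the stated complexities.

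The main obstacle I anticipate is verifying the hypothesis of Lemma~\ref{lem:new1} that $\E[F(\x_k)-\min_{\x}F(\x)]\leq\Delta$ uniformly in $k$, where $F$ is now the DC reformulation $g-h+r_\mu$. In the Lipschitz case (i) this follows because $r_\mu$ is globally Lipschitz and $g-h+r_\mu$ inherits a lower bound from $f$; in the lower-bounded case (ii), level-boundedness of $f+r_\mu$ coupled with a standard descent-type argument along the stagewise construction $F^\gamma_{\x_k}\geq F$ keeps the iterates in a bounded sublevel set, which is the same argument invoked implicitly in Theorem~\ref{thm:ncns:reg}. Once that is in place the remaining balancing is routine calculus mirroring Corollary~\ref{cor:7}.
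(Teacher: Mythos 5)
Your proposal follows essentially the same route as the paper's own proof: apply the Moreau-envelope DC reformulation, reuse the residual decomposition from Theorem~\ref{thm:ncns:reg}, substitute the sharper stagewise bound of Lemma~\ref{lem:new1} with $\gamma_k=3Lk^{\beta}$, and balance the $(L+\gamma_K)$-term against the $\mu^{-1}$-term to arrive at $\beta=1/3$, $K=O(\epsilon^{-3})$ for part (a) and $\beta=1/2$, $K=O(\epsilon^{-4})$ for part (b), with the same complexity accounting $\sum_k T_k=O(K^{1+(1-\beta)})$. The one point you flag as an obstacle --- verifying $\E[F(\x_k)-\min_\x F(\x)]\leq\Delta$ for the reformulated objective --- is simply assumed in Lemma~\ref{lem:new1} and not re-verified in the paper's proof either, so your sketch is, if anything, slightly more careful on that point.
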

{\bf Remark:} We  note that after the first version of this paper was posted online, \cite{DBLP:journals/corr/abs/1901.08369} also considered a special setting of our problem~(\ref{eqn:Pf}) where $g$ is a smooth and non-convex, $h=0$, $r$ is  non-smooth non-convex and Lipchitz continuous, which is covered in part (a) of the above theorem by noting that a smooth function $g$ can be written as a DC decomposition of two smooth convex functions. In terms of iteration complexity, they obtained the same complexity of $O(1/\epsilon^5)$ but with a large mini-batch size equal to $O(1/\epsilon)$. When using  a mini-batch size of $1$, their algorithm has a worse complexity of $O(1/\epsilon^6)$. In contrast, our algorithm does not need a large mini-batch size. In addition, the step size in their algorithm is also small in the order of $O(\epsilon^2)$ or $(\epsilon^3)$ for finding an $\epsilon$-critical point, while the step size in our algorithm is decreasing in a stagewise manner.

\begin{proof} 
This proof is similar to the proof of Theorem~\ref{thm:ncns:reg}. Following the analysis of Theorem~\ref{thm:ncns:reg} we know
for any $\tau\in\{1,\ldots, K\}$,
\begin{align*}
\E[\|\nabla  g(\w_\tau) - \nabla h(\w_\tau)  + \hat\partial r(\w_\tau)\|] &\leq \E[(L+\gamma_\tau)\|\x_\tau - \z_\tau\| +  \frac{1}{\mu}\|\x_\tau - \z_\tau\| + 2L\|\x_\tau - \w_\tau\|].
\end{align*}
For part (a), by the setting of $\gamma_\tau= 3L\tau^{1/3}$, we know $\gamma_\tau \leq 3LK^{1/3}$ so that
\begin{align*}
\E[\|\nabla  g(\w_\tau) - \nabla h(\w_\tau)  + \hat\partial r(\w_\tau)\|] &\leq \E[(L+3LK^{1/3})\|\x_\tau - \z_\tau\| +  \frac{1}{\mu}\|\x_\tau - \z_\tau\| + 2L\|\x_\tau - \w_\tau\|].
 \end{align*}   
 By using 
 \begin{align*}
 \frac{1}{2\mu}\|\x_\tau - \w_\tau\|^2 +  r(\w_\tau) \leq r(\x_\tau)
\end{align*}
  we have  
   \begin{align*}
   \frac{1}{2\mu}\|\x_\tau - \w_\tau\|^2\leq r(\x_\tau) - r(\w_\tau)\leq G\|\x_\tau - \w_\tau\|\Rightarrow \|\x_\tau - \w_\tau\|\leq 2G\mu,
   \end{align*}
   where the second inequality with an appropriate $G>0$ follows the Lipchitz continuity of $r$. 
         Then 
      \begin{align*}
      \E[\|\nabla  g(\w_\tau) - \nabla h(\w_\tau)  + \hat\partial r(\w_\tau)\|] &\leq  \E[(L+3LK^{1/3}) \|\x_\tau - \z_\tau\| +  \frac{1}{\mu}\|\x_\tau - \z_\tau\| + 4GL\mu ].
      \end{align*}
      By setting $\mu = \epsilon$ and $K= O(1/\epsilon^3)$ and $\tau$ randomly sampled, we have $\E[\|\x_\tau - \z_\tau\|]\leq \epsilon^2$ by Lemma~\ref{lem:new1} with $\beta=1/3$ and hence
      \begin{align*}
     \E[\|\nabla  g(\w_\tau) - \nabla h(\w_\tau)  + \hat\partial r(\w_\tau)\|] \leq  O(\epsilon), \quad \E[\|\x_\tau - \w_{\tau}\|]\leq O(\epsilon). 
      \end{align*}   
For part (b), by the setting of $\gamma_\tau= 3L\tau^{1/2}$, we know $\gamma_\tau \leq 3LK^{1/2}$ so that
\begin{align*}
\E[\|\nabla  g(\w_\tau) - \nabla h(\w_\tau)  + \hat\partial r(\w_\tau)\|] &\leq \E[(L+3LK^{1/2})\|\x_\tau - \z_\tau\| +  \frac{1}{\mu}\|\x_\tau - \z_\tau\| + 2L\|\x_\tau - \w_\tau\|].
 \end{align*}   
By using
   \begin{align*}
\frac{1}{2\mu}\|\x_\tau - \w_\tau\|^2 +  r(\w_\tau) \leq r(\x_\tau)
   \end{align*}
      we have
       \begin{align*}
   \|\x_\tau - \w_\tau\|\leq \sqrt{2\mu \left(r(\x_\tau)  - \min_{\x\in\R^d}r(\x)\right)}\leq \sqrt{2\mu M},
      \end{align*}
      where $M>0$ exists due to Assumption~\ref{ass:last}(ii).
             Then 
      \begin{align*}
      \E[\|\nabla  g(\w_\tau) - \nabla h(\w_\tau)  + \hat\partial r(\w_\tau)\|] &\leq  \E[(L+3LK^{1/2}) \|\x_\tau - \z_\tau\| +  \frac{1}{\mu}\|\x_\tau - \z_\tau\| + 2L\sqrt{2\mu M}].
      \end{align*}
      By setting $\mu = \epsilon^2$ and $K= O(1/\epsilon^{4})$ and $\tau$ randomly sampled, we have $\E[\|\x_\tau - \z_\tau\|]\leq \epsilon^3$ by Lemma~\ref{lem:new1} with $\beta=1/2$ and hence
      \begin{align*}
      \E[\|\nabla  g(\w_\tau) - \nabla h(\w_\tau)  + \hat\partial r(\w_\tau)\|] &\leq  O(\epsilon), \quad \E[\|\x_\tau - \w_{\tau}\|]\leq O(\epsilon).
      \end{align*}
\end{proof}

We can also solve the subproblem by using SVRG (Algorithm~\ref{alg:SVRG}) when $g$ and $h$ are of finite-sum form. Similar to Lemma~\ref{lem:DC:new3}, we have the following result for the convergence of $\|\x_\tau - \z_{\tau}\|$.
\begin{lemma}\label{lem:new2}
    Suppose Assumption~\ref{ass:4} holds and  there exists $\Delta>0$ such that $\E[F(\x_1) - \min_{\x}F(\x)]\leq \Delta$,  and Algorithm~\ref{alg:SVRG} is employed for solving $F_k$ with  $\gamma_k= ck^{\beta}$ with $0\leq \beta < 1$ and $c>0$, $\eta_{k} = 0.05/L$, $T_k\geq \max(2, 200L/\gamma_k)$, $S_k =\lceil \log_2(k)\rceil$, then with a total of $K$ stages Algorithm~\ref{alg:meta} guarantees  
\begin{align*}
     \E\bigg[\|\z_{\tau} - \x_\tau\|^2\bigg]&\leq \frac{12\Delta (\alpha +1)}{cK^{1+\beta}},
\end{align*}
    where $\tau\in\{1,\ldots, K\}$ is sampled according to probabilities $p(\tau=k) = \frac{k^\alpha}{\sum_{k=1}^Kk^\alpha}$ with $\alpha\geq 1$. 
\end{lemma}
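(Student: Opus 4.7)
The plan is to adapt the two-step argument behind Theorem~\ref{thm:svrg} to the non-constant regularization $\gamma_k = ck^{\beta}$. First, following the derivation leading to inequality~(\ref{eqn:c2}) but retaining $\gamma_k$ instead of a constant $\gamma$, one obtains the per-stage recursion
\[ \frac{\gamma_k}{2}\E[\|\z_k - \x_k\|^2] \leq \E[F(\x_k) - F(\x_{k+1})] + \epsilon_k, \qquad \epsilon_k := \E[F_k(\x_{k+1}) - F_k(\z_k)], \]
whose derivation uses $F_k(\x_k) = F(\x_k)$ and convexity of $h$. Proposition~\ref{prop:svrg} then controls $\epsilon_k$: with $\eta_k = 0.05/L$ and $T_k \geq \max(2, 200L/\gamma_k)$ each SVRG outer loop contracts the subproblem gap by a factor of $1/2$, so the choice $S_k = \lceil \log_2 k\rceil$ gives $\epsilon_k \leq 0.5^{S_k}\E[F_k(\x_k) - F_k(\z_k)] \leq \phi_k/k$, where $\phi_k := \E[F(\x_k) - \min F]$ and the final inequality again uses convexity of $h$ via $F_k(\x_k) - F_k(\z_k) \leq F(\x_k) - F(\z_k) \leq F(\x_k) - \min F$.

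The key technical step is then the weighted sum with the correct weights for the varying $\gamma_k$. I would multiply the per-stage inequality by $u_k := 2k^{\alpha-\beta}/c$, chosen precisely so that $u_k \gamma_k / 2 = k^\alpha$ matches the target sampling weight, and sum over $k = 1, \dots, K$ to obtain
\[ \sum_{k=1}^K k^\alpha \E[\|\z_k - \x_k\|^2] \leq \sum_{k=1}^K u_k [F(\x_k) - F(\x_{k+1})] + \sum_{k=1}^K u_k \epsilon_k. \]
Applying Abel summation to the first right-hand sum as in the proof of Theorem~\ref{thm:3} (valid because $u_k$ is non-decreasing since $\alpha \geq \beta$), and substituting $\epsilon_k \leq \phi_k / k$ into the second, reduces both terms to sums involving $\phi_k$ times $k^{\alpha-\beta-1}$-type quantities. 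Under a uniform bound $\phi_k \leq \Delta$, both telescope to $O(\Delta K^{\alpha-\beta}/c)$, and dividing by $\sum_k k^\alpha \geq K^{\alpha+1}/(\alpha+1)$ delivers $\E[\|\z_\tau - \x_\tau\|^2] = O(\Delta(\alpha+1)/(cK^{1+\beta}))$, which matches the claimed bound.

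The main obstacle is precisely the uniform control $\phi_k \leq \Delta$, since the lemma assumes only $\phi_1 \leq \Delta$. The naive recursion $\phi_{k+1} \leq (1+1/k)\phi_k$ (obtained by dropping the descent term $\frac{\gamma_k}{2}\E[\|\z_k - \x_k\|^2]$ and using $\epsilon_k \leq \phi_k/k$) yields only $\phi_k \leq k\Delta$, which would degrade the rate to $O(1/K^{\beta})$. The resolution is to keep the full descent inequality and exploit its amortized structure: dividing $\phi_{k+1} + \frac{\gamma_k}{2}\E[\|\z_k - \x_k\|^2] \leq (1+1/k)\phi_k$ by $k+1$ gives the monotonicity $\phi_{k+1}/(k+1) \leq \phi_k/k$ together with the budget $\sum_k \gamma_k\E[\|\z_k - \x_k\|^2]/(k+1) \leq 2\Delta$. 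Combining this budget with the non-uniform weighting $u_k$ (through a H\"{o}lder-style estimate, or equivalently by canceling the $\epsilon_k$-induced $\phi_k/k$ terms against a fraction of the descent contribution before the final averaging) so as to bound $\sum_k k^\alpha \E[\|\z_k - \x_k\|^2]$ without a pointwise assumption on $\phi_k$ is the most delicate step of the proof.
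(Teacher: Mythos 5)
Your overall scaffolding is the same as the paper's (per-stage descent recursion, contraction of the subproblem gap via Proposition~\ref{prop:svrg} with $0.5^{S_k}\le 1/k$, weighting by $w_k/\gamma_k$ so that the effective weight on $\E[\|\z_k-\x_k\|^2]$ is $k^\alpha$, Abel summation, and division by $\sum_k k^\alpha\ge K^{\alpha+1}/(\alpha+1)$), and your computation conditional on the uniform bound $\phi_k:=\E[F(\x_k)-\min F]\le\Delta$ does deliver the claimed rate. The genuine gap is in how you propose to get that uniform bound. The missing observation — which is exactly why this lemma, unlike the SPG/AdaGrad versions, only assumes a bound on the \emph{initial} gap — is that the SVRG stage is monotone: since the contraction factor is at most one, $\E_k[F_k(\x_{k+1})]\le F_k(\x_k)$, and since $F_k$ majorizes $F$ with $F_k(\x_k)=F(\x_k)$, this gives $\E_k[F(\x_{k+1})]\le\E_k[F_k(\x_{k+1})]\le F(\x_k)$. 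Hence $\phi_k\le\phi_1\le\Delta$ for every $k$ with no further work, and your "most delicate step" disappears.

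Moreover, your fallback amortized argument does not recover the claimed rate. Dividing $\phi_{k+1}+\frac{\gamma_k}{2}d_k\le(1+1/k)\phi_k$ (with $d_k=\E[\|\z_k-\x_k\|^2]$) by $k+1$ and telescoping yields only the budget $\sum_k k^{\beta-1}d_k\le O(\Delta/c)$; converting this to the required weights $k^\alpha$ costs a factor $K^{\alpha+1-\beta}$, and after dividing by $\sum_k k^\alpha$ one is left with $O(\Delta(\alpha+1)/(cK^{\beta}))$, which is strictly weaker than the stated $O(\Delta(\alpha+1)/(cK^{1+\beta}))$. So the H\"{o}lder-style combination you gesture at cannot substitute for the pointwise bound $\phi_k\le\Delta$; you need the monotonicity argument above (or, equivalently, the hypothesis $\E[F(\x_k)-\min F]\le\Delta$ for all $k$, which for SVRG is a consequence rather than an extra assumption).
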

\begin{thm}(Improved Complexities of SSDC-SVRG)\label{thm:new2}
Regarding the problem~(\ref{eqn:Pf}), we have the following results:
\begin{itemize} 
\item [a.]
If $g$ and $h$ have a finite-sum form, Assumption~\ref{ass:ghsm} and Assumption~\ref{ass:4} hold, and Assumption~\ref{ass:last} (i)  holds,  then we can use Algorithm~\ref{alg:meta} with Algorithm~\ref{alg:SVRG}  to solve~(\ref{eqn:New}). We can  set  $\mu = \epsilon$, $\gamma_k= ck^{1/3}$, $T_k\geq \max(2, 200L/\gamma_k)$, $S_k =\lceil \log_2(k)\rceil$.  The algorithm will return a solution $\x_{\tau}$ after $K=O(1/\epsilon^3)$ stages satisfying 
\begin{align*}
\E[\|\x_\tau - \w_{\tau}\|]\leq O(\epsilon), \quad \E[\text{dist}(\nabla h(\w_\tau), \nabla g(\w_{\tau}) + \hat \partial r(\w_{\tau}))]\leq O(\epsilon),
\end{align*}
where $\w_\tau = \text{prox}_{\mu r}(\x_{\tau})$. The total gradient complexity is $\widetilde O(n/\epsilon^3)$.
\item [b.]
If $g$ and $h$ have a finite-sum form, Assumption~\ref{ass:ghsm} and Assumption~\ref{ass:4} hold, and Assumption~\ref{ass:last} (ii) or (iii) holds,  then we can use Algorithm~\ref{alg:meta} with Algorithm~\ref{alg:SVRG}  to solve~(\ref{eqn:New}). We can  set  $\mu=\epsilon^2$, $\gamma_k= ck^{1/2}$, $T_k\geq \max(2, 200L/\gamma_k)$, $S_k =\lceil \log_2(k)\rceil$.  The algorithm will return a solution $\x_{\tau}$ after $K=O(1/\epsilon^{4})$ stages satisfying 
\begin{align*}
\E[\|\x_\tau - \w_{\tau}\|]\leq O(\epsilon), \quad \E[\text{dist}(\nabla h(\w_\tau), \nabla g(\w_{\tau}) + \hat \partial r(\w_{\tau}))]\leq O(\epsilon),
\end{align*}
where $\w_\tau = \text{prox}_{\mu r}(\x_{\tau})$. The total gradient complexity is $\widetilde O(n/\epsilon^{4})$.
\end{itemize}
\end{thm}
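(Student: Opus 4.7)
The plan is to mirror the proof structure of Theorem~\ref{thm:new1}, replacing Lemma~\ref{lem:new1} with Lemma~\ref{lem:new2} to exploit the linear-convergence guarantee of SVRG inside each stage. Applying the algorithms of Section~\ref{sec:DC} to the DC reformulation~(\ref{eqn:New}) and copying the chain of inequalities derived in the proof of Theorem~\ref{thm:ncns:reg:Appendix}, I would first record the key bound
\begin{align*}
\E[\|\nabla g(\w_\tau)-\nabla h(\w_\tau)+\hat\partial r(\w_\tau)\|]
\leq \E\!\left[(L+\gamma_\tau)\|\x_\tau-\z_\tau\|+\tfrac{1}{\mu}\|\x_\tau-\z_\tau\|+2L\|\x_\tau-\w_\tau\|\right],
\end{align*}
where $\w_\tau=\text{prox}_{\mu r}(\x_\tau)$ and $\z_\tau$ is the exact minimizer of the $\tau$-th SVRG subproblem. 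Since $\gamma_k$ is increasing, $\gamma_\tau\le \gamma_K=cK^\beta$, so all randomness is channeled into $\|\x_\tau-\z_\tau\|$ (controlled by Lemma~\ref{lem:new2}) and $\|\x_\tau-\w_\tau\|$ (controlled by Assumption~\ref{ass:last}).

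For part (a), I would use Lipschitz continuity of $r$ to conclude $\|\x_\tau-\w_\tau\|\le 2G\mu$, take $\mu=\epsilon$, $\beta=1/3$, and $K=O(1/\epsilon^3)$. Lemma~\ref{lem:new2} with $\beta=1/3$ yields $\E[\|\x_\tau-\z_\tau\|^2]\le O(1/K^{4/3})$, hence $\E[\|\x_\tau-\z_\tau\|]\le O(\epsilon^2)$ by Jensen's inequality. Plugging in gives $\gamma_\tau\|\x_\tau-\z_\tau\|\le cK^{1/3}\cdot\epsilon^2=O(\epsilon)$, $\tfrac{1}{\mu}\|\x_\tau-\z_\tau\|=O(\epsilon)$, and $2L\cdot 2G\mu=O(\epsilon)$. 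For part (b), I would use $\tfrac{1}{2\mu}\|\x_\tau-\w_\tau\|^2\le r(\x_\tau)-r(\w_\tau)\le M$ (either from Assumption~\ref{ass:last}(ii) directly, or from~(iii) together with the descent-property of SVRG ensuring $\E[f(\x_k)+r_\mu(\x_k)]$ is bounded, hence $\E[r(\x_k)-\min r]\le M$), giving $\|\x_\tau-\w_\tau\|\le\sqrt{2\mu M}$. Taking $\mu=\epsilon^2$, $\beta=1/2$, $K=O(1/\epsilon^4)$, Lemma~\ref{lem:new2} yields $\E[\|\x_\tau-\z_\tau\|]\le O(\epsilon^3)$; then $\gamma_\tau\|\x_\tau-\z_\tau\|\le cK^{1/2}\cdot\epsilon^3=O(\epsilon)$, $\tfrac{1}{\mu}\|\x_\tau-\z_\tau\|=O(\epsilon)$, and $2L\sqrt{2\mu M}=O(\epsilon)$.

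For the gradient complexity, I would use the per-stage gradient cost of SVRG, namely $(n+T_k)S_k$ with $T_k=\max(2,200L/\gamma_k)$ and $S_k=\lceil\log_2 k\rceil$. Since $\gamma_k=ck^\beta$ grows with $k$, the term $200L/\gamma_k$ is eventually dominated by the constant $2$, so $T_k=O(1)$ and $(n+T_k)S_k=O(n\log k)$. Summing over $k=1,\dots,K$ gives $\widetilde O(nK)$, which becomes $\widetilde O(n/\epsilon^3)$ in part (a) and $\widetilde O(n/\epsilon^4)$ in part (b).

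The only subtle point, and the main obstacle, is verifying the hypothesis of Lemma~\ref{lem:new2} that $\E[F(\x_k)-\min_\x F(\x)]\le\Delta$ holds uniformly in $k$ under Assumption~\ref{ass:last}(iii), where $r$ need only be finite-valued on a compact set. I would justify this by showing that SVRG with a sufficiently small inner step size makes $\E[F(\x_k)]$ non-increasing across outer stages (as noted in the remark after Theorem~\ref{thm:3}), which together with level-boundedness of $f+r_\mu$ keeps the iterates confined to a compact sub-level set where $r$ is finite-valued and bounded; this simultaneously yields the uniform bound $\Delta$ and the constant $M$ used above. With this verification in place, the four displayed bounds combine to give $\E[\|\nabla g(\w_\tau)-\nabla h(\w_\tau)+\hat\partial r(\w_\tau)\|]\le O(\epsilon)$ and $\E[\|\x_\tau-\w_\tau\|]\le O(\epsilon)$, completing the proof.
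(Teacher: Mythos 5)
Your proposal is correct and follows essentially the same route as the paper's proof: the same key inequality bounding $\E[\|\nabla g(\w_\tau)-\nabla h(\w_\tau)+\hat\partial r(\w_\tau)\|]$, the same case analysis on Assumption~\ref{ass:last} to control $\|\x_\tau-\w_\tau\|$, and the same use of Lemma~\ref{lem:new2} with $\beta=1/3$ (resp.\ $\beta=1/2$) to get $\E[\|\x_\tau-\z_\tau\|]\le O(\epsilon^2)$ (resp.\ $O(\epsilon^3)$). The ``obstacle'' you flag is in fact already absorbed into the statement of Lemma~\ref{lem:new2}, which only requires $\E[F(\x_1)-\min_\x F(\x)]\le\Delta$ at the initial point, the uniform-in-$k$ bound being supplied internally by exactly the SVRG descent argument you describe.
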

{\bf Remark:} \cite{DBLP:journals/corr/abs/1901.08369} also considered a special setting of our problem~(\ref{eqn:Pf}) where $g$ is a finite-sum smooth and non-convex function, $h=0$, $r$ is  non-smooth non-convex and Lipchitz continuous, which is covered in part (a) of the above theorem. In terms of iteration complexity, they obtained a gradient complexity of $O(n^{2/3}/\epsilon^3)$ but with a large mini-batch size equal to $n^{2/3}$. Their gradient complexity is better than the result of part (a) in the above theorem. However, their algorithm needs to use a large mini-batch size and a small step size. 
In contrast, our algorithm does not need a large mini-batch size, and the step size in SVRG is a constant. In addition, we also emphasize that our algorithm SSDC is a general framework, which allows one to employ any suitable stochastic algorithms for smooth and strongly convex functions. It therefore gives us much more flexibility in practice. However, it is an interesting question whether one can obtain a better dependence on $n$ in our framework. 


\begin{proof} 
Similar to the proof of Theorem~\ref{thm:new1}, we know
for any $\tau\in\{1,\ldots, K\}$,
\begin{align*}
\E[\|\nabla  g(\w_\tau) - \nabla h(\w_\tau)  + \hat\partial r(\w_\tau)\|] &\leq \E[(L+\gamma_K)\|\x_\tau - \z_\tau\| +  \frac{1}{\mu}\|\x_\tau - \z_\tau\| + 2L\|\x_\tau - \w_\tau\|].
 \end{align*}   
 For part (a), using 
 \begin{align*}
 \frac{1}{2\mu}\|\x_\tau - \w_\tau\|^2 +  r(\w_\tau) \leq r(\x_\tau)
\end{align*}
  we have  
   \begin{align*}
   \frac{1}{2\mu}\|\x_\tau - \w_\tau\|^2\leq r(\x_\tau) - r(\w_\tau)\leq G\|\x_\tau - \w_\tau\|\Rightarrow \|\x_\tau - \w_\tau\|\leq 2G\mu,
   \end{align*}
   where the second inequality with an appropriate $G>0$ follows the Lipchitz continuity of $r$. 
         Then by the setting of $\gamma_K = 3LK^{1/3}$,
      \begin{align*}
      \E[\|\nabla  g(\w_\tau) - \nabla h(\w_\tau)  + \hat\partial r(\w_\tau)\|] &\leq  \E[(L+3LK^{1/3}) \|\x_\tau - \z_\tau\| +  \frac{1}{\mu}\|\x_\tau - \z_\tau\| + 4GL\mu ].
      \end{align*}
      By setting $\mu = \epsilon$ and $K= O(1/\epsilon^3)$ and $\tau$ randomly sampled, we have $\E[\|\x_\tau - \z_\tau\|]\leq \epsilon^2$ by Lemma~\ref{lem:new2} with $\beta=1/3$ and hence
      \begin{align*}
     \E[\|\nabla  g(\w_\tau) - \nabla h(\w_\tau)  + \hat\partial r(\w_\tau)\|] \leq  O(\epsilon), \quad \E[\|\x_\tau - \w_{\tau}\|]\leq O(\epsilon). 
      \end{align*}
      For part (b), if Assumption~\ref{ass:last}(ii) holds, then using
   \begin{align*}
\frac{1}{2\mu}\|\x_\tau - \w_\tau\|^2 +  r(\w_\tau) \leq r(\x_\tau)
   \end{align*}
      we have
       \begin{align*}
   \|\x_\tau - \w_\tau\|\leq \sqrt{2\mu \left(r(\x_\tau)  - \min_{\x\in\R^d}r(\x)\right)}\leq \sqrt{2\mu M}.
      \end{align*}
      If Assumption~\ref{ass:last}(iii) holds, we take expectation over the above inequality giving
       \begin{align*}
  \E[ \|\x_\tau - \w_\tau\|]\leq \sqrt{ 2\mu \left(\E[r(\x_\tau)  - \min_{\x\in\R^d}r(\x)]\right)}.
   \end{align*}
   Since using the SVRG, we can show $\E[f(\x_\tau) + r_\mu(\x_\tau)]$ is bounded above, i.e., $\x_\tau$ is in a bounded set (in expectation), which together with the assumption $r$ is lower bounded implies that there exists a constant $M>0$ such that $\E[r(\x_\tau) - \min_{\x\in\R^d}r(\x) ]\leq M$ for $\tau=1,\ldots, K$. 
   For both cases, by the setting of $\gamma_K = 3LK^{1/2}$ we have 
      \begin{align*}
      \E[\|\nabla  g(\w_\tau) - \nabla h(\w_\tau)  + \hat\partial r(\w_\tau)\|] &\leq  \E[(3L+3LK^{1/2}) \|\x_\tau - \z_\tau\| +  \frac{1}{\mu}\|\x_\tau - \z_\tau\| + 2L\sqrt{2\mu M} ].
      \end{align*}
By setting $\mu = \epsilon^2$ and $K=O(1/\epsilon^{4})$ and $\tau$ randomly sampled, we have $\E \|\x_\tau - \z_\tau\| \leq \epsilon^3$ by Lemma~\ref{lem:new2}  and hence 
      \begin{align*}
      \E[\|\nabla  g(\w_\tau) - \nabla h(\w_\tau)  + \hat\partial r(\w_\tau)\|] &\leq  O(\epsilon), \quad \E[\|\x_\tau - \w_{\tau}\|]\leq O(\epsilon).
      \end{align*}
\end{proof}

Next, we present the results under the H\"{o}lder continuous gradient condition of $h$. These results are simple extension of that in Theorems~\ref{thm:new1}, and Theorem~\ref{thm:new2}.
\begin{thm}\label{thm:new1:Appendix}
Regarding the problem~(\ref{eqn:Pf}), we have the following results:
\begin{itemize} 
\item [a.]
If Assumption~\ref{ass:ghsm2}, Assumption~\ref{ass:last} (i)  and Assumption~\ref{ass:1} (i) hold, then we can use Algorithm~\ref{alg:meta} with Algorithm~\ref{alg:sgd} (option 1)  to solve~(\ref{eqn:New}) with $\mu = \epsilon$, $\gamma_k= 3Lk^{1/3}$ and $T_k = 3Lk/\gamma_k+3$, which returns a solution $\x_{\tau}$ after $K=O(1/\epsilon^{\frac{3(1+\nu)}{2}})$ stages satisfying 
\begin{align*}
\E[\|\x_\tau - \w_{\tau}\|]\leq O(\epsilon), \quad \E[\text{dist}(\nabla h(\w_\tau), \nabla g(\w_{\tau}) + \hat \partial r(\w_{\tau}))]\leq O(\epsilon^\nu),
\end{align*}
where $\w_\tau = \text{prox}_{\mu r}(\x_{\tau})$. The total iteration complexity  for such convergence guarantee  is $\sum_{k=1}^KT_k \le O(K^{5/3}) = O(1/\epsilon^{\frac{5(1+\nu)}{2}})$. 
\item [b.] If Assumption~\ref{ass:ghsm2}, Assumption~\ref{ass:last} (ii) and Assumption~\ref{ass:1} (i) hold, then we can use Algorithm~\ref{alg:meta} with Algorithm~\ref{alg:sgd} (option 1)  to solve~(\ref{eqn:New}) with $\mu = \epsilon^2$, $\gamma_k= 3Lk^{1/2}$ and $T_k = 3Lk/\gamma_k+3$, which returns a solution $\x_{\tau}$ after $K=O(1/\epsilon^{\frac{4(2+\nu)}{3}})$ stages satisfying 
\begin{align*}
\E[\|\x_\tau - \w_{\tau}\|]\leq O(\epsilon), \quad \E[\text{dist}(\nabla h(\w_\tau), \nabla g(\w_{\tau}) + \hat \partial r(\w_{\tau}))]\leq O(\epsilon^\nu),
\end{align*}
where $\w_\tau = \text{prox}_{\mu r}(\x_{\tau})$. The total iteration complexity for such convergence guarantee is $\sum_{k=1}^KT_k \le O(K^{3/2}) = O(1/\epsilon^{ 2(2+\nu)})$. 
\item [c.]
If $g$ and $h$ have a finite-sum form, Assumption~\ref{ass:ghsm2},  Assumption~\ref{ass:4} and Assumption~\ref{ass:last} (i)  hold,  then we can use Algorithm~\ref{alg:meta} with Algorithm~\ref{alg:SVRG}  to solve~(\ref{eqn:New}). We can  set  $\mu = \epsilon$, $\gamma_k= ck^{1/3}$, $T_k\geq \max(2, 200L/\gamma_k)$, $S_k =\lceil \log_2(k)\rceil$.  The algorithm will return a solution $\x_{\tau}$ after $K=O(1/\epsilon^{\frac{3(1+\nu)}{2}})$ stages satisfying 
\begin{align*}
\E[\|\x_\tau - \w_{\tau}\|]\leq O(\epsilon), \quad \E[\text{dist}(\nabla h(\w_\tau), \nabla g(\w_{\tau}) + \hat \partial r(\w_{\tau}))]\leq O(\epsilon^\nu),
\end{align*}
where $\w_\tau = \text{prox}_{\mu r}(\x_{\tau})$. The total gradient complexity  for such convergence guarantee  is $\widetilde O(n/\epsilon^{\frac{3(1+\nu)}{2}})$.
\item [d.]
If $g$ and $h$ have a finite-sum form, Assumption~\ref{ass:ghsm2},  Assumption~\ref{ass:4} and  Assumption~\ref{ass:last} (ii) or (iii) hold,  then we can use Algorithm~\ref{alg:meta} with Algorithm~\ref{alg:SVRG}  to solve~(\ref{eqn:New}). We can  set  $\mu=\epsilon^2$, $\gamma_k= ck^{1/2}$, $T_k\geq \max(2, 200L/\gamma_k)$, $S_k =\lceil \log_2(k)\rceil$.  The algorithm will return a solution $\x_{\tau}$ after $K=O(1/\epsilon^{\frac{4(2+\nu)}{3}})$ stages satisfying 
\begin{align*}
\E[\|\x_\tau - \w_{\tau}\|]\leq O(\epsilon), \quad \E[\text{dist}(\nabla h(\w_\tau), \nabla g(\w_{\tau}) + \hat \partial r(\w_{\tau}))]\leq O(\epsilon^\nu),
\end{align*}
where $\w_\tau = \text{prox}_{\mu r}(\x_{\tau})$. The total gradient complexity  for such convergence guarantee is $\widetilde O(n/\epsilon^{\frac{4(2+\nu)}{3}})$.
\end{itemize}
\end{thm}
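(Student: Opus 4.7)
The plan is to mirror the proofs of Theorem~\ref{thm:new1} and Theorem~\ref{thm:new2} but track the weaker Hölder regularity of $\nabla h$ explicitly through the chain of inequalities. At each outer stage, the subproblem is still strongly convex (with modulus $\gamma_k + 1/\mu$), so Lemma~\ref{lem:new1} (for SPG) and Lemma~\ref{lem:new2} (for SVRG) give $\E\|\z_\tau - \x_\tau\|^2 = O(1/K^{1+\beta})$ with $\beta=1/3$ or $1/2$ depending on the case. From the first-order optimality of the subproblem, we know $0 \in \nabla\hat g(\z_\tau) + \gamma_\tau(\z_\tau - \x_\tau) - \nabla h(\x_\tau) - \frac{1}{\mu}\text{prox}_{\mu r}(\x_\tau)$, and since $\frac{1}{\mu}(\x_\tau - \w_\tau) \in \hat\partial r(\w_\tau)$ with $\w_\tau = \text{prox}_{\mu r}(\x_\tau)$, transferring the residual to $\w_\tau$ produces the master inequality
\begin{align*}
\|\nabla g(\w_\tau) - \nabla h(\w_\tau) + \hat\partial r(\w_\tau)\|
 \leq (L + \gamma_\tau)\|\z_\tau - \x_\tau\| + \tfrac{1}{\mu}\|\z_\tau - \x_\tau\| + L\|\x_\tau - \w_\tau\| + L\|\x_\tau - \w_\tau\|^\nu.
\end{align*}
Compared with the Lipschitz case, only the last summand is new, arising because $\|\nabla h(\w_\tau) - \nabla h(\x_\tau)\| \leq L\|\x_\tau - \w_\tau\|^\nu$ under Assumption~\ref{ass:ghsm2}.

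Next, I would bound $\|\x_\tau - \w_\tau\|$ exactly as in Theorem~\ref{thm:new1}/\ref{thm:new2}: under Assumption~\ref{ass:last}(i) Lipschitz continuity of $r$ gives $\|\x_\tau - \w_\tau\| \leq 2G\mu$, so the Hölder term becomes $L(2G\mu)^\nu = O(\mu^\nu)$; under Assumption~\ref{ass:last}(ii) (resp.~(iii) through the finite-sum SVRG argument showing $\E[f(\x_\tau) + r_\mu(\x_\tau)]$ bounded) we get $\|\x_\tau - \w_\tau\| \leq \sqrt{2\mu M}$, yielding $L(2\mu M)^{\nu/2} = O(\mu^{\nu/2})$. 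This motivates taking $\mu = \epsilon$ for part~(a) and~(c) and $\mu = \epsilon^2$ for part~(b) and~(d), so that the Hölder term is exactly $O(\epsilon^\nu)$ in every case.

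Finally, I would pick $K$ to force the first two terms to also be $O(\epsilon^\nu)$. Plugging the Lemma bounds into the master inequality gives $\frac{1}{\mu}\E\|\z_\tau - \x_\tau\| \leq O(\mu^{-1} K^{-(1+\beta)/2})$ and $\gamma_K \E\|\z_\tau - \x_\tau\| \leq O(K^\beta \cdot K^{-(1+\beta)/2}) = O(K^{(\beta-1)/2})$. For part~(a) with $\beta=1/3$ and $\mu=\epsilon$, setting $K = O(1/\epsilon^{3(1+\nu)/2})$ forces $\frac{1}{\mu}\E\|\z_\tau - \x_\tau\| \leq O(\epsilon^\nu)$ (with room to spare on the $\gamma_K$ term since $(1-\beta)/2 \geq \nu$ reduces to $\nu \leq 1$); part~(b) with $\beta=1/2$, $\mu=\epsilon^2$ requires $K = O(1/\epsilon^{4(2+\nu)/3})$ by the same balancing. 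For parts~(c) and~(d), replacing Lemma~\ref{lem:new1} by Lemma~\ref{lem:new2} does not change $\beta$, and the $\log k$ factor from SVRG produces only the tilde in the gradient complexity. Totaling $\sum_k T_k$ for SPG yields $O(K^{5/3}) = O(1/\epsilon^{5(1+\nu)/2})$ or $O(K^{3/2}) = O(1/\epsilon^{2(2+\nu)})$; the SVRG gradient count is $\widetilde O(nK)$.

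The main obstacle is not the derivation itself (which is an almost mechanical re-run of the two earlier proofs), but verifying that the single exponent $K$ I choose in each case simultaneously dominates three terms with different scalings: the $\frac{1}{\mu}\|\z_\tau - \x_\tau\|$ term (which wants $K$ large when $\mu$ is small), the $\gamma_K\|\z_\tau - \x_\tau\|$ term (where the growth of $\gamma_k$ fights the contraction from $K^{-(1+\beta)/2}$), and the new Hölder term $O(\mu^\nu)$ or $O(\mu^{\nu/2})$ (which requires $\mu$ to be shrinking already as a power of $\epsilon$). The rates $5(1+\nu)/2$, $2(2+\nu)$, $3(1+\nu)/2$, $4(2+\nu)/3$ quoted in the theorem are precisely what arises from this balancing, and so the bookkeeping must be done carefully in each of the four parts.
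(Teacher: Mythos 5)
Your proposal is correct and follows essentially the same route as the paper: the same master inequality with the extra H\"{o}lder term $L\|\x_\tau-\w_\tau\|^\nu$, the same bounds $\|\x_\tau-\w_\tau\|\le 2G\mu$ or $\sqrt{2\mu M}$ under the respective assumptions on $r$, and the same choice of $\mu$ and $K$ balancing the $\frac{1}{\mu}\|\z_\tau-\x_\tau\|$, $\gamma_K\|\z_\tau-\x_\tau\|$, and H\"{o}lder terms via Lemmas~\ref{lem:new1} and~\ref{lem:new2}. Your exponent bookkeeping (including the check that $\nu\le 1$ makes the $\gamma_K$ term subdominant) matches the paper's.
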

{\bf Remark:} When deriving the total gradient complexity for ensuring $ \E[\text{dist}(\nabla h(\w_\tau), \nabla g(\w_{\tau}) + \hat \partial r(\w_{\tau}))]\leq O(\epsilon)$ we can simply replace $\epsilon$ by $\epsilon^{1/\nu}$ in the above results. 

\begin{proof} 
This proof is similar to the proof of Theorem~\ref{thm:ncns:reg:Appendix} and Theorems~\ref{thm:new1} and~\ref{thm:new2}. Following the analysis of Theorem~\ref{thm:ncns:reg:Appendix} we know
for any $\tau\in\{1,\ldots, K\}$,
\begin{align*}
&\E[\|\nabla  g(\w_\tau) - \partial h(\w_\tau)  + \hat\partial r(\w_\tau)\|] \\
\leq &\E[(L+\gamma_\tau)\|\x_\tau - \z_\tau\| +  \frac{1}{\mu}\|\x_\tau - \z_\tau\| + L\|\x_\tau - \w_\tau\|+ L\|\x_\tau - \w_\tau\|^\nu].
\end{align*}
For part (a), by the setting of $\gamma_\tau= 3L\tau^{1/3}$, we know $\gamma_\tau \leq 3LK^{1/3}$ so that
\begin{align*}
&\E[\|\nabla  g(\w_\tau) - \partial h(\w_\tau)  + \hat\partial r(\w_\tau)\|] \\
\leq& \E[(L+3LK^{1/3})\|\x_\tau - \z_\tau\| +  \frac{1}{\mu}\|\x_\tau - \z_\tau\| + L\|\x_\tau - \w_\tau\|+ L\|\x_\tau - \w_\tau\|^\nu].
 \end{align*}   
 By using 
 \begin{align*}
 \frac{1}{2\mu}\|\x_\tau - \w_\tau\|^2 +  r(\w_\tau) \leq r(\x_\tau)
\end{align*}
  we have  
   \begin{align*}
   \frac{1}{2\mu}\|\x_\tau - \w_\tau\|^2\leq r(\x_\tau) - r(\w_\tau)\leq G\|\x_\tau - \w_\tau\|\Rightarrow \|\x_\tau - \w_\tau\|\leq 2G\mu,
   \end{align*}
   where the second inequality with an appropriate $G>0$ follows the Lipchitz continuity of $r$. 
         Then 
      \begin{align*}
      \E[\|\nabla  g(\w_\tau) - \nabla h(\w_\tau)  + \hat\partial r(\w_\tau)\|] &\leq  \E[(L+3LK^{1/3}) \|\x_\tau - \z_\tau\| +  \frac{1}{\mu}\|\x_\tau - \z_\tau\| + 2GL\mu + L (2G\mu)^\nu].
      \end{align*}
      By setting $\mu = \epsilon$ and $K= O(1/\epsilon^{\frac{3(1+\nu)}{2}})$ and $\tau$ randomly sampled, we have $\E[\|\x_\tau - \z_\tau\|]\leq \epsilon^{(1+\nu)}$ by Lemma~\ref{lem:new1} with $\beta=1/3$ and hence
      \begin{align*}
     \E[\|\nabla  g(\w_\tau) - \nabla h(\w_\tau)  + \hat\partial r(\w_\tau)\|] \leq  O(\epsilon^\nu), \quad \E[\|\x_\tau - \w_{\tau}\|]\leq O(\epsilon). 
      \end{align*}
      For part (b), 
      by the setting of $\gamma_\tau= 3L\tau^{1/2}$, we know $\gamma_\tau \leq 3LK^{1/2}$ so that
\begin{align*}
&\E[\|\nabla  g(\w_\tau) - \partial h(\w_\tau)  + \hat\partial r(\w_\tau)\|] \\
\leq& \E[(L+3LK^{1/2})\|\x_\tau - \z_\tau\| +  \frac{1}{\mu}\|\x_\tau - \z_\tau\| + L\|\x_\tau - \w_\tau\|+ L\|\x_\tau - \w_\tau\|^\nu].
 \end{align*}   
      By using
   \begin{align*}
\frac{1}{2\mu}\|\x_\tau - \w_\tau\|^2 +  r(\w_\tau) \leq r(\x_\tau)
   \end{align*}
      we have
       \begin{align*}
   \|\x_\tau - \w_\tau\|\leq \sqrt{2\mu \left(r(\x_\tau)  - \min_{\x\in\R^d}r(\x)\right)}\leq \sqrt{2\mu M},
      \end{align*}
      where $M>0$ exists due to Assumption~\ref{ass:last}(ii).
             Then 
      \begin{align*}
      \E[\|\nabla  g(\w_\tau) - \nabla h(\w_\tau)  + \hat\partial r(\w_\tau)\|] &\leq  \E[(L+3LK^{1/2}) \|\x_\tau - \z_\tau\| +  \frac{1}{\mu}\|\x_\tau - \z_\tau\| + 2L(2\mu M)^{\nu/2}].
      \end{align*}
      By setting $\mu = \epsilon^2$ and $K= O(1/\epsilon^{\frac{4(2+\nu)}{3}})$ and $\tau$ randomly sampled, we have $\E[\|\x_\tau - \z_\tau\|]\leq \epsilon^{2+\nu}$ by Lemma~\ref{lem:new1} with $\beta=1/2$ and hence
      \begin{align*}
      \E[\|\nabla  g(\w_\tau) - \nabla h(\w_\tau)  + \hat\partial r(\w_\tau)\|] &\leq  O(\epsilon^\nu), \quad \E[\|\x_\tau - \w_{\tau}\|]\leq O(\epsilon).
      \end{align*}
Part (c) and Part (d) can be proved similarly as in Theorem~\ref{thm:new2}. 
\end{proof}

\subsection{Improved Complexities when $\nu$ of $h$ is known}\label{sec:new:improve}
Similar to the case when $r$ is convex, we can also improve the complexity for solving problems with non-smooth non-convex $r$ and $h$ with a H\"{o}lder continuous gradient under the condition that $\nu$ is known. 
\begin{thm}\label{thm:new1:Appendix2}
Regarding the problem~(\ref{eqn:Pf}), we have the following results:
\begin{itemize} 
\item [a.]
If Assumption~\ref{ass:ghsm2}, Assumption~\ref{ass:last} (i)  and Assumption~\ref{ass:1} (i) hold, then we can use Algorithm~\ref{alg:meta} with Algorithm~\ref{alg:sgd} (option 1)  to solve~(\ref{eqn:New}) with $\mu = \epsilon$, $\gamma_k= 3Lk^{1/(1+2\nu)}$ and $T_k = 3Lk/\gamma_k+3$, which returns a solution $\x_{\tau}$ after $K=O(1/\epsilon^{1+2\nu})$ stages satisfying 
\begin{align*}
\E[\|\x_\tau - \w_{\tau}\|]\leq O(\epsilon), \quad \E[\text{dist}(\nabla h(\w_\tau), \nabla g(\w_{\tau}) + \hat \partial r(\w_{\tau}))]\leq O(\epsilon^\nu),
\end{align*}
where $\w_\tau = \text{prox}_{\mu r}(\x_{\tau})$. The total iteration complexity is $\sum_{k=1}^KT_k= O(1/\epsilon^{1+4\nu})$. 
\item [b.] If Assumption~\ref{ass:ghsm2}, Assumption~\ref{ass:last} (ii) and Assumption~\ref{ass:1} (i) hold, then we can use Algorithm~\ref{alg:meta} with Algorithm~\ref{alg:sgd} (option 1)  to solve~(\ref{eqn:New}) with $\mu = \epsilon^2$, $\gamma_k= 3Lk^{1/(1+\nu)}$ and $T_k = 3Lk/\gamma_k+3$, which returns a solution $\x_{\tau}$ after $K=O(1/\epsilon^{2(1+\nu)})$ stages satisfying 
\begin{align*}
\E[\|\x_\tau - \w_{\tau}\|]\leq O(\epsilon), \quad \E[\text{dist}(\nabla h(\w_\tau), \nabla g(\w_{\tau}) + \hat \partial r(\w_{\tau}))]\leq O(\epsilon^\nu),
\end{align*}
where $\w_\tau = \text{prox}_{\mu r}(\x_{\tau})$. The total iteration complexity for such convergence guarantee is $\sum_{k=1}^KT_k = O(1/\epsilon^{ 2(1+2\nu)})$. 
\item [c.]
If $g$ and $h$ have a finite-sum form, Assumption~\ref{ass:ghsm2},  Assumption~\ref{ass:4}  and Assumption~\ref{ass:last} (i)  holds,  then we can use Algorithm~\ref{alg:meta} with Algorithm~\ref{alg:SVRG}  to solve~(\ref{eqn:New}). We can  set  $\mu = \epsilon$, $\gamma_k= ck^{1/(1+2\nu)}$, $T_k\geq \max(2, 200L/\gamma_k)$, $S_k =\lceil \log_2(k)\rceil$.  The algorithm will return a solution $\x_{\tau}$ after $K=O(1/\epsilon^{1+2\nu})$ stages satisfying 
\begin{align*}
\E[\|\x_\tau - \w_{\tau}\|]\leq O(\epsilon), \quad \E[\text{dist}(\nabla h(\w_\tau), \nabla g(\w_{\tau}) + \hat \partial r(\w_{\tau}))]\leq O(\epsilon^\nu),
\end{align*}
where $\w_\tau = \text{prox}_{\mu r}(\x_{\tau})$. The total gradient complexity is $\widetilde O(n/\epsilon^{1+2\nu})$.
\item [d.]
 If $g$ and $h$ have a finite-sum form, Assumption~\ref{ass:ghsm2},  Assumption~\ref{ass:4}  and Assumption~\ref{ass:last} (ii) or (iii) holds,  then we can use Algorithm~\ref{alg:meta} with Algorithm~\ref{alg:SVRG}  to solve~(\ref{eqn:New}). We can  set  $\mu=\epsilon^2$, $\gamma_k= ck^{1/(1+\nu)}$, $T_k\geq \max(2, 200L/\gamma_k)$, $S_k =\lceil \log_2(k)\rceil$.  The algorithm will return a solution $\x_{\tau}$ after $K=O(1/\epsilon^{2(1+\nu)})$ stages satisfying 
\begin{align*}
\E[\|\x_\tau - \w_{\tau}\|]\leq O(\epsilon), \quad \E[\text{dist}(\nabla h(\w_\tau), \nabla g(\w_{\tau}) + \hat \partial r(\w_{\tau}))]\leq O(\epsilon^\nu),
\end{align*}
where $\w_\tau = \text{prox}_{\mu r}(\x_{\tau})$. The total gradient complexity is $\widetilde O(n/\epsilon^{2(1+\nu)})$.
\end{itemize}
\end{thm}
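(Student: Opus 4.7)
The plan is to follow the same line of argument as in Theorem~\ref{thm:new1:Appendix}, but to optimize the choice of the growth rate $\gamma_k = O(k^{\beta})$ using knowledge of $\nu$. The starting point is the bound derived in the proof of Theorem~\ref{thm:ncns:reg:Appendix} (reused in Theorem~\ref{thm:new1:Appendix}): for any stage index $\tau$,
\begin{align*}
\E\big[\|\nabla g(\w_\tau) - \nabla h(\w_\tau) + \hat\partial r(\w_\tau)\|\big]
\leq \E\Big[(L+\gamma_\tau)\|\x_\tau-\z_\tau\| + \tfrac{1}{\mu}\|\x_\tau-\z_\tau\| + L\|\x_\tau-\w_\tau\| + L\|\x_\tau-\w_\tau\|^\nu\Big].
\end{align*}
From Lemma~\ref{lem:new1} (for SPG) or Lemma~\ref{lem:new2} (for SVRG), with $\gamma_k = ck^\beta$ one has $\E[\|\x_\tau-\z_\tau\|] \leq O(1/K^{(1+\beta)/2})$. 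The proximal-envelope control gives $\|\x_\tau-\w_\tau\| \leq 2G\mu$ under Assumption~\ref{ass:last}(i) and $\|\x_\tau-\w_\tau\|\leq \sqrt{2\mu M}$ under (ii)/(iii). Plugging these in, the right-hand side becomes a sum of four terms depending on $(\mu,\beta,K)$.

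For part (a), take $\mu = \epsilon$; the terms $L\mu$ and $L\mu^\nu$ contribute $O(\epsilon^\nu)$, and the two $\|\x_\tau-\z_\tau\|$ terms demand both $\frac{1}{\mu K^{(1+\beta)/2}} \le O(\epsilon^\nu)$ and $\frac{K^{\beta}}{K^{(1+\beta)/2}} = K^{-(1-\beta)/2}\le O(\epsilon^\nu)$. Balancing these two constraints forces $(1+\nu)(1-\beta) = \nu(1+\beta)$, i.e., $\beta = 1/(1+2\nu)$, yielding $K = O(1/\epsilon^{1+2\nu})$ and total iteration count $\sum_k T_k = O(K^{2-\beta}) = O(1/\epsilon^{1+4\nu})$. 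For part (b), take $\mu=\epsilon^2$; the Moreau bound $\|\x_\tau-\w_\tau\|\le\sqrt{2\mu M}=O(\epsilon)$ makes both $L\|\x_\tau-\w_\tau\|$ and $L\|\x_\tau-\w_\tau\|^\nu$ of order $O(\epsilon^\nu)$, while matching $\frac{1}{\mu K^{(1+\beta)/2}}$ against $K^{-(1-\beta)/2}$ gives $(2+\nu)(1-\beta) = \nu(1+\beta)$, hence $\beta = 1/(1+\nu)$, $K = O(1/\epsilon^{2(1+\nu)})$, and $\sum_k T_k = O(K^{2-\beta}) = O(1/\epsilon^{2(1+2\nu)})$.

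Parts (c) and (d) proceed identically in structure: SVRG (Lemma~\ref{lem:new2}) yields exactly the same $O(1/K^{(1+\beta)/2})$ bound on $\E[\|\x_\tau-\z_\tau\|]$, so the choice of $\beta$ and the resulting $K$ are unchanged. The only difference is that SVRG's per-stage gradient complexity is $(n + T_k)S_k = \widetilde O(n)$ (since $T_k = \max(2, 200L/\gamma_k)$ is $O(1)$ once $\gamma_k$ grows beyond $L$, and $S_k = \lceil \log_2 k\rceil$), so the total gradient complexity scales like $\widetilde O(nK)$, giving $\widetilde O(n/\epsilon^{1+2\nu})$ and $\widetilde O(n/\epsilon^{2(1+\nu)})$ for parts (c) and (d) respectively. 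In each case the $\E[\|\x_\tau-\w_\tau\|]=O(\epsilon)$ claim follows directly from the bound on $\|\x_\tau-\w_\tau\|$.

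The only nonroutine step is identifying the optimal $\beta$ by balancing the two $\|\x_\tau-\z_\tau\|$-related terms—one scaled by $\gamma_K=K^\beta$, the other by $1/\mu$—against the target accuracy $\epsilon^\nu$. All other ingredients (the Moreau-envelope inequalities, the stagewise-DC decomposition, and the convergence lemmas for SPG and SVRG at a single stage) have been established earlier and can be invoked verbatim. In particular, no new algorithmic idea or new estimate is required beyond the choice of $\beta$, so the proof amounts to replaying the computations of Theorem~\ref{thm:new1:Appendix} with $\beta$ tuned to $\nu$ and verifying the exponents.
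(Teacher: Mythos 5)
Your proposal is correct and follows exactly the route the paper intends: the paper gives no separate proof for this theorem, remarking only that it ``can be easily proved as Theorem~\ref{thm:new1:Appendix}'', and your argument is precisely that replay with the exponent $\beta$ tuned to $\nu$ by balancing the $\gamma_K\|\x_\tau-\z_\tau\|$ and $\frac{1}{\mu}\|\x_\tau-\z_\tau\|$ terms, which correctly recovers $\beta=1/(1+2\nu)$ and $\beta=1/(1+\nu)$ and all the stated complexities.
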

{\bf Remark:} The above results can be easily proved as Theorem~\ref{thm:new1:Appendix}. 

\paragraph{Deterministic Methods for~(\ref{eqn:Pf}) with a Non-smooth Non-convex Regularizer.}
Finally, we note that for the problem~(\ref{eqn:Pf}) with a non-smooth non-convex regularizer, a non-asymptotic convergence result for a deterministic optimization method is novel and interesting of its own. In particular, SVRG can be replaced by deterministic gradient based methods (e.g., accelerated gradient methods~\citep{citeulike:9501961,Beck:2009:FIS:1658360.1658364,Composite}) to enjoy a similar non-asymptotic convergence in terms of $\epsilon$. From this perspective, we can also generalize the results to the case when both $g$ and $h$ have H\"{o}lder continuous gradients by using a universal gradient method~\citep{Nesterov2015} that can adapts to the  actual level of smoothness of the objective function for solving each subproblem. 


\section{Numerical Experiments}
In this section, we perform some experiments for solving different tasks to demonstrate effectiveness of proposed algorithms by comparing with different baselines. 
We use very large-scale datasets from libsvm website in experiments, including real-sim (n = 72309) and rcv1 (n=20242) for classification, million songs (n = 463715) for regression. 
For all algorithms, the initial stepsizes are tuned in the range of $\{10^{-6:1:4}\}$, and the same initial solution with all zero entries is used.
 The initial iteration number $T_0$ of SSDC-SPG is tuned in $ \{10^{1:1:4}\}$. For SSDC algorithms, we use a constant parameter $\gamma$ and also tune its value in $\{10^{-7:1:3}\}$. 
 

First, we compare SSDC algorithms with SDCA~\citep{pmlr-v70-thi17a}, SGD~\citep{sgdweakly18}, SVRG~\citep{reddi2016proximal}, GIST~\citep{DBLP:conf/icml/GongZLHY13} and GPPA~\citep{doi:10.1080/02331934.2016.1253694} for learning with a DC regularizer: minimizing  logistic loss with a SCAD regularizer for classification and huber loss with a MCP regularizer for regression. The parameter in Huber loss is set to be $1$. The value of regularization parameter is set to be $10^{-4}$.  Note that since these regularizers are weakly convex, SGD with step size $\eta_0/\sqrt{t}$ is applicable~\citep{sgdweakly18}. We set the inner iteration number of SVRG as $n$ following ~\citep{reddi2016proximal} and the same value is used as the inner iteration number $T$ of SSDC-SVRG. We set the values of parameters in GIST with their suggested  BB rule~\citep{DBLP:conf/icml/GongZLHY13}. 
Similar to~\citep{pmlr-v70-thi17a}, we tune the batch size of SDCA in a wide range and choose the one with the best performance. 
 GIST and GPPA are two deterministic algorithms that use the all data points in each iteration. For fairness of comparison, we plot the objective in log scale versus the number of gradient computations in Figure~\ref{fig01}. 

\begin{figure}[t]
\centering
\centering
{\includegraphics[scale=.33]{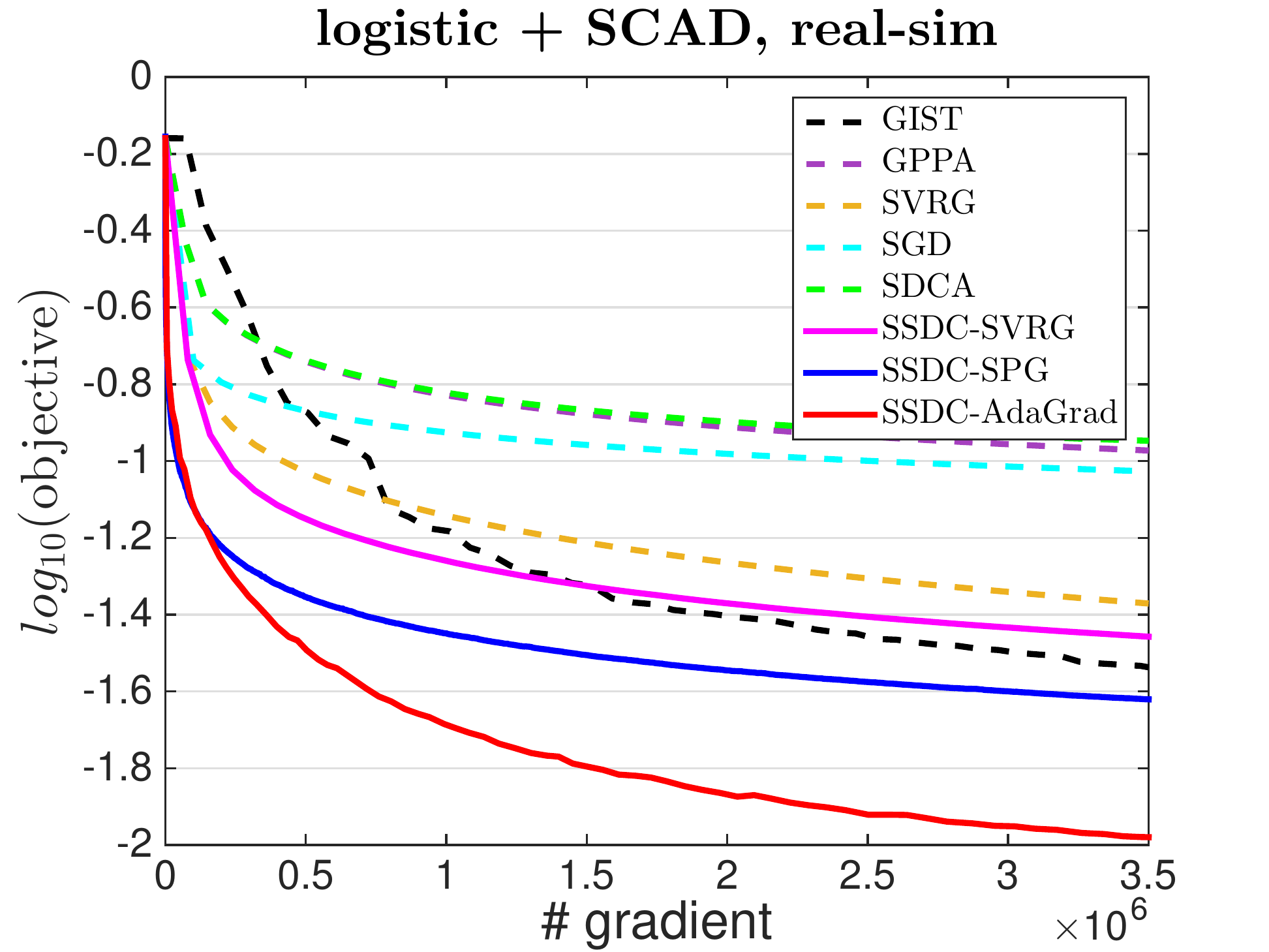}}
{\includegraphics[scale=.33]{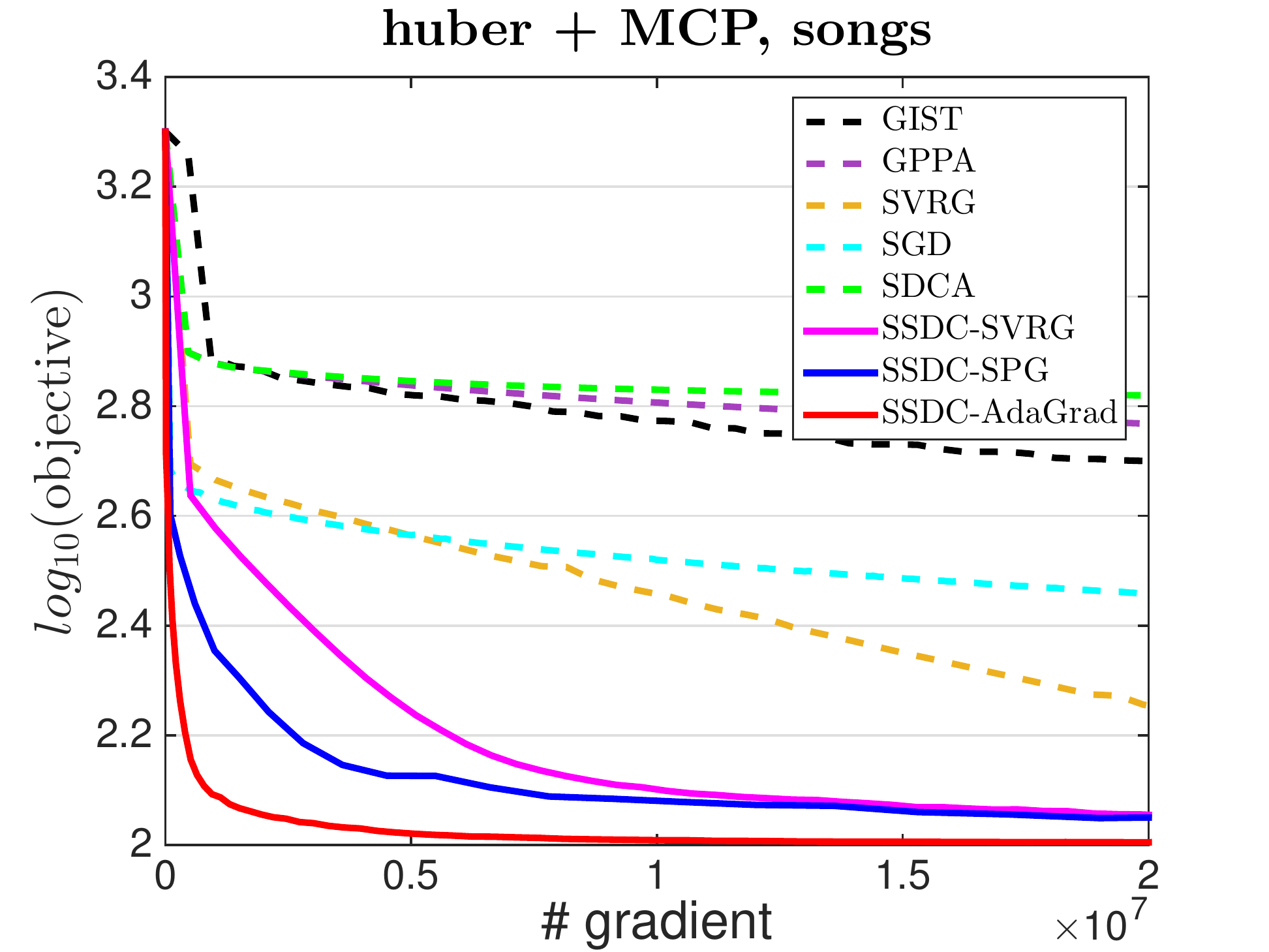}}
\caption{Learning with DC regularizers on different datasets for classification and regression.}\label{fig01}
\end{figure}
\begin{figure}[t]
\centering
{\includegraphics[scale=.33]{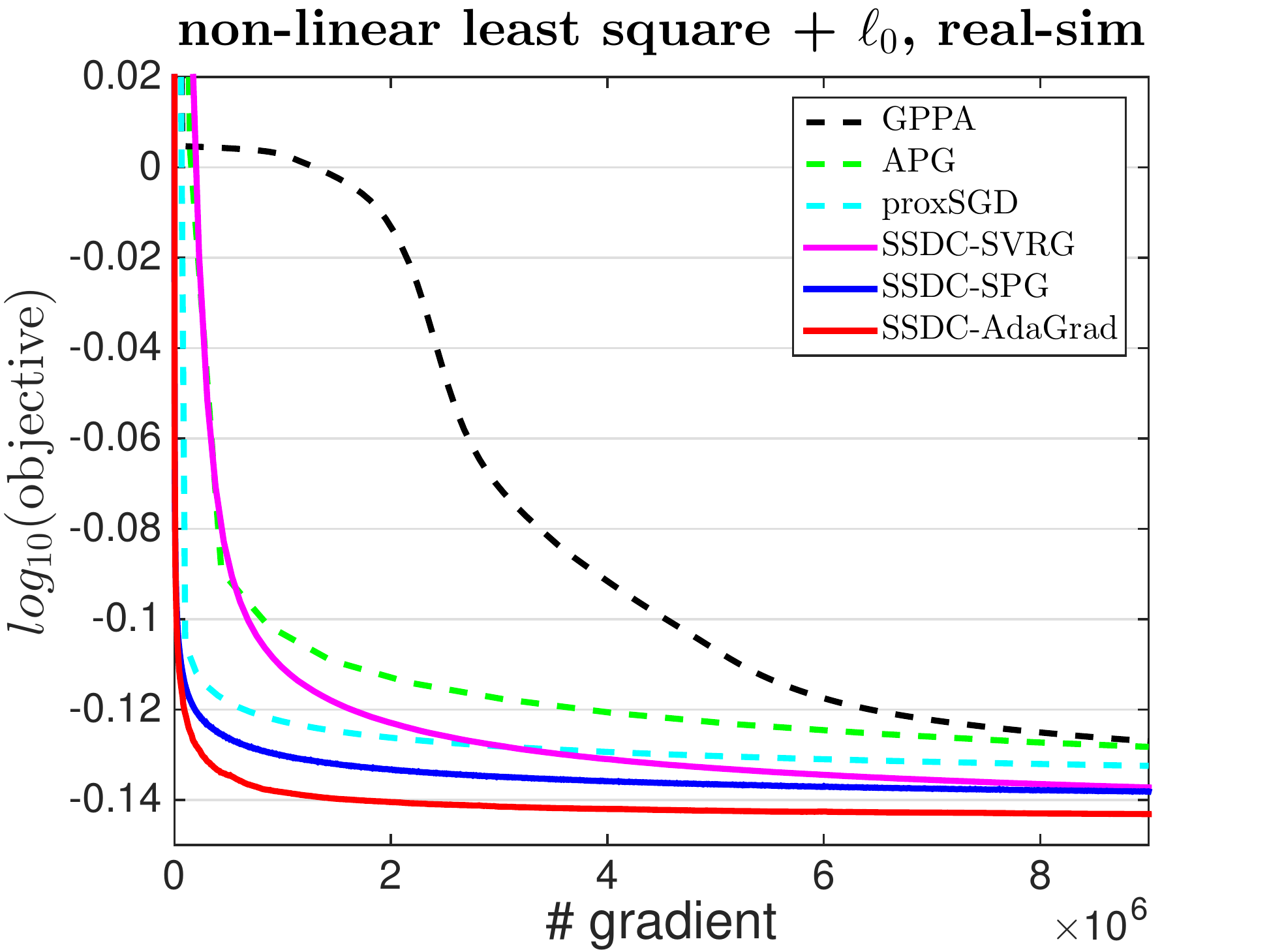}}
{\includegraphics[scale=.33]{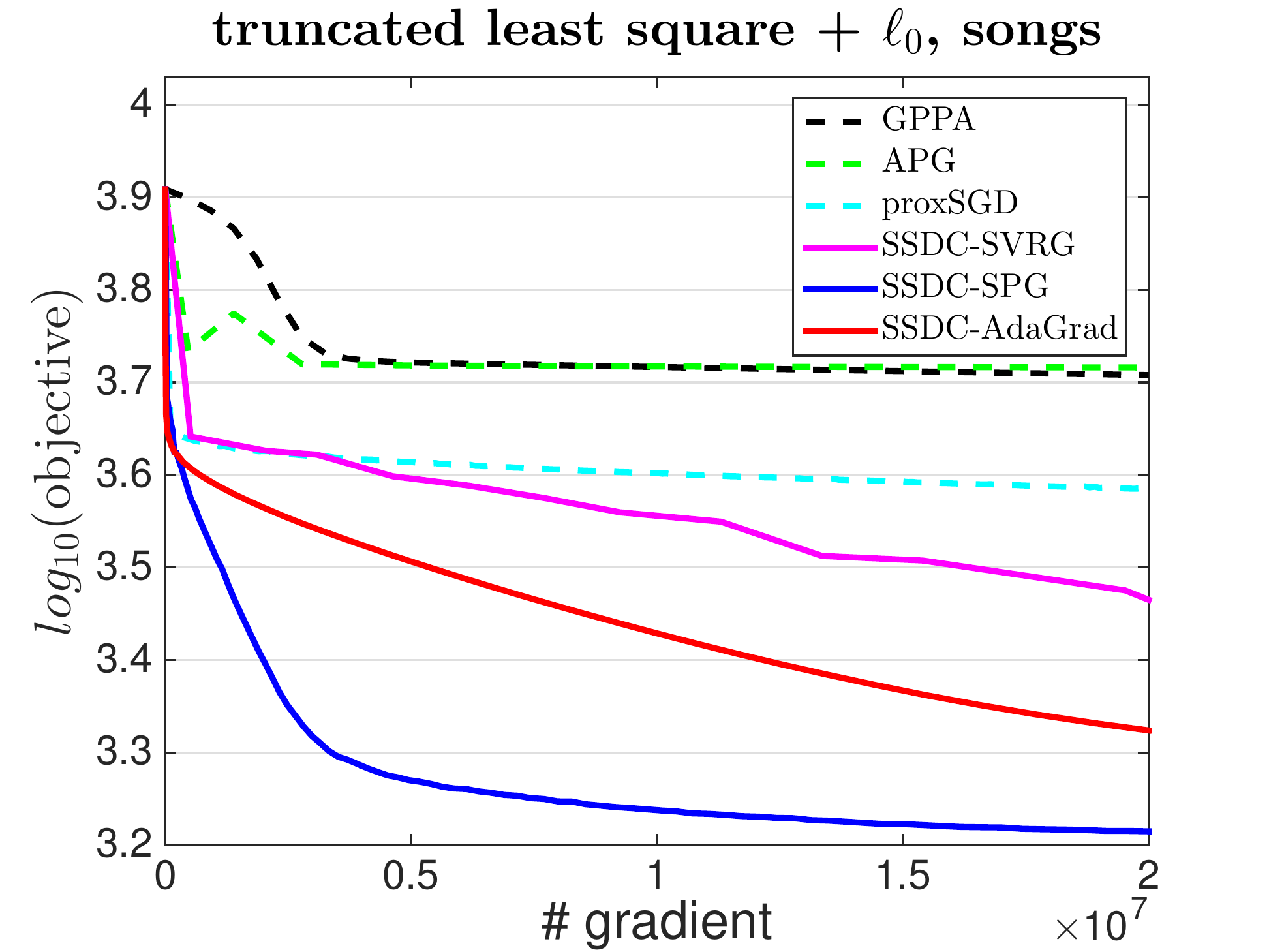}}
\caption{Learning with non-DC regularizers (right two) on different datasets for classification and regression.}\label{fig02}
\end{figure}
\begin{figure}[t]
\centering
{\includegraphics[scale=.33]{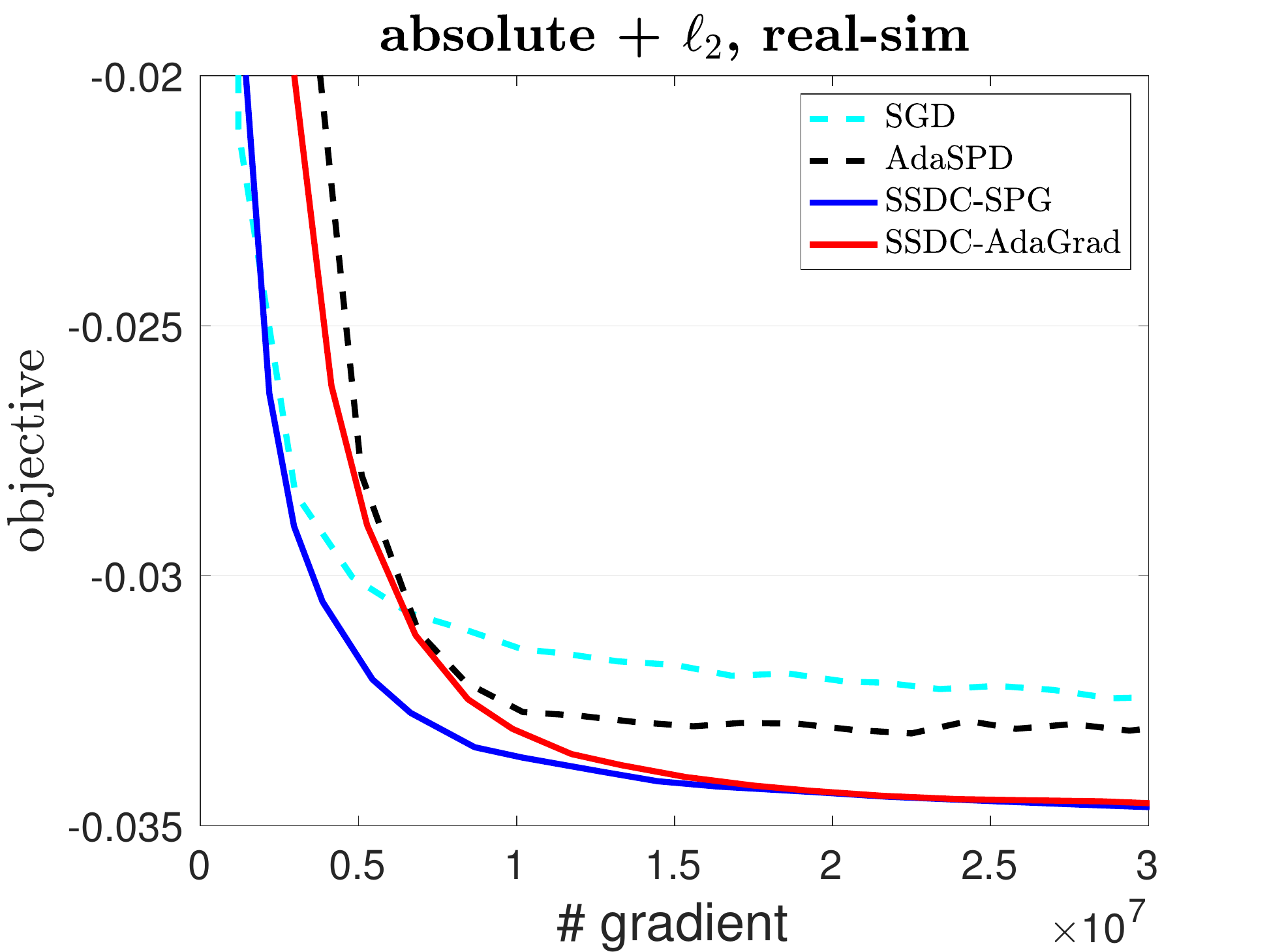}}
{\includegraphics[scale=.33]{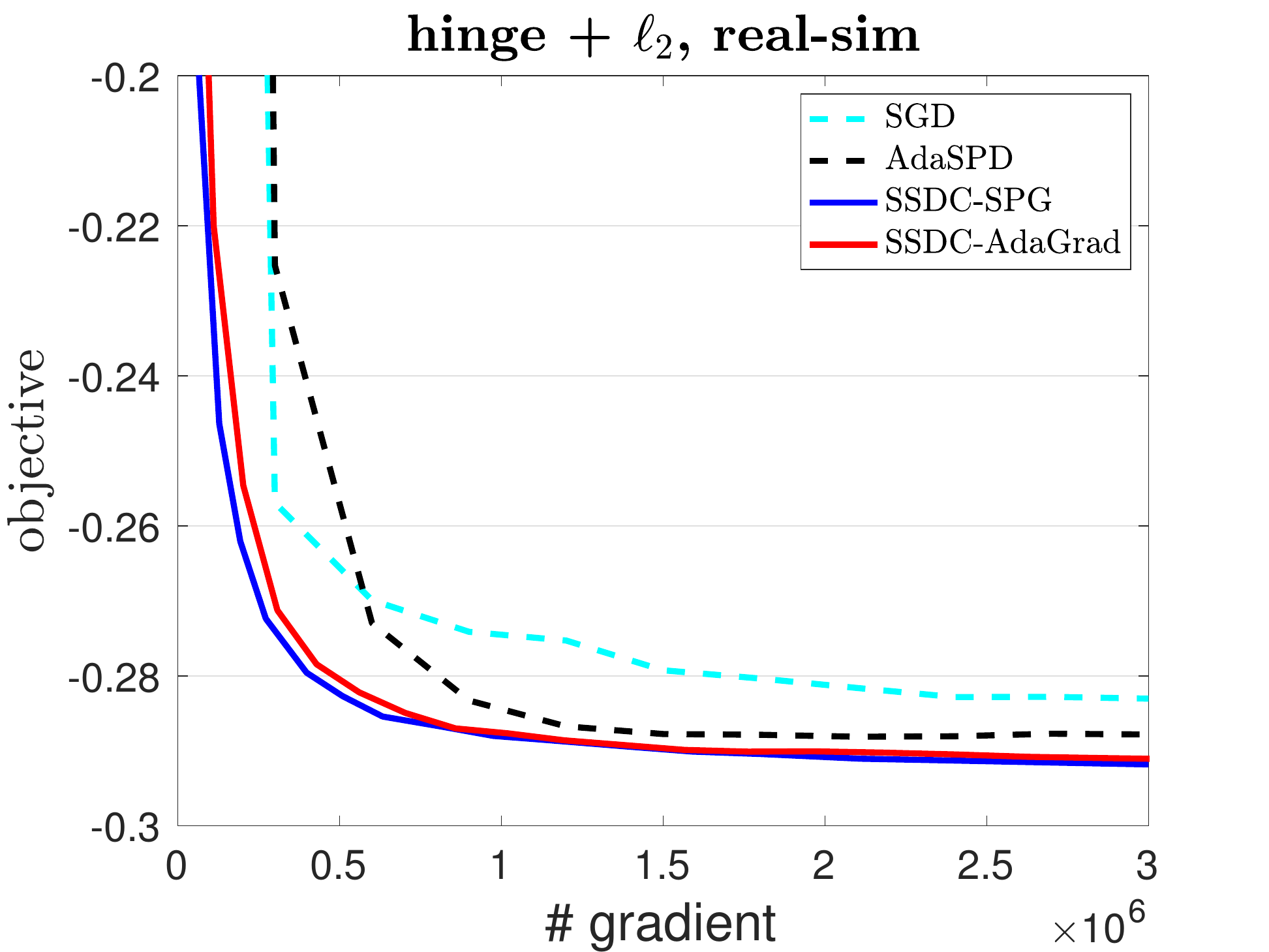}}

{\includegraphics[scale=.33]{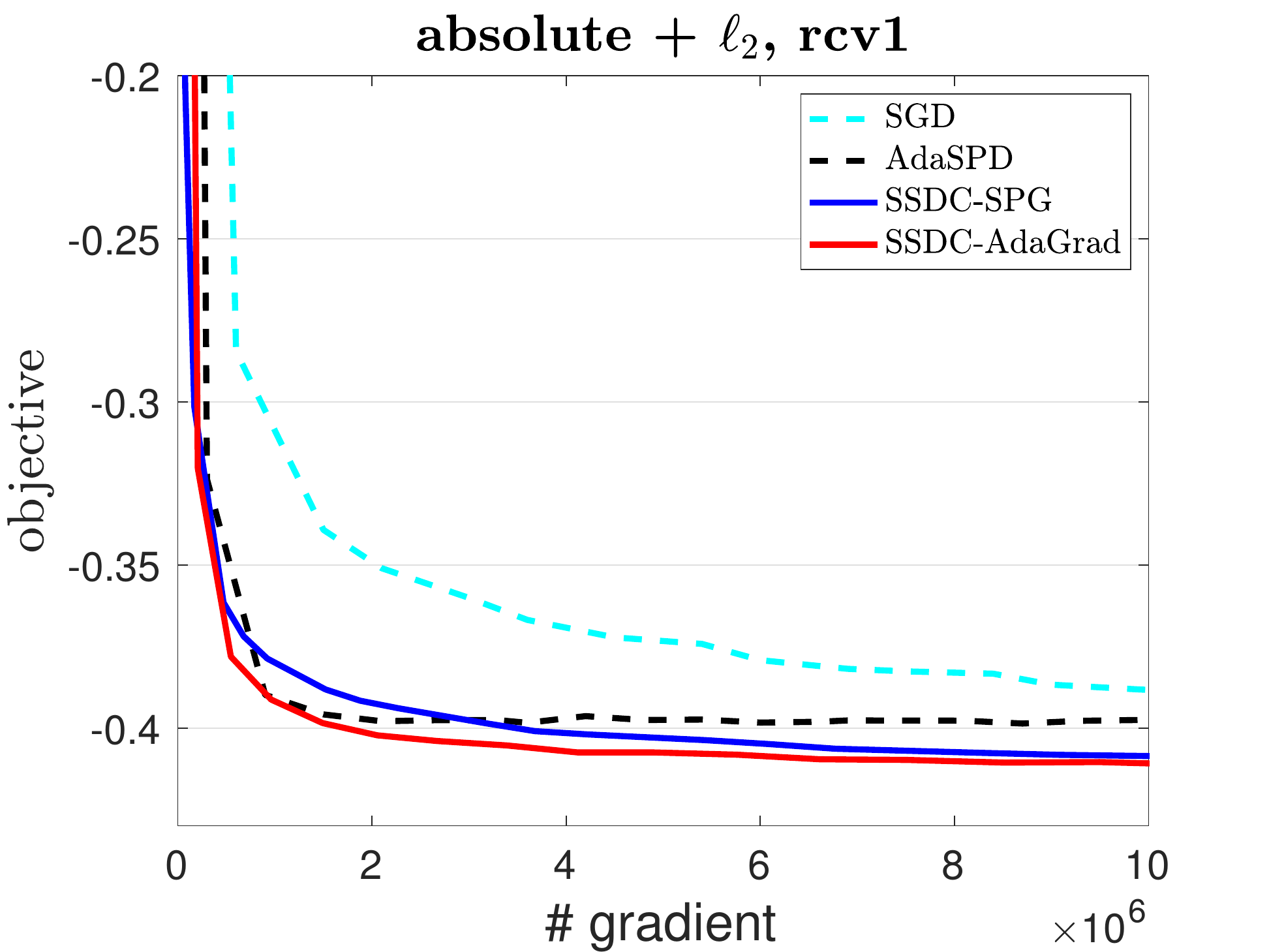}}
{\includegraphics[scale=.33]{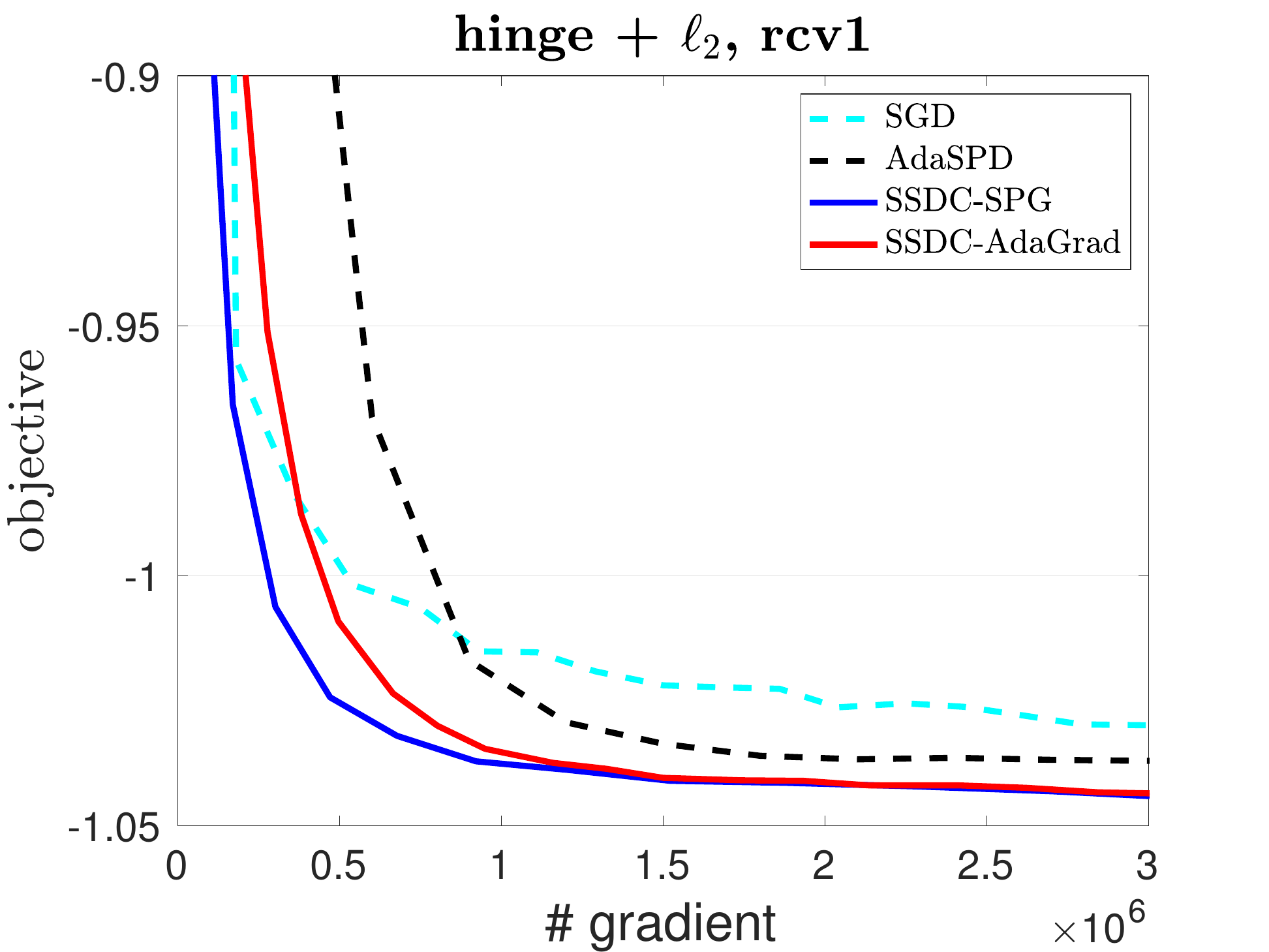}}
\caption{PU learning with different non-smooth loss functions on different datasets.}
\label{fig03}
\end{figure}

Second, we consider minimizing $\ell_0$ regularized non-linear least square loss function $\frac{1}{n}\sum_{i=1}^{n} (y_i - \sigma(\w^\top\x_i))^2 + \lambda\|\w\|_0$ with a sigmod function $\sigma(s) =  \frac{1}{1+e^{-s}}$ for classification and $\ell_0$ regularized truncated least square loss function $\frac{1}{2n}\sum_{i=1}^{n} \alpha \log(1+(y_i - \w^\top\x_i)^2/\alpha) + \lambda\|\w\|_0$~\citep{DBLP:journals/corr/abs-1805-07880} for regression. We compare the proposed algorithms with GPPA, APG~\citep{Li:2015:APG:2969239.2969282} and  proximal version of SGD (proxSGD), where GPPA and APG are  deterministic algorithms. We fix the truncation value as $\alpha = \sqrt{10n}$.  The loss function in these two tasks are smooth and non-convex.  The value of regularization parameter is fixed as $10^{-6}$. For APG, we implement both monotone and non-monotone versions following~\citep{Li:2015:APG:2969239.2969282}, and then the better one is reported. Although the convergence guarantee of proxSGD remains unclear for the considered problems, we still include it for comparison. The results on two data sets are plotted in Figure~\ref{fig02}.

The results of these two experiments indicate that the proposed stochastic algorithms outperform all deterministic baselines (GITS, GPPA, APG) on all tasks, which verify the necessity of using stochastic algorithms on large datasets.  In addition, our algorithms especially SSDC-AdaGrad and SSDC-SPG also converge faster than stochastic algorithms SGD, SDCA, and non-convex SVRG verifying that our stochastic algorithms are more practical for the considered problems. 
We also see that in most cases SSDC-AdaGrad is more effective than SSDC-SPG and SSDC-SVRG. 

Finally, we compare SSDC algorithms with two baselines AdaSPD~\citep{pmlr-v54-nitanda17a} and SGD~\citep{DBLP:journals/corr/abs-1804-07795} for solving two $\ell_2$ regularized positive-unlabeled (PU) learning problems~\citep{du2015convex} with non-smooth losses, i.e., hinge loss and absolute loss. The $\ell_2$ regularization parameter is  set to be $10^{-4}$.  For SGD, we  use the standard stepsize $\eta = \eta_0/\sqrt{t}$~\citep{DBLP:journals/siamjo/GhadimiL13a} with $\eta_0$ tuned. The mini-batch size and the number of iterations of each stage of AdaSPD are simply set as $10^4$. The results on two classification datasets are plotted in Figure~\ref{fig03}, which show that SSDC-SPG and SSDC-AdaGrad outperforms SGD and AdaSPD. 

\section{Conclusions}
In this paper, we have presented stochastic optimization algorithms for solving a broad family of DC functions with convex or non-convex non-smooth regularizer. We consider several stochastic algorithms and presented their convergence results in terms of finding an approximate critical point. The proposed stochastic optimization algorithms and their analysis for solving DC functions are improvements over an existing work by making the algorithms more efficient and practical and the theories more broad and general. For the first time, we provide non-asymptotic convergence for solving non-convex problems with non-smooth non-convex regularizer. 

\appendix 
\section{Proof of Proposition~\ref{prop:sgd}}
\paragraph{\bf Option 1.} Let us first prove the case of smooth $g$. Let $f(\x) = g(\x) - \partial h(\x_1)^{\top}(\x- \x_1)$ and $\hat r (\x) = r(\x) +  \frac{\gamma}{2}\|\x - \x_1\|^2$. Then $f(\x)$ is $L$-smooth and $\hat r(\x)$ is $\gamma$-strongly convex.  A stochastic gradient of $f(\x)$ is given by $\partial f(\x; \xi, \varsigma) = \nabla g(\x; \xi) - \partial h(\x_1, \varsigma) $, which has a variance of $G^2$ according to the assumption. Let $\eta_t= 3/(\gamma(t+1))\leq 1/L$ due to $\gamma\geq 3L$. In our proof, we first need the following lemma, which is attributed to \cite{DBLP:conf/icml/ZhaoZ15}. \begin{lemma}\label{lem:1:prop2}
Under the same assumptions in Proposition~\ref{prop:sgd}, we have
\begin{align*}
\E[f(\x_{t+1}) + \hat r(\x_{t+1}) - f(\x) - \hat r(\x)] \leq & \frac{\|\x_t - \x \|^2}{2\eta_t} - \frac{\|\x - \x_{t+1} \|^2}{2\eta_t} - \frac{\gamma}{2}\|\x - \x_{t+1}\|^2  + \eta_tG^2.
\end{align*}
\end{lemma}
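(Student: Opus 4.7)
The plan is to view the Option~1 update as a proximal step on the linearized objective and then combine five standard ingredients: the $L$-smoothness of $f$, the convexity of $f$, the first-order optimality of the $\x_{t+1}$-subproblem, the $\gamma$-strong convexity of $\hat r$, and the unbiasedness/bounded variance of the stochastic gradient.

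First I would introduce $\g_t := \nabla g(\x_t;\xi_t) - \partial h(\x_1;\varsigma_t)$ and rewrite the update as $\x_{t+1} = \arg\min_\x \g_t^\top\x + \hat r(\x) + \frac{1}{2\eta_t}\|\x-\x_t\|^2$. Because $\hat r$ is $\gamma$-strongly convex, the whole minimized objective is $(\gamma+1/\eta_t)$-strongly convex, which yields the three-point inequality
\begin{align*}
\g_t^\top(\x_{t+1}-\x) + \hat r(\x_{t+1}) - \hat r(\x) \le \frac{\|\x-\x_t\|^2}{2\eta_t} - \frac{\|\x_{t+1}-\x_t\|^2}{2\eta_t} - \frac{\gamma+1/\eta_t}{2}\|\x-\x_{t+1}\|^2.
\end{align*}
Separately, combining the smoothness bound $f(\x_{t+1})\le f(\x_t)+\nabla f(\x_t)^\top(\x_{t+1}-\x_t)+\tfrac{L}{2}\|\x_{t+1}-\x_t\|^2$ with the convexity bound $f(\x_t)\le f(\x)+\nabla f(\x_t)^\top(\x_t-\x)$ produces
\begin{align*}
f(\x_{t+1}) - f(\x) \le \nabla f(\x_t)^\top(\x_{t+1}-\x) + \tfrac{L}{2}\|\x_{t+1}-\x_t\|^2.
\end{align*}

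Next I would add these two inequalities and split $\nabla f(\x_t) = \g_t + (\nabla f(\x_t)-\g_t)$ to obtain, with the $\gamma\eta_t$ term absorbed into $-\tfrac{\gamma}{2}\|\x-\x_{t+1}\|^2$,
\begin{align*}
f(\x_{t+1})+\hat r(\x_{t+1}) - f(\x) - \hat r(\x) &\le \frac{\|\x-\x_t\|^2}{2\eta_t} - \frac{\|\x-\x_{t+1}\|^2}{2\eta_t} - \frac{\gamma}{2}\|\x-\x_{t+1}\|^2 \\
&\quad - \frac{1-L\eta_t}{2\eta_t}\|\x_{t+1}-\x_t\|^2 + (\nabla f(\x_t)-\g_t)^\top(\x_{t+1}-\x).
\end{align*}
The remaining work is to dispose of the noise cross-term. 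I would decompose $(\nabla f(\x_t)-\g_t)^\top(\x_{t+1}-\x) = (\nabla f(\x_t)-\g_t)^\top(\x_{t+1}-\x_t) + (\nabla f(\x_t)-\g_t)^\top(\x_t-\x)$; the second piece vanishes in conditional expectation because $\g_t$ is unbiased and $\x_t-\x$ is $\x_t$-measurable.

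The main (and only mildly tricky) step is choosing the Young's-inequality parameter so that the first piece is fully absorbed by the leftover $-\tfrac{1-L\eta_t}{2\eta_t}\|\x_{t+1}-\x_t\|^2$. Setting $a = (1-L\eta_t)/\eta_t$ in $\langle u,v\rangle \le \tfrac{1}{2a}\|u\|^2+\tfrac{a}{2}\|v\|^2$ cancels the quadratic exactly and leaves a residual of $\tfrac{\eta_t}{2(1-L\eta_t)}\|\nabla f(\x_t)-\g_t\|^2$. Under the stated parameter choice $\eta_t=3/(\gamma(t+1))$ with $\gamma\ge 3L$, one checks $L\eta_t\le 1/2$, so the residual is bounded by $\eta_t\|\nabla f(\x_t)-\g_t\|^2$, whose expectation is at most $\eta_t G^2$ by Assumption~\ref{ass:1}(i). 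Taking total expectation then yields the claimed inequality. The proof is largely bookkeeping; the only point requiring care is tying the Young's constant to the step-size condition so that the final variance term has coefficient exactly $\eta_t$ (as stated) rather than $\eta_t/2$ or $2\eta_t$.
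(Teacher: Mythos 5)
Your proposal is correct, and it reproduces the paper's skeleton (three-point inequality from the strongly convex proximal subproblem, plus smoothness-and-convexity of $f$, yielding the cross term $(\nabla f(\x_t)-\g_t)^{\top}(\x_{t+1}-\x)$ with a leftover $-\frac{1-L\eta_t}{2\eta_t}\|\x_{t+1}-\x_t\|^2$). Where you genuinely diverge is in disposing of the noise cross term. The paper introduces a ghost iterate $\widehat\x_{t+1}$ computed from the same proximal map but with the \emph{exact} gradient $\partial f(\x_t)$, notes that $\E[\langle \partial f(\x_t)-\g_t,\widehat\x_{t+1}-\x\rangle]=0$ by measurability, and controls $\|\x_{t+1}-\widehat\x_{t+1}\|\le \eta_t\|\partial f(\x_t)-\g_t\|$ via a separate nonexpansiveness lemma (Lemma~\ref{lem:2:prop2}); this yields the $\eta_t G^2$ term and lets them discard the whole $\|\x_{t+1}-\x_t\|^2$ remainder using only $\eta_t\le 1/L$. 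You instead split $\x_{t+1}-\x=(\x_{t+1}-\x_t)+(\x_t-\x)$, kill the second piece by unbiasedness, and absorb the first by Young's inequality with parameter $(1-L\eta_t)/\eta_t$, paying $\frac{\eta_t}{2(1-L\eta_t)}\|\partial f(\x_t)-\g_t\|^2\le \eta_t\|\partial f(\x_t)-\g_t\|^2$ once you check $L\eta_t\le 1/2$ (which indeed holds for $t\ge 1$ under $\eta_t=3/(\gamma(t+1))$ and $\gamma\ge 3L$). Your route avoids the auxiliary prox-nonexpansiveness lemma entirely and is arguably more self-contained; the price is the marginally stronger step-size requirement $L\eta_t\le 1/2$ rather than $L\eta_t\le 1$, which is harmless here. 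Both arguments land on exactly the stated bound with coefficient $\eta_t$ on $G^2$.
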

The proof of this lemma is similar to the analysis to proof of Lemma 1 in~\citep{DBLP:conf/icml/ZhaoZ15}. For completeness, we will include its proof later in this section. 

Let's continue the proof by following Lemma~\ref{lem:1:prop2} and letting $w_t = t$, then
\begin{align*}
&\sum_{t=1}^Tw_{t+1}(f(\x_{t+1}) + \hat r(\x_{t+1}) - f(\x) - \hat r(\x))\\
& \leq\sum_{t=1}^T\left(\frac{w_{t+1}}{2\eta_t}\|\x - \x_t\|^2 - \frac{w_{t+1}}{2\eta_t}\|\x - \x_{t+1}\|^2 -\frac{\gamma w_{t+1}}{2}\|\x - \x_{t+1}\|^2  \right) + \sum_{t=1}^T\eta_t w_{t+1} G^2\\
& \leq \sum_{t=1}^T\bigg(  \frac{w_{t+1}}{2\eta_t}  - \frac{w_{t}}{2\eta_{t-1}}- \frac{\gamma w_{t}}{2}\bigg)\|\x - \x_t\|^2 + \frac{w_1/\eta_0 + \gamma w_1}{2} \|\x - \x_1\|^2   + \sum_{t=1}^T\eta_t w_{t+1} G^2\\
& \leq \frac{4\gamma/3}{2} \|\x - \x_1\|^2  + \sum_{t=1}^T3G^2/\gamma
\end{align*}
where the last inequality uses the fact $ \frac{w_{t+1}}{\eta_t}  - \frac{w_{t}}{\eta_{t-1}}- \gamma w_{t} =\frac{\gamma(t+1)^2}{3} - \frac{\gamma t^2}{3} - \gamma t \leq 0$. 
Then we have
\begin{align*}
f(\xh_T)  +\hat  r(\xh_T) - f(\x) - \hat r(\x)\leq \frac{4\gamma\|\x - \x_1\|^2}{3T(T+3)} + \frac{6G^2}{(T+3)\gamma}.
\end{align*}

\paragraph{\bf Option 2.}  Next, we prove the case when $g$ is non-smooth. First, we need to show that $\x_* = \arg\min F^{\gamma}_{\x_1}(\x)$ in the set $\|\x - \x_1\|\leq 3G/\gamma $. By the optimality condition of $\x_*$ we have
\begin{align*}
(\partial g(\x_*)  + \partial r(\x_*)- \partial h(\x_1)  + \gamma (\x_* - \x_1))^{\top}(\x - \x_*)\geq 0, \forall \x\in\text{dom}(r)
\end{align*}
Plugging $\x = \x_1$ into the above inequality, we have
\begin{align*}
\gamma \|\x_1 - \x_*\|^2\leq 3G\|\x_1 -\x_*\|\Rightarrow \|\x_1 - \x_*\|\leq 3G/\gamma,
\end{align*}
where the first inequality uses Assumption~\ref{ass:1} (ii).
Let us recall the update 
\[
\x_{t+1} =\arg\min_{\|\x - \x_1\|\leq 3G/\gamma} \x^{\top}\partial f(\x_t; \xi_t, \varsigma_t) +\hat r(\x)  + \frac{1}{2\eta_t}\|\x- \x_{t}\|^2
\]
By the optimality condition of $\x_{t+1}$ and the strong convexity of above problem, we have for any $\x\in\Omega=\{\x\in\text{dom}(r); \|\x - \x_1\|\leq 3G/\gamma\}$
\begin{align*}
&\x^{\top}\partial f(\x_t; \xi_t, \varsigma_t) +\hat r(\x) +   \frac{1}{2\eta_t}\|\x- \x_{t}\|^2\\
&\geq \x_{t+1}^{\top}\partial f(\x_t; \xi_t, \varsigma_t) +\hat r(\x_{t+1}) + \frac{1}{2\eta_t}\|\x_{t+1}- \x_{t}\|^2 + \frac{1/\eta_t + \gamma}{2}\|\x - \x_{t+1}\|^2
\end{align*}
Rearranging the terms, we have
\begin{align*}
&(\x_t - \x)^{\top}\partial f(\x_t; \xi_t, \varsigma_t)+\hat r(\x_{t+1})   - \hat r(\x)\\
&\leq  (\x_t - \x_{t+1})^{\top}\partial f(\x_t; \xi_t, \varsigma_t)- \frac{1}{2\eta_t}\|\x_{t+1}- \x_{t}\|^2 +     \frac{1}{2\eta_t}\|\x- \x_{t}\|^2 -  \frac{1/\eta_t + \gamma}{2}\|\x - \x_{t+1}\|^2\\
&\leq \frac{\eta_t \|\partial f(\x_t; \xi_t, \varsigma_t)\|^2}{2}+     \frac{1}{2\eta_t}\|\x- \x_{t}\|^2 -  \frac{1/\eta_t + \gamma}{2}\|\x - \x_{t+1}\|^2
\end{align*}
Taking expectation on both sides, we have
\begin{align*}
&\E[f(\x_{t}) - f(\x)+\hat r(\x_{t+1})   - \hat r(\x)]\\
&\leq \frac{\eta_t\E[ \|\partial f(\x_t; \xi_t, \varsigma_t)\|^2]}{2}+\E\bigg[\frac{1}{2\eta_t}\|\x- \x_{t}\|^2 -  \frac{1/\eta_t + \gamma}{2}\|\x - \x_{t+1}\|^2\bigg].
\end{align*}
Multiplying both sides $w_{t}$ and taking summation over $t=1,\ldots, T$ and expectation, we have
\begin{align*}
&\E\bigg[\sum_{t=1}^Tw_{t}(f(\x_t) - f(\x)+\hat r(\x_{t+1})   - \hat r(\x))\bigg]\\
&\leq \sum_{t=1}^T2G^2w_{t}\eta_t+ \E\bigg[\sum_{t=1}^T \frac{w_{t}}{2\eta_t}\|\x- \x_{t}\|^2 -  \frac{w_{t}/\eta_t + w_{t}\gamma}{2}\|\x - \x_{t+1}\|^2\bigg].
\end{align*}
Thus, 
\begin{align*}
&\E\bigg[\sum_{t=1}^Tw_{t}(f(\x_{t}) - f(\x)+\hat r(\x_{t})   - \hat r(\x))\bigg]\\
&\leq \E\bigg[\sum_{t=1}^Tw_{t}(r(\x_{t}) - r(\x_{t+1}))\bigg]+ \sum_{t=1}^T2G^2w_{t}\eta_t+    \E\bigg[\sum_{t=1}^T \left(\frac{w_{t}}{2\eta_t}-\frac{w_{t-1}/\eta_{t-1} + w_{t-1}\gamma}{2}\right)\|\x- \x_{t}\|^2\bigg]\\
&\leq w_0r(\x_1) - w_{T}r(\x_{T+1}) + \E\bigg[\sum_{t=1}^T(w_{t} - w_{t-1})r(\x_{t})\bigg]+ \sum_{t=1}^T2G^2w_{t}\eta_t \\
   &+\E\bigg[\sum_{t=1}^T \left(\frac{w_{t}}{2\eta_t}-\frac{w_{t-1}/\eta_{t-1} + w_{t-1}\gamma}{2}\right)\|\x- \x_{t}\|^2\bigg]\\
   &\leq  \E\bigg[\sum_{t=1}^T(w_{t} - w_{t-1})(r(\x_{t}) - r(\x_{T+1}))\bigg]+ \sum_{t=1}^T2G^2w_{t}\eta_t \\
   &+\E\bigg[\sum_{t=1}^T \left(\frac{w_{t}}{2\eta_t}-\frac{w_{t-1}/\eta_{t-1} + w_{t-1}\gamma}{2}\right)\|\x- \x_{t}\|^2\bigg]\\
   &\leq  \E\bigg[\sum_{t=1}^T(w_{t} - w_{t-1})G\|\x_t - \x_{T+1}\|\bigg]+ \sum_{t=1}^T2G^2w_{t}\eta_t \\
   &+\E\bigg[\sum_{t=1}^T \left(\frac{w_{t}}{2\eta_t}-\frac{w_{t-1}/\eta_{t-1} + w_{t-1}\gamma}{2}\right)\|\x- \x_{t}\|^2\bigg]\\
&\leq  \E\bigg[\sum_{t=1}^T(w_{t} - w_{t-1})6G^2/\gamma\bigg]+ \sum_{t=1}^T2G^2w_{t}\eta_t +\E\bigg[\sum_{t=1}^T \left(\frac{w_{t}}{2\eta_t}-\frac{w_{t-1}/\eta_{t-1} + w_{t-1}\gamma}{2}\right)\|\x- \x_{t}\|^2\bigg]\\
\end{align*}
Plugging the value $w_t =t, \eta_t = 4/(\gamma t)$, and using the fact $\frac{w_{t}}{2\eta_t}-\frac{w_{t-1}/\eta_{t-1} + w_{t-1}\gamma}{2}\leq 0, \forall t\geq 2$, we have
\begin{align*}
\E\bigg[\sum_{t=1}^Tw_{t}(f(\x_{t}) - f(\x)+\hat r(\x_{t})   - \hat r(\x))\bigg]\leq \frac{w_{T}6G^2}{\gamma} + \frac{8G^2T}{\gamma} + \frac{\gamma\|\x - \x_1\|^2}{8}
\end{align*}
Thus,
\begin{align*}
\E\bigg[f(\xh_T) - f(\x)+\hat r(\xh_T)   - \hat r(\x))\bigg]\leq \frac{28G^2}{\gamma (T+1)}+ \frac{\gamma\|\x - \x_1\|^2}{4T(T+1)}.
\end{align*}

\subsection{Proof of Lemma~\ref{lem:1:prop2}}
First, we need the following lemma, which is attributed to \cite{DBLP:conf/icml/ZhaoZ15}. 
\begin{lemma}\label{lem:2:prop2}
If $\hat r(\x)$ is convex and 
\begin{align*}
\widehat \u =\arg\min_{\x} \x^\top \g_u + \hat r(\x) + \frac{1}{2\eta}\|\x-\z\|^2,~\widehat \v =\arg\min_{\x} \x^\top \g_v + \hat r(\x) + \frac{1}{2\eta}\|\x-\z\|^2, 
\end{align*}
then we have
\begin{align*}
\|\widehat \u - \widehat \v \| \leq \eta\|\g_u - \g_v\|.
\end{align*}
\end{lemma}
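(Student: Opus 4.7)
The plan is to prove this standard non-expansiveness property of the proximal operator by combining the first-order optimality conditions at the two minimizers with monotonicity of the subdifferential of $\hat r$.

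First I would write down the first-order optimality conditions for the two strongly convex subproblems. For the problem defining $\widehat{\u}$ there exists $\s_u \in \partial \hat r(\widehat{\u})$ such that
\begin{align*}
\g_u + \s_u + \frac{1}{\eta}(\widehat{\u} - \z) = 0,
\end{align*}
and analogously, there exists $\s_v \in \partial \hat r(\widehat{\v})$ with
\begin{align*}
\g_v + \s_v + \frac{1}{\eta}(\widehat{\v} - \z) = 0.
\end{align*}
Subtracting gives
\begin{align*}
\frac{1}{\eta}(\widehat{\u} - \widehat{\v}) = -(\g_u - \g_v) - (\s_u - \s_v).
\end{align*}

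Next I would take the inner product of both sides with $\widehat{\u} - \widehat{\v}$. On the right-hand side, the term $(\s_u - \s_v)^{\top}(\widehat{\u} - \widehat{\v})$ is nonnegative because $\hat r$ is convex, so its subdifferential is a monotone operator. Discarding this nonnegative term and applying Cauchy-Schwarz to the remaining inner product yields
\begin{align*}
\frac{1}{\eta}\|\widehat{\u} - \widehat{\v}\|^2 \leq -(\g_u - \g_v)^{\top}(\widehat{\u} - \widehat{\v}) \leq \|\g_u - \g_v\|\,\|\widehat{\u} - \widehat{\v}\|.
\end{align*}
Dividing through by $\|\widehat{\u} - \widehat{\v}\|$ (trivially handling the case where it is zero) gives the desired bound $\|\widehat{\u} - \widehat{\v}\| \leq \eta\|\g_u - \g_v\|$.

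There is essentially no obstacle here: the result is a textbook firm non-expansiveness statement of $\mathrm{prox}_{\eta \hat r}$, and the only subtlety is the existence of selections $\s_u$ and $\s_v$ from the subdifferentials, which is guaranteed because $\widehat{\u}$ and $\widehat{\v}$ are exact minimizers of the strongly convex problems and because $\hat r$ is proper lower semicontinuous convex.
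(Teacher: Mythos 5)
Your proof is correct and is essentially the same as the paper's: both isolate the subgradient of $\hat r$ from the first-order optimality conditions at $\widehat\u$ and $\widehat\v$, invoke monotonicity of $\partial\hat r$, and finish with Cauchy--Schwarz. No gaps.
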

\begin{proof}{\bf of Lemma~\ref{lem:2:prop2}.}
The proof can be found in the analysis of Lemma 1 in~\citep{DBLP:conf/icml/ZhaoZ15}, but we still include it here for completeness. 
By the optimilaty of $\widehat\u$ and $\widehat\v$ we have
\begin{align*}
\a := & \frac{\z -  \widehat\u}{\eta} - \g_u \in \partial \hat r(\widehat\u) \\  
\b := & \frac{\z -  \widehat\v}{\eta} - \g_v \in  \partial \hat r(\widehat\v). 
\end{align*}
Since $\hat r(\x)$ is convex, then
\begin{align*}
0 \leq \langle \a - \b, \widehat\u - \widehat\v \rangle =  \frac{1}{\eta}  \langle  \eta \g_v - \eta \g_u + \widehat\v - \widehat\u, \widehat\u - \widehat\v \rangle,
\end{align*}
which implies
\begin{align*}
1/\eta \|\widehat\u - \widehat\v\|^2 \leq \langle   \g_v -  \g_u, \widehat\u - \widehat\v \rangle \leq \|\g_v -  \g_u\| \| \widehat\u - \widehat\v \|.
\end{align*}
Thus
\begin{align*}
  \|\widehat\u - \widehat\v\| \leq \eta\|\g_v -  \g_u\|.
\end{align*}
\end{proof}
Then, let's start the proof of Lemma~\ref{lem:1:prop2}.
Since $f(\x) = g(\x) - \partial h(\x_1)^{\top}(\x- \x_1)$ and $\hat r (\x) = r(\x) +  \frac{\gamma}{2}\|\x - \x_1\|^2$, then $f(\x)$ is $L$-smooth and $\hat r(\x)$ is $\gamma$-strongly convex. Recall that a stochastic gradient of $f(\x)$ is given by $\partial f(\x; \xi, \varsigma) = \nabla g(\x; \xi) - \partial h(\x_1, \varsigma) $, which has a variance of $G^2$ according to the assumption, and $\eta_t= 1/(L(t+1))\leq 1/L$. 
By the convexity of $f(\x)$ and strong convexity of $\hat r(\x)$ we have
\begin{align*}
f(\x) + \hat r(\x) \geq f(\x_t) + \langle \partial f(\x_t), \x - \x_t \rangle + \hat r(\x_{t+1}) + \langle \partial \hat r(\x_{t+1}), \x - \x_{t+1} \rangle + \frac{\gamma}{2}\|\x - \x_{t+1}\|^2.
\end{align*}
By the smoothness of $f(\x)$, we also have
\begin{align*}
f(\x_t)  \geq f(\x_{t+1}) - \langle \partial f(\x_t), \x_{t+1} - \x_t \rangle - \frac{L}{2}\|\x_t - \x_{t+1}\|^2.
\end{align*}
Combing the above two inequalities, we have
\begin{align}\label{lem:1:ineq1}
\nonumber & f(\x_{t+1}) + \hat r(\x_{t+1}) - f(\x) - \hat r(\x) \\
\nonumber \leq & \langle \partial f(\x_t) +  \partial \hat r(\x_{t+1}), \x_{t+1} - \x \rangle   - \frac{\gamma}{2}\|\x - \x_{t+1}\|^2  + \frac{L}{2}\|\x_t - \x_{t+1}\|^2\\
\nonumber = &  \langle \partial f(\x_t) - \partial f(\x_t;\xi_t,\varsigma_t), \x_{t+1} - \x \rangle  +  \frac{1}{\eta_t} \langle \x_t - \x_{t+1}, \x_{t+1} - \x \rangle\\
\nonumber &  - \frac{\gamma}{2}\|\x - \x_{t+1}\|^2  + \frac{L}{2}\|\x_t - \x_{t+1}\|^2\\
\nonumber = &  \langle \partial f(\x_t) - \partial f(\x_t;\xi_t,\varsigma_t), \x_{t+1} - \x \rangle  +  \frac{\|\x_t - \x \|^2}{2\eta_t} - \frac{\|\x_t - \x_{t+1} \|^2}{2\eta_t} - \frac{\|\x_{t+1} - \x \|^2}{2\eta_t}\\
&  - \frac{\gamma}{2}\|\x - \x_{t+1}\|^2  + \frac{L}{2}\|\x_t - \x_{t+1}\|^2,
\end{align}
where the last second equality is due to the update and optimilaty of $\x_{t+1}$ (Option 1 in Algorithm~\ref{alg:sgd}); 
the last equality uses the fact that $2 \langle \x - \y, \y - \z \rangle = \|\x-\z\|^2 - \|\x-\y\|^2 - \|\y-\z\|^2$. To deal with the term $\langle \partial f(\x_t) - \partial f(\x_t;\xi_t,\varsigma_t), \x_{t+1} - \x \rangle$, we define $\widehat \x_{t+1} = \arg\min_{\x} \x^\top \partial f(\x_t) + \hat r(\x) + \frac{1}{2\eta_t}\|\x-\x_t\|^2$, which is independent of $\partial f(\x_t;\xi_t,\varsigma_t)$. Taking expectation over $\xi_t$ and $\varsigma_t$ over this term we get
\begin{align*}
&\E[\langle \partial f(\x_t) - \partial f(\x_t;\xi_t,\varsigma_t), \x_{t+1} - \x \rangle] \\
=& \E[\langle \partial f(\x_t) - \partial f(\x_t;\xi_t,\varsigma_t), \x_{t+1} - \widehat \x_{t+1} + \widehat \x_{t+1}- \x \rangle] \\
=& \E[\langle \partial f(\x_t) - \partial f(\x_t;\xi_t,\varsigma_t), \x_{t+1} - \widehat \x_{t+1}  \rangle] + \E[\langle \partial f(\x_t) - \partial f(\x_t;\xi_t,\varsigma_t), \widehat \x_{t+1}- \x \rangle]\\
=& \E[\langle \partial f(\x_t) - \partial f(\x_t;\xi_t,\varsigma_t), \x_{t+1} - \widehat \x_{t+1}  \rangle] \\
\leq & \E[\| \partial f(\x_t) - \partial f(\x_t;\xi_t,\varsigma_t)\|  \|\x_{t+1} - \widehat \x_{t+1}  \| ] \\
\leq & \eta_t \E[\| \partial f(\x_t) - \partial f(\x_t;\xi_t,\varsigma_t)\|^2 ] \leq  \eta_t G^2,
\end{align*}
where the third equality is due to $\E[\langle \partial f(\x_t) - \partial f(\x_t;\xi_t,\varsigma_t), \widehat \x_{t+1}- \x \rangle | \x_t] = 0$; the last third inequality uses Cauchy-Schwartz inequality; the last second inequality is due to Lemma~\ref{lem:2:prop2}; the last inequality uses Assumption~\ref{ass:1} (i).
With above inequality, taking the expectation on both sides of (\ref{lem:1:ineq1}) and using the fact that $\eta_t \le 1/L$, we get
\begin{align*}
\E[f(\x_{t+1}) + \hat r(\x_{t+1}) - f(\x) - \hat r(\x)] \leq & \frac{\|\x_t - \x \|^2}{2\eta_t} - \frac{\|\x - \x_{t+1} \|^2}{2\eta_t} - \frac{\gamma}{2}\|\x - \x_{t+1}\|^2  + \eta_tG^2.
\end{align*}

\section{Proof of Theorem~\ref{thm:sgd}}
Let us use the same notations as in the proof of Theorem~\ref{thm:3} and prove the case of smooth $g$. By Proposition~\ref{prop:sgd}, we have 
\begin{align*}
\E[ F_k(\x_{k+1}) - F_k(\z_k)]\leq \frac{4\gamma\|\x - \x_1\|^2}{3T_k(T_k+3)} + \frac{6G^2}{(T_k+3)\gamma}. 
\end{align*}
To continue the analysis, we have
\begin{align*}
\E[f_k(\x_{k+1})   + \frac{\gamma}{2}\|\x_{k+1} - \x_k\|^2]&\leq F_k(\z_k) + \frac{4\gamma\|\x_k - \z_k\|^2}{3T_k(T_k+3)} + \frac{6G^2}{(T_k+3)\gamma}\\
&\leq F_k(\x_k) - \frac{\gamma}{2}\|\x_k - \z_k\|^2 + \frac{4\gamma \|\x_k - \z_k\|^2}{3T_k(T_k+3)} + \frac{6G^2}{(T_k+3)\gamma}\\
& \leq  g(\x_k)  + r(\x_k)  +  \frac{6G^2}{(T_k+3)\gamma}
\end{align*}
where we use $F_k(\x_k)\geq F_k(\z_k) + \frac{\gamma}{2}\|\x_k - \z_k\|^2$ due to the strong convexity of $F(\x)$, and $T_k\geq 2$.
On the other hand, we have that 
\begin{align*}
    \|\x_{k+1}-\x_{k}\|^2 =& \|\x_{k+1}-\z_{k}+\z_{k}-\x_{s}\|^2\\
    =& \|\x_{k+1}-\z_{k}\|^2+\|\z_{k}-\x_{k}\|^2 + 2\langle \x_{k+1}-\z_{k}, \z_{k}- \x_{k}\rangle\\
    \geq& (1-\hat\alpha^{-1})\|\x_{k+1}-\z_{k}\|^2 + (1-\hat\alpha)\|\x_{k}-\z_{k}\|^2
\end{align*}
where the inequality follows from the Young's inequality with $0<\hat\alpha<1$. Thus we have that
\begin{align*}
 \E\bigg[\frac{\gamma(1-\hat\alpha)}{2}\|\z_{k} - \x_k\|^2\bigg]&\leq \E[g(\x_k) + r(\x_k)  - f_k(\x_{k+1})] + \frac{\gamma(\hat\alpha^{-1}-1)}{2}\E[\|\x_{k+1} - \z_k\|^2] \\
 &   + \frac{6G^2}{(T_k+3)\gamma}.
\end{align*}
On the other hand, by the convexity of $h(\cdot)$ we have
\begin{align*}
&\E[g(\x_k)  + r(\x_k) - f_k(\x_{k+1})]\\
 &\leq \E[g(\x_k) + r(\x_k) - g(\x_{k+1}) - r(\x_{k+1}) + \partial h(\x_k)^{\top}(\x_{k+1} - \x_k)]\\
 &\leq \E[g(\x_k)  + r(\x_k )- g(\x_{k+1}) -r(\x_{k+1}) + h(\x_{k+1}) - h(\x_k)]  \\
 &=  \E[F(\x_k) - F(\x_{k+1})],
\end{align*}
By the strong convexity of $F_k(\x)$, we also have 
\begin{align*}
\frac{\gamma}{2}\E[\|\x_{k+1} - \z_k\|^2]\leq \E[F_k(\x_{k+1}) - F_k(\z_k)]\leq \frac{4\gamma\|\x_k - \z_k\|^2}{3T_k(T_k+3)}  + \frac{6G^2}{(T_k+3)\gamma}
\end{align*}
Then we have
\begin{align*}
 \E\bigg[\frac{\gamma(1-\hat\alpha)}{2}\|\z_{k} - \x_k\|^2\bigg]&\leq \E[F(\x_k) - F(\x_{k+1})] + (\hat\alpha^{-1}-1)\left( \frac{4\gamma\|\x_k - \z_k\|^2}{3T_k(T_k+3)}  + \frac{6G^2}{(T_k+3)\gamma}\right) \\
 &  +\frac{6G^2}{(T_k+3)\gamma}
\end{align*}
Let $\hat\alpha=1/2$, we have
\begin{align*}
 \E\bigg[\frac{\gamma}{4}\|\z_{k} - \x_k\|^2\bigg]&\leq \E[F(\x_k) - F(\x_{k+1})] + \frac{4\gamma\|\x_k - \z_k\|^2}{3T_k(T_k+3)}+  \frac{12G^2}{(T_k+3)\gamma}\\
 & \leq \E[F(\x_k) - F(\x_{k+1})] + \frac{\gamma\|\x_k - \z_k\|^2}{8}+  \frac{12G^2}{(T_k+3)\gamma}
\end{align*}
where we use the fac  $T_k\geq 3$.  It then gives us 
\begin{align*}
 \E\bigg[\frac{\gamma}{8}\|\z_{k} - \x_k\|^2\bigg]&\leq \E[F(\x_k) - F(\x_{k+1})] + \frac{12G^2}{(T_k+3)\gamma}
\end{align*}
By setting $T_k = 3Lk/\gamma+3$, we have
\begin{align*}
 \E\bigg[\frac{\gamma}{8}\|\z_{k} - \x_k\|^2\bigg]&\leq \E[F(\x_k) - F(\x_{k+1})] + \frac{4G^2}{kL}
\end{align*}
Following similar analysis to the proof of Theorem~\ref{thm:3}, we can finish the proof. 
For completeness, we include the remaining analysis here. Multiplying both sides by $w_k = k^\alpha$ and taking summation over $k=1,\ldots, K$, we have
\begin{align}\label{eqn:c2}
 \E\bigg[\frac{\gamma}{8}\sum_{k=1}^Kw_k\|\z_{k} - \x_k\|^2\bigg]&\leq \E\bigg[ \sum_{k=1}^Kw_k(F(\x_k) - F(\x_{k+1}))\bigg] +\sum_{k=1}^Kw_k \frac{4G^2}{kL},
\end{align}
Similar to proof of of Theorem~\ref{thm:3}, we have
\begin{align*}
 \E\bigg[ \sum_{k=1}^Kw_k(F(\x_k) - F(\x_{k+1}))\bigg] \leq  \sum_{k=1}^{K}(w_k - w_{k-1})\E[(F(\x_k) - \min_{\x}F(\x))]\leq \Delta w_{K}
\end{align*}
Then, by $\sum_{k=1}^Kk^{\alpha}\geq \int_0^{K}x^\alpha d x= \frac{K^{\alpha+1}}{\alpha+1}$ and $\sum_{k=1}^Kk^{\alpha-1}\leq K^\alpha,~(\alpha\geq 1)$, we have
\begin{align*}
 \E\bigg[\frac{\gamma}{8}\|\z_{\tau} - \x_\tau\|^2\bigg]\leq  \frac{\Delta K^\alpha}{\sum_{k=1}^{K} k^\alpha} + \frac{4G^2 \sum_{k=1}^{K} k^{\alpha-1} }{L\sum_{k=1}^{K} k^\alpha} \leq \frac{\Delta (\alpha+1)}{K} + \frac{4G^2(\alpha+1)}{LK},
\end{align*}
which can complete the proof by multiplying both sides by $8\gamma$.

Similarly, we can prove the case of non-smooth $g(\x)$. For completeness, we include the details here. By Proposition~\ref{prop:sgd} we have
\begin{align*}
\E[f_k(\x_{k+1})   + \frac{\gamma}{2}\|\x_{k+1} - \x_k\|^2]&\leq F_k(\z_k) + \frac{\gamma\|\x_k - \z_k\|^2}{4T_k(T_k+1)} + \frac{28G^2}{\gamma(T_k+1)}\\
&\leq F_k(\x_k) - \frac{\gamma}{2}\|\x_k - \z_k\|^2 + \frac{\gamma\|\x_k - \z_k\|^2}{4T_k(T_k+1)} + \frac{28G^2}{\gamma(T_k+1)}\\
& =  g(\x_k)  + r(\x_k)  +  \frac{28G^2}{\gamma(T_k+1)}
\end{align*}
where we use $F_k(\x_k)\geq F_k(\z_k) + \frac{\gamma}{2}\|\x_k - \z_k\|^2$ due to the strong convexity of $F(\x)$, and $T_k\geq 1$.
On the other hand, we have that 
\begin{align*}
    \|\x_{k+1}-\x_{k}\|^2 =& \|\x_{k+1}-\z_{k}+\z_{k}-\x_{s}\|^2\\
    =& \|\x_{k+1}-\z_{k}\|^2+\|\z_{k}-\x_{k}\|^2 + 2\langle \x_{k+1}-\z_{k}, \z_{k}- \x_{k}\rangle\\
    \geq& (1-\hat\alpha^{-1})\|\x_{k+1}-\z_{k}\|^2 + (1-\hat\alpha)\|\x_{k}-\z_{k}\|^2
\end{align*}
where the inequality follows from the Young's inequality with $0<\hat\alpha<1$. Thus we have that
\begin{align*}
 \E\bigg[\frac{\gamma(1-\hat\alpha)}{2}\|\z_{k} - \x_k\|^2\bigg]&\leq \E[g(\x_k) + r(\x_k)  - f_k(\x_{k+1})] + \frac{\gamma(\hat\alpha^{-1}-1)}{2}\E[\|\x_{k+1} - \z_k\|^2] \\
 &   + \frac{28G^2}{\gamma(T_k+1)}.
\end{align*}
On the other hand, by the convexity of $h(\cdot)$ we have
\begin{align*}
&\E[g(\x_k)  + r(\x_k) - f_k(\x_{k+1})]\\
 &\leq \E[g(\x_k) + r(\x_k) - g(\x_{k+1}) - r(\x_{k+1}) + \partial h(\x_k)^{\top}(\x_{k+1} - \x_k)]\\
 &\leq \E[g(\x_k)  + r(\x_k )- g(\x_{k+1}) -r(\x_{k+1}) + h(\x_{k+1}) - h(\x_k)]  \\
 &=  \E[F(\x_k) - F(\x_{k+1})],
\end{align*}
By the strong convexity of $F_k(\x)$, we also have 
\begin{align*}
\frac{\gamma}{2}\E[\|\x_{k+1} - \z_k\|^2]\leq \E[F_k(\x_{k+1}) - F_k(\z_k)]\leq  \frac{\gamma\|\x_k - \z_k\|^2}{4T_k(T_k+1)} + \frac{28G^2}{\gamma(T_k+1)}
\end{align*}
Then we have
\begin{align*}
 \E\bigg[\frac{\gamma(1-\hat\alpha)}{2}\|\z_{k} - \x_k\|^2\bigg]&\leq \E[F(\x_k) - F(\x_{k+1})] + (\hat\alpha^{-1}-1)\left(  \frac{\gamma\|\x_k - \z_k\|^2}{4T_k(T_k+1)} + \frac{28G^2}{\gamma(T_k+1)}\right) \\
 &  +  \frac{28G^2}{\gamma(T_k+1)}
\end{align*}
Let $\hat\alpha=1/2$, we have
\begin{align*}
 \E\bigg[\frac{\gamma}{4}\|\z_{k} - \x_k\|^2\bigg]&\leq \E[F(\x_k) - F(\x_{k+1})] + \frac{\gamma\|\x_k - \z_k\|^2}{4T_k(T_k+1)} + \frac{56G^2}{\gamma(T_k+1)}\\
 & \leq \E[F(\x_k) - F(\x_{k+1})] + \frac{\gamma\|\x_k - \z_k\|^2}{8}+ \frac{56G^2}{\gamma(T_k+1)}
\end{align*}
where we use the fact $T_k \geq 1$.  It then gives us 
\begin{align*}
 \E\bigg[\frac{\gamma}{8}\|\z_{k} - \x_k\|^2\bigg]&\leq \E[F(\x_k) - F(\x_{k+1})] + \frac{56G^2}{\gamma(T_k+1)}
\end{align*}
By setting $T_k = k/\gamma + 1$, we have
\begin{align*}
 \E\bigg[\frac{\gamma}{8}\|\z_{k} - \x_k\|^2\bigg]&\leq \E[F(\x_k) - F(\x_{k+1})] + \frac{56 G^2}{k}
\end{align*}
Multiplying both sides by $w_k = k^\alpha$ and taking summation over $k=1,\ldots, K$, we have
\begin{align}\label{eqn:c2}
 \E\bigg[\frac{\gamma}{8}\sum_{k=1}^Kw_k\|\z_{k} - \x_k\|^2\bigg]&\leq \E\bigg[ \sum_{k=1}^Kw_k(F(\x_k) - F(\x_{k+1}))\bigg] +\sum_{k=1}^Kw_k  \frac{56 G^2}{k},
\end{align}
Similar to proof of of Theorem~\ref{thm:3}, we have
\begin{align*}
 \E\bigg[ \sum_{k=1}^Kw_k(F(\x_k) - F(\x_{k+1}))\bigg] \leq  \sum_{k=1}^{K}(w_k - w_{k-1})\E[(F(\x_k) - \min_{\x}F(\x))]\leq \Delta w_{K}
\end{align*}
Then, by $\sum_{k=1}^Kk^{\alpha}\geq \int_0^{K}x^\alpha d x= \frac{K^{\alpha+1}}{\alpha+1}$ and $\sum_{k=1}^Kk^{\alpha-1}\leq K^\alpha,~(\alpha\geq 1)$, we have
\begin{align*}
 \E\bigg[\frac{\gamma}{8}\|\z_{\tau} - \x_\tau\|^2\bigg]\leq  \frac{\Delta K^\alpha}{\sum_{k=1}^{K} k^\alpha} + \frac{56 G^2 \sum_{k=1}^{K} k^{\alpha-1} }{\sum_{k=1}^{K} k^\alpha} \leq \frac{\Delta (\alpha+1)}{K} +   \frac{56 G^2(\alpha+1)}{K},
\end{align*}
which can complete the proof by multiplying both sides by $8\gamma$.

\section{Proof of Proposition~\ref{lem:adagrad}}
The convergence analysis of using AdaGrad is build on the following proposition about the convergence AdaGrad for minimizing $F^\gamma_{\x}$, which is attributed to~\cite{SadaGrad18}.
For completeness, we include the proof here. First, we need to show that $\x_* = \arg\min F^{\gamma}_{\x_1}(\x)$ in the set $\|\x - \x_1\|\leq \frac{2G+G_r}{\gamma} $. By the optimality condition of $\x_*$ we have
\begin{align*}
(\partial g(\x_*)  + \partial r(\x_*)- \partial h(\x_1)  + \gamma (\x_* - \x_1))^{\top}(\x - \x_*)\geq 0, \forall \x\in\text{dom}(r)
\end{align*}
Plugging $\x = \x_1$ into the above inequality, we have
\begin{align*}
\gamma \|\x_1 - \x_*\|^2\leq (2G+G_r)\|\x_1 -\x_*\|\Rightarrow \|\x_1 - \x_*\|\leq \frac{2G+G_r}{\gamma},
\end{align*}
where the first inequality uses Assumption~\ref{ass:new}.

Let $\psi_0(\x) = 0$ and $\|\x\|_{H}=\sqrt{\x^{\top}H\x}$, then we can see that $\psi_{t+1}(\x)\geq \psi_t(\x)$ for any $t\geq 0$. 
Let $f(\x) = g(\x) - \partial h(\x_1)^\top(\x-\x_1)$ and $\hat r(\x) = r(\x) + \frac{\gamma}{2}\|\x-\x_1\|^2$. Then $f(\x)$ is convex and $\hat r(\x)$ is $\gamma$-strongly convex.
Let  $\z_t  = \sum_{\tau=1}^t\g_t$, $\Delta_t  = (\partial f(\x_t)- \g_t)^{\top}(\x_t - \x)$ and 
\begin{align*}
\psi^*_t(g) = \sup_{\x\in\Omega}g^{\top}\x - \frac{1}{\eta}\psi_t(\x) - t \hat r(\x)
\end{align*}
By the convexity of $f(\x)$, and then taking the summation over all iterations, we get
\begin{align}\label{lemma1:ieq1}
\nonumber &\sum_{t=1}^T(f(\x_t) - f(\x) + \hat r(\x_t) - \hat r(\x)) \\
\nonumber\leq& \sum_{t=1}^T(\partial f(\x_t)^{\top}(\x_t - \x)+ \hat r(\x_t) - \hat r(\x)) =  \sum_{t=1}^T\g_t^{\top}(\x_t - \x) + \sum_{t=1}^T\Delta_t + \sum_{t=1}^T(\hat r(\x_t) - \hat r(\x))\\
\nonumber =&\sum_{t=1}^T\g_t^{\top}\x_t - \sum_{t=1}^T\g_t^{\top}\x - \frac{1}{\eta}\psi_T(\x) - T\hat r(\x) + \frac{1}{\eta}\psi_T(\x) +  \sum_{t=1}^T\Delta_t + \sum_{t=1}^T\hat r(\x_t) \\
\nonumber \leq& \sum_{t=1}^T\g_t^{\top}\x_t + \sup_{\x\in\Omega}\left\{ - \sum_{t=1}^T\g_t^{\top}\x  - \frac{1}{\eta}\psi_T(\x) - T\hat r(\x) \right\} +  \frac{1}{\eta}\psi_T(\x)   + \sum_{t=1}^T\Delta_t+ \sum_{t=1}^T\hat r(\x_t)\\
= & \sum_{t=1}^T\g_t^{\top}\x_t + \psi_T^*(-\z_T)  + \frac{1}{\eta}\psi_T(\x) +  \sum_{t=1}^T\Delta_t+ \sum_{t=1}^T\hat r(\x_t)
\end{align}
On the other hand, 
\begin{align*}
\psi_T^*( - \z_T)  &=  - \sum_{t=1}^T\g_t^{\top}\x_{T+1} - \frac{1}{\eta}\psi_T(\x_{T+1}) - T\hat r(\x_{T+1}) \\
&\leq -\sum_{t=1}^T\g_t^{\top}\x_{T+1} - \frac{1}{\eta}\psi_{T-1}(\x_{T+1}) - (T-1)\hat r(\x_{T+1})- \hat r(\x_{T+1}) \\
&\leq \sup_{\x\in\Omega}\left \{- \z_T^{\top}\x - \frac{1}{\eta}\psi_{T-1}(\x) - (T-1)\hat r(\x)\right\} - \hat r(\x_{T+1}) \\
& =  \psi_{T-1}^*(-\z_T) - \hat r(\x_{T+1}) \\
&\leq \psi_{T-1}^*(-\z_{T-1}) - \g_T^{\top}\nabla\psi_{T-1}^*(-\z_{T-1}) + \frac{\eta}{2}\|\g_T\|_{\psi^*_{T-1}}^2- \hat r(\x_{T+1})
\end{align*}
where the last inequality is due to $\psi_t(\x)$ is $1$-strongly convex w.r.t $\|\cdot\|_{\psi_t} = \|\cdot\|_{H_t}$ and consequentially $\psi_t^*(\x)$ is $\eta$-smooth w.r.t. $\|\cdot\|_{\psi^*_t} = \|\cdot\|_{H_t^{-1}}$.
Then 
\begin{align*}
 &\sum_{t=1}^T\g_t^{\top}\x_t + \psi_T^*(-\z_T)\\
 \leq & \sum_{t=1}^T\g_t^{\top}\x_t + \psi_{T-1}^*(-\z_{T-1}) - \g_T^{\top}\nabla\psi_{T-1}^*(-\z_{T-1}) + \frac{\eta}{2}\|\g_T\|_{\psi^*_{T-1}}^2 - \hat r(\x_{T+1}) \\
 =& \sum_{t=1}^{T-1}\g_t^{\top}\x_t + \psi^*_{T-1}(-\z_{T-1})  + \frac{\eta}{2}\|\g_T\|_{\psi^*_{T-1}}^2 - \hat r(\x_{T+1})
\end{align*}
By repeating this process, we get
\begin{align}\label{lemma1:ieq2}
\nonumber \sum_{t=1}^T\g_t^{\top}\x_t + \psi_T^*(-\z_T) &\leq \psi_0^*(-\z_0) +\frac{\eta}{2} \sum_{t=1}^T\|\g_t\|^2_{\psi^*_{t-1}} - \sum_{t=1}^{T}\hat r(\x_{t+1})\\
&=  \frac{\eta}{2}\sum_{t=1}^T\|\g_t\|^2_{\psi^*_{t-1}} -  \sum_{t=1}^{T}\hat r(\x_{t+1})
\end{align}
Plugging the inequality (\ref{lemma1:ieq2}) in the inequality (\ref{lemma1:ieq1}), we have
\begin{align*}
&\sum_{t=1}^T(f(\x_t) - f(\x) + \hat r(\x_t) - \hat r(\x))
\leq \frac{1}{\eta}\psi_T(\x) +\frac{\eta}{2}\sum_{t=1}^T\|\g_t\|^2_{\psi^*_{t-1}} +\sum_{t=1}^T\Delta_t+ \hat r(\x_1) - \hat r(\x_{T+1}).
\end{align*}
It is known from the analysis in~\citep{duchi2011adaptive} that 
\begin{align*}
\sum_{t=1}^T\|\g_t\|_{\psi^*_{t-1}}^2\leq 2\sum_{i=1}^d\|\g_{1:T, i}\|_2
\end{align*}
Thus
\begin{align*}
&\sum_{t=1}^{T}(f(\x_t) - f(\x) + \hat r(\x_t) - \hat r(\x))\\
&\leq\frac{2 G\|\x - \x_1\|_2^2}{2\eta} + \frac{(\x - \x_1)^{\top}\diag(\s_T)(\x-\x_1)}{2\eta} + \eta\sum_{i=1}^d\|g_{1:T, i}\|_2 + \sum_{t=1}^T\Delta_t +\hat r(\x_1) - \hat r(\x_{T+1}) \\
&\leq \frac{2 G  +\max_i \|g_{1:T,i}\|_2}{2\eta}\|\x - \x_1\|_2^2  + \eta\sum_{i=1}^d\|g_{1:T,i}\|_2  +\sum_{t=1}^T\Delta_t + (\partial \hat r(\x_{1}))^\top( \x_1-\x_{T+1}) \\
&\leq \frac{2 G  +\max_i \|g_{1:T,i}\|_2}{2\eta}\|\x - \x_1\|_2^2  + \eta\sum_{i=1}^d\|g_{1:T,i}\|_2  +\sum_{t=1}^T\Delta_t + G_r\| \x_1 - \x_{T+1} \|_2
\end{align*}
where the last inequality holds by using the fact that $\|\partial \hat r(\x_{1})\|  = \| \partial r(\x_{1})\|\leq G_r $. 
Dividing by $T$ and taking the expecation on both sides, then by using the convexity of $f(\x) + \hat r(\x)$ and $\E[\sum_{t=1}^T\Delta_t/T ] = 0 $ according to the stopping time argument~\citep{SadaGrad18}[Lemma 1, Supplement] we get
\begin{align*}
& \E[ F^\gamma_{\x_1}(\xh_T)-F^\gamma_{\x_1}(\x_*)] \\
\leq &\E\bigg[\frac{2 G  +\max_i \|g_{1:T,i}\|_2}{2\eta T}\|\x - \x_1\|_2^2  + \frac{\eta}{T}\sum_{i=1}^d\|g_{1:T,i}\|_2  + \frac{G_r\| \x_1 - \x_{T+1} \|_2}{T}\bigg]\\
\leq & \frac{1}{2aM\eta}\|\x_1-\x_*\|^2 +\frac{(a+1)\eta}{M},
\end{align*}
where the last inequality is due to $T\geq M \max\{a(2G+ \max_i\|g_{1:T,i}\|),   \sum_{i=1}^d\|g_{1:T,i}\|/a, G_r\|\x_1 - \x_{T+1}\|/\eta \}$.

\section{Proof of Theorem~\ref{thm:nadagrad}}
Let us use the same notations as in the proof of Theorem~\ref{thm:3}. By Proposition~\ref{lem:adagrad}, we have 
\begin{align*}
\E[ F_k(\x_{k+1}) - F_k(\z_k)] \leq \frac{ \|\x_k - \z_k\|^2}{2aM_k \eta_k } + \frac{(a+1)\eta_k }{M_k}
\end{align*}
To continue the analysis, we have
\begin{align*}
\E[f_k(\x_{k+1})   + \frac{\gamma}{2}\|\x_{k+1} - \x_k\|^2]&\leq F_k(\z_k) + \frac{\|\x_k - \z_k\|^2}{2aM_k \eta_k } + \frac{(a+1)\eta_k }{M_k}\\
&\leq F_k(\x_k) - \frac{\gamma}{2}\|\x_k - \z_k\|^2 + \frac{ \|\x_k - \z_k\|^2}{2aM_k \eta_k } + \frac{(a+1)\eta_k }{M_k}\\
& \leq g(\x_k)  + r(\x_k)  +   \frac{(a+1)\eta_k }{M_k}
\end{align*}
where we use $F_k(\x_k)\geq F_k(\z_k) + \frac{\gamma}{2}\|\x_k - \z_k\|^2$ due to the strong convexity of $F(\x)$, and $M_k\eta_k \geq  \frac{8a}{\gamma}$.
On the other hand, we have that 
\begin{align*}
    \|\x_{k+1}-\x_{k}\|^2 =& \|\x_{k+1}-\z_{k}+\z_{k}-\x_{s}\|^2\\
    =& \|\x_{k+1}-\z_{k}\|^2+\|\z_{k}-\x_{k}\|^2 + 2\langle \x_{k+1}-\z_{k}, \z_{k}- \x_{k}\rangle\\
    \geq& (1-\hat\alpha^{-1})\|\x_{k+1}-\z_{k}\|^2 + (1-\hat\alpha)\|\x_{k}-\z_{k}\|^2
\end{align*}
where the inequality follows from the Young's inequality with $0<\hat\alpha<1$. Thus we have that
\begin{align*}
 \E\bigg[\frac{\gamma(1-\hat\alpha)}{2}\|\z_{k} - \x_k\|^2\bigg]&\leq \E[g(\x_k) + r(\x_k)  - f_k(\x_{k+1})] + \frac{\gamma(\hat\alpha^{-1}-1)}{2}\E[\|\x_{k+1} - \z_k\|^2] \\
 &   + \frac{(a+1)\eta_k }{M_k}\\
 &\leq \E[F(\x_k) - F(\x_{k+1})] + \frac{\gamma(\hat\alpha^{-1}-1)}{2}\E[\|\x_{k+1} - \z_k\|^2]+ \frac{(a+1)\eta_k }{M_k}.
\end{align*}
By the strong convexity of $F_k(\x)$, we also have 
\begin{align*}
\frac{\gamma}{2}\E[\|\x_{k+1} - \z_k\|^2]\leq \E[F_k(\x_{k+1}) - F_k(\z_k)]\leq \frac{\|\x_k - \z_k\|^2}{2aM_k \eta_k } +\frac{(a+1)\eta_k }{M_k}
\end{align*}
Then we have
\begin{align*}
 \E\bigg[\frac{\gamma(1-\hat\alpha)}{2}\|\z_{k} - \x_k\|^2\bigg]&\leq \E[F(\x_k) - F(\x_{k+1})] + (\hat\alpha^{-1}-1)\left( \frac{\|\x_k - \z_k\|^2}{2aM_k \eta_k } +\frac{(a+1)\eta_k }{M_k}\right) \\
 &  +\frac{(a+1)\eta_k }{M_k}
\end{align*}
Let $\hat\alpha=1/2$, we have
\begin{align*}
 \E\bigg[\frac{\gamma}{4}\|\z_{k} - \x_k\|^2\bigg]&\leq \E[F(\x_k) - F(\x_{k+1})] + \frac{ \|\x_k - \z_k\|^2}{2aM_k \eta_k } + \frac{4\eta_k }{aM_k}\\
 & \leq \E[F(\x_k) - F(\x_{k+1})] + \frac{\gamma\|\x_k - \z_k\|^2}{8}+  \frac{2(a+1)\eta_k }{M_k}
\end{align*}
where we use the fact $M_k\eta_k \geq  \frac{4}{a\gamma}$.  It then gives us 
\begin{align*}
 \E\bigg[\frac{\gamma}{8}\|\z_{k} - \x_k\|^2\bigg]&\leq \E[F(\x_k) - F(\x_{k+1})] + \frac{2(a+1)\eta_k }{M_k}
\end{align*}
By setting $\eta_k = c/\sqrt{k}$, we have
\begin{align*}
 \E\bigg[\frac{\gamma}{8}\|\z_{k} - \x_k\|^2\bigg]&\leq \E[F(\x_k) - F(\x_{k+1})] + \frac{a(a+1)\gamma c^2}{2 k }
\end{align*}
Multiplying both sides by $w_k = k^\alpha$ and taking summation over $k=1,\ldots, K$, we have
\begin{align}\label{eqn:c2}
 \E\bigg[\frac{\gamma}{8}\sum_{k=1}^Kw_k\|\z_{k} - \x_k\|^2\bigg]&\leq \E\bigg[ \sum_{k=1}^Kw_k(F(\x_k) - F(\x_{k+1}))\bigg] +\sum_{k=1}^Kw_k \frac{a(a+1)\gamma c^2}{2 k },
\end{align}
Similar to proof of of Theorem~\ref{thm:3}, we have
\begin{align*}
 \E\bigg[ \sum_{k=1}^Kw_k(F(\x_k) - F(\x_{k+1}))\bigg] \leq  \sum_{k=1}^{K}(w_k - w_{k-1})\E[(F(\x_k) - \min_{\x}F(\x))]\leq \Delta w_{K}
\end{align*}
Then, by $\sum_{k=1}^Kk^{\alpha}\geq \int_0^{K}x^\alpha d x= \frac{K^{\alpha+1}}{\alpha+1}$ and $\sum_{k=1}^Kk^{\alpha-1}\leq K^\alpha,~(\alpha\geq 1)$, we have
\begin{align*}
 \E\bigg[\frac{\gamma}{8}\|\z_{\tau} - \x_\tau\|^2\bigg]\leq  \frac{\Delta K^\alpha}{\sum_{k=1}^{K} k^\alpha} + \frac{\gamma c^2 \sum_{k=1}^{K} k^{\alpha-1} }{2a^2\sum_{k=1}^{K} k^\alpha} \leq \frac{\Delta (\alpha+1)}{K} + \frac{a(a+1)\gamma c^2(\alpha+1)}{2 K},
\end{align*}
which can complete the proof by multiplying both sides by $8\gamma$.

\section{Proof of Theorem~\ref{thm:svrg}}
Recall that 
$F(\x)=\frac{1}{n_1}\sum_{i=1}^{n_1} g_i(\x) + r(\x) - \frac{1}{n_2}\sum_{j=1}^{n_2}h_j(\x)$
and
$F_k(\x)=\frac{1}{n_1}\sum_{i=1}^{n_1} g_i(\x) + r(\x)  - h(\x_k)- \frac{1}{n_2}\sum_{j=1}^{n_2}\partial h_j(\x_k)^{\top}(\x - \x_k) + \frac{\gamma}{2}\|\x - \x_k\|^2$.
For any $\x$, by the convexity of $h(\x)$ we know $F_k(\x) \geq F(\x) + \frac{\gamma}{2}\|\x-\x_k\|^2$.
By applying the result in Proposition~\ref{prop:svrg} to the $k$-th stage, we have
\begin{align*}
\E_k[F_k(\x_{k+1}) - F_k(\z_k)]\leq 0.5^{S_k} \E[F_k(\x_k) - F_k(\z_k)],
\end{align*}
where $\z_k = \arg\min_{\x} F_k(\x)$. Since $F_k(\x_{k+1}) - F_k(\z_k) \geq 0$ and $F_k(\x_{k}) - F_k(\z_k) \geq 0$, then $\E_k[F_k(\x_{k+1}) - F_k(\z_k)]\leq  \E[F_k(\x_k) - F_k(\z_k)]$, which implies $\E_k[F_k(\x_{k+1})]\leq  \E[F_k(\x_k)]$. 
Due to $F(\x_{k+1})\leq F_k(\x_{k+1})$ and $F_k(\x_k) = F(\x_k)$, we have $\E_k[F(\x_{k+1})]\leq F(\x_k)$. Hence $\E[F(\x_k) - F(\x_*)]\leq F(\x_0) - F(\x_*)\leq\Delta$ for all $k$. 
Since 
\begin{align*}
F_k(\x_k) - F_k(\z_k)\leq F(\x_k) - F(\z_k)\leq F(\x_k ) - F(\x_*),
\end{align*}
as a result, we have 
\begin{align*}
\E_k[F_k(\x_{k+1}) - F_k(\z_k)]\leq 0.5^{S_k} [F(\x_k) - F(\x_*)]. 
\end{align*}
To continue the analysis, we have
\begin{align*}
\E[F(\x_{k+1})   + \frac{\gamma}{2}\|\x_{k+1} - \x_k\|^2]&\leq F_k(\z_k) +0.5^{S_k} [F(\x_k) - F(\x_*)]\\
&\leq F_k(\x_k) - \frac{\gamma}{2}\|\x_k - \z_k\|^2 +0.5^{S_k} [F(\x_k) - F(\x_*)]\\
& \leq F(\x_k) +0.5^{S_k} [F(\x_k) - F(\x_*)],
\end{align*}
where we use $F_k(\x_k)\geq F_k(\z_k) + \frac{\gamma}{2}\|\x_k - \z_k\|^2$ due to the strong convexity of $F(\x)$, and $F_k(\x_k)  = F(\x_k) $.
On the other hand, we have that 
\begin{align*}
    \|\x_{k+1}-\x_{k}\|^2 =& \|\x_{k+1}-\z_{k}+\z_{k}-\x_{s}\|^2\\
    =& \|\x_{k+1}-\z_{k}\|^2+\|\z_{k}-\x_{k}\|^2 + 2\langle \x_{k+1}-\z_{k}, \z_{k}- \x_{k}\rangle\\
    \geq& (1-\hat\alpha^{-1})\|\x_{k+1}-\z_{k}\|^2 + (1-\hat\alpha)\|\x_{k}-\z_{k}\|^2
\end{align*}
where the inequality follows from the Young's inequality with $0<\hat\alpha<1$. Thus we have that
\begin{align*}
 \E\bigg[\frac{\gamma(1-\hat\alpha)}{2}\|\z_{k} - \x_k\|^2\bigg]&\leq \E[F(\x_k) - F_k(\x_{k+1})] + \frac{\gamma(\hat\alpha^{-1}-1)}{2}\E[\|\x_{k+1} - \z_k\|^2] \\
 &   +0.5^{S_k} [F(\x_k) - F(\x_*)].
\end{align*}
On the other hand, by the strong convexity of $F_k(\x)$, we also have 
\begin{align*}
\frac{\gamma}{2}\E[\|\x_{k+1} - \z_k\|^2]\leq \E[F_k(\x_{k+1}) - F_k(\z_k)]\leq 0.5^{S_k} [F(\x_k) - F(\x_*)]
\end{align*}
Then we have
\begin{align*}
 \E\bigg[\frac{\gamma(1-\hat\alpha)}{2}\|\z_{k} - \x_k\|^2\bigg]&\leq \E[F(\x_k) - F(\x_{k+1})] + (\hat\alpha^{-1}-1)\left(0.5^{S_k} [F(\x_k) - F(\x_*)]\right) \\
 &  + 0.5^{S_k} [F(\x_k) - F(\x_*)]
\end{align*}
Let $\hat\alpha=1/2$, we have
\begin{align*}
 \E\bigg[\frac{\gamma}{4}\|\z_{k} - \x_k\|^2\bigg]&\leq \E[F(\x_k) - F(\x_{k+1})] + 2 \times 0.5^{S_k} [F(\x_k) - F(\x_*)].
 \end{align*}
Multiplying both sides by $w_k = k^\alpha$ and taking summation over $k=1,\ldots, K$, we have
\begin{align*}
 \E\bigg[\frac{\gamma}{4}\sum_{k=1}^Kw_k\|\z_{k} - \x_k\|^2\bigg]&\leq \E\bigg[ \sum_{k=1}^Kw_k(F(\x_k) - F(\x_{k+1}))\bigg] +2\E\bigg[\sum_{k=1}^Kw_k 0.5^{S_k} [F(\x_k) - F(\x_*)]\bigg],
\end{align*}
Then following the similar analysis as the proof of Theorem~\ref{thm:3},  we have
\begin{align*}
 \E\bigg[\frac{\gamma}{4}\|\z_{\tau} - \x_\tau\|^2\bigg]&\leq \frac{\Delta w_K}{\sum_{k=1}^Kw_k} + \frac{2\Delta\sum_{k=1}^Kw_k0.5^{S_k}}{\sum_{k=1}^Kw_k},
\end{align*}
By noting that $0.5^{S_k}\leq 1/k$,
\begin{align*}
 \E\bigg[\frac{\gamma}{4}\|\z_{\tau} - \x_\tau\|^2\bigg]&\leq  \frac{3\Delta (\alpha+1)}{K},
\end{align*}
which can complete the proof by multiplying both sides by $4\gamma$.

\section{Proof of Theorem~\ref{thm:6}}
The key is to connect the convergence in terms of $\|G_\gamma(\x)\|$ to the convergence in terms of (sub)gradient. 
By Jensen's inequality we know for any random variable $X$ and a convex function $g(x)$,
\begin{align*}
\E[g(X)] \geq g(\E[X]).
\end{align*}
Let $X = \|G_\gamma(\x_\tau)\|^v$ and $g(x) = x^{2/v}$, then $g(x)$ is a convex function since $0<v\le 1$. Therefore, we have
\begin{align*}
(\E[\|G_\gamma(\x_\tau)\|^v])^{2/v} \leq \E[\|G_\gamma(\x_\tau)\|^2] \leq O(1/K),
\end{align*}
which implies
\begin{align}\label{eqn1:thm6}
\E[\|G_\gamma(\x_\tau)\|^v]  \leq O(1/K^{v/2}),~(0<v\leq 1).
\end{align}
We finish the proof by combining inequality (\ref{eqn1:thm6}) and the results in Proposition~\ref{prop:5}.



\section{Proof of Lemma~\ref{lem:DC:new1}}
The proof can be obtained by slightly changing the proof of Theorem~\ref{thm:sgd}.

For the case of smooth $g(\x)$, from the proof of Theorem~\ref{thm:sgd}, we have
\begin{align*}
 \E\bigg[\frac{\gamma_k}{8}\|\z_{k} - \x_k\|^2\bigg]&\leq \E[F(\x_k) - F(\x_{k+1})] + \frac{4G^2}{kL}
\end{align*}
Multiplying by $w_k/\gamma_k$ and then summing over $k=1,\ldots, K$, we have
\begin{align*}
 \E\bigg[\frac{1}{8}\sum_{k=1}^Kw_k\|\z_{k} - \x_k\|^2\bigg]&\leq \E\bigg[ \sum_{k=1}^K\frac{w_k}{\gamma_k}(F(\x_k) - F(\x_{k+1}))\bigg] +\sum_{k=1}^K\frac{w_k}{\gamma_k} \frac{4G^2}{kL},
\end{align*}
According to the setting of $w_k = k^\alpha$ and $\gamma_k = 3Lk^{\frac{1-\nu}{ 1+ \nu}}$ with $\alpha\geq 1$, we have
\begin{align*}
 \E\bigg[\frac{1}{8}\|\z_{\tau} - \x_\tau\|^2\bigg]&\leq \frac{K^{\alpha - {\frac{1-\nu}{ 1+ \nu}}} \Delta (\alpha +1)}{3LK^{\alpha+1}} + \frac{4G^2K^{\alpha - {\frac{1-\nu}{ 1+ \nu}}}(\alpha +1) }{K^{\alpha +1}3L^2},
\end{align*}
which implies that 
\begin{align*}
 \E\bigg[\|\z_{\tau} - \x_\tau\|^2\bigg]&\leq \frac{8\Delta (\alpha +1)}{3LK^{\frac{2}{ 1+ \nu}}} + \frac{32G^2(\alpha +1) }{K^{\frac{2}{ 1+ \nu}}3L^2}.
\end{align*}

For the case of non-smooth $g(\x)$, from the proof of Theorem~\ref{thm:sgd}, we have
\begin{align*}
 \E\bigg[\frac{\gamma_k}{8}\|\z_{k} - \x_k\|^2\bigg]&\leq \E[F(\x_k) - F(\x_{k+1})] + \frac{56G^2}{k}
\end{align*}
Multiplying by $w_k/\gamma_k$ and then summing over $k=1,\ldots, K$, we have
\begin{align*}
 \E\bigg[\frac{1}{8}\sum_{k=1}^Kw_k\|\z_{k} - \x_k\|^2\bigg]&\leq \E\bigg[ \sum_{k=1}^K\frac{w_k}{\gamma_k}(F(\x_k) - F(\x_{k+1}))\bigg] +\sum_{k=1}^K\frac{w_k}{\gamma_k} \frac{56G^2}{k},
\end{align*}
According to the setting of $w_k = k^\alpha$ and $\gamma_k = k^{\frac{1-\nu}{ 1+ \nu}}$ with $\alpha\geq 1$, we have
\begin{align*}
 \E\bigg[\frac{1}{8}\|\z_{\tau} - \x_\tau\|^2\bigg]&\leq \frac{K^{\alpha - {\frac{1-\nu}{ 1+ \nu}}} \Delta (\alpha +1)}{K^{\alpha+1}} + \frac{56G^2K^{\alpha - {\frac{1-\nu}{ 1+ \nu}}}(\alpha +1) }{K^{\alpha +1}},
\end{align*}
which implies that 
\begin{align*}
 \E\bigg[\|\z_{\tau} - \x_\tau\|^2\bigg]&\leq \frac{8\Delta (\alpha +1)}{K^{\frac{2}{ 1+ \nu}}} + \frac{448G^2(\alpha +1) }{K^{\frac{2}{ 1+ \nu}}}.
\end{align*}


\section{Proof of Lemma~\ref{lem:DC:new2}}
From the proof of Theorem~\ref{thm:nadagrad}, we have
\begin{align*}
 \E\bigg[\frac{\gamma_k}{4}\|\z_{k} - \x_k\|^2\bigg]&\leq \E[F(\x_k) - F(\x_{k+1})] + \frac{ \|\x_k - \z_k\|^2}{2aM_k \eta_k } + \frac{4\eta_k }{aM_k}\\
 & \leq \E[F(\x_k) - F(\x_{k+1})] + \frac{\gamma_k\|\x_k - \z_k\|^2}{8}+  \frac{2(a+1)\eta_k }{M_k}
\end{align*}
where we use the fact $M_k\eta_k \geq  \frac{4}{a\gamma_k}$.  Then we have  
\begin{align*}
 \E\bigg[\frac{\gamma_k}{8}\|\z_{k} - \x_k\|^2\bigg]&\leq \E[F(\x_k) - F(\x_{k+1})] + \frac{2(a+1)\eta_k }{M_k}
\end{align*}
By setting $\eta_k = c/\sqrt{\gamma_k k}$ and $M_k\eta_k \geq \frac{4}{a\gamma_k}$, we have
\begin{align*}
 \E\bigg[\frac{\gamma_k}{8}\|\z_{k} - \x_k\|^2\bigg]&\leq \E[F(\x_k) - F(\x_{k+1})] + \frac{a(a+1)c^2}{2 k }
\end{align*}
Multiplying by $w_k/\gamma_k$ and then summing over $k=1,\ldots, K$, we have
\begin{align*}
 \E\bigg[\frac{1}{8}\sum_{k=1}^Kw_k\|\z_{k} - \x_k\|^2\bigg]&\leq \E\bigg[ \sum_{k=1}^K\frac{w_k}{\gamma_k}(F(\x_k) - F(\x_{k+1}))\bigg] +\E\bigg[\sum_{k=1}^K\frac{w_k}{\gamma_k} \frac{a(a+1)c^2}{2k}\bigg],
\end{align*}
According to the setting of $w_k = k^\alpha$ and $\gamma_k = k^{\frac{1-\nu}{ 1+ \nu}}$ with $\alpha\geq 1$, we have
\begin{align*}
 \E\bigg[\frac{1}{8}\|\z_{\tau} - \x_\tau\|^2\bigg]&\leq \frac{K^{\alpha - {\frac{1-\nu}{ 1+ \nu}}} \Delta (\alpha +1)}{K^{\alpha+1}} + \frac{a(a+1)c^2K^{\alpha - {\frac{1-\nu}{ 1+ \nu}}}(\alpha +1) }{2K^{\alpha +1}},
\end{align*}
which implies that 
\begin{align*}
 \E\bigg[\|\z_{\tau} - \x_\tau\|^2\bigg]&\leq \frac{8\Delta (\alpha +1)}{K^{\frac{2}{ 1+ \nu}}} + \frac{4a(a+1)c^2(\alpha +1) }{K^{\frac{2}{ 1+ \nu}}}.
\end{align*}


\section{Proof of Lemma~\ref{lem:DC:new3}}
From the proof of Theorem~\ref{thm:svrg}, we have
\begin{align*}
 \E\bigg[\frac{\gamma_k}{4}\|\z_{k} - \x_k\|^2\bigg]&\leq \E[F(\x_k) - F(\x_{k+1})] +2\E\bigg[\sum_{k=1}^K0.5^{S_k} [F(\x_k) - F(\x_*)]\bigg]
\end{align*}
Multiplying by $w_k/\gamma_k$ and then summing over $k=1,\ldots, K$, we have
\begin{align*}
 \E\bigg[\frac{1}{4}\sum_{k=1}^Kw_k\|\z_{k} - \x_k\|^2\bigg]&\leq \E\bigg[ \sum_{k=1}^K\frac{w_k}{\gamma_k}(F(\x_k) - F(\x_{k+1}))\bigg] +2\E\bigg[\sum_{k=1}^K\frac{w_k}{\gamma_k} 0.5^{S_k} [F(\x_k) - F(\x_*)]\bigg],
\end{align*}
According to the setting of $w_k = k^\alpha$ and $\gamma_k = ck^{\frac{1-\nu}{ 1+ \nu}}$ with $\alpha\geq 1$, and by noting that $0.5^{S_k}\leq 1/k$, we have
\begin{align*}
 \E\bigg[\frac{1}{4}\|\z_{\tau} - \x_\tau\|^2\bigg]&\leq \frac{K^{\alpha - {\frac{1-\nu}{ 1+ \nu}}} \Delta (\alpha +1)}{cK^{\alpha+1}} + \frac{2K^{\alpha - {\frac{1-\nu}{ 1+ \nu}}}\Delta (\alpha +1) }{cK^{\alpha +1}},
\end{align*}
which implies that 
\begin{align*}
 \E\bigg[\|\z_{\tau} - \x_\tau\|^2\bigg]&\leq \frac{12\Delta (\alpha +1)}{cK^{\frac{2}{ 1+ \nu}}}.
\end{align*}


\section{Proof of Lemma~\ref{lem:prox}}
The proof of the first fact can be found in~\citep{Liu2018}[Eqn. 7], and the second fact follows~\citep{RockWets98}[Theorem 10.1 and Exercise 8.8].
Since $r(\cdot)$ is nonnegative proper closed, then $R_{\mu}(\x)$ is convex continuous. By the definition of $r_{\mu}(x)$ and $\text{prox}_{\mu r}(\x)$ we know the supremum in $R_{\mu}(\x)$ is attained at any point in $\text{prox}_{\mu r}(\x)$. Let $\v\in \text{prox}_{\mu r}(\x)$, then for any $\w$ we get
\begin{align*}
R_{\mu}(\w) - R_{\mu}(\x) = & \max_{y\in\R^d} \left\{\frac{1}{\mu} \y^\top\w - \frac{1}{2\mu} \|\y\|^2 - r(\y)\right\}  - \max_{y\in\R^d} \left\{\frac{1}{\mu} \y^\top\x - \frac{1}{2\mu} \|\y\|^2 - r(\y) \right\} \\
\geq &  \left\{\frac{1}{\mu} \v^\top\w - \frac{1}{2\mu} \|\v\|^2 - r(\v)\right\}  -  \left\{\frac{1}{\mu} \v^\top\x - \frac{1}{2\mu} \|\v\|^2 - r(\v) \right\} \\
= &  \frac{1}{\mu} \v^\top(\w - \x),
\end{align*}
which implies $\frac{1}{\mu}\text{prox}_{\mu r}(\x) \subseteq \partial R_{\mu}(\x)$. 
By Theorem 1.25 of~\citep{RockWets98}, the set $\text{prox}_{\mu r}(\x) := \text{Arg}\min_{\y\in\R^d}\left\{\frac{1}{2\mu} \|\y-\x\|^2 + r(\y) \right\} $ is always nonempty since $r$ is proper lower-semicontinuous and bounded below. 
Let $ \v \in \text{prox}_{\mu r}(\x)$, then by Exercise 8.8 (c) and Theorem 10.1 of~\citep{RockWets98} we have 
\begin{align*}
\frac{1}{\mu}(\x- \v) \in \hat \partial r(\v).
\end{align*}

\section{DC decomposition of non-convex sparsity-promoting regularizers}

We present the details of DC decomposition for several regularizers. The following examples are from~\citep{Wen2018,DBLP:conf/icml/GongZLHY13}. 

{\bf Example 1.} The DC decomposition of log-sum penalty (LSP)~\citep{Candades2008} is given by
\begin{align*}
r(\x) := \lambda \sum_{i=1}^{d}\log(|\x_i|+\theta) =  \lambda\frac{\|\x\|_1}{\theta} - \underbrace{\lambda\sum_{i=1}^{d} \left(\frac{|\x_i|}{\theta} - \log(|\x_i|+\theta)\right)}\limits_{r_2(\x)},
\end{align*}
where $\lambda >0$ and $\theta>0$.
It was shown that $r_2(\x)$ is convex and smooth with smoothness parameter $\frac{\lambda}{\theta^2}$.

{\bf Example 2.} The DC decomposition of minimax concave penalty (MCP)~\citep{cunzhang10} is given by
\begin{align*}
r(\x) := \lambda \sum_{i=1}^{d} \int_{0}^{|\x_i|} \left[1- \frac{z}{\theta\lambda}\right]_+ d z = \lambda \|\x\|_1 - \underbrace{\lambda \sum_{i=1}^{d} \int_{0}^{|\x_i|} \min\left\{ 1, \frac{z}{\theta\lambda}\right\} dz}\limits_{r_2(\x)},
\end{align*}
where $\lambda >0$, $\theta>0$, $[z]_+ = \max\{0,z\}$,  
\begin{align*}
\lambda \int_{0}^{|\x_i|} \left[1- \frac{z}{\theta\lambda}\right]_+ dz =     \left\{ \begin{array}{rl}
        \lambda |\x_i| -  \frac{\x_i^2 }{2\theta} & \mbox{if}~ |\x_i| \leq \theta\lambda\\ 
         \frac{\theta\lambda^2}{2}& \mbox{if}~|\x_i| > \theta\lambda
                \end{array}\right.
\end{align*}
and
\begin{align*}
\lambda  \int_{0}^{|\x_i|} \min\left\{ 1, \frac{z}{\theta\lambda}\right\} dz =     \left\{ \begin{array}{rl}
       \frac{\x_i^2 }{2\theta} & \mbox{if}~ |\x_i| \leq \theta\lambda\\ 
         \lambda|\x_i| - \frac{\theta\lambda^2}{2}& \mbox{if}~|\x_i| > \theta\lambda
                \end{array}\right.
\end{align*}
Then $r_2(\x)$ is a convex and smooth function, and the smoothness parameter $\frac{1}{\theta}$.

{\bf Example 3.} The DC decomposition of smoothly clipped absolute deviation (SCAD)~\citep{CIS-172933} is given by
\begin{align*}
r(\x) = \lambda \sum_{i=1}^{d} \int_{0}^{|\x_i|}\min\left\{1, \frac{[\theta\lambda - z]_+}{(\theta-1)\lambda} \right\} dz = \lambda \|\x\|_1 - \underbrace{\lambda \sum_{i=1}^{d} \int_{0}^{|\x_i|} \frac{[\min\{\theta\lambda, z\} - \lambda]_+ }{(\theta-1)\lambda}dz}\limits_{r_2(\x)},
\end{align*}
where $\lambda >0$, $\theta>2$,
\begin{align*}
\lambda \int_{0}^{|\x_i|}\min\left\{1, \frac{[\theta\lambda - z]_+}{(\theta-1)\lambda} \right\} dz =     \left\{ \begin{array}{rcl}
         \lambda |\x_i| & \mbox{if} & |\x_i| \leq \lambda \\ 
         \frac{-\x_i^2 + 2\theta\lambda |\x_i|-\lambda^2}{2(\theta-1)}  & \mbox{if} & \lambda < |\x_i| \leq \theta\lambda\\
         \frac{(\theta+1)\lambda^2}{2}& \mbox{if} &  |\x_i| > \theta\lambda
                \end{array}\right.
\end{align*}
and
\begin{align*}
\lambda \int_{0}^{|\x_i|} \frac{[\min\{\theta\lambda, z\} - \lambda]_+ }{(\theta-1)\lambda}dz =     \left\{ \begin{array}{rcl}
         0& \mbox{if} & |\x_i| \leq \lambda \\ 
         \frac{\x_i^2 - 2\lambda |\x_i|+\lambda^2}{2(\theta-1)}  & \mbox{if} & \lambda < |\x_i| \leq \theta\lambda\\
       \lambda|\x_i| -  \frac{(\theta+1)\lambda^2}{2}& \mbox{if} &  |\x_i| > \theta\lambda
                \end{array}\right.
\end{align*}
Then $r_2(\x)$ was shown to be convex and smooth with modulus $\frac{1}{\theta-1}$.

{\bf Example 4.} The DC decomposition of transformed $\ell_1$ norm~\cite{zhang2018minimization} is given by
\begin{align*}
r(\x) :=  \sum_{i=1}^{d}\frac{(\theta+1)|\x_i|}{\theta+|\x_i|} =  \frac{(1+\theta)\|\x\|_1}{\theta} - \underbrace{\sum_{i=1}^{d}\left[\frac{(\theta+1)|\x_i|}{\theta}  - \frac{(\theta+1)|\x_i|}{\theta+|\x_i|} \right]}\limits_{r_2(\x)},
\end{align*}
where $\theta>0$. The function $r_2(\x)$ is smooth with parameter $\frac{2(1+\theta)}{\theta^2}$.

{\bf Example 5.} The DC decomposition of capped $\ell_1$ penalty~\citep{Zhang:2010:AMC:1756006.1756041} is given by
\begin{align*}
r(\x) :=  \lambda \sum_{i=1}^{d}\min\{|\x_i|,\theta \} =  \lambda \|\x\|_1 - \underbrace{\lambda \sum_{i=1}^{d}[|\x_i|-\theta]_+}\limits_{r_2(\x)},
\end{align*}
where $\lambda >0$, $\theta>0$.

\bibliography{all}

\begin{thebibliography}{55}
\providecommand{\natexlab}[1]{#1}
\providecommand{\url}[1]{\texttt{#1}}
\expandafter\ifx\csname urlstyle\endcsname\relax
  \providecommand{\doi}[1]{doi: #1}\else
  \providecommand{\doi}{doi: \begingroup \urlstyle{rm}\Url}\fi

\bibitem[Allen-Zhu(2017)]{DBLP:conf/icml/Allen-Zhu17}
Zeyuan Allen-Zhu.
\newblock Natasha: Faster non-convex stochastic optimization via strongly
  non-convex parameter.
\newblock In \emph{Proceedings of the 34th International Conference on Machine
  Learning (ICML)}, pages 89--97, 2017.

\bibitem[Allen-Zhu and Hazan(2016)]{DBLP:conf/icml/ZhuH16}
Zeyuan Allen-Zhu and Elad Hazan.
\newblock Variance reduction for faster non-convex optimization.
\newblock In \emph{Proceedings of the 33nd International Conference on Machine
  Learning (ICML)}, pages 699--707, 2016.

\bibitem[An and Nam(2017)]{doi:10.1080/02331934.2016.1253694}
Nguyen~Thai An and Nguyen~Mau Nam.
\newblock Convergence analysis of a proximal point algorithm for minimizing
  differences of functions.
\newblock \emph{Optimization}, 66\penalty0 (1):\penalty0 129--147, 2017.

\bibitem[Attouch et~al.(2013)Attouch, Bolte, and Svaiter]{Attouch2013}
Hedy Attouch, J{\'e}r{\^o}me Bolte, and Benar~Fux Svaiter.
\newblock Convergence of descent methods for semi-algebraic and tame problems:
  proximal algorithms, forward--backward splitting, and regularized
  gauss--seidel methods.
\newblock \emph{Mathematical Programming}, 137\penalty0 (1):\penalty0 91--129,
  Feb 2013.

\bibitem[Beck and Teboulle(2009)]{Beck:2009:FIS:1658360.1658364}
Amir Beck and Marc Teboulle.
\newblock A fast iterative shrinkage-thresholding algorithm for linear inverse
  problems.
\newblock \emph{SIAM J. Img. Sci.}, 2:\penalty0 183--202, 2009.
\newblock ISSN 1936-4954.

\bibitem[Bolte et~al.(2014)Bolte, Sabach, and
  Teboulle]{Bolte:2014:PAL:2650160.2650169}
J{\'e}r\^{o}me Bolte, Shoham Sabach, and Marc Teboulle.
\newblock Proximal alternating linearized minimization for nonconvex and
  nonsmooth problems.
\newblock \emph{Math. Program.}, 146\penalty0 (1-2):\penalty0 459--494, August
  2014.
\newblock ISSN 0025-5610.

\bibitem[Bot et~al.(2016)Bot, Csetnek, and L{\'a}szl{\'o}]{Bot2016}
Radu~Ioan Bot, Ern{\"o}~Robert Csetnek, and Szil{\'a}rd~Csaba L{\'a}szl{\'o}.
\newblock An inertial forward--backward algorithm for the minimization of the
  sum of two nonconvex functions.
\newblock \emph{EURO Journal on Computational Optimization}, 4\penalty0
  (1):\penalty0 3--25, Feb 2016.

\bibitem[Cand{\`e}s et~al.(2008)Cand{\`e}s, Wakin, and Boyd]{Candades2008}
Emmanuel~J. Cand{\`e}s, Michael~B. Wakin, and Stephen~P. Boyd.
\newblock Enhancing sparsity by reweighted l1 minimization.
\newblock \emph{Journal of Fourier Analysis and Applications}, 14\penalty0
  (5):\penalty0 877--905, Dec 2008.

\bibitem[Chen and Yang(2018)]{chen18stagewisekatyusha}
Zaiyi Chen and Tianbao Yang.
\newblock A variance reduction method for non-convex optimization with improved
  convergence under large condition number.
\newblock \emph{CoRR}, abs/1809.06754, 2018.

\bibitem[Chen et~al.(2018{\natexlab{a}})Chen, Xu, Chen, and Yang]{SadaGrad18}
Zaiyi Chen, Yi~Xu, Enhong Chen, and Tianbao Yang.
\newblock Sadagrad: Strongly adaptive stochastic gradient methods.
\newblock In \emph{Proceedings of the 35th International Conference on Machine
  Learning (ICML)}, pages 912--920, 2018{\natexlab{a}}.

\bibitem[Chen et~al.(2018{\natexlab{b}})Chen, Yang, Yi, Zhou, and
  Chen]{chen18stagewise}
Zaiyi Chen, Tianbao Yang, Jinfeng Yi, Bowen Zhou, and Enhong Chen.
\newblock Universal stagewise learning for non-convex problems with convergence
  on averaged solutions.
\newblock \emph{CoRR}, /abs/1808.06296, 2018{\natexlab{b}}.

\bibitem[Davis and Drusvyatskiy(2018{\natexlab{a}})]{modelweakly18}
Damek Davis and Dmitriy Drusvyatskiy.
\newblock Stochastic model-based minimization of weakly convex functions.
\newblock \emph{CoRR}, abs/1803.06523, 2018{\natexlab{a}}.

\bibitem[Davis and Drusvyatskiy(2018{\natexlab{b}})]{sgdweakly18}
Damek Davis and Dmitriy Drusvyatskiy.
\newblock Stochastic subgradient method converges at the rate
  $\text{O}(k^{-1/4})$ on weakly convex functions.
\newblock \emph{CoRR}, /abs/1802.02988, 2018{\natexlab{b}}.

\bibitem[Davis and Grimmer(2017)]{davis2017proximally}
Damek Davis and Benjamin Grimmer.
\newblock Proximally guided stochastic subgradient method for nonsmooth,
  nonconvex problems.
\newblock \emph{arXiv preprint arXiv:1707.03505}, 2017.

\bibitem[Davis et~al.(2018)Davis, Drusvyatskiy, Kakade, and
  Lee]{DBLP:journals/corr/abs-1804-07795}
Damek Davis, Dmitriy Drusvyatskiy, Sham Kakade, and Jason~D. Lee.
\newblock Stochastic subgradient method converges on tame functions.
\newblock \emph{CoRR}, abs/1804.07795, 2018.

\bibitem[Drusvyatskiy and Paquette(2018)]{Drusvyatskiy2018}
D.~Drusvyatskiy and C.~Paquette.
\newblock Efficiency of minimizing compositions of convex functions and smooth
  maps.
\newblock \emph{Mathematical Programming}, Jul 2018.

\bibitem[Du~Plessis et~al.(2015)Du~Plessis, Niu, and Sugiyama]{du2015convex}
Marthinus Du~Plessis, Gang Niu, and Masashi Sugiyama.
\newblock Convex formulation for learning from positive and unlabeled data.
\newblock In \emph{International Conference on Machine Learning}, pages
  1386--1394, 2015.

\bibitem[Duchi et~al.(2011)Duchi, Hazan, and Singer]{duchi2011adaptive}
John Duchi, Elad Hazan, and Yoram Singer.
\newblock Adaptive subgradient methods for online learning and stochastic
  optimization.
\newblock \emph{Journal of Machine Learning Research}, 12\penalty0
  (Jul):\penalty0 2121--2159, 2011.

\bibitem[Duchi et~al.(2010)Duchi, Shalev{-}Shwartz, Singer, and
  Tewari]{DBLP:conf/colt/DuchiSST10}
John~C. Duchi, Shai Shalev{-}Shwartz, Yoram Singer, and Ambuj Tewari.
\newblock Composite objective mirror descent.
\newblock In \emph{Proceedings of the 23rd Conference on Learning Theory
  (COLT)}, pages 14--26, 2010.

\bibitem[Fan and Li(2001)]{CIS-172933}
Jianqing Fan and Runze Li.
\newblock Variable selection via nonconcave penalized likelihood and its oracle
  properties.
\newblock \emph{Journal of the American Statistical Association}, 96\penalty0
  (456):\penalty0 1348--1360, 2001.

\bibitem[Ghadimi and Lan(2013)]{DBLP:journals/siamjo/GhadimiL13a}
Saeed Ghadimi and Guanghui Lan.
\newblock Stochastic first- and zeroth-order methods for nonconvex stochastic
  programming.
\newblock \emph{{SIAM} Journal on Optimization}, 23\penalty0 (4):\penalty0
  2341--2368, 2013.

\bibitem[Gong et~al.(2013)Gong, Zhang, Lu, Huang, and
  Ye]{DBLP:conf/icml/GongZLHY13}
Pinghua Gong, Changshui Zhang, Zhaosong Lu, Jianhua Huang, and Jieping Ye.
\newblock A general iterative shrinkage and thresholding algorithm for
  non-convex regularized optimization problems.
\newblock In \emph{Proceedings of the 30th International Conference on Machine
  Learning (ICML)}, pages 37--45, 2013.

\bibitem[Hiriart-Urruty(1985)]{10.1007/978-3-642-45610-7_3}
J.-B. Hiriart-Urruty.
\newblock Generalized differentiability / duality and optimization for problems
  dealing with differences of convex functions.
\newblock In Jacob Ponstein, editor, \emph{Convexity and Duality in
  Optimization}, pages 37--70, Berlin, Heidelberg, 1985. Springer Berlin
  Heidelberg.
\newblock ISBN 978-3-642-45610-7.

\bibitem[Horst and Thoai(1999)]{Horst1999}
R.~Horst and N.~V. Thoai.
\newblock Dc programming: Overview.
\newblock \emph{Journal of Optimization Theory and Applications}, 103\penalty0
  (1):\penalty0 1--43, Oct 1999.
\newblock ISSN 1573-2878.
\newblock \doi{10.1023/A:1021765131316}.

\bibitem[Khamaru and Wainwright(2018)]{pmlr-v80-khamaru18a}
Koulik Khamaru and Martin Wainwright.
\newblock Convergence guarantees for a class of non-convex and non-smooth
  optimization problems.
\newblock In Jennifer Dy and Andreas Krause, editors, \emph{Proceedings of the
  35th International Conference on Machine Learning}, volume~80 of
  \emph{Proceedings of Machine Learning Research}, pages 2601--2610,
  Stockholmsmässan, Stockholm Sweden, 10--15 Jul 2018. PMLR.

\bibitem[Kiryo et~al.(2017)Kiryo, Niu, du~Plessis, and Sugiyama]{NIPS2017_6765}
Ryuichi Kiryo, Gang Niu, Marthinus~C du~Plessis, and Masashi Sugiyama.
\newblock Positive-unlabeled learning with non-negative risk estimator.
\newblock In I.~Guyon, U.~V. Luxburg, S.~Bengio, H.~Wallach, R.~Fergus,
  S.~Vishwanathan, and R.~Garnett, editors, \emph{Advances in Neural
  Information Processing Systems 30}, pages 1675--1685. Curran Associates,
  Inc., 2017.

\bibitem[Lan and Yang(2018)]{DBLP:journals/corr/abs/1805.05411}
Guanghui Lan and Yu~Yang.
\newblock Accelerated stochastic algorithms for nonconvex finite-sum and
  multi-block optimization.
\newblock \emph{CoRR}, abs/1805.05411, 2018.

\bibitem[Li and Pong(2016)]{DBLP:journals/mp/LiP16}
Guoyin Li and Ting~Kei Pong.
\newblock Douglas-rachford splitting for nonconvex optimization with
  application to nonconvex feasibility problems.
\newblock \emph{Math. Program.}, 159\penalty0 (1-2):\penalty0 371--401, 2016.

\bibitem[Li and Lin(2015)]{Li:2015:APG:2969239.2969282}
Huan Li and Zhouchen Lin.
\newblock Accelerated proximal gradient methods for nonconvex programming.
\newblock In \emph{Proceedings of the 28th International Conference on Neural
  Information Processing Systems - Volume 1}, NIPS'15, pages 379--387, 2015.

\bibitem[Liu et~al.(2018)Liu, Pong, and Takeda]{Liu2018}
Tianxiang Liu, Ting~Kei Pong, and Akiko Takeda.
\newblock A successive difference-of-convex approximation method for a class of
  nonconvex nonsmooth optimization problems.
\newblock \emph{Mathematical Programming}, Sep 2018.

\bibitem[Metel and Takeda(2019)]{DBLP:journals/corr/abs/1901.08369}
Michael~R. Metel and Akiko Takeda.
\newblock Stochastic gradient methods for non-smooth non-convex regularized
  optimization.
\newblock \emph{CoRR}, abs/1901.08369, 2019.

\bibitem[Nesterov(2013)]{Composite}
Yu. Nesterov.
\newblock Gradient methods for minimizing composite functions.
\newblock \emph{Mathematical Programming}, 140\penalty0 (1):\penalty0 125--161,
  2013.

\bibitem[Nesterov(2015)]{Nesterov2015}
Yu~Nesterov.
\newblock Universal gradient methods for convex optimization problems.
\newblock \emph{Mathematical Programming}, 152\penalty0 (1):\penalty0 381--404,
  Aug 2015.
\newblock ISSN 1436-4646.
\newblock \doi{10.1007/s10107-014-0790-0}.
\newblock URL \url{https://doi.org/10.1007/s10107-014-0790-0}.

\bibitem[Nesterov(1983)]{citeulike:9501961}
Yurii Nesterov.
\newblock {A method of solving a convex programming problem with convergence
  rate O(1/sqr(k))}.
\newblock \emph{Soviet Mathematics Doklady}, 27:\penalty0 372--376, 1983.

\bibitem[Nitanda and Suzuki(2017)]{pmlr-v54-nitanda17a}
Atsushi Nitanda and Taiji Suzuki.
\newblock {Stochastic Difference of Convex Algorithm and its Application to
  Training Deep Boltzmann Machines}.
\newblock In Aarti Singh and Jerry Zhu, editors, \emph{Proceedings of the 20th
  International Conference on Artificial Intelligence and Statistics},
  volume~54, pages 470--478, 2017.

\bibitem[Reddi et~al.(2016{\natexlab{a}})Reddi, Hefny, Sra, P\'{o}cz\'{o}s, and
  Smola]{Reddi:2016:SVR:3045390.3045425}
Sashank~J. Reddi, Ahmed Hefny, Suvrit Sra, Barnab\'{a}s P\'{o}cz\'{o}s, and
  Alex Smola.
\newblock Stochastic variance reduction for nonconvex optimization.
\newblock In \emph{Proceedings of the 33rd International Conference on
  International Conference on Machine Learning (ICML)}, pages 314--323.
  JMLR.org, 2016{\natexlab{a}}.

\bibitem[Reddi et~al.(2016{\natexlab{b}})Reddi, Sra, P{\'{o}}czos, and
  Smola]{DBLP:conf/cdc/ReddiSPS16}
Sashank~J. Reddi, Suvrit Sra, Barnab{\'{a}}s P{\'{o}}czos, and Alexander~J.
  Smola.
\newblock Fast incremental method for smooth nonconvex optimization.
\newblock In \emph{55th {IEEE} Conference on Decision and Control (CDC)}, pages
  1971--1977, 2016{\natexlab{b}}.

\bibitem[Reddi et~al.(2016{\natexlab{c}})Reddi, Sra, P{\'o}czos, and
  Smola]{reddi2016proximal}
Sashank~J Reddi, Suvrit Sra, Barnab{\'a}s P{\'o}czos, and Alexander~J Smola.
\newblock Proximal stochastic methods for nonsmooth nonconvex finite-sum
  optimization.
\newblock In \emph{Advances in Neural Information Processing Systems}, pages
  1145--1153, 2016{\natexlab{c}}.

\bibitem[Rockafellar(1970)]{rockafellar-1970a}
R.~Tyrrell Rockafellar.
\newblock \emph{Convex analysis}.
\newblock Princeton Mathematical Series. Princeton University Press, Princeton,
  N. J., 1970.

\bibitem[Rockafellar and Wets(1998)]{RockWets98}
{R. Tyrrell} Rockafellar and Roger J.-B. Wets.
\newblock \emph{Variational Analysis}.
\newblock Springer Verlag, Heidelberg, Berlin, New York, 1998.

\bibitem[Thi and Dinh(2018)]{LeThi2018}
Hoai An~Le Thi and Tao~Pham Dinh.
\newblock Dc programming and dca: thirty years of developments.
\newblock \emph{Mathematical Programming}, 169\penalty0 (1):\penalty0 5--68,
  May 2018.

\bibitem[Thi et~al.(2017)Thi, Le, Phan, and Tran]{pmlr-v70-thi17a}
Hoai An~Le Thi, Hoai~Minh Le, Duy~Nhat Phan, and Bach Tran.
\newblock Stochastic {DCA} for the large-sum of non-convex functions problem
  and its application to group variable selection in classification.
\newblock In Doina Precup and Yee~Whye Teh, editors, \emph{Proceedings of the
  34th International Conference on Machine Learning}, volume~70 of
  \emph{Proceedings of Machine Learning Research}, pages 3394--3403,
  International Convention Centre, Sydney, Australia, 06--11 Aug 2017. PMLR.

\bibitem[Wen et~al.(2018)Wen, Chen, and Pong]{Wen2018}
Bo~Wen, Xiaojun Chen, and Ting~Kei Pong.
\newblock A proximal difference-of-convex algorithm with extrapolation.
\newblock \emph{Computational Optimization and Applications}, 69\penalty0
  (2):\penalty0 297--324, Mar 2018.

\bibitem[Xiao and Zhang(2014)]{DBLP:journals/siamjo/Xiao014}
Lin Xiao and Tong Zhang.
\newblock A proximal stochastic gradient method with progressive variance
  reduction.
\newblock \emph{{SIAM} Journal on Optimization}, 24\penalty0 (4):\penalty0
  2057--2075, 2014.

\bibitem[Xu et~al.(2018)Xu, Zhu, Yang, Zhang, Jin, and
  Yang]{DBLP:journals/corr/abs-1805-07880}
Yi~Xu, Shenghuo Zhu, Sen Yang, Chi Zhang, Rong Jin, and Tianbao Yang.
\newblock Learning with non-convex truncated losses by {SGD}.
\newblock \emph{CoRR}, abs/1805.07880, 2018.

\bibitem[Yang(2018)]{leiyangpg18}
Lei Yang.
\newblock Proximal gradient method with extrapolation and line search for a
  class of nonconvex and nonsmooth problems.
\newblock \emph{CoRR}, abs/1711.06831, 2018.

\bibitem[Yin et~al.(2015)Yin, Lou, He, and Xin]{Yin2015MinimizationO}
Penghang Yin, Yifei Lou, Qi~He, and Jack Xin.
\newblock Minimization of ℓ1-2 for compressed sensing.
\newblock \emph{SIAM J. Scientific Computing}, 37, 2015.

\bibitem[Yu et~al.(2015)Yu, Zheng, Marchetti-Bowick, and Xing]{YuZMX15}
Yaoliang Yu, Xun Zheng, Micol Marchetti-Bowick, and Eric~P. Xing.
\newblock Minimizing nonconvex non-separable functions.
\newblock In \emph{The ${17}^{th}$ International Conference on Artificial
  Intelligence and Statistics {(AISTATS)}}, 2015.

\bibitem[Zhang(2010{\natexlab{a}})]{cunzhang10}
Cun-Hui Zhang.
\newblock Nearly unbiased variable selection under minimax concave penalty.
\newblock \emph{The Annals of Statistics}, 38:\penalty0 894 -- 942,
  2010{\natexlab{a}}.

\bibitem[Zhang and Xin(2014)]{DBLP:journals/corr/ZhangX14}
Shuai Zhang and Jack Xin.
\newblock Minimization of transformed l{\_}1 penalty: Theory, difference of
  convex function algorithm, and robust application in compressed sensing.
\newblock \emph{CoRR}, abs/1411.5735, 2014.

\bibitem[Zhang and Xin(2018)]{zhang2018minimization}
Shuai Zhang and Jack Xin.
\newblock Minimization of transformed $l\_1$ penalty: theory, difference of
  convex function algorithm, and robust application in compressed sensing.
\newblock \emph{Mathematical Programming}, 169\penalty0 (1):\penalty0 307--336,
  2018.

\bibitem[Zhang and He(2018)]{zhang2018convergence}
Siqi Zhang and Niao He.
\newblock On the convergence rate of stochastic mirror descent for nonsmooth
  nonconvex optimization.
\newblock \emph{arXiv preprint arXiv:1806.04781}, 2018.

\bibitem[Zhang(2010{\natexlab{b}})]{Zhang:2010:AMC:1756006.1756041}
Tong Zhang.
\newblock Analysis of multi-stage convex relaxation for sparse regularization.
\newblock \emph{J. Mach. Learn. Res.}, 11:\penalty0 1081--1107, March
  2010{\natexlab{b}}.
\newblock ISSN 1532-4435.

\bibitem[Zhao and Zhang(2015)]{DBLP:conf/icml/ZhaoZ15}
Peilin Zhao and Tong Zhang.
\newblock Stochastic optimization with importance sampling for regularized loss
  minimization.
\newblock In \emph{Proceedings of the 32nd International Conference on Machine
  Learning (ICML)}, pages 1--9, 2015.

\bibitem[Zhong and Kwok(2014)]{DBLP:conf/aaai/ZhongK14}
Wenliang Zhong and James~T. Kwok.
\newblock Gradient descent with proximal average for nonconvex and composite
  regularization.
\newblock In \emph{Proceedings of the Twenty-Eighth {AAAI} Conference on
  Artificial Intelligence, July 27 -31, 2014, Qu{\'{e}}bec City, Qu{\'{e}}bec,
  Canada.}, pages 2206--2212, 2014.

\end{thebibliography}
\end{document}